\crefname{exm}{Example}{Examples}
\crefname{cor}{Corollary}{Corollaries}
\crefname{prop}{Proposition}{Propositions}
\crefname{rmk}{Remark}{Remarks}
\crefname{lem}{Lemma}{Lemmata}
\newcommand{\oset}[3][0ex]{%
	\mathrel{\mathop{#3}\limits^{
			\vbox to#1{\kern-2\ex@
				\hbox{$\scriptstyle#2$}\vss}}}}
\providecommand{\alberto}[1]{{\color{blue}{#1}}}
\numberwithin{equation}{section}%
\newtheorem{thm}[equation]{Theorem}
\newtheorem{prop}[equation]{Proposition}
\newtheorem{lemma}[equation]{Lemma}
\newtheorem{cor}[equation]{Corollary}
\theoremstyle{definition}
\newtheorem{dfn}[equation]{Definition}
\newtheorem{notn}[equation]{Notation}
\newtheorem{rmk}[equation]{Remark}
\newtheorem{exm}[equation]{Example}
\newtheorem{assu}[equation]{Assumption}
\newtheorem{cons}[equation]{Construction}
\newcommand{\one}{\mathbb{1}}
\newcommand{\Gmd}{\mathsf{M}^{\mathrm{coh}}(\mathbb{G}_{\tiny \textup{m}})} %
\newcommand{\A}{\mathbb{A}}
\newcommand{\B}{\mathbb{B}}
\newcommand{\C}{\mathbb{C}}
\newcommand{\E}{\mathbb{E}}
\newcommand{\F}{\mathbb{F}}
\newcommand{\barFp}{\bar{\F}_{\!p}}
\newcommand{\G}{\mathbb{G}}
\newcommand{\bH}{\mathbb{H}}
\renewcommand{\P}{\mathbb{P}}
\newcommand{\pirnd}[1][]{
	\ifblank{#1}{\rho^{\textup{nil}}}{\rho^{\textup{nil}}_{#1}}}
\newcommand{\pirn}{\pirnd{}}
\newcommand{\Q}{\mathbb{Q}}
\newcommand{\rF}{\varphi_{\rm rel}} %
\newcommand{\rFactual}{\varphi_{\rm rel}} %
\newcommand{\Ftilt}{\varphi^{\sharp}} %
\newcommand{\T}{\mathbb{T}}
\newcommand{\Z}{\mathbb{Z}}
\newcommand{\catC}{\mathscr{C}}
\newcommand{\catH}{\mathscr{H}}
\newcommand{\catD}{\mathscr{D}}
\newcommand{\catT}{\mathbf{T}}
\newcommand{\mcA}{\mathcal{A}}
\newcommand{\mcD}{\mathcal{D}}
\newcommand{\mcK}{\mathcal{K}}
\newcommand{\mcO}{\mathcal{O}}
\newcommand{\mcX}{\mathcal{X}}
\newcommand{\mcY}{\mathcal{Y}}
\newcommand{\del}{\partial}
\newcommand{\xto}[1]{\xrightarrow{#1}}
\newcommand{\isoto}{\xrightarrow{\sim}}
\newcommand{\xfrom}[1]{\xleftarrow{#1}}
\newcommand{\from}{\xleftarrow{}}
\newcommand{\isofrom}{\xleftarrow{\sim}}
\DeclareMathOperator{\alg}{alg}
\DeclareMathOperator{\Alt}{Alt}
\DeclareMathOperator{\an}{an}
\DeclareMathOperator{\An}{An}
\DeclareMathOperator{\CAlg}{CAlg}
\DeclareMathOperator{\coAlg}{coAlg}
\DeclareMathOperator{\car}{char}
\DeclareMathOperator{\colim}{colim}
\DeclareMathOperator{\cpt}{\omega} %
\DeclareMathOperator{\dR}{dR}
\DeclareMathOperator{\Derfp}{\mathcal{D}^{\mathit{b}}_{\varphi}}
\newcommand{\Dfp}[1][]{\ifblank{#1}{\Derfp(K_0)}{\Derfp(#1)}}
\newcommand{\DerfpN}{\mathcal{D}_{(\varphi,N)}^{\mathit{b}}}
\newcommand{\DfpN}[1][]{\ifblank{#1}{\DerfpN(K_0)}{\DerfpN(#1)}}
\DeclareMathOperator{\Derp}{\mathcal{D}_{\varphi}}
\newcommand{\Dp}[1][]{\ifblank{#1}{\Derp(K_0)}{\Derfp(#1)}}
\newcommand{\DerpN}{\mathcal{D}_{(\varphi,N)}}
\newcommand{\DpN}[1][]{\ifblank{#1}{\DerpN(K_0)}{\DerfpN(#1)}}
\DeclareMathOperator{\eff}{eff}
\DeclareMathOperator{\Ext}{Ext}
\DeclareMathOperator{\et}{\acute{e}t}
\DeclareMathOperator{\Fin}{{Fin}}
\DeclareMathOperator{\FDA}{{FDA}}
\DeclareMathOperator{\Frac}{Frac}
\DeclareMathOperator{\Gal}{Gal}
\DeclareMathOperator{\gr}{gr}
\DeclareMathOperator{\Hm}{H}
\DeclareMathOperator{\HK}{HK}
\DeclareMathOperator{\ho}{h}
\DeclareMathOperator{\Hodge}{Hodge}
\DeclareMathOperator{\Hom}{Hom}
\DeclareMathOperator{\id}{id}
\DeclareMathOperator{\Ind}{Ind}
\DeclareMathOperator{\Map}{Map}
\DeclareMathOperator{\MHS}{MHS}
\DeclareMathOperator{\spMap}{map}%
\DeclareMathOperator{\op}{{op}}
\DeclareMathOperator{\perf}{{perf}}
\DeclareMathOperator{\Perf}{Perf}
\newcommand{\Prl}{{\rm{Pr}^{L}}}
\newcommand{\Prlst}{{\rm{Pr}^{st}}}
\newcommand{\Prlmst}{{\rm{CAlg}(\rm{Pr}^{st})}}
\newcommand{\Prlo}{{\rm{Pr}^{L}_\omega}}
\newcommand{\Prlost}{{\rm{Pr}^{st}_\omega}}
\newcommand{\Prloo}{{\rm{CAlg}(\Prlo)}}
\newcommand{\Prloost}{{\rm{CAlg}(\Prlost)}}
\DeclareMathOperator{\proet}{pro\acute{e}t}
\DeclareMathOperator{\rig}{rig}
\DeclareMathOperator{\RigSm}{RigSm}
\DeclareMathOperator{\Sm}{Sm}
\DeclareMathOperator{\Sp}{Sp}
\DeclareMathOperator{\Spa}{Spa}
\DeclareMathOperator{\Spec}{Spec}
\DeclareMathOperator{\Spf}{Spf}
\DeclareMathOperator{\Sym}{Sym}
\DeclareMathOperator{\Tot}{Tot}
\DeclareMathOperator{\uhom}{\underline{Hom}}
\DeclareMathOperator{\Alg}{Alg}
\DeclareMathOperator{\iCat}{{Cat}_{\infty}}
\DeclareMathOperator{\Ch}{{Ch}}
\DeclareMathOperator{\DA}{{{DA}}}
\DeclareMathOperator{\DAet}{{{DA}_{\et}}}
\DeclareMathOperator{\UDA}{{UDA}}
\DeclareMathOperator{\coMod}{{coMod}}
\DeclareMathOperator{\Mod}{{Mod}}
\DeclareMathOperator{\QCoh}{{QCoh}}
\DeclareMathOperator{\RigDA}{{RigDA}}
\DeclareMathOperator{\cyc}{{cyc}}
\DeclareMathOperator{\SH}{{SH}}
\DeclareMathOperator{\Fun}{{Fun}}
\DeclareMathOperator{\Funadd}{{Fun_{add}}}
\DeclareMathOperator{\Funex}{{Fun_{ex}}}
\DeclareMathOperator{\Funaddo}{{Fun^\otimes_{add}}}
\DeclareMathOperator{\Funaddlax}{{Fun^{lax}_{add}}}
\DeclareMathOperator{\Funexo}{{Fun^\otimes_{ex}}}
\DeclareMathOperator{\FunLo}{{Fun^{\otimes}_{L}}}
\DeclareMathOperator{\FunLlax}{{Fun^{lax}_{L}}}
\newcommand{\heart}[1]{\mathcal{H}_{#1}}
\DeclareMathOperator{\Ho}{Ho} 
\DeclareMathOperator{\fib}{fib}
\DeclareMathOperator{\cof}{cofib}
\newcommand{\LF}[2]{{#1}^{#2}_{\textup{lax}}}
\newcommand{\LFnil}[2]{{#1}^{#2}_{\textup{nil}}}
\newcommand{\DAN}{\DA_N(k)}
\newcommand{\hF}[2]{{#1}^{#2}}
\newcommand{\schop}[1][]{
	\ifblank{#1}{\operatorname{Sch}^{\textup{op}}}{\operatorname{Sch}_{#1}^{\textup{op}}}}
\providecommand{\base}{B}
\providecommand{\uC}{C^{\textup{uni}}}
\providecommand{\into}{\hookrightarrow}
\providecommand{\augm}{\varepsilon}
\providecommand{\free}{\mathrm{Free}}
\providecommand{\aid}{\mathrm{I}}
\newcommand{\icat}{$\infty$\nobreakdash-category\xspace}
\newcommand{\icats}{$\infty$\nobreakdash-categories\xspace}
\newcommand{\icategorical}{$\infty$\nobreakdash-catego\-ri\-cal\xspace}
\newcommand{\iops}{$\infty$\nobreakdash-operads\xspace}
\newcommand{\subicat}{sub-$\infty$-category\xspace}
\newcommand{\subicats}{sub-$\infty$-categories\xspace}
\begin{document}
	\title{Motivic Monodromy and $p$-adic Cohomology Theories}
	\author{Federico Binda}
 \address{Dipartimento di Matematica ``F. Enriques'' - Universit\`a degli Studi di Milano (Italy)}
 \email{federico.binda@unimi.it}
	\author{Martin Gallauer}
  \address{Mathematics Institute - University of Warwick (United Kingdom)}
  \email{martin.gallauer@warwick.ac.uk}
	\author{Alberto Vezzani}
  \address{Dipartimento di Matematica ``F. Enriques'' - Universit\`a degli Studi di Milano (Italy)}
  \email{alberto.vezzani@unimi.it}

	\thanks{
	The first and third authors are partially supported by the {\it Ministero dell'Universit\`a e della Ricerca} (MUR), 
	project   PRIN-20222B24AY. The second author thanks the Max Planck Institute and the Hausdorff Research Institute for Mathematics in Bonn for their hospitality. }
	
	\begin{abstract}
We build a unified framework for the study of monodromy operators and weight filtrations of cohomology theories for varieties over a local field. As an application, we give a streamlined definition of Hyodo--Kato cohomology without recourse to log-geometry, as predicted by Fontaine, and we produce an induced Clemens--Schmid chain complex.
	\end{abstract}

	\maketitle
	\setcounter{tocdepth}{1}
	\tableofcontents
	
	\section{Introduction}
	\label{sec:introduction}
	\subsection{Clemens--Schmid for Hodge and $\ell$-adic cohomology}
	In complex algebraic geometry, a generically smooth family of proper varieties over a curve $X\to S$ induces, around each point~$0$ of~$S(\C)$, a proper smooth morphism $ U\to\Delta^*$ over the punctured analytic disk, for some open subset $U\subseteq X(\C)$. Up to  pullback along a finite cover of the disk, the relation between the singular cohomology of the (possibly singular) special fiber~$X_0$, equipped with its natural mixed Hodge structure, and the cohomology of a generic fiber~$X_t$ for~$t\neq 0$ is encoded in the strictly exact Clemens--Schmid chain complex
	$$
	\cdots\to \Hm^*(X_0)\to \Hm^*_{\lim}(X)\to \Hm^*_{\lim}(X)(-1)\to \Hm_{2d-*}(X_0)\to \Hm^{*+2}(X_0)\to \cdots
	$$
	where $d$ is the relative dimension of $X/S$ and $\Hm^*_{\lim}(X)$ is the ``limit cohomology'', which is a mixed Hodge structure $V$  equipped with a nilpotent monodromy operator $N\colon V\to V(-1)$,  to be thought of as the limit of the Hodge structures on~$\Hm^*(X_t)$ as $t\to 0$ (see \cite{schmid, steenbrink}). It can also be described by the formula $R\Gamma_{\lim}(X)=R\Gamma(X_0,\Psi(\Q))$ with $\Psi$ the (derived) nearby cycle functor. 
	The monodromy operator~$N$ is the logarithm of the action of a generator for the fundamental group of~$\Delta^*$. If $X_0$ is itself smooth then $N=0$ and $V$ is pure. In general, the exactness of the above chain complex follows from the weight-monodromy conjecture, proved by Saito~\cite{saito-mdhp,g-na}.%
	
	This situation has been translated to the arithmetic setting by work of Grothendieck and Deligne \cite{SGAVII1, weil2} (see also \cite{illusie}): Let $K$ be a local field with finite residue field~$k$ of characteristic~$p$ and let $\ell$ be a prime different from $p$. Then to a generically smooth proper map $X\to S=\Spec(\mcO_K)$ we can attach a Galois representation $\Hm^*(X_{\bar{K}},\Q_\ell)$ which, up to replacing $K$ by a finite extension,  amounts to the datum of a Weil--Deligne representation: that is,  a $\Gal(k)$-representation $V$ (which is the action of a topological generator $\varphi$ of $\Gal(k)$) and a $\varphi$-equivariant morphism $N\colon V\to V(-1)$ which encodes the action of the inertia group (by Grothendieck's monodromy theorem) and its relation to~$\varphi$. Here also one has a formula $R\Gamma(X_K,\Q_\ell)=R\Gamma( X_0,\Psi(\Q_\ell))$ and a Clemens--Schmid chain complex. %
	If $X_0$ is smooth then $N=0$ and $V$ is pure \cite{weil1}. In general, for a smooth projective variety $X/K$,  the weight-monodromy conjecture \cite{del-hodge1} predicts that the weight filtration  coincides (up to a shift) with the monodromy filtration, and hence that the Clemens--Schmid sequence is exact (see e.g.\ \cite[\S 5]{chiar-tsu-CS}).
	\subsection{The $p$-adic situation}
	In the $\ell=p$ situation, Jannsen and Fontaine \cite{Jan87, Fon94b} conjectured the existence of a cohomology theory $\Hm^*_{\HK}(X_K)$ (the Hyodo--Kato cohomology) also equipped with a weight and a monodromy filtration. More precisely, up to a finite field extension of $K$, it should be given by the following data (where $K_0=\Frac(W(k))$ and $\sigma$ is the lift of Frobenius):
	\begin{enumerate}
		\item\label{HK1} a $K_0$-vector space~$\Hm^*_{\HK}(X_K)$;
		\item a $K_0$-linear (``Frobenius'') isomorphism $\varphi\colon \Hm^*_{\HK}(X_K)\otimes_{\sigma}K_0\xto{\sim}\Hm^*_{\HK}(X_K)$;
		\item\label{HK3} a $\varphi$-equivariant (``monodromy'') map   $N\colon \Hm^*_{\HK}(X_K)\to \Hm^*_{\HK}(X_K)(-1)$ where the negative twist corresponds to the multiplication by~$p$, i.e., it satisfies $N\varphi=p\varphi N$;
		\item\label{HKlast} if $\car K=0$, an isomorphism $\Hm^*_{\HK}(X_K)\otimes_{K_0}K\cong \Hm^*_{\dR}(X_K)$.
	\end{enumerate}
	In other words, the HK-cohomology endows $\Hm^*_{\dR}(X_K)$ with a $(\varphi,N)$-module structure over~$K_0$.
	Such a cohomology theory has been defined by various authors, including Hyodo--Kato~\cite{HK94}, Mokrane~\cite{mokrane}, Gro\ss e-Kl\"onne~\cite{GK05}, Beilinson~\cite{Bei-crys}, Ertl--Yamada~\cite{EY19} and Colmez--Nizio{\l}~\cite{CN19}. In the most general case, one can even assume $X_K$ to be a rigid analytic variety (in this case, $\Hm_{\dR}(X_K)$ must be thought of as its overconvergent de\,Rham cohomology). This definition is a somewhat intricate generalization of crystalline cohomology: one can reduce to the case of a suitable model of semi-stable reduction, and use some version of the log-de\,Rham complex of its log-special fiber. It is by no means easy to show that this construction and its comparison with de\,Rham can be made independent of choices, functorially on the generic fiber. %
	Not even the existence (let alone exactness) of the Clemens-Schmid chain complex has been known in this context, apart from the equi-characteristic case (see \cite{chiar-tsu-CS}), in low dimension (\cite{chiar-duke}) or in special situations (\cite{cmn}). 
	
	\subsection{Executive summary}
	In the present paper we explain how the ``limit'' structures in all three situations above are nothing but different manifestations of a single structure existing at the motivic level.
	This involves notably two things: producing motivic monodromy operators (for which we use   homotopy theory of rigid analytic varieties) and identifying their realizations in the three contexts (for which we use additionally, and chiefly, weight arguments).
	
	In particular, we provide a more conceptual definition of Hyodo--Kato cohomology that is ultimately built on the (classical) rigid cohomology of smooth varieties over the residue field (even, smooth proper ones: see \Cref{rmk:mononWCHK}). We do not use log-geometry, nor choices of semi-stable models. As such, our definition only depends on the generic fiber, as predicted by Fontaine~\cite[Remarque 6.2.11]{FontaineRepSem}.
	Moreover, we observe that HK-cohomology can be understood, just as in the more classical contexts, as the cohomology of (a version of) the nearby cycles sheaf.
	Finally, we are able to construct the Clemens--Schmid chain complex, thereby resolving~\cite[Conjecture 7.15]{flach-morin}.
	
	\medskip
	All of this will now be discussed in more detail. %
	In what follows, we let $\RigDA(S)$ (resp.\ $\DA(S)$) be the category of analytic (resp.\ algebraic) \'etale motives with rational coefficients over a base~$S$.\footnote{Intuitively, one should think of them as analytic (resp.\ algebraic) varieties ``modulo'' \'etale localization, and homotopies parametrized by~$\B^1$ (resp.~$\A^1$).}
	
	\begin{rmk}
		Certain limit structures at the motivic level have previously been considered, by Spitzweck~\cite{Spitzweck_lim}, Ayoub~\cite{ayoub-th2} and Levine~\cite{MR2302525}.
		We go beyond these works in identifying (and studying) the structure of monodromy operators, as well as in providing general tools in order to identify their realizations.
		And the application to $p$-adic cohomology theories is entirely new.
	\end{rmk}
	
	\begin{rmk}
		We can motivate the use of motivic homotopy theory by drawing a parallel between the complex punctured disk considered above (in the Hodge situation) and the Fargues--Fontaine curve. Let $\Delta$ be the adic space $\mcY_{(0,\infty]}(\C_p^\flat)$ (following the notation of \cite[Section 12.2]{berkeley}) i.e. the locus $p\neq0$ in~$(\Spf\A_{\inf})^{\rig}$. Let $x_\infty$ be its closed point $\Spa\breve{\Q}_p\to\Delta$ and $\Delta^*$ its complement. 
		In \cite{LBV} it is shown that if $C$ is $\C_p$ or $\C_p^\flat$, one can associate canonically to any variety $X$ over $C$ a $\varphi$-motive $\mcD(X)$ over $\Delta^*$ by means of a functor
		$$
		\mcD\colon \RigDA(C)\to\RigDA_{\varphi}(\Delta^*)
		$$
		so that $\mcD(X)$ is then analogous to the generically smooth family $X(\C)$ considered above, except that it is a \emph{motive}, not an actual variety (in general). Under this analogy, the Hyodo--Kato cohomology is the ``limit'' of the relative de\,Rham cohomology groups $\Hm^*_{\dR}(X_{C'}/C')$ as $x_{C'}\to x_\infty$, considered with its structure of $\varphi$-module.\footnote{Indeed, any $\varphi$-module on $\Delta^*$ (such as the de\,Rham-Fargues-Fontaine cohomology of \cite{LBV}) extends uniquely to $\Delta$, see \cite{kedlaya-m}, \cite[Theorem 13.4.1]{berkeley}.} It is also equipped with a  monodromy operator, as we now explain.%
		
	\end{rmk}
	\subsection{Analytic motives as nearby cycles with monodromy}
	In the sequel, $k$ denotes the residue field of the non-archimedean valued field under consideration.
	We construct a compactly generated~\icat $\DA_N(k)$ whose compact objects are given by pairs $(M,N_M\colon M\to M(-1))$ where $M\in\DA(k)_{\cpt}$ is a compact algebraic motive, and $M(-1)=M\otimes\one(-1)$ denotes its (negative) Tate twist. %
		For weight reasons all such maps are nilpotent. %
		We show directly the following identification (see \Cref{cor:RigDAisDAN}).
		
		\begin{thm}\label{thm:intro1}  Let $C$ be $\C_p$,  $\C_p^\flat$ or the complete algebraic closure of $\C(\!(T)\!)$\footnote{More generally, $C$ may be a complete algebraically closed non-archimedean valued field whose valuation group is $\Q$, e.g. spherical completions of these fields.}. Let $k$ be its residue field. 
				There is an  equivalence of symmetric monoidal \icats   $$\RigDA(C)\simeq\DA_N(k),\qquad M\mapsto (\Psi M,N_{\Psi M}).$$ 
		\end{thm}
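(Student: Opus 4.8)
The plan is to construct the equivalence by descending the known structure of $\RigDA(C)$ along a "motivic nearby cycles" formalism, and to recognize the target as a category built out of algebraic motives with a single nilpotent operator. First I would recall the general nearby cycles picture for rigid analytic motives: for $C$ as in the statement, with ring of integers $\mcO_C$ and residue field $k$, there is a specialization functor together with its nearby-cycles companion $\Psi\colon\RigDA(C)\to\DA(k)$, constructed from the special fiber of formal models (following Ayoub's and Vezzani's work on motivic nearby cycles). The first key point is that $\Psi$ is monoidal, conservative, and admits a description making the monodromy operator visible: concretely, one uses the $\B^1$-homotopy invariance together with the fact that $\mathbb{G}_m^{\mathrm{an}}$-type loops (equivalently, the unit circle / the punctured disk) carry a canonical self-extension of the unit, which upon applying $\Psi$ produces the operator $N_M\colon \Psi M\to \Psi M(-1)$ functorially in $M$. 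This is the motivic incarnation of "log of the monodromy," and it is here that one needs the rigid-analytic (rather than algebraic) homotopy theory.

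Next I would set up the functor $\RigDA(C)\to\DA_N(k)$ on compact objects as $M\mapsto(\Psi M, N_{\Psi M})$, check that it lands among the compact generators of $\DA_N(k)$ (using that $\Psi$ preserves compactness, which follows from continuity/compact generation of both sides and the explicit generators coming from smooth affinoid/rigid varieties), and then left-Kan-extend to all of $\RigDA(C)$ to get a colimit-preserving symmetric monoidal functor. Symmetric monoidality of the extension reduces to monoidality of $\Psi$ and compatibility of $N$ with tensor products, i.e. the Leibniz rule $N_{M\otimes M'} = N_M\otimes\id + \id\otimes N_{M'}$ read off from the coproduct structure on the loop; this is a bookkeeping step but must be done carefully on the level of coherences, ideally by exhibiting the functor as induced by a map of the relevant $\mathbb{E}_\infty$-algebra objects (the "Tate/loop algebra") so that all higher coherences come for free.

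The heart of the argument — and the step I expect to be the main obstacle — is full faithfulness and essential surjectivity, equivalently that $(\Psi, N)$ identifies $\RigDA(C)$ with modules over this loop algebra in $\DA(k)$. I would approach this by a $\B^1$-localization / monodromy-weight argument: the field $C$ is algebraically closed with value group $\Q$, so every smooth rigid variety over $C$ is, motivically, built from (potentially multi-)Tate twists of algebraic motives over $k$ via iterated extensions controlled precisely by $N$. Concretely, one shows (i) that $\Psi$ is essentially surjective because algebraic motives over $k$ lift to rigid motives over $C$ (constant/pullback motives), and (ii) that the fiber of $\Psi$ — the "vanishing cycles" — is generated by the cone of a canonical map witnessing $N$, so that $\RigDA(C)$ is the universal category receiving $\DA(k)$ together with such an operator. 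For full faithfulness one computes mapping spectra: $\Map_{\RigDA(C)}(M,M')$ should match $\Map_{\DA_N(k)}((\Psi M,N),(\Psi M',N))$, and the right-hand side is by construction a totalization / equalizer of $\Map_{\DA(k)}(\Psi M,\Psi M'(*))$ against the two maps "compose with $N$"; matching this with the analytic side amounts to a Gysin/localization triangle relating $\RigDA(C)$ to $\DA(k)$ in two variables (source and a $\mathbb{G}_m$-direction), which is where weight arguments — nilpotence of $N$ forced by purity, hence convergence of the relevant filtration — are essential. I would isolate this as the main lemma and prove it first on the generators (Tate twists of smooth proper $k$-varieties, cf.\ the cited remark) and then propagate by the fact that both sides are compactly generated and the functor is exact and monoidal, using Popescu-type / idempotent-completion arguments to conclude the equivalence.
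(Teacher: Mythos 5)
Your high-level intuition is right — the functor is $M\mapsto(\Psi M, N_{\Psi M})$, $\Psi$ comes from the nearby-cycles/special-fibre picture, the monodromy is controlled by a Kummer-type extension, and the target should be a category of ``modules over a loop algebra'' — but the proof strategy you sketch is genuinely different from the paper's, and as written it has gaps in exactly the places where the paper does something specific.

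The paper's proof is a clean two-step factorization through module categories, with no direct mapping-spectrum computation in the style you propose. Step one is the monadic comparison $\RigDA_{\gr}(K)\simeq\Mod_{\chi\one}(\DA(k))$, where $\chi$ is the right adjoint to the Monsky--Washnitzer/generic-fibre functor $\xi\colon\DA(k)\to\RigDA(K)$; this is a Barr--Beck--Lurie argument imported from AGV, together with the identification (from Ayoub's Weil-cohomology paper) of $\chi\one$ with the split square-zero extension $\one\oplus\one(-1)[-1]$ upon choosing a pseudo-uniformizer. Step two is the Koszul-type duality $\Mod_{\one\oplus t[-1]}(\catC)\simeq\LFnil{\catC}{-\otimes t}$ for $\Q$-linear $\catC$ with $\Alt^2(t)\simeq 0$, which the paper establishes by showing $\fib(N)$ is monoidal-monadic. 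Composing gives $\RigDA_{\gr}(K)\simeq\DAN$. Since $K$ is algebraically closed, $\RigDA_{\gr}(K)=\RigDA(K)$. Nowhere in this chain is a mapping-spectrum computation ``matched with the analytic side via Gysin/localization'' needed, and full faithfulness is never checked by hand.

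The concrete gaps in your proposal are these. First, you ascribe to weight arguments a role they do not play: you say that nilpotence of $N$ is what makes full faithfulness work (``convergence of the relevant filtration''). In the paper, weights appear only to show that compact objects of $\DAN$ automatically have \emph{strictly} nilpotent monodromy (so $\DAN\simeq\Ind(\LF{(\DA(k)_{\cpt})}{-\otimes\one(-1)})$), which is a pleasant bookkeeping fact but is not what drives the equivalence; the equivalence is a consequence of the shape of the algebra $\chi\one$ plus Barr--Beck--Lurie. Second, your essential-surjectivity argument (i) only produces objects of the form $(M,0)$ — it does not explain how to realize an arbitrary pair $(M,N)$ with nonzero $N$, which is exactly what the module-theoretic description handles automatically. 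Third, you do not identify what the ``loop algebra'' actually is; the crucial input that $\chi\one\simeq\one\oplus\one(-1)[-1]$ as a split square-zero extension (depending on $\varpi$) is nontrivial and is what makes the Koszul duality applicable. Without pinning down that the monad $\chi\xi$ is tensoring by this particular commutative algebra, the ``totalization/equalizer'' you want to compare with the analytic side has no concrete form, and the argument cannot close.
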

		If $M$ is the analytification of an algebraic motive~$M'$ over~$C$ then $\Psi M$ is the motivic nearby cycles of~$M'$, and we interpret~$N_{\Psi M}$ as a \emph{motivic monodromy operator} on these.
		The proof of \Cref{thm:intro1} uses the comparison between rigid and algebraic motives from~\cite{agv}
		and some version of the monadicity theorem of Barr--Beck--Lurie.  We actually prove a more general statement, which is an instance of Koszul duality applied to coefficient systems. For details, see \Cref{thm:summary-Mod-coMod-lax} and \Cref{cor:unip-sameas-monodromy}.
		
		\begin{rmk}
			The equivalence depends \textsl{a priori} on a choice of a pseudo-uniformizer of~$C$.
			If $C$ is $\C_p$ or $\C_p^\flat$, another choice has the sole effect of scaling the monodromy operator by a (uniform) non-zero rational constant: see \Cref{rmk:changepi?} and also \Cref{rmk:choice?} for a choice-free equivalence in this case.
		\end{rmk}
		\Cref{thm:intro1} can be used to produce a canonical ``Frobenius lift'' for rigid analytic varieties. More precisely, let  $\DA_{(\varphi,N)}(\barFp)$ be the \icat of  ``motivic $(\varphi,N)$-modules'', i.e., the compactly generated \icat whose compact objects are given by (note the analogy with \eqref{HK1}--\eqref{HK3} before):
		\begin{enumerate}
			\item a compact algebraic motive $M\in \DA(\barFp)_{\cpt}$;
			\item an equivalence $\alpha\colon M\xto{\sim}\varphi_*M$ (where $\varphi$ is the Frobenius on~$\barFp$);
			\item a morphism $N\colon M\to M(-1)$ together with a homotopy $N\circ\alpha\simeq p\cdot\alpha\circ N$.
		\end{enumerate}
		We then show the following (see \Cref{prop:K0phinew}):
		\begin{thm}\label{thm:intro2}
			Let $C$ be $\C_p$ or $\C_p^\flat$. There is a canonical symmetric monoidal functor
			$$
			\RigDA(C)\to \DA_{(\varphi,N)}(\barFp).
			$$
			Informally, it sends $M$ to $\left(\varphi_*\Psi M\xfrom{\sim}\Psi M\to \Psi M(-1)\right)$ where the first map is induced by  the relative Frobenius.
		\end{thm}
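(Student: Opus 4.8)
The plan is to reduce, via the equivalence of \Cref{thm:intro1} (in the precise form \Cref{cor:RigDAisDAN}), to the construction of a symmetric monoidal functor
$$\DA_N(\barFp)\longrightarrow\DA_{(\varphi,N)}(\barFp)$$
which is the identity on underlying motives, sending $(M,N_M)$ to $(M,\alpha_M,N_M)$, and which equips each compact motive with a \emph{canonical} Frobenius structure $\alpha_M\colon M\xto{\sim}\varphi_*M$. Composing with $\RigDA(C)\simeq\DA_N(\barFp)$ then yields the asserted functor, and it sends $M$ to $(\Psi M,\alpha_{\Psi M},N_{\Psi M})$; in particular ``the relative Frobenius'' in the statement is the map $\alpha_{\Psi M}$.

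The canonical Frobenius structure comes from the relative $p$-power Frobenius on $\DA(\barFp)$. For $Y\in\Sm/\barFp$ the relative Frobenius $F_{Y/\barFp}\colon Y\to Y^{(p)}:=Y\times_{\barFp,\varphi}\barFp$ is functorial in $Y$ and compatible with products, so (as the $M(Y)$ generate $\DA(\barFp)$ under colimits and Tate twists) these maps assemble into a symmetric monoidal natural transformation $\mathrm{fr}\colon\id\Rightarrow\varphi^*$ of endofunctors of $\DA(\barFp)$. Here the main point to recall is that $\mathrm{fr}$ is a natural \emph{equivalence}: each $F_{Y/\barFp}$ is a universal homeomorphism, which $M(-)$ inverts because $\DA$ is taken with rational coefficients; and the normalization one needs is that $\mathrm{fr}$ acts on $\one(n)$ by multiplication by $p^{n}$, which one reads off from $F_{\G_m/\barFp}=[p]_{\G_m}$ acting by $p$ on the reduced motive $\widetilde{M}(\G_m)=\one(1)[1]$. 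Transporting $\mathrm{fr}$ along the tautological equivalence $\varphi_*\varphi^*\simeq\id$ produces a symmetric monoidal natural equivalence $\alpha\colon\id\Rightarrow\varphi_*$ acting on $\one(n)$ by multiplication by $p^{-n}$; in particular $\alpha_{\one(-1)}$ is multiplication by $p$. (When $C=\C_p^\flat$, $\alpha$ can alternatively be obtained by transporting the Frobenius automorphism of $\C_p^\flat$ through \Cref{thm:intro1}, and when $C=\C_p$ via the tilting equivalence $\RigDA(\C_p)\simeq\RigDA(\C_p^\flat)$; but the route above is uniform in $C$ and choice-free, which is how one sees the resulting functor is canonical, cf.\ \Cref{rmk:choice?}.)

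Now, given $(M,N_M)\in\DA_N(\barFp)$, I take $\alpha_M$ as the Frobenius structure and keep $N_M$ for the monodromy. The naturality square of $\alpha$ for the morphism $N_M\colon M\to M(-1)$ supplies a path from $\varphi_*N_M\circ\alpha_M$ to $\alpha_{M(-1)}\circ N_M$; using $\alpha_{M(-1)}=\alpha_M\otimes\alpha_{\one(-1)}=p\cdot(\alpha_M\otimes\id_{\one(-1)})$, this is exactly the homotopy ``$N_M\alpha_M\simeq p\cdot\alpha_M N_M$'' demanded in the definition of $\DA_{(\varphi,N)}(\barFp)$, and since $\alpha$ is a natural transformation of functors of \icats it comes with all of its higher coherences. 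To promote this assignment to an honest symmetric monoidal functor of \icats I would work through the (co)module presentations of \Cref{thm:summary-Mod-coMod-lax} and \Cref{cor:unip-sameas-monodromy}: the equivalence $\alpha$ trivializes the Frobenius twist on $\DA(\barFp)$, so that ``adding the canonical Frobenius structure'' becomes base change along a morphism of the relevant (co)algebra objects — it is precisely in that morphism that the constant $p$ is recorded — and symmetric monoidality is then inherited from the monoidality of $\mathrm{fr}$ together with the compatibility of $N$ with tensor products.

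The step I expect to be the main obstacle is not the object-level relation $N\varphi=p\varphi N$, which is essentially the naturality of $\mathrm{fr}$ plus the Tate-twist bookkeeping, but the upgrade to a symmetric monoidal functor of \icats with coherent higher homotopies; this is where the (co)module descriptions of $\DA_N(\barFp)$ and $\DA_{(\varphi,N)}(\barFp)$ do the real work, and where one must check carefully that the homotopy produced by $\mathrm{fr}$ is compatible with the Leibniz rule for $N$ on tensor products and with the symmetry isomorphism. The secondary, more computational point is the normalization of the constant $p$ in that homotopy, which has to be pinned down (via $\mathrm{fr}_{\one(1)}=p$) so that it agrees both with the definition of $\DA_{(\varphi,N)}(\barFp)$ and with the Hyodo--Kato relation \eqref{HK3}.
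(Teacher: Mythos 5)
Your proposal captures the same essential idea as the paper's proof: the canonical $\varphi$-structure comes from the relative Frobenius as a monoidal natural transformation $\id\Rightarrow\varphi^*$ on $\DA(\barFp)$ (invertible by universal-homeomorphism invariance with $\Q$-coefficients), and the constant $p$ in the relation $N\alpha\simeq p\,\alpha N$ is pinned down by the action on $\one(-1)$, exactly as in \Cref{lemma:K0phinew}. Your computation of the normalization from $\G_m$ and the use of the naturality square of the Frobenius for the map $N_M\colon M\to M(-1)$ are both correct and line up with the informal commutative square following \Cref{cons:DAphiN}.

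Where the proposal stops short — and you rightly flag this as the main obstacle — is in upgrading the pointwise assignment $(M,N_M)\mapsto(M,\alpha_M,N_M)$ to an honest symmetric monoidal functor of \icats. Your suggestion to ``work through the (co)module presentations, so that adding the Frobenius structure becomes base change along a morphism of (co)algebra objects'' is vague and not quite what the paper does. The cleaner packaging is this: a monoidal natural transformation $\id\Rightarrow\varphi^*$ on $\DA(\barFp)$ is the same datum as a monoidal, colimit- and compact-preserving functor $\rF\colon\DA(\barFp)\to\hF{\DA(\barFp)}{\varphi}$ into the equalizer of $\id$ and $\varphi^*$ (\Cref{dfn:Dphi}, \Cref{rmk:relative-Frob-modp}). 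The target category $\DA_{(\varphi,N)}(\barFp)$ is \emph{defined} as $\LFnil{(\hF{\DA(\barFp)}{\varphi})}{-\otimes\rF(\one(-1))}$ (\Cref{cons:DAphiN}), so that the entire Frobenius structure lives in the ambient category and the coherence between $\alpha$ and $N$ is encoded once and for all in the object $\rF(\one(-1))$. The desired functor is then simply the functor $\widehat{\rF}$ induced on lax fixed points by $\rF$ as in \Cref{notation:comparison_monod}, with all higher coherences supplied automatically by that general construction; composing with $\RigDA(C)\simeq\LFnil{\DA(\barFp)}{-\otimes\one(-1)}$ from \Cref{cor:RigDAisDAN} gives \Cref{prop:K0phinew}. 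In other words, the coherence bookkeeping you worry about never needs to be done by hand: it is absorbed into the equalizer construction and the functoriality of $\LFnil{-}{-}$ in the ambient monoidal \icat. Your detour through $\alpha\colon\id\Rightarrow\varphi_*$ is also unnecessary; the paper works directly with $\varphi^*$ (the natural home of the relative Frobenius), and \Cref{rmk:choice-of-phi^*} explains why the two conventions give equivalent fixed-point categories.
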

		
		\subsection{(Co)homological realizations}
		These results simplify the task of defining cohomology theories on the generic fiber, starting from (motivic) cohomology theories on the special fiber. More precisely, let $\catD$ be  a symmetric monoidal stable compactly generated \icat. As before, we fix a ``Tate twist'' operator~$(-1)$ on $\catD$ and an endofunctor~$\varphi$, and we let~$\catD_N$ resp.~$\catD_{(\varphi,N)}$ be the \icat of $N$-modules resp. of $(\varphi,N)$-modules in $\catD$, respectively. In the presence of a colimit-preserving, compact-preserving monoidal functor $R\Gamma\colon\DA(k)\to\catD$ we will always take the twisting object to be the image of the Tate object~$\one(-1)$.  \Cref{thm:intro1} and \Cref{thm:intro2} immediately imply the following (see also Corollaries \ref{cor:RGammahat} and \ref{cor:RGammahat2}):
		\begin{cor} Let $C$ be $\C_p$,  $\C_p^\flat$ or the complete algebraic closure of $\C(\!(T)\!)$.
			\begin{enumerate}
				\item Every  functor $R\Gamma\colon\DA(k)\to \catD$   as above extends to
				\[\widehat{R\Gamma}\colon \RigDA(C)\to {\catD_N}, \qquad M\mapsto R\Gamma(\Psi M).\]
				\item If $\car k=p>0$,   every  $\varphi$-equivariant  functor $R\Gamma\colon \DA(k)\to \catD$  as above extends to \[\widehat{R\Gamma}\colon \RigDA(C)\to \catD_{(\varphi,N)}, \qquad M\mapsto R\Gamma(\Psi M).\]  
			\end{enumerate}
		\end{cor}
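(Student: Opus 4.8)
The plan is to deduce the statement formally from \Cref{thm:intro1} and \Cref{thm:intro2}, together with the functoriality of the assignments $\catD\rightsquigarrow\catD_N$ and $\catD\rightsquigarrow\catD_{(\varphi,N)}$. These constructions turn a stable compactly generated symmetric monoidal \icat equipped with its Tate twist (and, in the second case, with its chosen endofunctor $\varphi$) into another such \icat, and the first thing I would record — as a short lemma — is that a colimit-preserving symmetric monoidal functor $F\colon\catC\to\catC'$ that preserves the twist (and, in the second case, intertwines the two copies of $\varphi$) induces $F_N\colon\catC_N\to\catC'_N$ (resp. $F_{(\varphi,N)}$) over $F$, compatibly with the forgetful functors: on compact objects it sends a pair $(M,N_M\colon M\to M(-1))$ to $(F(M),F(N_M))$, and likewise for the extra Frobenius datum. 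This is proved by viewing $\catC_N$ as a module category — over the lax/nilpotent free algebra generated by $\one(1)$, as set up in the body (cf.\ \Cref{thm:summary-Mod-coMod-lax}) — and invoking functoriality of module \icats; the only point needing attention is that $F_N$ lands in the correct, namely the nilpotent, flavour of $N$-module \icat, which holds because $F$ preserves compact objects and hence carries a null-homotopy of $N_M^{\circ n}$ to one of $F(N_M)^{\circ n}=F(N_M^{\circ n})$.

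With this in hand, recall that \Cref{thm:intro1} (in the refined form of \Cref{cor:unip-sameas-monodromy}) identifies $\RigDA(C)$ with $\DA_N(k)$ — the $N$-module \icat attached to $\DA(k)$ with its Tate twist — via $M\mapsto(\Psi M,N_{\Psi M})$, and that \Cref{thm:intro2} provides, when $\car k=p>0$, a symmetric monoidal functor $\RigDA(C)\to\DA_{(\varphi,N)}(\barFp)$ over $\Psi$, where $\DA_{(\varphi,N)}(\barFp)$ is the $(\varphi,N)$-module \icat attached to $\DA(\barFp)$. Given $R\Gamma\colon\DA(k)\to\catD$ as in the statement — colimit- and compact-preserving, symmetric monoidal, sending $\one(-1)$ to the chosen twist of $\catD$ — I would apply the functoriality above to get $(R\Gamma)_N\colon\DA_N(k)\to\catD_N$, and then precompose with the equivalence of \Cref{thm:intro1} to obtain $\widehat{R\Gamma}\colon\RigDA(C)\to\catD_N$. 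By construction it sends $M$ to $\bigl(R\Gamma(\Psi M),R\Gamma(N_{\Psi M})\bigr)$, i.e.\ to $R\Gamma(\Psi M)$ with its induced monodromy; it is symmetric monoidal and colimit-preserving because each factor is, and it preserves compact objects since $\Psi$ carries $\RigDA(C)_{\cpt}$ into $\DA(k)_{\cpt}$ while $R\Gamma$ preserves compactness. For the second assertion, assuming moreover $\car k=p>0$ and $R\Gamma$ $\varphi$-equivariant, one forms $(R\Gamma)_{(\varphi,N)}\colon\DA_{(\varphi,N)}(\barFp)\to\catD_{(\varphi,N)}$ and precomposes with the functor of \Cref{thm:intro2}; this yields the more precise statements to be recorded as \Cref{cor:RGammahat} and \Cref{cor:RGammahat2}.

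I expect the only genuinely delicate step to be the bookkeeping in the first paragraph: pinning down precisely which model of the $N$-module (resp.\ $(\varphi,N)$-module) \icat is used on both sides, so that the equivalence of \Cref{thm:intro1}, the functor of \Cref{thm:intro2}, and the induced functor $(R\Gamma)_N$ all live in the same ambient structure of such \icats, and checking that the nilpotent/lax part is respected under $(R\Gamma)_N$. This is where the hypothesis that $R\Gamma$ preserves compact objects is essential, together with compatibility of $\Psi$ with compactness. Once that is set up carefully, the rest is a purely formal consequence of \Cref{thm:intro1} and \Cref{thm:intro2}.
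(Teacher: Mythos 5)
Your proposal is correct and takes essentially the same route as the paper: induce a functor $(R\Gamma)_N$ (resp.\ $(R\Gamma)_{(\varphi,N)}$) on the $N$-module (resp.\ $(\varphi,N)$-module) \icats and precompose with the equivalence of \Cref{thm:intro1} (resp.\ the functor of \Cref{thm:intro2}); this is exactly how \Cref{cor:RGammahat} and \Cref{cor:RGammahat2} are proved. One remark on the subroutine: the paper obtains the induced functor directly from the lax-fixed-points description (\Cref{notation:comparison_monod}), where colimit-preservation of $F$ alone yields preservation of ind-nilpotence since $F(X[f^{-1}])\simeq F(X)[F(f)^{-1}]$, rather than passing through the module-category model $\Mod_A$ and a compactness argument; and the algebra $A$ is free on $\one(-1)[-1]$, not on $\one(1)$ as you wrote.
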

		By taking the Hodge resp. $\ell$-adic \'etale realization we recover the limit Hodge resp. the $\ell$-adic \'etale cohomology of the generic fiber (for the latter, see in particular \cite[Section 11]{AyoubEt}). In this article, we focus more on the $p$-adic situation, obtaining the following (see \Cref{cor:HKiso}, \Cref{cor:HKisHK} and \Cref{rmk:mononWCHK}).
		\begin{thm}\label{thm:intro3}
			Let $C$ be $\C_p$ or $\C_p^\flat$. The rigid realization
			$$
			R\Gamma_{\rig}\colon \DA(\barFp)\to \mcD(\breve{\Q}_p)
			$$
			induces a symmetric monoidal realization to the derived \icat of $(\varphi,N)$-modules
			$$
			\widehat{R\Gamma}_{\rig}\colon\RigDA(C)\to \mcD_{(\varphi,N)}(\breve{\Q}_p), \qquad\widehat{R\Gamma}_{\rig}(X)=R\Gamma_{\rig}(\Psi X)
			$$
			satisfying:    \begin{enumerate}
				\item \label{HKiso} %
				If $C=\C_p$ there is a canonical equivalence  $$\widehat{R\Gamma}_{\rig}\otimes C\simeq R\Gamma_{\dR}^\dagger.$$
				\item\label{WS} There is a canonical weight spectral sequence converging to $\widehat{\Hm}{}^n_{\rig}(X)$ giving rise to the weight filtration, and the monodromy operator restricts to $$N\colon \gr_i\widehat{\Hm}{}^n_{\rig}(X)\to \gr_{i-2}\widehat{\Hm}{}^n_{\rig}(X)(-1).$$
				\item\label{isotous} There is an equivalence with the ``classical'' Hyodo--Kato cohomology realization $$\widehat{R\Gamma}_{\rig}\simeq R\Gamma_{\HK}.$$
			\end{enumerate}
		\end{thm}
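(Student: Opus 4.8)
\emph{Construction of $\widehat{R\Gamma}_{\rig}$.} This part is formal. The rigid (overconvergent) realization $R\Gamma_{\rig}\colon\DA(\barFp)\to\mcD(\breve{\Q}_p)$ preserves colimits and compact objects, is symmetric monoidal, and carries a canonical $\varphi$-equivariant structure coming from the Frobenius-semilinear action on the rigid cohomology of $\barFp$-varieties; feeding it into the extension corollary stated just above (that is, into \Cref{thm:intro1} and \Cref{thm:intro2}) produces the symmetric monoidal functor $\widehat{R\Gamma}_{\rig}\colon\RigDA(C)\to\mcD_{(\varphi,N)}(\breve{\Q}_p)$, $X\mapsto R\Gamma_{\rig}(\Psi X)$. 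Its values are honest derived $(\varphi,N)$-modules: the relation $N\varphi=p\varphi N$ is built into the target by \Cref{thm:intro2}, and nilpotence of $N$ comes from \Cref{thm:intro1}. It then remains to prove the three clauses.

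\emph{Clause \eqref{HKiso}} (for $C=\C_p$). Both $X\mapsto\widehat{R\Gamma}_{\rig}(X)\otimes_{\breve{\Q}_p}C$ and $R\Gamma^\dagger_{\dR}$ are colimit-preserving symmetric monoidal functors $\RigDA(C)\to\mcD(C)$, the latter by \cite{LBV}. I would construct a natural comparison map and check it is an equivalence on compact generators. By continuity, \'etale descent and semistable reduction up to alteration, together with the explicit description underlying \Cref{thm:intro1}, it is enough to test on objects corresponding to (free, or trivial, $N$-modules on) motives of smooth proper $\barFp$-varieties, i.e.\ on analytifications of generic fibres of (semi)stable formal $\mcO_C$-schemes. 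On these, $\widehat{R\Gamma}_{\rig}(X)\otimes C$ is $R\Gamma_{\rig}$ of the special fibre — a combinatorial colimit of smooth proper strata — base-changed to $C$, while $R\Gamma^\dagger_{\dR}(X)$ is the overconvergent de\,Rham cohomology of the generic fibre; these are matched strata-wise by the classical crystalline--de\,Rham (resp.\ Monsky--Washnitzer) comparison, compatibly with the Steenbrink-type gluing, hence with $N$. Monoidality is then inherited, it being enough to match the transformation on the unit and the generators. (Alternatively one may read $\widehat{R\Gamma}_{\rig}$ off the nearby cycles at $x_\infty$ of the $\varphi$-module $\mcD(X)$ on $\Delta^*$ of \cite{LBV}.)

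\emph{Clause \eqref{WS}.} This is weight machinery. The category $\mcD(\breve{\Q}_p)$ carries a weight structure for which $R\Gamma_{\rig}$ is weight-exact — the motivic shadow of purity of rigid $(=$ crystalline$)$ cohomology of smooth proper $\barFp$-varieties. Viewed in $\DA(\barFp)$ through the forgetful functor, $\Psi X$ comes with the canonical weight filtration $W_\bullet$ of a nearby-cycles motive; applying $R\Gamma_{\rig}$ to its weight tower yields a convergent spectral sequence abutting to $\widehat{\Hm}{}^n_{\rig}(X)$ with $E_1$-page assembled from the rigid cohomology of the strata of the special fibre — the motivic incarnation of Mokrane's weight spectral sequence \cite{mokrane} — and the inclusion $N(W_i)\subseteq W_{i-2}(-1)$, hence the displayed map on $\gr$, is the nearby-cycles identity of \cite{SGAVII1} transported along \Cref{thm:intro1} (and already available motivically from the earlier sections).

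\emph{Clause \eqref{isotous}} (the crux). I would fix one of the established constructions of Hyodo--Kato cohomology — for definiteness the rigid-analytic one of Gro\ss e-Kl\"onne \cite{GK05}, or after $\breve{\Q}_p$-linearisation that of Colmez--Nizio{\l}~\cite{CN19} — and first promote it to a (lax) symmetric monoidal functor $R\Gamma_{\HK}\colon\RigDA(C)\to\mcD_{(\varphi,N)}(\breve{\Q}_p)$, which is legitimate because it satisfies \'etale descent and $\B^1$-invariance and so factors through $\RigDA(C)$ by \cite{agv}. Then, by continuity and monoidality, it suffices to produce a natural equivalence $\widehat{R\Gamma}_{\rig}\simeq R\Gamma_{\HK}$ on analytifications of algebraic varieties with semistable reduction: there $R\Gamma_{\HK}(X)$ is computed by a Steenbrink / log-de\,Rham--Witt double complex built from the rigid cohomology of the intersections $\mfX_{0,I}$ of the special-fibre components, with $N$ the combinatorial shift and $\varphi$ the relative Frobenius, while — by the explicit description of $\Psi X\in\DA_N(\barFp)$ for semistable $X$ — $R\Gamma_{\rig}(\Psi X)$ is $R\Gamma_{\rig}$ applied term-by-term to the same combinatorial diagram of strata, carrying the same $(\varphi,N)$-action. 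Matching these double complexes and propagating the equivalence to all of $\RigDA(C)$ as in Clause \eqref{HKiso} gives \eqref{isotous}, and compatibility of its $C$-linearisation with \eqref{HKiso} recovers the de\,Rham comparison of \cite{GK05}. \emph{The main obstacle is precisely this clause}: turning an a priori model-dependent, log-geometric definition of $R\Gamma_{\HK}$ into a functor on motives, and then identifying its Steenbrink complex — its differentials, Frobenius and monodromy — with the motivic nearby-cycles complex, is where essentially all of the work lies; Clause \eqref{HKiso} is a considerably easier second, the delicate point there being to pass from algebraic (semi)stable-reduction generators to arbitrary, in particular non-algebraic, rigid analytic varieties while keeping track of the monoidal structure.
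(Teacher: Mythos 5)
Your construction of $\widehat{R\Gamma}_{\rig}$ is essentially the paper's, but your proofs of the three clauses deviate significantly, and in the one you identify as "the crux" you miss the idea that makes the paper's argument go through.

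For clause \eqref{HKiso} you propose to \emph{construct a natural comparison map and check it on compact generators} by strata-wise crystalline--de\,Rham comparisons. The paper does not construct a comparison map at all. It first enriches $R\Gamma^\dagger_{\dR,K_0}$ with a $\varphi$-module structure (\Cref{prop:HKprequel}, itself a nontrivial step invoking \Cref{prop:newnew} and \Cref{prop:new!}), and then applies the detection principle \Cref{cor:RGammas}: since for a Chow motive~$M$ over $\barFp$ the $\varphi$-module $R^i\Gamma_{\rig}(M)$ is pure of weight~$i$ by Katz--Messing, and since pure $\varphi$-modules of distinct weights are mutually $\Ext$-orthogonal (\Cref{rmk:phi-modules-coh-dim}, \Cref{exm:Ai}), \emph{any} such monoidal functor factors through the weight-complex functor, which $\Psi$ and $\xi$ identify on the two sides (\Cref{prop:WSonRig}). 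This yields the equivalence abstractly, as a contractibility statement about a mapping space, rather than by exhibiting a transformation object-by-object -- the latter is genuinely insufficient in the \icategorical setting, where specifying values on generators does not produce higher coherences. Your sketch leaves this gap open and never names the Weil-conjecture/purity input, which is the real engine. Clause \eqref{WS} is then a formal corollary of this factorization (\Cref{rmk:mononWCHK}, \Cref{eg:mononWC}), so your somewhat looser description there inherits the same issue but is directionally correct.

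For clause \eqref{isotous} you flag the hard part correctly but solve it by the route the paper was expressly designed to \emph{avoid}: matching Steenbrink/log-de\,Rham--Witt double complexes for semistable models strata-by-strata. The paper instead exploits the monodromy detection principle (\Cref{sta:classification-monodromy}, \Cref{prop:unicity}): among monoidal extensions to $\LFnil{\catC}{-\otimes t}$ compatible with $\pi$ and $(N=0)$, the monodromy component is \emph{uniquely determined by its value on the Kummer object} $\mcK$, and \Cref{thm:KummerisTate} identifies $\mcK$ with a summand of the motive of the Tate curve $\G_m^{\an}/\varpi^\Z$. Combined with the classical computation $\Hm^1_{\HK}(\G_m^{\an}/p^\Z)$ having monodromy $\left(\begin{smallmatrix}0&1\\0&0\end{smallmatrix}\right)$ and with $R\Gamma_{\HK}$ already being available on motives from~\cite{pWM}, the comparison collapses to a rank-two linear algebra check (\Cref{cor:HKisHK}). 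Your proposal never invokes the Kummer object or the Tate curve; without that, you are re-proving independence-of-model and Steenbrink-complex identifications by hand, which is both a genuinely different and a substantially heavier route, and in the form sketched it does not resolve the functoriality-on-motives problem you yourself raise.
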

		We note in particular that \eqref{HKiso} and \eqref{isotous} give an alternative and independent proof of the ``classical'' Hyodo--Kato comparison isomorphism. 
		
		\medskip
		The proofs of \eqref{HKiso} and \eqref{WS} are based on the motivic definition of the (overconvergent) de\,Rham cohomology of \cite{vezz-MW} and the presence of a weight structure on $\RigDA(C)$ in the sense of Bondarko, whose heart is generated by motives of smooth and proper varieties \emph{of good reduction}. The latter allows one to detect functors by their restriction to the heart, and gives a way of attaching functorially to any motive over $K$ a (weight) complex with pure terms (giving rise to the classical weight spectral sequence) equipped with a monodromy operator. More precisely, we give the following criterion for a cohomology theory on the generic fiber to come from the special fiber, see \Cref{cor:RGammas}.
		\begin{prop}
			Let $R\Gamma\colon\RigDA(C)\to \catD$ be a functor as above, and let $R\Gamma'$ be its restriction along the generic fiber functor $\xi\colon\DA(k)\to \RigDA(C)$. If $R\Gamma'$ factors through the weight complex functor, then $R\Gamma\simeq R\Gamma'\circ\Psi$.
		\end{prop}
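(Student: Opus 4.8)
The plan is to use \Cref{thm:intro1} to turn the statement into one about functors on $\DA_N(k)$, and then to exploit the weight structure on $\RigDA(C)$ to detect everything on its heart.

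First I would invoke \Cref{thm:intro1} to identify $\RigDA(C)\simeq\DA_N(k)$; under this equivalence $\Psi$ is the functor forgetting the monodromy operator, the generic fiber functor $\xi$ is $M\mapsto(M,0)$, and $\Psi\circ\xi\simeq\id_{\DA(k)}$. Since $R\Gamma$ and $R\Gamma'\circ\Psi$ both preserve colimits, it suffices to compare them on compact objects, where one can work with the weight structure on $\RigDA(C)$ recalled before the statement. Its heart $\mathcal{H}$ is identified, via $\xi$, with the heart of the weight structure on $\DA(k)$, namely the category $\mathrm{Chow}(k)$ of Chow motives over $k$: indeed for weight reasons the monodromy operator vanishes on pure objects, so $\xi$ and $\Psi$ restrict to mutually inverse equivalences between the two hearts.

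Next I would note that the weight complex functor records only the weight-graded pieces of a motive together with the differentials of its weight Postnikov tower, and these are insensitive to a monodromy operator (which contributes only the off-diagonal maps $\gr_i\to\gr_{i-2}(-1)$, extra structure on rather than part of the weight complex). Hence the weight complex functor $t_{\RigDA(C)}\colon\RigDA(C)_{\cpt}\to\mathrm{K}^b(\mathcal{H})$ is canonically identified, under $\mathcal{H}\simeq\mathrm{Chow}(k)$, with $t_{\DA(k)}\circ\Psi$. Granting this, the hypothesis provides $\overline{R\Gamma'}\colon\mathrm{K}^b(\mathrm{Chow}(k))\to\catD$ with $R\Gamma'\simeq\overline{R\Gamma'}\circ t_{\DA(k)}$, and therefore $R\Gamma'\circ\Psi\simeq\overline{R\Gamma'}\circ t_{\RigDA(C)}$: the functor $R\Gamma'\circ\Psi$ factors through the weight complex functor of $\RigDA(C)$. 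Moreover $R\Gamma$ and $R\Gamma'\circ\Psi$ agree on $\mathcal{H}$, by the computation $R\Gamma(\xi P)\simeq R\Gamma'(P)\simeq\overline{R\Gamma'}(t_{\DA(k)}P)\simeq\overline{R\Gamma'}(t_{\RigDA(C)}(\xi P))\simeq(R\Gamma'\circ\Psi)(\xi P)$ for $\xi P\in\mathcal{H}$.

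It remains to upgrade this agreement on $\mathcal{H}$ to an equivalence on all of $\RigDA(C)_{\cpt}$, and this last step is the one I expect to be the main obstacle. I would use the detection principle attached to the weight structure (announced before the statement): since $\mathrm{K}^b(\mathcal{H})$ is the free stable $\infty$-category on the additive category $\mathcal{H}$, a colimit- and compact-preserving functor out of $\RigDA(C)$ that factors through $t_{\RigDA(C)}$ is determined by its restriction to $\mathcal{H}$. So one must show that $R\Gamma$ itself factors through $t_{\RigDA(C)}$. Here I would argue from the hypothesis and the Koszul-dual (co)monadic description of $\RigDA(C)$ over $\DA(k)$ underlying \Cref{thm:intro1}: since $t_{\RigDA(C)}\simeq t_{\DA(k)}\circ\Psi$ and $R\Gamma\circ\xi=R\Gamma'$ already factors through $t_{\DA(k)}$, the task reduces to showing that $R\Gamma$ does not distinguish a compact motive $(M,N_M)$ from its trivial-monodromy companion $(M,0)$ --- equivalently, that $R\Gamma$ annihilates the universal ``monodromy'' extension class that measures the difference between the two, this being precisely a class that, after applying $\xi$, is killed by $t_{\DA(k)}$ and hence by $R\Gamma'$. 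Once $R\Gamma$ is known to factor through $t_{\RigDA(C)}$, the detection principle together with the agreement on $\mathcal{H}$ gives $R\Gamma\simeq R\Gamma'\circ\Psi$.
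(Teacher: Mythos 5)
Your outline correctly identifies the shape of the argument — the heart equivalence $\Ho(\heart{k})\simeq\Ho(\heart{K})$ via $\xi$ and $\Psi$, the compatibility $t_{\RigDA(C)}\simeq t_{\DA(k)}\circ\Psi$, and the detection principle on hearts — and you rightly pinpoint the crucial step, namely that $R\Gamma$ itself must factor through $t_{\RigDA(C)}$. But the argument you propose for that step is circular. Saying ``the task reduces to showing that $R\Gamma$ does not distinguish a compact motive $(M,N_M)$ from its trivial-monodromy companion $(M,0)$'' is just a restatement of the conclusion $R\Gamma\simeq R\Gamma\circ\xi\circ\Psi$, not a reduction of it, and the subsequent appeal to Koszul duality and to annihilating a ``universal monodromy extension class'' is both speculative and beside the point.

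The step you flag as ``the main obstacle'' is in fact immediate, and the observation you are missing is this: $\heart{K}$ is \emph{by definition} the essential image of $\heart{k}$ under $\xi$ (\Cref{prop:WSonRig}). The precise hypothesis of the paper's \Cref{cor:RGammas} is $\pi_n\spMap_{\catD}(R\Gamma\xi X, R\Gamma\xi Y)=0$ for $X,Y\in\heart{k}$ and $n>0$ — which is what the introduction's informal ``$R\Gamma'$ factors through the weight complex functor'' stands for — and since $\xi(\heart{k})=\heart{K}$, this is verbatim the orthogonality hypothesis of the detection criterion \Cref{prop:factorsonwc} applied to $R\Gamma$ itself, not only to its restriction $R\Gamma'$. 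So $R\Gamma$ factors through $w_K$ with no further argument. Once both $R\Gamma$ and $R\Gamma'$ are known to factor through their respective weight complex functors, identifying $\mcK^b(\Ho(\heart{k}))\simeq\mcK^b(\Ho(\heart{K}))$ via the heart equivalence and using $\Psi\circ\xi\simeq\id$ yields $R\Gamma\simeq R\Gamma'\circ\Psi$ directly, as you had already sketched.
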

		The condition above is fulfilled whenever a ``purity'' statement \`a la Deligne holds (e.g.\ in all our main examples).
		
		\medskip
		The proof of \eqref{isotous} on the other hand, is based on the observation that the ``monodromy component'' of a realization functor is determined essentially by its value on the monodromy of the \emph{Kummer motive} which is a non-trivial extension of~$\one$ by~$\one(-1)$. The crucial geometric input is then the identification of the Kummer motive as  the $\Hm^1$-term of (the motive of) a Tate curve (\Cref{thm:KummerisTate}). %

		\begin{rmk}
			Even if in this introduction we focus mainly on the ``geometric case'', arithmetic versions of the theorems above (that is, replacing $C$ with a field $K$ which is not algebraically closed) can easily be deduced using Galois descent. One may restate them as they are by restricting to ``unipotent'' motives   over $K$ (as we do in the main body of the text), or by enriching realizations with values in $(\varphi,N,\Gamma)$-modules with $\Gamma=\Gal(K)$. Note also that, as $\RigDA(C)=\varinjlim_{[L\colon K]<\infty}\RigDA(L)$, any compact motive becomes unipotent, up to a finite field extension.
		\end{rmk}
		\subsection{Further applications}
		Not only does the approach above provide a unified, polished approach to monodromy and Hyodo--Kato cohomology, but it also  allows us to deduce new facts on limit cohomologies. For example, we prove a conjecture of Flach and Morin~\cite[Conjecture 7.15]{flach-morin}, see \Cref{cor:CS_realization}.
		\begin{thm}
			Let $\mcX$ be a regular proper flat scheme over $\mcO_C$ of relative dimension $d$. There are two canonical Clemens-Schmid chain complexes (for even and for odd indices)
			$$%
			\cdots \to \Hm^*_{\rig}(\mcX_k)\to {\Hm}_{\HK}^*(\mcX_C)\xto{N} {\Hm}_{\HK}^*(\mcX_C)(-1) \to \Hm_{2d-*}^{\rig}(\mcX_k)(-d-1) \to  \Hm^{*+2}_{\rig}(\mcX_k)\to\cdots 
			$$%
	\end{thm}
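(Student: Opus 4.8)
The plan is to construct the two chain complexes first at the motivic level over the model $\mcX$, and then to deduce the statement by pushing forward to $\DA(\barFp)$, applying the rigid realization $R\Gamma_{\rig}$, and invoking \Cref{thm:intro3}. Write $i\colon\mcX_k\hookrightarrow\mcX$ and $j\colon\mcX_C\hookrightarrow\mcX$ for the special and generic fibres, and let $\Psi\in\DA(\mcX_k)$ denote the (relative) motivic nearby cycles of $\one_{\mcX_C}$, equipped with its monodromy $N\colon\Psi\to\Psi(-1)$, in the form compatible with \Cref{thm:intro1} (under which $\RigDA(C)\simeq\DA_N(k)$, the object $i^*j_*\one_{\mcX_C}$ is the fibre of $N$, and $R\Gamma_{\rig}$ of the global sections of $\Psi$ is $\Hm^*_{\HK}(\mcX_C)$ by \Cref{thm:intro3}). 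Two exact triangles in $\DA(\mcX_k)$ are the input. The first is the restriction to the special fibre of the localization triangle for $\one_{\mcX}$,
\[ i^!\one_{\mcX}\longrightarrow i^*\one_{\mcX}\xto{\ \mathsf{sp}'\ }i^*j_*\one_{\mcX_C}\longrightarrow i^!\one_{\mcX}[1], \]
with middle term the rigid cohomology of $\mcX_k$. The second is the monodromy triangle supplied by \Cref{thm:intro1},
\[ i^*j_*\one_{\mcX_C}\xto{\ g\ }\Psi\xto{\ N\ }\Psi(-1)\longrightarrow i^*j_*\one_{\mcX_C}[1], \]
which for a model of good reduction degenerates ($\Psi=i^*\one$, $N=0$) to the Gysin sequence of a smooth divisor, a useful sanity check.

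The composite $\mathsf{sp}:=g\circ\mathsf{sp}'\colon i^*\one_{\mcX}\to\Psi$ is the specialization map; applying the octahedral axiom to it produces a third triangle $i^!\one_{\mcX}[1]\to\cof(\mathsf{sp})\to\Psi(-1)\to i^!\one_{\mcX}[2]$ along which the composite $\Psi\to\cof(\mathsf{sp})\to\Psi(-1)$ equals $N$. Pushing the three triangles to $\DA(\barFp)$, applying $R\Gamma_{\rig}$, taking cohomology, and splicing the long exact sequences of $i^*\one_{\mcX}\to\Psi\to\cof(\mathsf{sp})\to i^*\one_{\mcX}[1]$ and of the third triangle makes the $\cof(\mathsf{sp})$-terms cancel and leaves
\[ \cdots\to\Hm^n_{\rig}(\mcX_k)\xto{\ \mathsf{sp}\ }\Hm^n_{\HK}(\mcX_C)\xto{\ N\ }\Hm^n_{\HK}(\mcX_C)(-1)\to\Hm^{n+2}\big(R\Gamma_{\rig}(\mcX_k,i^!\one_{\mcX})\big)\to\Hm^{n+2}_{\rig}(\mcX_k)\to\cdots, \]
whose consecutive composites vanish directly from the defining triangles. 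The shift $[-2]$ that $i^!$ carries relative to $i^*$ (codimension one) forces the periodicity $n\rightsquigarrow n+2$, whence the splitting into the two announced complexes indexed by the parity of $n$.

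To identify the penultimate term I would use that $\mcX$ is \emph{regular}: absolute purity gives $i^!\one_{\mcX}\simeq i^!\omega_{\mcX}(-\dim\mcX)[-2\dim\mcX]\simeq\omega_{\mcX_k}(-d-1)[-2d-2]$ since $\dim\mcX=d+1$, so that $\Hm^{n+2}\big(R\Gamma_{\rig}(\mcX_k,i^!\one_{\mcX})\big)\simeq\Hm^{n-2d}\big(R\Gamma_{\rig}(\mcX_k,\omega_{\mcX_k})\big)(-d-1)$, which is by definition the Borel--Moore homology $\Hm^{\rig}_{2d-n}(\mcX_k)(-d-1)$ of the (possibly singular) special fibre. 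The operator $N$ in the sequence is the Hyodo--Kato monodromy, compatible with $\varphi$ by \Cref{thm:intro2} and with the weight filtration by \Cref{thm:intro3}; and since every triangle above is functorial in $\mcX$, the resulting pair of complexes is canonical — precisely the structure conjectured in \cite[Conjecture 7.15]{flach-morin}.

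The principal obstacle is the monodromy triangle over the base $\mcO_C$, which is neither Noetherian nor discretely valued: one must verify that the functor $\Psi$ produced abstractly by \Cref{thm:intro1} genuinely computes nearby cycles in the localization picture over $\Spf\mcO_C$, that its operator $N$ is the logarithm of the geometric monodromy arising from the non-discrete valuation (so that $i^*j_*\one_{\mcX_C}$ really is its fibre), and that absolute purity holds for the regular model $\mcX$. This has to be handled through the formal-scheme and rigid-analytic formalism of \cite{agv}, \cite{vezz-MW} together with Noetherian approximation along $\mcO_C=\varinjlim_{[L:\Q_p]<\infty}\mcO_L$, rather than through the classical étale theory of nearby cycles over a strictly Henselian trait, which does not apply verbatim. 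A lesser, bookkeeping-type difficulty is to pin down all Tate twists and shifts so that the homology term reads exactly $\Hm^{\rig}_{2d-*}(\mcX_k)(-d-1)$, and to check the compatibility of $R\Gamma_{\rig}$ with Verdier duality on the singular fibre.
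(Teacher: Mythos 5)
Your proposal shares the paper's overall blueprint — two fiber sequences spliced through an octahedron, absolute purity to identify the $\iota^!$-term as Borel--Moore homology with the $(-d-1)$ twist, and realization via $R\Gamma_{\rig}$ — but it is set up one level of generality higher than the paper's, and this is where a genuine gap appears. You build the monodromy triangle
\[
i^*j_*\one_{\mcX_C}\xrightarrow{\ g\ }\Psi\xrightarrow{\ N\ }\Psi(-1)
\]
as a triangle in $\DA(\mcX_k)$, i.e.\ a \emph{relative} motivic nearby-cycles sequence over the special fibre. But \Cref{thm:intro1} (and its proof in \Cref{cor:RigDAisDAN}) is an \emph{absolute} statement: it identifies $\RigDA(C)$ with $\DA_N(k)$ over the residue field, and the resulting identification $\chi\simeq\fib(N)$ lives in $\DA(k)$, not in a category of motives over $\mcX_k$. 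The paper never constructs a relative $\Psi$ with monodromy over a general base, and building one — with the compatibilities you need (that $i^*j_*$ is its fibre, that it pushes forward correctly) — would be a substantial extra project, not a verification. You flag this yourself as ``the principal obstacle,'' but the obstacle is not resolved, and in fact the paper deliberately routes around it.

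The paper's actual proof avoids the relative setting entirely by pushing down to the base \emph{first}: it takes $M=\mathsf{M}^{\mathrm{coh}}_{\mcO_K}(\mcX)\in\DA(\mcO_K)$ and works with the two fiber sequences $\iota^!M\to\iota^*M\to\chi^{\alg}j^*M$ (localization over $\Spec\mcO_K$) and $\chi^{\alg}j^*M\to\Psi\An^*j^*M\xrightarrow{N}\Psi\An^*j^*M(-1)$ (from the absolute identification $\chi\simeq\fib(N)$), both living in $\DA(k)$. After concatenating these at $\chi^{\alg}j^*M$ into an octahedron and applying $R\Gamma$, one gets exactly your spliced sequence — your $\cof(\mathsf{sp})$ corresponds to the paper's vanishing-cycles $\Phi$, pushed forward. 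The absolute-purity computation $\iota^!\mathsf{M}^{\mathrm{coh}}(\mcX)\simeq\mathsf{M}_k(\mcX_k)\{-d-1\}$ and the resulting twists match what you wrote. So the content you want is all obtainable, but by reducing to the absolute case at the outset rather than invoking an unconstructed relative $\Psi$. If you reorganize your argument to apply $p_{k*}$ (or rather, to start from $\mathsf{M}^{\mathrm{coh}}(\mcX)\in\DA(\mcO_K)$) before invoking any nearby-cycles structure, the gap disappears.
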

	Such complexes are exact whenever the weight-monodromy conjecture holds for $\mcX_C$ (eg. if it is a smooth projective hypersurface over a local field, see \cite{pWM}).
	We remark that for a proper semistable family of varieties over a curve in characteristic~$p$ (the equi-characteristic setting),  the existence of a ``Clemens-Schmid type'' long exact sequence for log-crystalline cohomology was shown by Chiarellotto and Tsuzuki~\cite{chiar-tsu-CS}. The analogous result in the mixed-characteristic setting was originally conjectured in~\cite{chiar-duke} where it is proved for curves and surfaces.
	
	\medskip
	Moreover, the comparison between the Hyodo--Kato cohomology of a variety over~$C$ and the one of its ``motivic tilt'' over~$C^\flat$ becomes straighforward using our construction. This yields a little shortcut in the proof of the main theorem of~\cite{pWM}, see \Cref{cor:shortcut}.
	
	Note that the definition of the Hyodo--Kato cohomology via (the motivic) $\Psi$  still allows one to make explicit calculations (recovering those of Mokrane and Gro\ss e-Kl\"onne). We give some concrete examples  in \Cref{sec:CS}.
	We also point out that our approach is more easily generalized to the relative setting, where one considers motives over a more general base~$S$ (not necessarily a field). 
	Our construction is obviously related to the de Rham--Fargues--Fontaine cohomology (with values in vector bundles over the Fargues--Fontaine curve) of \cite{LBV}. 
	We also expect to be able to deduce a general comparison theorem between the Hyodo--Kato and  the \'etale $p$-adic cohomology for varieties over $K$ (that is, the $C_{pst}$-conjecture) from the crystalline case and the study of periods of the Tate curve. This is the subject of future research.
	
	\subsection{Organization}In \Cref{sec:split-square-zero} we recall some instances of Koszul duality for modules over split square-zero extensions, we prove their relation to monodromy maps and to unipotent  objects in coefficients systems. In \Cref{sec:detection-principles} we prove that the monodromy of the Kummer object determines (in an appropriate sense) the monodromy of any other object. We also recall the definition of hearts of weight structures and prove that exact symmetric monoidal functors on the heart admit a unique extension to the whole \icat. In \Cref{sec:realizations} we  move to our main example of this formalism, namely the \icat of rigid analytic motives. In particular, we  prove \Cref{thm:intro1} in \Cref{sec:RigDA-monodromy}, \Cref{thm:intro2} in \Cref{sec:motivicphiN}  and  \Cref{thm:intro3} in \Cref{sec:dR-realization,sec:hyodo-kato-iso,sec:comp-with-classical}. Finally, in \Cref{sec:CS} we produce the Clemens--Schmid chain complexes, and we sketch some explicit computations for the weight complexes of $\chi M$ and $\Psi M$ in the case of a motive~$M$ of a smooth proper rigid analytic variety with semi-stable reduction, comparing with the classical formulas by Steenbrink, Rapoport--Zink and Mokrane. 
	
	We strongly advise the reader who is interested in $p$-adic cohomologies to skip directly to \Cref{sec:realizations} and refer back to the previous sections whenever needed.

		\subsection*{Acknowledgments}
		We thank Joseph Ayoub for detailed discussions which sparked our interest in Koszul duality and its relation to the motivic monodromy operator.
		We thank Emil Jacobsen for suggesting \Cref{rmk:derivation}, and Pierre Colmez for asking a question that led to the development of \Cref{sec:KummerinRig}.
		Shaul Barkan was helpful in explaining his work on square-zero extensions.
		We  are grateful  to  Bruno Chiarellotto, Farid Mokrane and Wies\l awa Nizio{\l}  for their interest in this project, and to the anonymous referees for a detailed reading and very helpful corrections and suggestions.

		\subsection*{Notation and conventions}
		\label{sec:notation-conventions}%
		We use freely the language of \icats, mostly following the notation of~\cite{lurie, HA}. If $X,Y$ are objects in a (resp.\ stable) \icat, we let $\Map(X,Y)$ (resp.\ $\spMap(X,Y)$) be the mapping space (resp.\ mapping spectrum) and $\Hom(X,Y)$ the set $\pi_0\Map(X,Y)$.
		By a monoidal structure on an \icat we always mean a \emph{symmetric} monoidal structure; similarly for functors between (symmetric) monoidal \icats.
		Even if we don't mention it explicitly, the tensor product in stable monoidal \icats is always assumed to be exact (preserve finite colimits) in both variables separately.
		Similarly, the tensor product in presentable (resp.\ compactly generated) monoidal \icats is assumed to preserve small colimits in both variables separately (resp.\ and restrict to compact objects); in other words, these are objects of $\CAlg(\Prl)$ and $\CAlg(\Prlo)$, respectively.
		If $\catC$ is a monoidal \icat and $A$ an object in $\CAlg(\catC)$ we denote by $\Mod_{A}(\catC)$ or $\Mod_A$  the \icat of $A$-modules. We assume some familiarity with the \icats of (algebraic, rational) \'etale motives $\DAet(S)\colonequals \DAet(S,\Q)=\SH_{\et}(S,\Ch\Q)$. We recall their definition in \Cref{sec:RigDA-monodromy}. See also \cite{ayoub-ICM, AyoubEt,  cd, MR3477640}.
		
		\section{Modules, comodules and monodromy operators}
		\label{sec:split-square-zero} We begin with some categorical background material. The results in this section are probably all known in some form or other: we record them here for lack of appropriate references.
		
		Let $\catC\in\Prloost$  be a compactly generated monoidal stable \icat, and let $A=\one\oplus t[-1] \in \CAlg(\catC)$ be the split square-zero extension of the tensor unit~$\one$ by some~$t[-1]\in\catC$, see \Cref{sec:Mod-laxFix}, equipped with its augmentation $\varepsilon\colon A\to \one$. 
		In \Cref{sec:lax-fixed-points,sec:nilpotent-op} we recall the \icat~$\LF{\catC}{-\otimes t}$ of lax fixed points of the endofunctor $\catC\to \catC$, $X\mapsto X\otimes t$, as well as the full \subicat~$\LFnil{\catC}{-\otimes t}$ of ind-nilpotent operators.
		Its objects are pairs $(X,f)$, where $X$ is an object of~$\catC$ and $f\colon X\to X\otimes t$ is a (respectively ind-nilpotent) morphism.
                It has a natural monoidal structure.
Using this, we describe the \icat~$\Mod_A$ in  relatively simple terms, that we summarize in the following theorem.
\begin{thm}[See \Cref{sta:Mod-fix,sta:Mod-coMod}]
			\label{thm:summary-Mod-coMod-lax}
			Assume that $\catC$ is $\Q$-linear and $\Alt^2(t)\simeq 0$. Let $\bH =  \mathrm{Bar}(A,\epsilon)\simeq\bigoplus_{n\geq 0} t^{\otimes n}$ be the Bar construction of $A$ with its canonical coalgebra structure, and let $\nu$ be its canonical coaugmentation. Then there are equivalences of monoidal \icats and commutative diagrams
			\[
			\begin{tikzcd}[column sep=5em, row sep=3em]
				\Mod_A  \arrow[rd,  shift right , "\varepsilon^*", swap]\arrow[r, leftrightarrow, "\sim"] & \coMod_\bH \arrow[r, leftrightarrow, "\sim"] \arrow[d, shift right, "\mathrm{forget}", swap] & \LFnil{\catC}{-\otimes t} \arrow[ld, swap, "\pi"]  \\
				& \catC \arrow[lu,  shift right, "\varepsilon_*", swap] \arrow[ru, shift right = 2, swap, "\pirn"] \arrow[u, shift right, "\mathrm{cofree}", swap]& 
			\end{tikzcd}\]
			where $\pi$ is the forgetful functor $(X,f)\mapsto X$ and $\pirn$ its right adjoint. 
			
   Letting $\nu^* \colon \catC \to  \coMod_\bH$ be the co-restriction functor of comodules along $\nu$, the equivalences above also render the following diagrams commutative
			\[
			\begin{tikzcd}[column sep=5em, row sep=3em]
				\Mod_A  \arrow[rd,  shift left , "\mathrm{forget}"]\arrow[r, leftrightarrow, "\sim"] & \coMod_\bH \arrow[r, leftrightarrow, "\sim"] \arrow[d, shift left, "\nu_*"] & \LFnil{\catC}{-\otimes t} \arrow[ld, shift left, "\mathrm{fib}(N)"]  \\
				& \catC \arrow[lu,  shift left, "\mathrm{free}"] \arrow[ru, shift left , "(N=0)"] \arrow[u, shift left, "\nu^* "]& 
			\end{tikzcd}\]
			where $(N=0)$ is the functor given by sending $X$  to the pair $(X,0)$, and $\mathrm{fib}(N)$ is the functor sending $(X,f)$ to the fiber of~$f$. 
		\end{thm}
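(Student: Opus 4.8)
The plan is to construct the horizontal equivalences $\Mod_A \simeq \coMod_\bH \simeq \LFnil{\catC}{-\otimes t}$ by Koszul-duality/bar-cobar arguments, and then to verify the commutativity of each triangle by identifying the relevant adjoint pair. First I would recall that, since $A = \one \oplus t[-1]$ is a split square-zero extension, the augmentation $\varepsilon\colon A \to \one$ is an effective epimorphism of algebras in a suitable sense, so that the Barr--Beck--Lurie comonadicity theorem applies to $\varepsilon^*\colon \Mod_A \to \catC$; the induced comonad on $\catC$ is $-\otimes_{\catC} (A \otimes_A \one)$ — concretely, $-\otimes \bH$ where $\bH = \mathrm{Bar}(A,\varepsilon) = \bigoplus_{n\ge 0} t^{\otimes n}$ is the bar construction computing $\one \otimes_A \one$ — which gives the first equivalence $\Mod_A \simeq \coMod_\bH$, with $\varepsilon^*$ corresponding to the forgetful functor and its (left, hence both!) adjoint $\varepsilon_*$ to the cofree comodule functor. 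Here the $\Q$-linearity and $\Alt^2(t)\simeq 0$ hypotheses enter: they force the divided-power/symmetric structure on $\bigoplus_n t^{\otimes n}$ to degenerate nicely, so that $\bH$ is genuinely the free cocommutative coalgebra (tensor coalgebra) on $t$, and the bar construction does not pick up higher correction terms.

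Next I would produce the equivalence $\coMod_\bH \simeq \LFnil{\catC}{-\otimes t}$. The point is that $\bH = \bigoplus_{n\ge 0} t^{\otimes n}$ as an object is the (ind-)cofree coalgebra on $t$, so a $\bH$-comodule structure on $X$ is — by the universal property recalled in \Cref{sec:lax-fixed-points,sec:nilpotent-op} — the same datum as a lax fixed point $f\colon X \to X\otimes t$, and the ind-nilpotence condition on $f$ matches the fact that a comodule coaction $X \to X\otimes\bH$ must be ``locally finite'' (land in $X\otimes\bigoplus_{n\le N}t^{\otimes n}$ after truncation). Under this dictionary the forgetful functor $\coMod_\bH \to \catC$ becomes $\pi\colon (X,f)\mapsto X$, and its right adjoint — the cofree $\bH$-comodule $X\otimes\bH$ with its natural operator — is exactly $\pirn$. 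Composing the two equivalences and tracking $\varepsilon^*$ through them gives the first commutative diagram; the only thing to check is that $\varepsilon^*$ transported to $\LFnil{\catC}{-\otimes t}$ agrees with $\pi$, which is immediate once one writes down that an $A$-module is, after forgetting, its underlying object together with the action of $t[-1]$ shifted into the operator $f$.

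For the second diagram I would argue \emph{by adjunction}: the free $A$-module functor $\free\colon \catC \to \Mod_A$ is left adjoint to the forgetful functor $\Mod_A \to \catC$ (restriction along $\one \to A$), so it suffices to identify, on the $\LFnil{}{}$-side, the left adjoint of the functor $\LFnil{\catC}{-\otimes t} \to \catC$ that corresponds to ``forget along $\one\to A$''. Unwinding, this latter functor sends $(X,f)$ to $\fib(f) = \fib\bigl(X \xrightarrow{f} X\otimes t\bigr)$ — since tensoring up along $\one\to A$ and then restricting back computes the fiber of the square-zero operator — so $\mathrm{fib}(N)$ is forced, and its left adjoint is seen to be $(N=0)\colon X \mapsto (X,0)$ by a direct mapping-space computation: $\Map_{\LFnil{}{}}((X,0),(Y,g)) \simeq \Map_\catC(X,\fib(g))$. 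On the $\coMod_\bH$-side the coaugmentation $\nu\colon \one \to \bH$ induces corestriction $\nu^*\colon \catC \to \coMod_\bH$ (trivial coaction, matching $(N=0)$) with right adjoint $\nu_*$ (cotensor over $\nu$, matching $\fib(N)$), and one checks $\nu$ corresponds under $\coMod_\bH\simeq\Mod_A$ to the unit $\one\to A$, closing the square. The main obstacle I anticipate is the careful bookkeeping in the first step — verifying that comonadicity applies and, above all, that the comonad is precisely $-\otimes\bH$ with $\bH$ the \emph{degenerate} (polynomial, non-divided-power) cofree coalgebra; this is exactly where $\Q$-linearity and $\Alt^2(t)\simeq 0$ must be used, and getting the coalgebra structure on $\bigoplus_n t^{\otimes n}$ to match the bar construction's is the delicate point. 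The rest is formal adjoint-functor juggling, modulo the identifications of $\LFnil{}{}$ with $\coMod_\bH$ recalled earlier.
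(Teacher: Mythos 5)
Your overall strategy — obtain the three-way equivalence by Koszul duality and Barr--Beck--Lurie, then chase adjoints around the triangles — is the right one, but the specific route differs from the paper's, and a couple of your pivotal steps are asserted where they genuinely need proof.

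The paper never proves $\Mod_A \simeq \coMod_\bH$ directly. Instead, it routes everything through $\LFnil{\catC}{-\otimes t}$: it shows $\fib(N)\colon\LFnil{\catC}{-\otimes t}\to\catC$ is \emph{monadic} (conservativity follows from the compact-generation statement \Cref{sta:C^Tnil-compactly-generated}, and the monoidal Barr--Beck--Lurie applies after verifying the projection formula for $(N=0)\dashv\fib(N)$), which gives $\LFnil{\catC}{-\otimes t}\simeq\Mod_A$; and separately shows $\pi\colon\LFnil{\catC}{-\otimes t}\to\catC$ is \emph{comonadic} (again conservativity is easy on the lax-fixed-points side; the delicate point is showing totalizations of $\pi$-split cosimplicial objects remain ind-nilpotent), then identifies the comonad $\pi\pirn\simeq\varepsilon^*\varepsilon_*\simeq\bH\otimes-$ by \Cref{sta:comonads}. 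The payoff of this route is that conservativity and the Beck condition are concrete on the $\LFnil{}{}$ side. Your route, by contrast, wants comonadicity of $\varepsilon^*\colon\Mod_A\to\catC$ up front. This is true, and the paper even remarks that this alternative works, but it is not automatic: one must know that $\one$ is a descendable (or nilpotent, in Mathew's sense) $A$-algebra so that $\varepsilon^*$ is conservative and the cosimplicial condition holds. You assert it in ``a suitable sense'' but give no argument; this is a genuine gap. (The fix is short — the fiber sequence $t[-1]\to A\to\one$ in $\Mod_A$ exhibits $A$ in the thick $\otimes$-ideal of $\one$, giving conservativity; for the rest one should invoke descent theory as the paper does.)

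Your second step — ``a $\bH$-comodule is the same datum as a lax fixed point by the universal property of the cofree conilpotent coalgebra'' — is the \emph{conclusion}, not an available input. In the $\infty$-categorical setting, identifying $\coMod_\bH$ with $\LFnil{\catC}{-\otimes t}$ requires actually constructing the equivalence with its coherence data, which is exactly what \Cref{sta:Mod-coMod} does via comonadicity of $\pi$ and an explicit monoidal inverse (via the oriented fiber product and the algebra map $\bH\to\one\oplus t$). The 1-categorical heuristic about ``locally finite'' coactions is sound, but it is not a proof here.

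Two smaller points: your parenthetical that $\varepsilon_*$ is a ``left, hence both'' adjoint of $\varepsilon^*$ is off — $\varepsilon_*$ is just the right adjoint (restriction of scalars); the matching adjoint pair on the comodule side is forget $\dashv$ cofree, and these correspond. Your explanation that forgetting $\Mod_A\to\catC$ computes $\fib(f)$ ``by tensoring up along $\one\to A$ and then restricting back'' is confused: the forgetful functor is simply restriction along $\one\to A$, and the identification with $\fib(N)$ comes from \Cref{cons:square-zero}, where one sees the underlying $\catC$-object of the $A$-module attached to $(X,f)$ is $\fib(f)$. Finally, the theorem asserts \emph{monoidal} equivalences, and your proposal does not address monoidality at all; the paper spends real effort here (the monoidal structure on $\LF{\catC}{-\otimes t}$ via oriented fiber products, the monoidal Barr--Beck--Lurie from \cite[Proposition~5.29]{MR3570153}, and the by-hand construction of a monoidal inverse in \Cref{sta:Mod-coMod}).
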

		
		\begin{rmk}
  For details on the Bar construction of an augmented  algebra in a monoidal \icat and its properties, we refer to~\cite[\S\,5.2.2]{HA} and~\cite{DAN-COHEN-HOREV}. 
                The equivalence~$\Mod_A\simeq\coMod_\bH$ fits into the common framework of Koszul duality.
                The further equivalence with~$\LFnil{\catC}{-\otimes t}$ exhibits~$\bH$ as the cofree conilpotent coalgebra on~$t$.
                For more general statements about the correspondence between associative augmented algebras and conilpotent coaugmented coassociative coalgebras, we refer the reader to~\cite[Corollary~3.3.5 and Section~3.3.6]{FG} and   \cite[II, Chapter~6, Section~2.4.1]{Gait-Rozen-II}.
		\end{rmk}

                \begin{rmk}
                The assumption $\Alt^2(t)\simeq 0$ simply means that the canonical transposition of factors on~$t\otimes t$ is homotopic to the identity.
                The reader can find the equivalence $\Mod_A\simeq\LFnil{\catC}{-\otimes t}$ already in~\cite[Proposition~3.2.2]{raskin-dgm} and~\cite[Theorem~2.34]{barkan:square-zero}, without assuming this nor $\Q$-linearity. 
                On the other hand, these assumptions allow us to give self-contained alternative proofs and promote these known equivalences to ones between \emph{monoidal} \icats.
                (We don't claim that they are necessary even to this aim, however.)
                \end{rmk}
		
		\begin{rmk} We are going to apply \Cref{thm:summary-Mod-coMod-lax} to $\catC = \DAet(k;\Q)$. %
			In this case, the relevant square-zero extension~$A$ is the cohomological motive~$\mathsf{M}_k^{\mathrm{coh}}(\G_m)$   of the multiplicative group~$\G_{m}$. The augmentation ideal (suspended once)~$t$ is then simply the Tate object~$\one(-1)$, and the relevant \icat of lax fixed points consists of pairs $(X, X\xrightarrow{N_X} X(-1))$, where we interpret the map $N_X$ as a monodromy operator. 
			
			Note that in this setting, the \icat $\Mod_{\mathsf{M}^{\mathrm{coh}}(\G_m)}$ is also equivalent to the full \subicat $\UDA(k)$ of $\DAet(\G_{m,k}; \Q)$ of so-called unipotent motives, introduced by Ayoub and Spitzweck. We will generalize this description to rather general coefficient systems in \Cref{sec:unipotent-setting}.  %
		\end{rmk}
		
		\subsection{Lax fixed points}
		\label{sec:lax-fixed-points}
		Let $\catC$ be a stable \icat and $T\colon\catC\to\catC$ an exact endofunctor.
		We recall a particular instance of the lax equalizer construction~\cite[\S\,II.1.4]{Nikolaus-Scholze}.
		\begin{cons}
			\label{cons:lax-equalizer}
			Define the \icat of \emph{lax fixed points} of~$T$ as the following pullback in~$\iCat$:
			\begin{equation}
				\label{eq:C^T}
				\begin{tikzcd}
					\LF{\catC}{T}
					\ar[r, "N"]
					\ar[d, "\pi"]
					&
					\Fun(\Delta^1,\catC)
					\ar[d, "{\mathrm{ev}_0,\mathrm{ev}_1}"]
					\\
					\catC
					\ar[r, "{\id, T}"]
					&
					\Fun(\partial\Delta^1,\catC).
				\end{tikzcd}
			\end{equation}
		\end{cons}
		
		\begin{rmk}\label{rmk:map_laxeq}
			\label{rmk:C^T-explicit}
			Thus, objects $\mathcal{X}$ of $\LF{\catC}{T}$ are given by  pairs $(X, f)$, where $X=\pi(\mathcal{X})$ is an object of $\catC$ and $f=N(\mathcal{X})$ is a map $f\colon X\to TX$.
			If $(X,f)$ and $(Y,g)$ are two objects of $\LF{\catC}{T}$, then the space  $\Map_{\LF{\catC}{T}}((X,f), (Y,g))$ is given by the equalizer of the two maps:
			\begin{equation}\label{eq:mapping_space_laxeq}
				\begin{tikzcd}
					\Map_{\catC}(X,Y)
					\arrow[r, shift left, "g_*\id_{\catC} "]
					\arrow[r, shift right, swap, "f^*T "]
					&
					\Map_{\catC}(X,TY).
				\end{tikzcd}
			\end{equation}
		\end{rmk}
		\begin{rmk}
			In~\cite[Definition II.5.1]{Nikolaus-Scholze}, objects in~$\LF{\catC}{T}$ are called ``$T$-coalgebras''.
		\end{rmk}

		\begin{lemma}[{\cite[Proposition~II.1.5]{Nikolaus-Scholze}}]
			\label{sta:C^T-basics}
			Assume that $\catC$ is a stable presentable \icat and that $T$ preserves colimits.
			Then:
			\begin{enumerate}
				\item $\LF{\catC}{T}$ is a stable presentable \icat and \eqref{eq:C^T} is a pullback square in $\Prlst$.
				\item
				\label{it:C^T-basics.2}
				The functor~$\pi$ is conservative.
				\item If $T$ preserves limits then so does~$\pi$.
			\end{enumerate} 
		\end{lemma}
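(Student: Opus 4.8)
The plan is to deduce all three assertions formally from the pullback description~\eqref{eq:C^T}, the only nontrivial external input being Lurie's theorem that the forgetful functor $\Prlst\hookrightarrow\iCat$ preserves small limits (it factors through $\Prl$, which is closed under small limits in $\iCat$ by \cite{lurie}, and being stable is preserved under limits in $\Prl$, so $\Prlst$ is closed under limits in $\Prl$). First I would check that the whole cospan defining $\LF{\catC}{T}$ lives in $\Prlst$: the \icat $\Fun(\Delta^1,\catC)$ is stable and presentable since $\catC$ is, and so is $\Fun(\partial\Delta^1,\catC)\simeq\catC\times\catC$; the restriction functor $(\mathrm{ev}_0,\mathrm{ev}_1)\colon\Fun(\Delta^1,\catC)\to\catC\times\catC$ preserves all small colimits because these are computed objectwise in functor \icats, hence is a morphism of $\Prlst$ by the adjoint functor theorem; and $(\id,T)\colon\catC\to\catC\times\catC$ preserves small colimits precisely because $T$ does, which is where that hypothesis enters. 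Since $\Prlst\hookrightarrow\iCat$ preserves pullbacks, the pullback computed in $\iCat$ that \emph{defines} $\LF{\catC}{T}$ agrees with the one computed in $\Prlst$; in particular $\LF{\catC}{T}$ is stable and presentable, the square~\eqref{eq:C^T} is cartesian in $\Prlst$, and all four of its functors are colimit-preserving. This proves~(1).

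For~(2) I would use the standard identity $\LF{\catC}{T}\simeq\bigl(\catC\times\Fun(\Delta^1,\catC)\bigr)\times_{(\catC\times\catC)\times(\catC\times\catC)}(\catC\times\catC)$, which exhibits $(\pi,N)\colon\LF{\catC}{T}\to\catC\times\Fun(\Delta^1,\catC)$ as the base change of the diagonal $\catC\times\catC\to(\catC\times\catC)\times(\catC\times\catC)$ along $(\id,T)\times(\mathrm{ev}_0,\mathrm{ev}_1)$. A diagonal functor is conservative and base changes of conservative functors are conservative, so $\pi$ and $N$ are jointly conservative. To promote this to conservativity of $\pi$ alone, take a morphism $\phi$ in $\LF{\catC}{T}$ with $\pi(\phi)$ invertible: commutativity of~\eqref{eq:C^T} gives $\mathrm{ev}_0(N\phi)=\pi(\phi)$ and $\mathrm{ev}_1(N\phi)=T(\pi(\phi))$, both invertible, whence $N(\phi)$ is an equivalence in $\Fun(\Delta^1,\catC)$ (a map of $\Delta^1$-diagrams is invertible iff it is so at both endpoints), and joint conservativity forces $\phi$ to be an equivalence.

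For~(3) I would invoke the general fact that a pullback of \icats along functors preserving $\kappa$-small limits again has $\kappa$-small limits, computed so that the two projections preserve them (\cite{lurie}; alternatively, apply the limit-preserving cotensors $\Fun(K^\triangleleft,-)$ and $\Fun(K,-)$ to the cartesian square). Here $(\mathrm{ev}_0,\mathrm{ev}_1)$ preserves all small limits, again objectwise, and $(\id,T)$ preserves small limits exactly when $T$ does; so under the extra hypothesis $\pi$ preserves limits. I expect the only step requiring genuine care to be the passage in~(1) from the pullback formed in $\iCat$ to the one formed in $\Prlst$: this rests on Lurie's closure results for presentable \icats together with the adjoint functor theorem, and once those are granted the remaining arguments are purely formal manipulations of the pullback square.
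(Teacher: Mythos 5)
Your proof is correct. The paper does not actually give an argument for this lemma; it simply cites Nikolaus--Scholze, Proposition~II.1.5, and your proof fills in essentially the argument one would reconstruct from that reference, so there is no divergence to report.

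A few small remarks on robustness of your three steps. In (1), the key fact that the inclusion $\Prlst \hookrightarrow \iCat$ preserves small limits deserves a precise citation: $\Prl \hookrightarrow \iCat$ preserves limits by HTT~5.5.3.13, and stability is a limit-closed condition in $\Prl$ (a presentable \icat is stable iff it is pointed and $\Sigma$ is an equivalence, both of which pass to limits), which is the content of HA~1.1.4.4 and its neighbours. In (2), your reduction to joint conservativity of $(\pi,N)$ and the objectwise detection of equivalences in $\Fun(\Delta^1,\catC)$ is exactly right; one could alternatively read off conservativity directly from the mapping-space description in \Cref{rmk:C^T-explicit} (a fibre-product of spaces whose projection to $\Map_\catC(X,Y)$ has fibres given by path spaces), which is how NS frame it, but the two arguments are equivalent in substance. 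In (3), the limit-preservation claim for pullbacks of \icats along limit-preserving functors is the limit counterpart of HTT~5.4.5.5 and is correctly applied: $(\mathrm{ev}_0,\mathrm{ev}_1)$ always preserves limits, and $(\id,T)$ does precisely under the added hypothesis on $T$, so the cartesian square in $\iCat$ is also cartesian in the \icat of \icats-with-small-limits-and-limit-preserving-functors, whence $\pi$ preserves limits.
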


		\begin{cons}
			\label{cons:pi_rho}
			Assume that $T$ preserves products.
			Consider the functor $\rho\colon\catC\to\LF{\catC}{T}$ that sends $X\in\catC$ to $(\prod_{n\geq 0}T^nX, \mathrm{proj})$, where
			\[
			\mathrm{proj}\colon\prod_{n\geq 0}T^nX\to\prod_{n> 0}T^nX
			\]
			is the canonical projection. Using the description given in Remark \ref{rmk:map_laxeq} for the mapping spaces in $\LF{\catC}{T}$, for every 
			$(X,f)\in\LF{\catC}{T}$ we can construct a morphism
			\[
			\eta_{(X,f)}\colon (X,f)\to \rho(X)
			\]
			that on the $n$th component is given by $f^n:X\to T^nX$.
			It is not difficult to show that $\eta\colon\id\to \rho\circ\pi$ defines a natural transformation.
		\end{cons}
		
		\begin{lemma}
			\label{sta:pi_rho}
			The natural transformation~$\eta$ exhibts~$\pi$ as left adjoint to~$\rho$.
		\end{lemma}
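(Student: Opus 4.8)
The plan is to verify the triangle identities directly using the explicit description of mapping spaces in $\LF{\catC}{T}$ recalled in \Cref{rmk:map_laxeq}, or — more efficiently — to check that for every $(X,f)\in\LF{\catC}{T}$ and every $Y\in\catC$ the composite
\[
\Map_{\catC}(\pi(X,f),Y)\xrightarrow{\rho}\Map_{\LF{\catC}{T}}(\rho\pi(X,f),\rho Y)\xrightarrow{\eta_{(X,f)}^*}\Map_{\LF{\catC}{T}}((X,f),\rho Y)
\]
is an equivalence, naturally in both variables. First I would unwind the right-hand side: by \Cref{cons:pi_rho}, $\rho Y=(\prod_{n\geq0}T^nY,\mathrm{proj})$, and by the equalizer formula \eqref{eq:mapping_space_laxeq} the space $\Map_{\LF{\catC}{T}}((X,f),\rho Y)$ is the equalizer of the two maps $\Map_{\catC}(X,\prod_{n\geq0}T^nY)\rightrightarrows\Map_{\catC}(X,\prod_{n\geq0}T^{n+1}Y)$ induced by $\mathrm{proj}$ and by $f^*T$. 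Since $T$ preserves products, $\Map_{\catC}(X,\prod_{n\geq0}T^nY)\simeq\prod_{n\geq0}\Map_{\catC}(X,T^nY)$, and under this identification the equalizer condition on a tuple $(g_n)_{n\geq0}$ becomes precisely the system of constraints $g_{n+1}=T(g_n)\circ f$ for all $n\geq0$. Thus the equalizer is (the limit of) the tower whose transition maps carry $g_{n+1}\mapsto$ its ``predecessor data'', but every such tuple is determined by $g_0$ alone via $g_n=T^{n}(g_0)\circ T^{n-1}(f)\circ\cdots\circ f=T^n(g_0)\circ f^n$ — which is exactly the $n$th component of $\eta_{(X,f)}$ post-composed with $g_0$. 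Hence projection to the $0$th factor $g_0$ identifies this equalizer with $\Map_{\catC}(X,Y)=\Map_{\catC}(\pi(X,f),Y)$, and one checks that this inverse equivalence is exactly $\eta_{(X,f)}^*\circ\rho$.

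To make this rigorous at the \icat level rather than on homotopy sets, I would phrase the computation as an identification of the equalizer diagram with a limit over $\Delta^1\times\Delta^1$-type data, or better: observe that the pullback square \eqref{eq:C^T} defining $\LF{\catC}{T}$ expresses $\LF{\catC}{T}$ as a lax equalizer, and $\rho$ is manifestly right adjoint to $\pi$ on the level of the three corners $\catC$, $\Fun(\Delta^1,\catC)$, $\Fun(\partial\Delta^1,\catC)$ compatibly — the right adjoint of $\pi$ is assembled from the right adjoints of $\mathrm{ev}_0\colon\Fun(\Delta^1,\catC)\to\catC$ (which is $X\mapsto(X\xrightarrow{\id}X)$, then pushed along $T$) together with the product formula forced by the pullback. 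Concretely, using that right adjoints to functors between pullbacks of \icats can be computed componentwise when the relevant squares are adjointable, the unit $\eta$ of \Cref{cons:pi_rho} is visibly the induced unit, and \Cref{sta:C^T-basics}\eqref{it:C^T-basics.2} (conservativity of $\pi$) plus preservation of limits is enough rigidity to conclude.

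The main obstacle I anticipate is purely bookkeeping: making the ``$g_n=T^n(g_0)\circ f^n$'' reconstruction into an honest equivalence of \emph{spaces} (not just $\pi_0$-bijection) requires identifying the totalization/equalizer of the relevant cosimplicial-type diagram, i.e. checking that the tower of partial-tuple spaces has the expected (discrete-on-$\pi_0$-in-the-right-sense) limit and that no $\lim^1$ or higher obstruction intervenes. In the stable presentable setting this is automatic — the equalizer of two maps of spectra/spaces is a genuine limit and the reconstruction map is an equivalence on the nose because the transition maps in the tower are split — so I would spell this out by noting that the forgetful-to-$0$th-component map admits the section $g_0\mapsto(T^n(g_0)\circ f^n)_n$ and that the two are mutually inverse up to canonical homotopy, the homotopies again being forced by the $f^*T$ versus $\mathrm{proj}$ comparison. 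Given the preparatory lemmas already in place, none of this should require more than careful diagram-chasing; the statement is essentially the assertion that $\rho$ is the ``cofree $T$-fixed-point'' functor, dual to the familiar fact that forgetting from modules has a cofree right adjoint.
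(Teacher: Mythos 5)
Your main argument (first and third paragraphs) is essentially the paper's own proof: unwind $\Map_{\LF{\catC}{T}}((X,f),\rho Y)$ via the equalizer formula~\eqref{eq:mapping_space_laxeq}, commute $\Map(X,-)$ past the product, and read off that the equalizer condition determines the tuple by $g_n\simeq T^n(g_0)\circ f^n$. Where you worry about higher coherence and propose a split-section argument, the paper disposes of this more cleanly by identifying the equalizer with the limit of the $\N$-shaped diagram $\Map(X,Y)\to\Map(X,TY)\to\cdots$ (transition maps $T(-)\circ f$); since $\N$ has an initial object, this limit is $\Map(X,Y)$ on the nose, with $\eta$ visibly supplying the identification. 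The componentwise-adjointability route you float in your second paragraph is a genuinely different idea and not the one the paper uses, but your actual argument never relies on it.
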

		\begin{proof}
			Fix $(X,f)\in\LF{\catC}{T}$ and $Y\in\catC$.
			We need to show that the composite
			\[
			\Map_{\catC}(X,Y)\xto{\rho}\Map_{\LF{\catC}{T}}(\rho(X),\rho(Y))\xto{\eta_{(X,f)}}\Map_{\LF{\catC}{T}}((X,f),\rho(Y))
			\]
			is an equivalence of spaces.
			By \Cref{sta:C^T-basics}, the target is the equalizer of
			\[
			\begin{tikzcd}
				\Map_{\catC}(X,\prod_{n\geq 0}T^nY)
				\arrow[r, shift left, "\mathrm{proj}\circ(-)"]
				\arrow[r, shift right, swap, "T(-)\circ f"]
				&
				\Map_{\catC}(X,\prod_{n>0}T^nY),
			\end{tikzcd}
			\]
			that is, equivalently, of
			\begin{equation*}%
				\begin{tikzcd}
					\prod_{n\geq 0}\Map_{\catC}(X,T^nY)
					\arrow[r, shift left, "\mathrm{proj}"]
					\arrow[r, shift right, swap, "(T(-)\circ f)"]
					&
					\prod_{n>0}\Map_{\catC}(X,T^nY).
				\end{tikzcd}
			\end{equation*}
This may be identified with the limit~$\varprojlim_n\Map_{\catC}(X,T^nY)$ with transition maps $T(-)\circ f$. 
			The indexing diagram has an initial object so that this limit is $\Map_{\catC}(X,Y)$.
			It is easy to see that~$\eta$ provides this identification.
		\end{proof}
		
		We will use the following observation later on.
		\begin{lemma}
			\label{sta:pi-compacts}
			Assume that $\catC\in\Prlst$ and $T$ preserves colimits.
			Let $(X,f)\in\LF{\catC}{T}$.
			If $X\in\catC$ is compact then so is $(X,f)$.
		\end{lemma}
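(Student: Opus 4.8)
The plan is to reduce the claim to the standard fact that filtered colimits commute with finite limits in the \icat~$\mathcal{S}$ of spaces. The two ingredients I would use are the explicit equalizer formula~\eqref{eq:mapping_space_laxeq} for the mapping spaces of $\LF{\catC}{T}$ and the description of colimits in $\LF{\catC}{T}$ coming from the fact that~\eqref{eq:C^T} is a pullback square in~$\Prlst$ (\Cref{sta:C^T-basics}).

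First I would record how filtered colimits are computed in $\LF{\catC}{T}$: since~\eqref{eq:C^T} is a pullback in~$\Prlst$, all four functors in it preserve small colimits, and the colimit of a diagram in $\LF{\catC}{T}$ is obtained by computing the corresponding colimits in~$\catC$ (via~$\pi$) and in $\Fun(\Delta^1,\catC)$ (via~$N$). Concretely, for a filtered diagram $\{(Y_i,g_i)\}_{i\in I}$, its colimit is the pair $(Y,g)$ with $Y=\colim_i Y_i$ and $g=\colim_i g_i$, where I use that $T$ preserves colimits to identify the target $T(\colim_i Y_i)\simeq\colim_i TY_i$. Next, fixing $(X,f)$ with $X$ compact, I would rewrite
\[
\Map_{\LF{\catC}{T}}\!\big((X,f),(Y,g)\big)\;=\;\mathrm{eq}\Big(\Map_{\catC}(X,Y)\rightrightarrows\Map_{\catC}(X,TY)\Big)
\]
using~\eqref{eq:mapping_space_laxeq}, the two maps being postcomposition with $g$ and the composite $h\mapsto Th\circ f$. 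Since $X$ is compact and $T$ preserves colimits, $\Map_{\catC}(X,Y)\simeq\colim_i\Map_{\catC}(X,Y_i)$ and $\Map_{\catC}(X,TY)\simeq\colim_i\Map_{\catC}(X,TY_i)$, and these equivalences intertwine the two parallel maps (as they are assembled from $g=\colim_i g_i$ and from $f$, which does not depend on $i$). Because finite limits commute with filtered colimits in~$\mathcal{S}$ and an equalizer is a finite limit, the equalizer of the colimit is the colimit of the equalizers, and the right-hand side becomes $\colim_i\Map_{\LF{\catC}{T}}((X,f),(Y_i,g_i))$ — precisely the assertion that $(X,f)$ is compact.

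I do not expect a genuine obstacle here; the argument is essentially formal. The one point that requires a moment's care is the compatibility, after passing to the filtered colimit, of the two parallel maps defining the equalizer — but this is immediate once one notes that in $\LF{\catC}{T}$ the colimit structure map is literally $\colim_i g_i$ under $T(\colim_i Y_i)\simeq\colim_i TY_i$, so that the whole diagram~\eqref{eq:mapping_space_laxeq} for $(Y,g)$ is the filtered colimit of the corresponding diagrams for the $(Y_i,g_i)$. Alternatively, one can bypass this bookkeeping by invoking that $\Map_{\LF{\catC}{T}}((X,f),-)$ is, by the pullback description, a finite limit of functors of the form $\Map_{\catC}(X,-)$ and $\Map_{\catC}(X,T(-))$, each of which preserves filtered colimits when $X$ is compact and $T$ preserves colimits.
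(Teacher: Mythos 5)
Your argument is correct and is exactly the paper's: the proof there is the one-line observation that the claim follows from the equalizer description of mapping spaces in \Cref{rmk:C^T-explicit} together with the commutation of filtered colimits with finite limits. Your write-up simply fills in the routine bookkeeping (colimits in $\LF{\catC}{T}$ are computed underlying, $T$ preserving colimits lets you pass the colimit through, and the equalizer is a finite limit), which is what the paper leaves implicit.
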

		\begin{proof}
			This follows immediately from the description of mapping spaces in \Cref{rmk:C^T-explicit} and the fact that filtered colimits commute with finite limits.
		\end{proof}

		\subsection{Nilpotent operators}
		\label{sec:nilpotent-op}
		We now want to restrict to those lax fixed points which are (ind\nobreakdash-)nilpotent.
		The discussion here follows~\cite{raskin-dgm} and~\cite{barkan:square-zero}.
		\begin{dfn}%
			\label{dfn:LFnil}
			Assume that $\catC$ is stable and $T\colon\catC\to\catC$ an exact functor.
			We define $\LFnil{\catC}{T}$ as the full \subicat of $\LF{\catC}{T}$ spanned by objects $(X,f)$ with \emph{ind-nilpotent} %
			morphism~$f$, that is, such that
			\[
			X[f^{-1}]:=\colim\left(X\xto{f}TX\xto{f}T^2X\xto{f}\cdots\right)\simeq 0.
			\]
		\end{dfn}
		
		\begin{exm}
			There is an obvious section to $\pi\colon\LFnil{\catC}{T}\to\catC$ which sends~$X$ to~$(X,0)$. %
			We sometimes denote it by $(N=0)$.
		\end{exm}

		\begin{lemma}
			\label{sta:N=0-fib}
			The section $(N=0)$ admits a right adjoint $\fib(N)\colon\LF{\catC}{T}\to\catC$ which is given on objects by
			\[
			(X,f)\ \longmapsto\ \fib(f).
			\]
		\end{lemma}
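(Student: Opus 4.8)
The plan is to establish the adjunction by a direct computation of mapping spaces, using the explicit description recalled in \Cref{rmk:C^T-explicit}. First I would observe that $\fib(N)$ is a genuine functor: it is the composite of the leg $N\colon\LF{\catC}{T}\to\Fun(\Delta^1,\catC)$ of the defining pullback in \Cref{cons:lax-equalizer} with the fiber functor $\fib\colon\Fun(\Delta^1,\catC)\to\catC$, which exists since $\catC$ is stable. It then suffices to produce a natural equivalence
\[
\Map_{\LF{\catC}{T}}\bigl((N=0)(X),(Y,g)\bigr)\ \simeq\ \Map_{\catC}\bigl(X,\fib(g)\bigr)\qquad\bigl(X\in\catC,\ (Y,g)\in\LF{\catC}{T}\bigr)
\]
and then to invoke the standard criterion (e.g.\ \cite[\S\,5.2]{lurie}) that such a natural equivalence of space-valued hom-bifunctors is exhibited by an adjunction $(N=0)\dashv\fib(N)$.

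For the equivalence, I would specialise \eqref{eq:mapping_space_laxeq} to the source $(X,f)=(X,0)$: the left-hand side is the equalizer of the two maps $\Map_{\catC}(X,Y)\rightrightarrows\Map_{\catC}(X,TY)$ given by $\phi\mapsto g\circ\phi$ and $\phi\mapsto T(\phi)\circ 0$. Because the zero morphism $0\colon X\to TX$ factors through the zero object of $\catC$, the second map factors through the contractible space $\Map_{\catC}(0,TY)$ and is therefore (coherently) constant at the zero morphism $X\to TY$; hence the equalizer is identified with the fiber of $g_*\colon\Map_{\catC}(X,Y)\to\Map_{\catC}(X,TY)$ over the basepoint. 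On the other hand, $\Map_{\catC}(X,-)$ preserves finite limits, so applying it to the fiber sequence $\fib(g)\to Y\xto{g}TY$ identifies $\Map_{\catC}(X,\fib(g))$ with the same fiber. Comparing the two descriptions yields the desired equivalence, visibly natural in $X$ and in $(Y,g)$. To pin down the counit, I would note that the canonical map $\fib(g)\to Y$, together with the canonical nullhomotopy of the composite $\fib(g)\to Y\xto{g}TY$, assembles via \Cref{rmk:C^T-explicit} into a morphism $\bigl(\fib(g),0\bigr)\to(Y,g)$ in $\LF{\catC}{T}$, i.e.\ a natural transformation $(N=0)\circ\fib(N)\to\id$; one then checks that composing with it realizes the equivalence above, so it is the counit.

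The only genuinely delicate point is the $\infty$-categorical bookkeeping: promoting the pointwise identifications above to a coherently natural equivalence of bifunctors on $\catC^{\op}\times\LF{\catC}{T}$, and hence to an adjunction. I expect this to follow formally from the pullback presentation of $\LF{\catC}{T}$ in \Cref{cons:lax-equalizer} and the standard analysis of mapping spaces in lax equalizers in \cite[\S\,II.1.4]{Nikolaus-Scholze}; the actual mathematical content — that killing the operator~$f$ turns the condition ``commutes with $g$'' into ``lifts through $\fib(g)$'' — is immediate. Concretely, one could also just define $\fib(N)$ and the counit $\epsilon$ as above and verify the triangle identities directly, which is a short check.
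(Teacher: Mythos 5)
Your proof is correct and takes essentially the same approach as the paper's: compute $\Map_{\LF{\catC}{T}}\bigl((X,0),(Y,g)\bigr)$ via the explicit description~\eqref{eq:mapping_space_laxeq}, observe that with a zero source operator this becomes the fiber of $g_*$, and identify that fiber with $\Map_{\catC}(X,\fib(g))$ using that corepresentable functors preserve limits. The paper defers the coherent-naturality bookkeeping to the reader (or to \cite[Proposition~2.24]{barkan:square-zero}), which you discuss in a bit more detail, including an explicit description of the counit; the mathematical content is identical.
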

		\begin{proof}
			Let $(X,f)\in\LFnil{\catC}{T}$ and $Y\in\catC$.
			The mapping space $\Map_{\LF{\catC}{T}}((Y,0),(X,f))$ is described in~\eqref{eq:mapping_space_laxeq} as the fiber of
			\[
			\Map_{\catC}(Y,X)\xto{f_*}\Map_{\catC}(Y,TX)
			\]
			which is nothing but~$\Map_{\catC}(Y,\fib(f))$.
			We leave it to the reader to exhibit this identification as part of the claimed adjunction.
			Or see~\cite[Proposition~2.24]{barkan:square-zero}.
		\end{proof}
		
		\begin{lemma}
			\label{sta:C^Tnil-compactly-generated}
			Assume that $\catC\in\Prlost$ and that $T$ preserves colimits.
			Then $\LFnil{\catC}{T}$ is compactly generated by the image of $(N=0)\colon\catC_{\cpt}\to\LFnil{\catC}{T}$.
		\end{lemma}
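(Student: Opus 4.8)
The plan is to verify the three features that characterize compact generation of $\LFnil{\catC}{T}$: that it is presentable and stable, that the objects $(X,0)$ with $X$ compact are themselves compact in it, and that this set detects the zero object.

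First I would show that $\LFnil{\catC}{T}$ is presentable stable and that the inclusion $\iota\colon\LFnil{\catC}{T}\hookrightarrow\LF{\catC}{T}$ preserves colimits. By \Cref{sta:C^T-basics} the \icat $\LF{\catC}{T}$ is presentable stable and $\pi$ preserves colimits, and the map $N$ of \eqref{eq:C^T} assembles into a telescope functor $\Phi\colon\LF{\catC}{T}\to\catC$, $(X,f)\mapsto X[f^{-1}]=\colim_n T^nX$. This $\Phi$ factors as $\LF{\catC}{T}\to\Fun(\N,\catC)\xto{\colim}\catC$, the first functor sending $(X,f)$ to the $\N$-diagram $(T^nX)_n$; since it is a colimit-preserving functor objectwise (as $\pi$ and $T$ preserve colimits), $\Phi$ preserves colimits. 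Hence $\LFnil{\catC}{T}=\ker\Phi$ is the kernel of a colimit-preserving accessible functor between presentable stable categories, so it is presentable stable and $\iota$ preserves colimits. (Equivalently, one checks directly that a filtered colimit of ind-nilpotent objects is ind-nilpotent, using that $\pi$ and $T$ preserve colimits and that filtered colimits commute with the sequential colimit computing $X[f^{-1}]$.)

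Next, for $X\in\catC_{\cpt}$ the pair $(X,0)$ lies in $\LFnil{\catC}{T}$, since a sequential colimit along zero maps vanishes in a stable \icat (by the telescope formula it is $\cof(\id)\simeq 0$). It is compact in $\LF{\catC}{T}$ by \Cref{sta:pi-compacts}, and since $\iota$ preserves filtered colimits and is fully faithful, $(X,0)$ remains compact in $\LFnil{\catC}{T}$. As $\catC$ is compactly generated, $\{(X,0):X\in\catC_{\cpt}\}$ is an essentially small set of compact objects of $\LFnil{\catC}{T}$. To conclude, I would check that these objects detect $0$: given $(Y,g)\in\LFnil{\catC}{T}$ with $\spMap_{\LFnil{\catC}{T}}((X,0),(Y,g))\simeq 0$ for all $X\in\catC_{\cpt}$, fullness of $\iota$ together with the adjunction $(N=0)\dashv\fib(N)$ of \Cref{sta:N=0-fib} identifies this spectrum with $\spMap_\catC(X,\fib(g))$; its vanishing for all compact $X$ forces $\fib(g)\simeq 0$, i.e.\ $g\colon Y\to TY$ is an equivalence. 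Then the ind-nilpotence datum $Y[g^{-1}]=\colim(Y\xto{\sim}TY\xto{\sim}\cdots)$ is equivalent to $Y$, yet it must vanish; hence $Y\simeq 0$ and $(Y,g)\simeq 0$. So the right-orthogonal complement of these generators is zero, and with the previous two points this yields the claimed compact generation.

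I expect the one genuinely delicate point to be the colimit-preservation of the telescope functor $\Phi$ (equivalently, stability of the ind-nilpotence condition under filtered colimits) in the first step, since it is precisely this that allows compactness to descend from $\LF{\catC}{T}$ to $\LFnil{\catC}{T}$; the detection step is purely formal, relying only on \Cref{sta:N=0-fib} and compact generation of $\catC$.
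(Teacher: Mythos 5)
Your proposal is correct and takes essentially the same approach as the paper: both use \Cref{sta:pi-compacts} to see that $(N=0)$ preserves compacts, and both reduce the generation statement via the adjunction $(N=0)\dashv\fib(N)$ of \Cref{sta:N=0-fib} to conservativity of $\fib(N)$, concluding by the same observation that if $\fib(g)\simeq 0$ then $g$ is an equivalence, so $Y\simeq Y[g^{-1}]\simeq 0$. Your expansion of the parenthetical remark that the inclusion $\LFnil{\catC}{T}\hookrightarrow\LF{\catC}{T}$ preserves (filtered) colimits, via the telescope functor $\Phi$, is a correct and clean way to make that assertion precise.
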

		\begin{proof}
			By \Cref{sta:pi-compacts}, the functor $(N=0)\colon\catC\to\LFnil{\catC}{T}$ preserves compact objects.
			(This is because the inclusion $\LFnil{\catC}{T}\into\LF{\catC}{T}$ preserves filtered colimits.)
			It therefore suffices to show that the image of $\catC_{\cpt}$ under this functor generates $\LFnil{\catC}{T}$ under colimits.
			Or, equivalently, that the right adjoint $\fib(N)\colon\LFnil{\catC}{T}\to\catC$ is conservative (\Cref{sta:N=0-fib}).
			But if $\fib(f\colon X\to TX)\simeq 0$ then $f$ is an equivalence as well as ind-nilpotent: $X\simeq X[f^{-1}]\simeq 0$.
			We conclude with \Cref{sta:C^T-basics}.\eqref{it:C^T-basics.2}.
		\end{proof}

		\begin{rmk}
			We will say that $(X,f)$ is \emph{nilpotent} if $f^N $ is homotopic to $0$ for a sufficiently large $N$. By the previous lemma, and the fact that nilpotent objects are closed under finite colimits and retracts, we deduce that compact objects of $\LFnil{\catC}{T}$ are all nilpotent.
		\end{rmk}

		\begin{lemma}
			\label{sta:pi_rho^nil}
			Assume that $\catC\in\Prlost$ and that $T\colon\catC\to\catC$ preserves colimits.
			The functor $\pi\colon\LFnil{\catC}{T}\to\catC$ admits a right adjoint~$\pirn$ given on objects by
			\[
			X\mapsto \left(\bigoplus_{n\geq 0}T^nX,\mathrm{proj}\right).
			\]
		\end{lemma}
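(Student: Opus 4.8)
The plan is to prove this in the same spirit as \Cref{sta:pi_rho}, replacing the product there by a coproduct, which is exactly what the weaker hypothesis (``$T$ preserves colimits'' instead of ``$T$ preserves products'') demands. Concretely: first show $\pi$ preserves colimits, so that a right adjoint exists abstractly; then write down the candidate $\Phi(X)=(\bigoplus_{n\geq 0}T^nX,\mathrm{proj})$ and check it lands in $\LFnil{\catC}{T}$; finally identify $\Phi\simeq\pirn$ by testing the expected adjunction equivalence on the compact generators supplied by \Cref{sta:C^Tnil-compactly-generated}.

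\emph{Existence and a closure property.} I would first record that $\LFnil{\catC}{T}$ is closed under colimits inside $\LF{\catC}{T}$: by \Cref{sta:C^T-basics} colimits in $\LF{\catC}{T}$ are computed componentwise, $\colim_i(X_i,f_i)\simeq(\colim_i X_i,\colim_i f_i)$, and since $T$ preserves colimits and colimits commute with colimits,
\[
(\colim_i X_i)\bigl[(\colim_i f_i)^{-1}\bigr]\simeq\colim_k\colim_i T^kX_i\simeq\colim_i\bigl(X_i[f_i^{-1}]\bigr)\simeq 0 .
\]
In particular the inclusion $\LFnil{\catC}{T}\into\LF{\catC}{T}$ preserves colimits, so composing with the colimit-preserving $\pi\colon\LF{\catC}{T}\to\catC$ of \Cref{sta:C^T-basics} shows $\pi\colon\LFnil{\catC}{T}\to\catC$ preserves colimits; as both categories are presentable (\Cref{sta:C^Tnil-compactly-generated}), the adjoint functor theorem yields a right adjoint $\pirn$.

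\emph{The candidate.} Since $T$ commutes with coproducts, $T(\bigoplus_{n\geq 0}T^nX)\simeq\bigoplus_{n\geq 0}T^{n+1}X$, and I let $\mathrm{proj}$ be the evident map onto this target killing the $n=0$ summand, so that $\Phi(X):=(\bigoplus_{n\geq 0}T^nX,\mathrm{proj})$ is an object of $\LF{\catC}{T}$. To see $\Phi(X)\in\LFnil{\catC}{T}$, consider the truncations $\Phi_N(X):=(\bigoplus_{0\leq n\leq N}T^nX,\mathrm{proj}_N)$ with the analogous shift: here $\mathrm{proj}_N^{\,N+1}\simeq 0$, so each $\Phi_N(X)$ is (even) nilpotent, and $\Phi(X)\simeq\colim_N\Phi_N(X)$, hence ind-nilpotent by the closure property above.

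\emph{Identification.} Post-composition with the projection $\mathrm{pr}_0\colon\bigoplus_{n\geq 0}T^nX\to T^0X=X$ onto the zeroth summand defines a transformation
\[
\alpha_{(Y,g)}\colon\Map_{\LFnil{\catC}{T}}\bigl((Y,g),\Phi(X)\bigr)\longrightarrow\Map_{\catC}\bigl(\pi(Y,g),X\bigr),
\]
natural in $(Y,g)$ (and in $X$). I claim it is an equivalence, which by uniqueness of adjoints gives $\pirn\simeq\Phi$. Both sides send colimits in $(Y,g)$ to limits — the right-hand side because $\pi$ preserves colimits — so by \Cref{sta:C^Tnil-compactly-generated} it suffices to treat $(Y,g)=(Z,0)$ with $Z\in\catC_{\cpt}$. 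For such objects, \Cref{rmk:C^T-explicit} (i.e.\ \eqref{eq:mapping_space_laxeq}, whose ``source'' edge vanishes because the operator on $Z$ is $0$) identifies the left-hand side with the fiber of
\[
\mathrm{proj}_*\colon\Map_{\catC}\bigl(Z,\textstyle\bigoplus_{n\geq 0}T^nX\bigr)\longrightarrow\Map_{\catC}\bigl(Z,\textstyle\bigoplus_{n\geq 1}T^nX\bigr).
\]
As $Z$ is compact, this is the projection $\bigoplus_{n\geq 0}\Map_{\catC}(Z,T^nX)\to\bigoplus_{n\geq 1}\Map_{\catC}(Z,T^nX)$ killing the $n=0$ factor, a split surjection with fiber the $n=0$ factor $\Map_{\catC}(Z,X)$ — and the resulting identification is precisely $\mathrm{pr}_0$, i.e.\ $\alpha_{(Z,0)}$. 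Hence $\alpha_{(Z,0)}$ is an equivalence, and therefore $\alpha$ is an equivalence throughout.

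\emph{Expected main difficulty.} There is no deep obstacle; the only care required is the (co)limit bookkeeping. The one point that must be handled correctly is that we only assume $T$ preserves \emph{colimits}: this forces the use of $\bigoplus$ rather than $\prod$ (so that $T(\bigoplus_n T^nX)\simeq\bigoplus_n T^{n+1}X$), makes the ind-nilpotence of $\Phi(X)$ something to check, and is why the reduction to compact $Z$ is essential — compactness is exactly what permits $\Map_{\catC}(Z,\bigoplus_n T^nX)\simeq\bigoplus_n\Map_{\catC}(Z,T^nX)$, without which the final computation does not simplify.
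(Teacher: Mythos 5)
Your proof is correct, and it takes a noticeably different route from the paper's. Both proofs bottom out in the same computation — for a compact $Z$ the fiber of $\mathrm{proj}_*\colon\Map_\catC(Z,\bigoplus_n T^nX)\to\Map_\catC(Z,\bigoplus_{n\geq 1}T^nX)$ identifies with $\Map_\catC(Z,X)$ — but the scaffolding is different. The paper builds on \Cref{sta:pi_rho}: it observes that for $(X,f)$ compact, the unit $\eta_{(X,f)}\colon(X,f)\to\rho(X)=(\prod_nT^nX,\mathrm{proj})$ factors (using nilpotence of $f$) through the subobject $(\bigoplus_nT^nX,\mathrm{proj})$, and then extends this natural transformation from compacts to all of $\LFnil{\catC}{T}$ via the equivalence $\Fun^\omega(\LFnil{\catC}{T},\LFnil{\catC}{T})\simeq\Fun((\LFnil{\catC}{T})_\cpt,\LFnil{\catC}{T})$, before finally checking the adjunction identity. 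You sidestep both the prior lemma and the factorization/extension step by writing down the counit-style transformation $\alpha$ globally as $(\mathrm{pr}_0)_*\circ\pi$, so naturality comes for free, and then checking it is an equivalence on the compact generators $(Z,0)$ supplied by \Cref{sta:C^Tnil-compactly-generated}. You also make explicit two facts the paper's proof uses silently: that $\LFnil{\catC}{T}$ is closed under (all, not just filtered) colimits inside $\LF{\catC}{T}$, and that the candidate $(\bigoplus_n T^nX,\mathrm{proj})$ actually lies in $\LFnil{\catC}{T}$, which you verify via the nilpotent truncations $\Phi_N(X)$. The preliminary appeal to the adjoint functor theorem to produce an abstract $\pirn$ is harmless but redundant, since your $\alpha$ being a natural equivalence already exhibits $\Phi$ as the right adjoint.
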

		\begin{proof}
			If $(X,f)\in\LFnil{\catC}{T}$ is compact then $X\in\catC$ is compact, by \Cref{sta:C^Tnil-compactly-generated}.
			In that case the natural transformation of \Cref{cons:pi_rho},
			\[
			\eta_{(X,f)}\colon (X,f)\to\rho(X)=\left(\prod_{n\geq 0}T^nX,\mathrm{proj}\right),
			\]
			factors through $(\oplus_{n\geq 0}T^nX,\mathrm{proj})=\pirn(X)$.
			By the equivalence $\Fun^{\omega}(\LFnil{\catC}{T},\LFnil{\catC}{T})\simeq\Fun((\LFnil{\catC}{T})_{\cpt},\LFnil{\catC}{T})$ of~\cite[Proposition~5.3.5.10]{lurie}, the natural transformation $\eta\colon\id\to\pirn\circ\pi$ on the right hand side extends uniquely to one on the left hand side (which we denote by the same symbol).
			
			Now let $(X,f)\in\LFnil{\catC}{T}$ and $Y\in\catC$ arbitrary.
			In order to show that the morphism $\Map_{\catC}(X,Y)\xto{\eta_{(X,f)}^*\circ\pirn}\Map_{\LFnil{\catC}{T}}((X,f),\pirn(Y))$ is an equivalence we write $(X,f)$ as a filtered colimit of compact objects and thereby reduce to $(X,f)$ compact, and even, using \Cref{sta:C^Tnil-compactly-generated}, to $f=0$. In this case, the morphism  $\eta_{(X,f)}^*\circ\pirn$ is the identification between $\Map_{\catC}(X,Y)$ and the fiber of
   $$
   \bigoplus_{n\geq0}\Map_{\catC}(X,T^nY)\xto{\mathrm{proj} }\bigoplus_{n>0}\Map_{\catC}(X,T^nY).
   $$
		\end{proof}

                \begin{rmk}\label{lem:CTNil-vsIndCat}
                Assume that $\catC\in\Prlost$ and that $T\colon\catC\to\catC$ preserves compact objects. The diagram~\eqref{eq:C^T} defining~$\LF{\catC}{T}$ is clearly a diagram in $\Prlost$, and as such one could compute its limit in $\Prlost$. This amounts to the \icat $\Ind\left(\LF{(\catC_{\cpt})}{T}\right)$, where $\catC_{\cpt}$ is the \subicat of compact objects. We have a canonical functor 
				\[\iota\colon\Ind\left(\LF{(\catC_{\cpt})}{T}\right)\to\LF{\catC}{T}\]
				which can be easily seen to be fully faithful.
				In fact, the functor $\iota$   identifies $\LFnil{\catC}{T}$ with a full \subicat of $\Ind\left(\LF{(\catC_{\cpt})}{T}\right)$, so that we have a chain of fully faithful functors
				\[
				\LFnil{\catC}{T}\subseteq \Ind\left(\LF{(\catC_{\cpt})}{T}\right)\subseteq \LF{\catC}{T}.
				\]
				It is immediate to check that the former two \subicats coincide if the following condition holds for a set of compact generators~$X\in\catC$:
				\[
				\Hom_{\catC}(X,T^{n}X)=0, \quad n\gg 0.
				\]
				Indeed, this implies that the monodromy map $Y\to TY$ is nilpotent whenever~$Y\in\catC$ is compact. 
			\end{rmk}

                        \subsection{Monoidal structure}
\label{sec:monoidal-structure}

                When $\catC$ is equipped with a monoidal structure and $T=-\otimes t$ is tensoring with a fixed object~$t\in\catC$ then	there should be a monoidal structure on~$\LF{\catC}{-\otimes t}$ that is informally given by the formula
                \begin{align}
                \label{eq:LEq-tensor}
                  (X,f)\boxtimes(X',f')=(X\otimes X', f\boxtimes f'),\\\intertext{where}\notag
                  f\boxtimes f'\ \colon\ X\otimes X'\xto{f\otimes \id + \id\otimes f'}X\otimes t\otimes  X'.
                \end{align}
                We will presently make this precise and discuss monoidal aspects of the constructions in the preceding sections.

                \begin{cons}
                \label{cons:LEq-monoidal}
                Assume that $\catC^\otimes$ is a monoidal \icat and let $\catT\colon \catC^\otimes  \to \catC^\otimes$ be a lax monoidal endofunctor (see~\cite[A.2]{Nikolaus-Scholze}, \cite[Definition 2.1.2.7]{HA}).
			Consider the oriented fiber product $\catC\overset{\to}{\times}_{\catT}\catC$, i.e.,  the pullback of $\Fun(\Delta^1, \catC) \xto{\mathrm{ev}_1}\catC\xleftarrow{\catT}\catC$ in~$\iCat$. See \cite[\href{https://kerodon.net/tag/01KE}{Tag 01KE}]{kerodon}. %
			Its objects are triples $(X,Y,f)$ with $X,Y$ objects of $\catC$ and $f\colon X\to \catT Y$. Observe that the functor \icat $\Fun(\Delta^1,\catC)$ can be naturally endowed with the structure of monoidal \icat (with respect to the pointwise monoidal structure, see~\cite[Remark~2.1.3.4]{HA}) such that the evaluation functor $\mathrm{ev}_1$ is a cocartesian fibration, in particular monoidal.  Using this we can endow $\catC\overset{\to}{\times}_{\catT}\catC$  with the structure of a monoidal \icat
			\[	
			(\catC\overset{\to}{\times}_{\catT}\catC)^\boxtimes\colonequals \lim\left(\Fun(\Delta^1, \catC)^{\otimes}\xto{ev_1}\catC^{\otimes}\xleftarrow{{\catT}}\catC^{\otimes}\right).
			\]
			Indeed, this limit can also be computed in the \icat $\mathrm{Op}_\infty$ of \iops~\cite[Definition~2.1.4.1]{HA}. (This follows from~\cite[Proposition~2.1.4.6]{HA} and the fact that~$\mathrm{ev}_1$ is a fibration of \iops.)
It then suffices to note that the induced functor $(\catC\overset{\to}{\times}_{\catT}\catC)^\boxtimes\to  \Fin_*$ is again a cocartesian fibration.

Informally, we have $(X,Y,f)\boxtimes(X',Y',f')=(X\otimes X', Y\otimes Y',f\boxtimes f')$ with $f\boxtimes f'$ being the composition $$X\otimes X'\xto{f\otimes f'}{\catT}Y\otimes {\catT}Y'\to {\catT}(Y\otimes Y').$$
                \end{cons}
                        
		\begin{rmk}\label{rmk:monoidal-structure-lax-cat}\label{LF-tensor} 	
                We apply \Cref{cons:LEq-monoidal} in the case where $\catC^{\otimes}\in\Prlmst$ and where ${\catT}$ is the lax monoidal functor ${\catT}= (\one\oplus t) \otimes (-)$ 
                given by tensoring with a split square-zero extension in~$\catC$, cf.\ \Cref{rmk:ssze}. If we let $T$ be the functor $-\otimes t$ we can observe that~$\LF{\catC}{-\otimes t}$ is (equivalent to) a full \subicat of $\catC\overset{\to}{\times}_{\catT}\catC$ (using the functor $(X,f)\mapsto (X,X,\id+f)$).
                As this \subicat contains the unit object and is closed under tensor products, \cite[Proposition~2.2.1.1, Remark~2.2.1.2]{HA} imply that the monoidal structure on the oriented fiber product restricts to a monoidal structure $(\LF{\catC}{-\otimes t})^\boxtimes$.
                Note that it indeed is given by the formula in~\eqref{eq:LEq-tensor}.
		\end{rmk}

                \begin{rmk}
                \label{rmk:monoidal-structure-LFnil}
                The full \subicat $\LFnil{\catC}{-\otimes t}$ of $\LF{\catC}{-\otimes t}$ contains the unit object $(\one,0)$ and is stable under tensor products. Therefore, it inherits the structure of a monoidal \icat from the one on~$\LF{\catC}{-\otimes t}$.
                \end{rmk}
                
                In the rest of this section we work in the setting of \Cref{LF-tensor}.
                We will always endow $\LF{\catC}{-\otimes t}$ and $\LFnil{\catC}{-\otimes t}$ with the respective monoidal structures just constructed.
                \begin{lemma}
                \label{sta:pi-rho-projection-formula}%
                The forgetful functor $\pi\colon\LFnil{\catC}{-\otimes t}\to\catC$ can be promoted to a monoidal functor and the adjoints $\pi\dashv \pirn$ satisfy the projection formula
                \[
                \pirn(Y)\boxtimes (X,f)\isoto\pirn(Y\otimes X).
                \]
                Explicitly, it is given as follows, after applying $\pi$, in $\Ho(\catC)$:
                \[
                \bigoplus_{n\geq 0}t^{\otimes n}\otimes Y\otimes X\xrightarrow[
                \left(\binom{m}{m-n}f^{m-n}
                \right)]{}\bigoplus_{m\geq 0}t^{\otimes m}\otimes Y\otimes X.
                \]
                \end{lemma}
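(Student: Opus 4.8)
The plan is to establish the monoidal promotion of $\pi$ first, and then deduce the projection formula as a formal consequence of the fact that $\pirn$ is right adjoint to a monoidal functor, plus an explicit computation on compact objects.

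\smallskip

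\textbf{Step 1: $\pi$ is monoidal.} By \Cref{rmk:monoidal-structure-lax-cat} and \Cref{rmk:monoidal-structure-LFnil}, the \icat $\LFnil{\catC}{-\otimes t}$ is a full monoidal \subicat of the oriented fiber product $\catC\overset{\to}{\times}_{\catT}\catC$, whose monoidal structure is the one constructed in \Cref{cons:LEq-monoidal}. The functor $\pi$ is the restriction of the projection $\catC\overset{\to}{\times}_{\catT}\catC\to\catC$ onto the \emph{first} factor, $(X,Y,f)\mapsto X$. Under the identification $\LF{\catC}{-\otimes t}\hookrightarrow\catC\overset{\to}{\times}_{\catT}\catC$, $(X,f)\mapsto(X,X,\id+f)$, this is just $(X,f)\mapsto X$. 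So I need to check that the first-factor projection of $\catC\overset{\to}{\times}_{\catT}\catC$ is a monoidal functor. This is immediate from the construction of $(\catC\overset{\to}{\times}_{\catT}\catC)^\boxtimes$ as a limit: the first projection $\Fun(\Delta^1,\catC)\to\catC$ is $\mathrm{ev}_0$, which is monoidal for the pointwise monoidal structure (\cite[Remark~2.1.3.4]{HA}), and it is compatible with the defining limit diagram, hence induces a monoidal functor out of the limit. Concretely, on objects this sends $(X,f)\boxtimes(X',f')=(X\otimes X',f\boxtimes f')$ to $X\otimes X'$, as required, and it sends the unit $(\one,0)$ to $\one$.

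\smallskip

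\textbf{Step 2: the projection formula abstractly.} Since $\pi$ is a monoidal colimit-preserving functor between compactly generated monoidal \icats and admits the right adjoint $\pirn$ (\Cref{sta:pi_rho^nil}), the lax monoidal structure on $\pirn$ transported from $\pi$ gives, for every $Y\in\catC$ and $(X,f)\in\LFnil{\catC}{-\otimes t}$, a canonical map $\pirn(Y)\boxtimes(X,f)\to\pirn(Y\otimes X)$ (the ``projection morphism''; see e.g.\ \cite[\S\,3.4]{HA} or the discussion of projection formulas in the motivic literature). To show it is an equivalence, by \Cref{sta:C^Tnil-compactly-generated} it suffices to check it after evaluating $\pi$ and on a set of compact generators $(X,f)$, and since both sides preserve colimits in $(X,f)$ and the generators are the $(X,0)$ with $X$ compact, it suffices to treat $(X,f)=(X,0)$ with $X$ and $Y$ compact. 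Applying $\pi\pirn=\bigoplus_{n\geq0}(-\otimes t)^{\otimes n}$ (\Cref{sta:pi_rho^nil}) to both sides: the left side becomes $\left(\bigoplus_{n\geq0}t^{\otimes n}\otimes Y\right)\otimes X$, and the right side becomes $\bigoplus_{n\geq0}t^{\otimes n}\otimes Y\otimes X$; the projection map is visibly the identity here, because $f=0$ kills all the off-diagonal terms in the formula $\binom{m}{m-n}f^{m-n}$. This proves the equivalence.

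\smallskip

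\textbf{Step 3: the explicit formula.} It remains to identify the projection morphism, after applying $\pi$, with the displayed map $\bigoplus_n t^{\otimes n}\otimes Y\otimes X\to\bigoplus_m t^{\otimes m}\otimes Y\otimes X$ whose $(n,m)$-component is $\binom{m}{m-n}f^{m-n}$ (zero unless $m\geq n$). This is unwound from the definition of the monoidal structure \eqref{eq:LEq-tensor} together with \Cref{cons:pi_rho}/\Cref{sta:pi_rho^nil}: the object $\pirn(Y)$ has underlying object $\bigoplus_n t^{\otimes n}\otimes Y$ with ``monodromy'' $\mathrm{proj}$ (the shift), $\boxtimes$-ing with $(X,f)$ produces on underlying objects $\bigoplus_n t^{\otimes n}\otimes Y\otimes X$ with monodromy $\mathrm{proj}\otimes\id + \id\otimes f$ (reindexing $t\otimes t^{\otimes n}\simeq t^{\otimes(n+1)}$ using $\Alt^2(t)\simeq0$ so that the braidings are coherently trivialized, which is where the standing hypothesis enters), and the counit of $\pi\dashv\pirn$ composed with this is, degreewise, the sum over how many times one applies $f$ versus the shift — giving the binomial coefficient $\binom{m}{m-n}$ as the number of interleavings. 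The identification of this combinatorial description with the adjunction counit is a routine but slightly tedious bookkeeping exercise on $\Ho(\catC)$, and I expect \textbf{this last bookkeeping — pinning down the binomial coefficients as the image of the counit, coherently with the trivialized symmetry on $t^{\otimes n}$ — to be the main obstacle}, everything else being formal. One clean way to organize it: observe that $\pirn(Y)$ is the cofree $\bH$-comodule (or $A$-module) on $Y$ under the equivalences of \Cref{thm:summary-Mod-coMod-lax}, so the projection formula becomes the standard statement that cofree comodules form a module over the base, and the explicit matrix is read off from the $\bH$-coaction $t^{\otimes m}\to\bigoplus_{n}t^{\otimes n}\otimes t^{\otimes(m-n)}$ composed with $f^{\otimes(m-n)}$, the multiplicity $\binom{m}{m-n}$ being the number of $(n,m-n)$-shuffles — which is a clean, citable combinatorial fact.
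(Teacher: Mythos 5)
Your Steps 1 and 2 track the paper in substance, though Step 2 differs in method: you reduce the equivalence to compact generators $(X,0)$, whereas the paper simply writes down the matrix of the projection morphism and observes it is upper-triangular with identities on the diagonal, hence invertible. Your reduction is sound, but be aware of a small circularity: to handle the $f=0$ case you appeal to the formula ``$\binom{m}{m-n}f^{m-n}$'' that you are supposed to establish only in Step 3. The repair is easy --- for $(X,0)$ the monodromy on $\pirn(Y)\boxtimes(X,0)$ is $\mathrm{proj}\otimes\id_X$, so the unit of $\pi\dashv\pirn$ at this object has $m$th component the iterated projection $(\mathrm{proj})^m$, and the counit collapses to the $\ell=0$ row, visibly giving the identity without any binomial coefficients.

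The real gap is Step 3, which you acknowledge. The explicit matrix is part of what the lemma asserts, and declaring it a ``routine but slightly tedious bookkeeping exercise'' is not a proof. Moreover, the comodule/shuffle organization you propose at the end is delicate: the multiplication on $\bH$, which supplies the shuffle multiplicities, is identified in \Cref{lem:H-free} \emph{as a consequence of} the present lemma, so that route risks a second circularity. The paper's argument is shorter and self-contained: by the tensor formula \eqref{eq:LEq-tensor}, the monodromy of $\pirn(Y)\boxtimes(X,f)$ is the upper-triangular matrix $N$ with $f$'s on the diagonal and $1$'s on the superdiagonal; the unit $\eta$ at this object has $m$th component $N^m$, which the binomial theorem expands to $\bigl(\binom{m}{m-n}f^{m-n}\bigr)_n$; and the counit projects onto the $\ell=0$ component. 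This one computation simultaneously yields the stated formula and its invertibility, rendering the compact-generator reduction of your Step 2 unnecessary.
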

             
                \begin{proof}
                The forgetful functor is the composite of the inclusion $\LFnil{\catC}{-\otimes t}\into\catC\overset{\to}{\times}_{\catT}\catC$ and the projection onto~$\catC$ both of which are monoidal.

                The projection formula is obtained  by  the following composite of unit, monoidal structure and counit of the adjunction:
  \begin{equation}\label{eq:proj_formula}  
  \pirn(Y)\boxtimes (X,f)\xto{\eta}\pirn\pi(\pirn(Y)\boxtimes (X,f))\simeq \pirn(\pi\pirn(Y)\otimes X) \to \pirn(Y\otimes X)
    \end{equation}
  The nilpotent operator~$N$ for the domain is (by \Cref{rmk:monoidal-structure-lax-cat})
  \[
 \bigoplus_{n\geq 0}t^{\otimes n}\otimes Y\otimes X\simeq(\oplus_{n}T^nY)\otimes X\xrightarrow[\left(
 \begin{smallmatrix}
    f&1&0&\cdots\\
    0&f&1&\\
   0&0&f&\\
   \vdots&&&\ddots  \end{smallmatrix}\right)
  ]{} T\left((\oplus_{n}T^nY)\otimes X\right)\simeq\bigoplus_{n>0}t^{\otimes n}\otimes Y\otimes X
  \]
  so that the unit
  \[
  \eta\colon\bigoplus_{n\geq 0}t^{\otimes n}\otimes Y\otimes X\to \bigoplus_{m\geq 0}t^{\otimes m}\otimes \bigoplus_{\ell\geq 0}t^{\otimes \ell}\otimes Y\otimes X
  \]
  on the $m$th component is given by
  \[
  N^m=
  \begin{pmatrix}
    f^m&\binom{m}{m-1}f^{m-1}&\binom{m}{m-2}f^{m-2}&\cdots\\
    0&f^m&\binom{m}{m-1}f^{m-1}\\
    \vdots&&\ddots
  \end{pmatrix}.
  \]
  The counit of the adjunction projects onto the $(\ell=0)$-component thus the explicit description of the projection formula in the statement.
  As this matrix is invertible the lemma is proved.
  \end{proof}
                
               \begin{rmk}\label{lemma:ciucciaN}
                Note that by putting $Y=\one$ we immediately deduce an equivalence \[
                \pirn\ \simeq\ (N=0)\boxtimes\pirn\one.
                \]
                \end{rmk}
                \begin{lemma}
                \label{sta:N-fib-projection-formula}
                The zero section $(N=0)\colon\catC\to\LFnil{\catC}{-\otimes t}$ can be promoted to a monoidal functor and the adjoints $(N=0)\dashv \fib(N)$ satisfy the projection formula
                $$
                \fib(X\xto{f} X\otimes t)\otimes Y \isoto \fib(X\otimes Y\xto{f\boxtimes0} X\otimes Y\otimes t).
                $$
                \end{lemma}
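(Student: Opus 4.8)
The statement is the mirror image of \Cref{sta:pi-rho-projection-formula}, so the plan is to run the same three-step argument with the roles of the two adjunctions exchanged. First I would promote the zero section $(N=0)\colon\catC\to\LFnil{\catC}{-\otimes t}$ to a monoidal functor. As in the proof of the previous lemma, this is immediate from \Cref{LF-tensor}: under the embedding $\LF{\catC}{-\otimes t}\hookrightarrow\catC\overset{\to}{\times}_{\catT}\catC$, $(N=0)$ is the composite of the monoidal diagonal-type inclusion $X\mapsto (X,X,\id)$ with the monoidal structure on the oriented fiber product, so it is monoidal; equivalently, $(\one,0)$ is the unit and $(X,0)\boxtimes(X',0)=(X\otimes X',0)$ by the formula \eqref{eq:LEq-tensor} since $0\boxtimes 0=0$.

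Next I would construct the projection formula morphism. Being the left adjoint, $(N=0)$ is \emph{oplax} monoidal for free once it is monoidal; dually its right adjoint $\fib(N)$ is lax monoidal, so there is a canonical map $\fib(N)(X,f)\otimes Y\to\fib(N)\big((X,f)\boxtimes(Y,0)\big)$. Concretely this is the composite, analogous to \eqref{eq:proj_formula},
\[
\fib(f)\otimes Y\xto{\ \text{lax}\ }\fib(N)\big((N=0)(\fib(f))\big)\boxtimes(Y,0)\simeq\fib(N)\big((N=0)(\fib(f)\otimes Y)\big)\xto{\ \varepsilon\ }\fib(N)\big((X,f)\boxtimes(Y,0)\big),
\]
using the counit $\varepsilon\colon(N=0)\fib(N)\to\id$ and the fact that $(N=0)$ is monoidal. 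Now $(X,f)\boxtimes(Y,0)=(X\otimes Y,\,f\boxtimes 0)$ where $f\boxtimes 0\colon X\otimes Y\xto{f\otimes\id}X\otimes t\otimes Y$, so the target is exactly $\fib\big(X\otimes Y\xto{f\boxtimes 0}X\otimes Y\otimes t\big)$, matching the statement.

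Finally I would check that this map is an equivalence. Since $-\otimes Y$ is exact (the tensor product is exact in each variable by the standing conventions) and a fiber is a finite limit, $\fib(f)\otimes Y\simeq\fib(f\otimes\id_Y)$ canonically; and $f\otimes\id_Y$ is, up to the evident reassociation $X\otimes t\otimes Y\simeq X\otimes Y\otimes t$ (which uses the symmetry, harmless here), precisely the map $f\boxtimes 0$. So both sides are identified with $\fib(f\otimes\id_Y)$, and the only remaining point is that the canonical comparison map built above agrees with this identification; this is a diagram chase on unit/counit of the adjunction, entirely parallel to — and in fact simpler than — the matrix computation in \Cref{sta:pi-rho-projection-formula}, since no infinite direct sums or binomial coefficients intervene. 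The main (mild) obstacle is purely bookkeeping: making the identification of $(X,f)\boxtimes(Y,0)$ via the formula in \Cref{rmk:monoidal-structure-lax-cat} compatible with the symmetry isomorphism reindexing $t$ to the correct tensor slot, so that "$f\boxtimes 0$" literally reads as "$f\otimes\id_{Y}$" after transport; once this is pinned down the equivalence is formal.
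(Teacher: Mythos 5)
Your proof is correct and follows the paper's approach closely for the first two steps: you promote $(N=0)$ to a monoidal functor via the unit $\one\to\one\oplus t$ viewed inside the oriented fiber product (the paper phrases this as the natural transformation $\id\to\catT$, but it is the same map $X\mapsto(X,X,\iota_X)$, with monoidality checked after postcomposition with the conservative $\pi$), and you build the projection-formula morphism by the standard unit--monoidality--counit composite. Where you diverge is in verifying that the morphism is an equivalence. The paper invokes \Cref{sta:C^Tnil-compactly-generated}, together with the fact that all four functors in sight preserve colimits, to reduce to $f=0$; there $\fib(N)(X,0)\simeq X\oplus X\otimes t[-1]$ splits and the map becomes a $2\times2$-matrix that is checked to be the identity. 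You instead observe directly that for arbitrary $f$ both sides are identified with $\fib(f\otimes\id_Y)$ (via exactness of $-\otimes Y$ and the reassociation making $f\boxtimes 0$ into $f\otimes\id_Y$) and defer the agreement of the unit/counit composite with this canonical equivalence to a diagram chase. Both routes are valid. Yours economizes by avoiding the appeal to compact generation, which is a genuine if modest gain; on the other hand, since $\fib(f)$ does not split for general $f$, the deferred diagram chase is no longer a $2\times2$ problem and is not obviously lighter than the paper's reduced check. Your remark that it is simpler than the computation in \Cref{sta:pi-rho-projection-formula} is fair (no infinite sums, no binomial coefficients), but note the paper's proof of the present lemma has already shed those; so you end with a mild variant trading one piece of bookkeeping for another.
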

                \begin{proof}
                Let, as in \Cref{LF-tensor}, $\one\oplus t$ be the split square-zero extension and $\catT\colon\catC\to\catC$ the lax monoidal functor given by tensoring with $\one\oplus t$.
                The unit $\one\to\one\oplus t$ produces a natural transformation $\id\to\catT$ which may be viewed as a lax monoidal functor $\catC^\otimes\to\Fun(\Delta^1,\catC)^\otimes$.
                Postcomposing with~$\mathrm{ev}_1$ yields~$\catT$ so that we have produced a lax monoidal functor $\catC^\otimes\to(\catC\overset{\to}{\times}_{\catT}\catC)^\boxtimes$.
                It factors through the full \subicat $\LFnil{\catC}{-\otimes t}$ and gives a (lax) monoidal enhancement of~$(N=0)$.
                But note that postcomposing with the conservative functor~$\pi$ we get the identity on~$\catC$ and we conclude that $(N=0)$ is monoidal.

                To prove the second statement let $(X,f)\in\LFnil{\catC}{-\otimes t}$ and $Y\in\catC$.
                Abbreviating $F=(N=0)$ and $G=\fib(N)$, we need to show that the composite
                \[
                G(X,f)\otimes Y\to GF(G(X,f)\otimes Y)\simeq G\left(FG(X,f)\boxtimes F(Y)\right)\to G\left((X,f)\boxtimes F(Y)\right)
                \]
                is an equivalence.
                By \Cref{sta:C^Tnil-compactly-generated} we may assume $f=0$ so that $G(X,f)=X\oplus (X\otimes t[-1])$ and similarly for the codomain.
                In other words, the map is given by a $2\times 2$-matrix and one can check that it is the identity matrix under this identification.
                \end{proof}
		
		\subsection{Modules over split square-zero extensions}\label{sec:Mod-laxFix} We are now ready to relate the \icat of lax fixed points with a suitable \icat of modules. Fix $\catC\in\Prloost$ a stable, compactly generated monoidal \icat.
		\begin{rmk}
			\label{rmk:ssze}
			Recall from~\cite[\S\,7.3.4]{HA} the notion of a split square-zero extension in~$\catC$.
			We specialize to extensions of the unit~$\one$.
			The free commutative algebra functor induces an adjunction
			\[
			\free\colon \catC\rightleftarrows \CAlg(\catC)_{/\one}\colon \aid,
			\]
			where the right adjoint~$\aid$ is the augmentation ideal functor~\cite[Proposition~7.3.4.5]{HA}.
			The latter induces an equivalence~\cite[Theorem~7.3.4.7]{HA} $\Sp(\CAlg(\catC)_{/\one})\isoto\Sp(\catC)\isoto\catC$ so that we may define a functor as the inverse composed with the infinite loops functor:
			\[
			\one\oplus -\colon \catC\isofrom\Sp(\CAlg(\catC)_{/\one})\xto{\Omega^\infty}\CAlg(\catC)_{/\one}.
			\]
			The commutative algebra $\one\oplus M$ (with its canonical augmentation) is the \emph{split square-zero extension} of~$\one$ by~$M$.
		\end{rmk}
		
		We now give one simple criterion to check that a given augmented algebra is a split square-zero extension.
		For this let $A\in\CAlg(\catC)_{/\one}$ with augmentation ideal~$t[-1]=\aid(A)\in\catC$.
                The inclusion $t[-1]\into A$ and $t[-1]\into\one\oplus t[-1]$, respectively, induce by adjunction two morphisms in $\CAlg(\catC)_{/\one}$:
		\begin{equation}
			\label{eq:free-ssze}
			A\from \free(t[-1])\to \one\oplus t[-1].
		\end{equation}

		\begin{lemma}
			\label{sta:free=ssze}
                        If  $\catC$ is $\Q$-linear and $\Alt^2(t)\simeq 0$, both arrows in~\eqref{eq:free-ssze} are equivalences.
			In particular, $A$ is a split square-zero extension.
		\end{lemma}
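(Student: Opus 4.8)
The plan is to reduce everything to the vanishing $\Sym^n(t[-1])\simeq 0$ for $n\geq 2$, and to prove that vanishing by a Koszul-sign computation in which both hypotheses intervene.

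For the reduction, recall that $\free(t[-1])\simeq\bigoplus_{n\geq 0}\Sym^n(t[-1])$ with $\Sym^n(M) = (M^{\otimes n})_{h\Sigma_n}$, and that a morphism of augmented commutative algebras is an equivalence if and only if it induces an equivalence on augmentation ideals (the fibre sequence $\aid(B)\to B\to\one$ splits naturally via the unit). By construction of the square-zero extension (\Cref{rmk:ssze}) one has $\aid(A)\simeq t[-1]\simeq\aid(\one\oplus t[-1])$, and both arrows of~\eqref{eq:free-ssze} are the maps adjoint to $\id_{t[-1]}$ under the adjunction $\free\dashv\aid$. Hence on augmentation ideals each of them becomes, under the identification $\aid\,\free(t[-1])\simeq\bigoplus_{n\geq 1}\Sym^n(t[-1])$, the identity on the summand $\Sym^1(t[-1]) = t[-1]$ and a map factoring through the (possibly nontrivial) multiplication of $B$ on the summands $\Sym^n(t[-1])$ with $n\geq 2$. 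Therefore both arrows are equivalences as soon as $\Sym^n(t[-1])\simeq 0$ for every $n\geq 2$, and then $A\simeq\free(t[-1])\simeq\one\oplus t[-1]$ is a split square-zero extension.

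To prove the vanishing I would exploit $\Q$-linearity. Since $\catC$ is idempotent complete, the $\Sigma_n$-action on $t[-1]^{\otimes n}$ gives an $E_1$-ring map $\Q[\Sigma_n]\to\End(t[-1]^{\otimes n})$, and, $\Q[\Sigma_n]$ being semisimple, $\Sym^n(t[-1]) = (t[-1]^{\otimes n})_{h\Sigma_n}$ is the retract split off by the image $p_n$ of the averaging idempotent $\frac1{n!}\sum_{\sigma\in\Sigma_n}\sigma$. Let $\tau = (1\,2)\in\Sigma_n$. Under $t[-1]^{\otimes n}\simeq(t[-1]\otimes t[-1])\otimes t[-1]^{\otimes(n-2)}$ the element $\tau$ acts by $\sigma_{t[-1],t[-1]}\otimes\id$, and under the canonical identification $t[-1]\otimes t[-1]\simeq(t\otimes t)[-2]$ the symmetry $\sigma_{t[-1],t[-1]}$ corresponds to $-\sigma_{t,t}$ by the Koszul sign rule; the hypothesis $\Alt^2(t)\simeq 0$ says exactly that $\sigma_{t,t}\simeq\id$, so $\tau$ acts on $t[-1]^{\otimes n}$ by a map homotopic to $-\id$. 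As the orbit projector is $\Sigma_n$-fixed we get $p_n\simeq\tau\circ p_n\simeq -p_n$, so $2p_n\simeq 0$, and since $2$ is invertible $p_n\simeq 0$; therefore $\Sym^n(t[-1])\simeq 0$ for $n\geq 2$. (Equivalently one can phrase this via the décalage equivalence $\Sym^n(t[-1])\simeq(\Alt^n t)[-n]$ together with the fact that $\Alt^n t$ is a retract of $\Alt^2 t\otimes t^{\otimes(n-2)}$ through the antisymmetrizer, so that $\Alt^2 t\simeq 0$ forces $\Alt^n t\simeq 0$.)

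The algebra of augmented and square-zero algebras used in the reduction is routine. The step requiring genuine care — and the only place where the two hypotheses are used essentially — is the equivariant bookkeeping of the last paragraph: identifying rational homotopy orbits with the splitting of the averaging idempotent, and correctly tracking the Koszul sign through $t[-1]^{\otimes 2}\simeq(t^{\otimes 2})[-2]$. Both are standard in a $\Q$-linear stable symmetric monoidal \icat; I expect no real obstacle beyond making these manipulations coherent.
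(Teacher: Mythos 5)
Your proof is correct and follows essentially the same route as the paper: reduce, via the conservativity of the augmentation-ideal functor (equivalently your split fibre sequence $\aid(B)\to B\to\one$) and the identification $\aid(\free(t[-1]))\simeq\coprod_{n>0}\Sym^n(t[-1])$, to the vanishing $\Sym^n(t[-1])\simeq 0$ for $n\geq 2$, and deduce that vanishing from $\Alt^2(t)\simeq 0$ together with $\Q$-linearity. The paper's proof is simply terser, citing \cite[Prop.~7.3.4.5, Lem.~7.3.4.11]{HA} for the reduction and \cite[Lemma~5.19]{MR4133164} for the reduction of symmetric powers to the homotopy category, and leaving the Koszul-sign/averaging-idempotent computation you spell out implicit.
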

		\begin{proof}
			By~\cite[Lemma~7.3.4.11]{HA}, the augmentation ideal functor is conservative.
			By~\cite[Proposition~7.3.4.5]{HA}, we have $I(\free(t[-1]))=\coprod_{n>0}\Sym^n(t[-1])$.
                        By~\cite[Lemma~5.19]{MR4133164}, the symmetric powers may be computed at the level of homotopy categories.
                        Using our assumption on~$t$ we see that $\Sym^n(t[-1])\simeq 0$ for all $n\geq 2$ so that $I(\free(t[-1]))\simeq t[-1]$.
			The claim follows.
		\end{proof}
		
                As shown in \cite{raskin-dgm} and~\cite{barkan:square-zero}, modules over split-square zero extensions are intimately related to nilpotent operators.
                Here we promote their observation to a monoidal equivalence.
                The extra assumptions are probably superfluous for this.
                \begin{prop}
                \label{sta:Mod-fix}
                If $\catC$ is $\Q$-linear and $\Alt^2(t)\simeq 0$, the functor $\fib(N)\colon\LFnil{\catC}{-\otimes t}\to\catC$ is monadic and factors through the following  monoidal equivalence.
                \[
                \begin{tikzcd}
                \LFnil{\catC}{-\otimes t}
                \ar[rr, "\sim"]
                \ar[dr, "\fib(N)" swap]
                &&
                \Mod_{\one\oplus t[-1]}(\catC)
                \ar[ld, "\textup{forget}"]
                \\
                &
                \catC
                \end{tikzcd}
                \]
                \end{prop}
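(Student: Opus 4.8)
The plan is to identify both sides with modules over a common algebra and then invoke monadicity. First I would set up the monadic adjunction on the left. The functor $(N=0)\colon\catC\to\LFnil{\catC}{-\otimes t}$ was shown in \Cref{sta:N-fib-projection-formula} to be monoidal, and by \Cref{sta:N=0-fib} it admits the right adjoint $\fib(N)$. I would like to invoke $(N=0)$ being \emph{left} adjoint to $\fib(N)$ via the Barr--Beck--Lurie theorem, but the handedness in \Cref{sta:N=0-fib} is the opposite. The fix is to observe that $\LFnil{\catC}{-\otimes t}$ is stable and, by \Cref{sta:C^Tnil-compactly-generated}, compactly generated with generators in the image of $(N=0)$ applied to compacts; one checks $\fib(N)$ preserves colimits (it is a finite limit of colimit-preserving functors, $\catC$ being stable) and that it is conservative (this is exactly the last paragraph of the proof of \Cref{sta:C^Tnil-compactly-generated}: $\fib(f)\simeq 0$ forces $f$ an equivalence, hence, being ind-nilpotent, $X\simeq X[f^{-1}]\simeq 0$). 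So $\fib(N)$ itself is a colimit-preserving conservative functor admitting a left adjoint $(N=0)$, and its left adjoint $(N=0)$ is monoidal; this is the precise situation in which \cite[Theorem~4.8.5.11]{HA} (or~\cite[Proposition~4.7.3.3]{HA} for the monadic part) applies. Hence $\fib(N)$ is monadic and exhibits $\LFnil{\catC}{-\otimes t}\simeq\Mod_{B}(\catC)$, monoidally, for $B$ the algebra $\fib(N)(N=0)(\one)=\fib(\one\xto{0}t)=\one\oplus t[-1]$ — with its algebra structure coming from the monad, which is itself oplax/lax-monoidally built from the monoidal adjunction, so $B$ is a \emph{commutative} algebra and the equivalence is one of \emph{symmetric monoidal} \icats.

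The one subtle point is to identify the monad $\fib(N)\circ(N=0)$, as an algebra in $\catC$, with the split square-zero extension $\one\oplus t[-1]$, and not merely its underlying object. Here I would use \Cref{sta:N-fib-projection-formula} a second time: the projection formula $\fib(f)\otimes Y\isoto\fib(f\boxtimes 0)$ specialized to $(X,f)=(N=0)(\one)$ shows the monad is $Y\mapsto (\one\oplus t[-1])\otimes Y$, i.e.\ tensoring with the object $\one\oplus t[-1]$, so the monad is the one associated to the commutative algebra $\one\oplus t[-1]$ provided we know its algebra structure is the square-zero one. For that I invoke \Cref{sta:free=ssze}: under the hypotheses $\catC$ $\Q$-linear and $\Alt^2(t)\simeq 0$, any augmented algebra with augmentation ideal $t[-1]$ whose multiplication kills $\Sym^{\geq 2}$ — which is automatic for the monad here since $t[-1]$ is an odd suspension and $\Sym^n(t[-1])\simeq 0$ for $n\geq 2$ by \cite[Lemma~5.19]{MR4133164} — is forced to be the split square-zero extension. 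Concretely: the monad, being built from $(N=0)$ which is monoidal, receives a canonical map from $\free(t[-1])$ (adjoint to the inclusion of the augmentation ideal), and \eqref{eq:free-ssze}, \Cref{sta:free=ssze} identify this with $\one\oplus t[-1]$ as augmented commutative algebras.

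I expect the main obstacle to be bookkeeping the \emph{monoidal} enhancement of the monadic equivalence rather than any conceptual difficulty: one must check that the comparison functor produced by monadicity is symmetric monoidal, which requires the left adjoint $(N=0)$ to be symmetric monoidal (done in \Cref{sta:N-fib-projection-formula}) \emph{and} the projection formula of \Cref{sta:N-fib-projection-formula} to hold, so that $\fib(N)$ is a lax-monoidal right adjoint whose associated monad is genuinely $\otimes$-with-an-algebra; then \cite[Corollary~4.8.5.14 and Proposition~4.8.5.16]{HA} (monoidal Barr--Beck--Lurie) upgrade the equivalence to a monoidal one. The remaining assertions — that $\fib(N)$ is monadic and that the triangle with \textup{forget} commutes — are then immediate, the latter because both composites are the forgetful functor to $\catC$ and the monadic comparison functor is by construction compatible with the forgetful functors. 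Everything else (conservativity, colimit-preservation, adjunctions) has already been established in \Cref{sta:C^T-basics}, \Cref{sta:N=0-fib}, \Cref{sta:C^Tnil-compactly-generated} and \Cref{sta:N-fib-projection-formula}, so the proof is essentially an assembly of these pieces.
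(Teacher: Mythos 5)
Your proposal is correct and follows essentially the same route as the paper's proof: conservativity and colimit-preservation of $\fib(N)$ (from \Cref{sta:C^Tnil-compactly-generated}), monoidality of the left adjoint $(N=0)$ plus the projection formula (from \Cref{sta:N-fib-projection-formula}), a monoidal Barr--Beck--Lurie theorem (the paper cites \cite[Proposition~5.29]{MR3570153} rather than assembling it from \cite{HA}~\S4.8.5, but these are the same package), and finally identification of the algebra $\fib(N)(\one,0)$ with the split square-zero extension via \Cref{sta:free=ssze}. One small note: your momentary worry about the handedness in \Cref{sta:N=0-fib} is unfounded — that lemma already states $(N=0)$ admits $\fib(N)$ as right adjoint, which is exactly the orientation Barr--Beck--Lurie needs — and the paper exhibits the augmentation on $\fib(N)(\one,0)$ more directly by applying $\fib(N)$ to the unit of $\pi\dashv\pirn$ rather than by your argument through $\free(t[-1])$, but both reach the same conclusion.
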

                \begin{proof}
                By \Cref{sta:C^Tnil-compactly-generated}, $\fib(N)$ is conservative and preserves all colimits.
                Its left adjoint $(N=0)$ is monoidal and they satisfy the projection formula, by \Cref{sta:N-fib-projection-formula}.
                It then follows from a version of Barr--Beck--Lurie~\cite[Proposition~5.29]{MR3570153} that we have a monoidal equivalence as claimed with modules over the algebra~$\fib(N)(\one,0)$.
                It remains to identify the latter with the split square-zero extension~$\one\oplus t[-1]$.
                The unit of the adjunction $\fib(N)(\one,0)\to\fib(N)\pirn\pi(\one,0)\simeq\one$ exhibits this algebra as augmented.
                The claim now follows from \Cref{sta:free=ssze}.
                \end{proof}

                \begin{rmk}
                \label{cons:square-zero}
                One can describe the equivalence of \Cref{sta:Mod-fix} informally as follows.
                (We write $A=\one\oplus t[-1]$.)
                Starting with $(X,f)\in\LFnil{\catC}{-\otimes t}$, the corresponding $A$-module is the fiber~$\fib(f)$  with the $A$-module action induced by the projection formula and adjunction:
                \[
                \fib(N)(X,f)\otimes \fib(N)(N=0)(\one)\simeq\fib(N)(N=0)\fib(N)(X,f)\to\fib(N)(X,f).
                \]
                Conversely, an $A$-module~$M$ is sent to the pair $(M\otimes_A\one,M\otimes_A\one\to M\otimes_At)$ where the map is induced by the cofiber sequence $A\xto{\augm}\one\to t$, where $\augm$ denotes the augmentation.

                Note in particular that under this identification, the forgetful functor~$\pi$ corresponds to the base change functor~$\augm^*$ (and thus~$\pirn$ to~$\augm_*$).
			\end{rmk}

		\subsection{Comodules over cofree coalgebras}
		\label{sec:comodules}
		While the contents of this section will not be used in the sequel, we include it to complete the picture described in \Cref{thm:summary-Mod-coMod-lax} and to explain how the equivalence of \Cref{sta:Mod-fix} may be viewed as a form of Koszul duality.

                We continue with the previous setting. Fix $\catC\in\Prloost$ and $t\in\catC$. %
                Let us denote the corresponding split square-zero extension by~$A=\one\oplus t[-1]$, with augmentation~$\augm\colon A\to\one$.
		An important object in~$\Mod_A$ is $\augm_*\one$, that is, $\one\in\catC$ viewed as an $A$-module via the augmentation.
		We have
		\[
		\bH:=\bH(t):=\augm^*\augm_*\one=\one\otimes_A\one=\mathrm{Bar}(A,\augm)\in\catC,
		\]
		the Bar construction of the augmented algebra $\augm\colon A\to\one$ in the sense of~\cite[\S\,5.2.2]{HA}.
		As such it is canonically a commutative bialgebra, by which we mean an~$(\E_\infty,\E_1)$-bialgebra in~$\catC$.\footnote{That is, an object of $\coAlg_{\E_1}(\CAlg(\catC))$, where $\CAlg(\catC)$ has the monoidal structure of~\cite[Example~3.2.4.4]{HA}.}

		\begin{rmk}
			\label{rmk:coMod-monoidal}
			The \icat of (left) $\bH$-comodules admits a monoidal structure such that the forgetful functor $\coMod_\bH\to\catC$ is monoidal.
			Informally, it can be described as follows.
			If $X,Y\in\catC$ with coaction maps $a:X\to \bH\otimes X$, $b:Y\to \bH\otimes Y$ then their tensor product is the object $X\otimes Y$ with coaction map (see e.g.~\cite[Corollaire 1.25]{ayoub-h1} for ordinary categories)
			\[
			X\otimes Y\xto{a\otimes b}(\bH\otimes X)\otimes (\bH\otimes Y)\simeq (\bH\otimes \bH)\otimes (X\otimes Y)\to \bH\otimes (X\otimes Y)
			\]
			where the last map is induced by the multiplication on~$\bH$.
			
			This is made precise in~\cite[Proposition~3.16]{beardsley:coHA} which shows more generally the existence of coCartesian fibrations\footnote{We write $\coMod$ for the \icat of left comodules.}
			\[
			\coMod(\catC)^\otimes\xto{p}\coAlg(\catC)^{\otimes}\xto{q}\E_\infty^\otimes
			\]
			such that pulling back~$p$ along $\bH:\E_\infty^\otimes\to\coAlg(\catC)^\otimes$ produces the monoidal structure on~$\coMod_\bH(\catC)$.
                        It endows the forgetful functor $\coMod_{\bH}(\catC)\to\catC$ with a monoidal structure.
		\end{rmk}

		\begin{rmk}
                        Under the equivalence of \Cref{sta:Mod-fix}, the functor $\varepsilon^*\colon \Mod_A \to \catC$ corresponds to the projection~$\pi$, and thus its right adjoint~$\varepsilon_*$ corresponds to~$\pirn$, see \Cref{cons:square-zero}. 
			By \Cref{sta:pi_rho^nil}, the functor $\pirn$ sends~$\one$ to
			\[
			\pirn\one\simeq\left(\oplus_{n\geq 0}t^{\otimes n},\oplus_{n\geq 0}t^{\otimes n}\xto{\text{proj}}\oplus_{n>0}t^{\otimes n}\right) \in\LFnil{\catC}{-\otimes t}
			\]
			so that
			\[
			\bH\simeq\bigoplus_{n\geq 0}t^{\otimes n}.
			\]
			As a coalgebra, it agrees with the cofree conilpotent coalgebra  on~$t$, in the sense of \cite[\S 3]{FG}.
		\end{rmk}

  Also the commutative algebra structure on $\bH$ is well understood, in the situation we are mostly interested in.

  \begin{prop}
\label{lem:H-free}%
Assume $\catC$ is $\Q$-linear and $\Alt^2(t)\simeq 0$. The canonical inclusion of the summand $t\to \bH$ exhibits the latter as the free commutative algebra generated by~$t$.
\end{prop}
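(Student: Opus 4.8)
The plan is to read off $\bH=\mathrm{Bar}(A,\augm)=\one\otimes_A\one$ from the fact that, under the running hypotheses, $A$ is a \emph{free} commutative algebra, so that $\bH$ becomes a pushout of free algebras and hence free itself. First I would record two reductions. Since $\one$ and $A$ lie in $\CAlg(\catC)$, the relative tensor product $\bH=\one\otimes_A\one$ agrees with the pushout $\one\sqcup_A\one$ computed in $\CAlg(\catC)$ (this uses $\catC\in\Prloost$, so that $\otimes$ commutes with geometric realizations; see e.g.~\cite[\S\,3.2.4]{HA}), where both structure maps $A\to\one$ are the augmentation $\augm$. Second, by \Cref{sta:free=ssze} the augmented algebra $A$ is identified with the free commutative algebra $\free(t[-1])=\Sym(t[-1])$ on $t[-1]$, in such a way that $\augm\colon A\to\one$ becomes $\Sym$ applied to the zero map $t[-1]\to 0$ (note $\Sym(0)\simeq\one$, and the augmentation of a free augmented commutative algebra is the one that is ``zero on generators'').

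Since $\Sym=\free\colon\catC\to\CAlg(\catC)$ is a left adjoint, it preserves pushouts, so
\[
\bH\;\simeq\;\one\sqcup_A\one\;\simeq\;\Sym(0)\sqcup_{\Sym(t[-1])}\Sym(0)\;\simeq\;\Sym\bigl(0\sqcup_{t[-1]}0\bigr),
\]
and in the stable \icat $\catC$ the pushout $0\sqcup_{t[-1]}0$ is, by definition of the suspension, $\Sigma(t[-1])\simeq t[-1][1]\simeq t$. Hence $\bH\simeq\Sym(t)$ as commutative algebras.

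It then remains to check that this equivalence is the one induced by the summand inclusion $t=t^{\otimes 1}\hookrightarrow\bigoplus_{n\ge0}t^{\otimes n}\simeq\bH$ of \Cref{thm:summary-Mod-coMod-lax}. The equivalence produced above is $\Sym$ applied to the canonical equivalence $t\isoto 0\sqcup_{t[-1]}0$, precomposed with the unit $0\sqcup_{t[-1]}0\to\Sym(0\sqcup_{t[-1]}0)$ of the $\free\dashv\mathrm{forget}$ adjunction; this yields a map $u\colon t\to\bH$ which by construction exhibits $\bH$ as free on $t$, and one identifies $u$ with the summand inclusion by restricting both maps to the module of indecomposables $\bar\bH/(\bar\bH\cdot\bar\bH)\simeq t$ of the augmented algebra $\bH$, on which both become the identity of $t$ (equivalently, one reads this off the two-sided bar complex $\mathrm{Bar}_\bullet(\one,A,\one)$, where $u$ is the inclusion of the nondegenerate $1$-simplices). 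Alternatively, the whole statement can be obtained by the Yoneda lemma: using that $\one$ is initial in $\CAlg(\catC)$ one gets $\Hom_{\CAlg(\catC)}(\bH,B)\simeq\Omega\,\Hom_{\CAlg(\catC)}(A,B)$ based at $A\xrightarrow{\augm}\one\to B$, and since $A\simeq\Sym(t[-1])$ this is $\simeq\Map_\catC(t[-1],B)$ looped at $0$, which is $\simeq\Map_\catC(t,B)$ naturally in $B$ because $\Map_\catC(t[-1],B)=\Omega^\infty\spMap_\catC(t[-1],B)$ is an infinite loop space.

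The argument is essentially formal: the only substantial input is \Cref{sta:free=ssze}, and this is precisely where the hypotheses ($\Q$-linearity and $\Alt^2(t)\simeq 0$) enter — without them $A$ is merely a square-zero extension, not a free algebra, and the conclusion is false. The one step requiring genuine (if routine) bookkeeping is the final identification of the generator $u$ with the canonical summand inclusion; everything else (left adjoints preserve colimits, suspensions in a stable category, pushouts of commutative algebras as relative tensor products) is formal.
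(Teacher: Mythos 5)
Your proof is correct, but it takes a genuinely different route from the paper. The paper verifies the universal property of the free commutative algebra directly, via~\cite[Proposition~3.1.3.13]{HA}: it reduces to checking that each map $\Sym^n(t)\to\bH$ is the inclusion of the summand $t^{\otimes n}$, and then establishes this by an explicit inductive computation of the multiplication $\mu_n$ on $\bH$ (using the matrix description of the projection formula from \Cref{sta:pi-rho-projection-formula}), showing $\mu_n$ restricted to $t^{\otimes n}$ is multiplication by $n!$ and invoking $\Q$-linearity and the trivial $\Sigma_n$-action separately at that stage. You instead observe that $\bH=\one\otimes_A\one$ agrees with the pushout $\one\sqcup_A\one$ in $\CAlg(\catC)$, that under \Cref{sta:free=ssze} this becomes $\free(0)\sqcup_{\free(t[-1])}\free(0)$, and that $\free$ preserves pushouts, giving $\bH\simeq\free(0\sqcup_{t[-1]}0)\simeq\free(t)$ since $0\sqcup_{t[-1]}0$ is the suspension of $t[-1]$ in the stable \icat~$\catC$. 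This is cleaner and more robust: all of the hypothesis-dependence is funnelled into the single citation of \Cref{sta:free=ssze}, and you avoid the inductive multiplication computation entirely. What the paper's more computational route buys is an explicit description of the multiplication on $\bH$, which is reused (e.g.\ informally in \Cref{rmk:derivation}); your argument does not give that for free. The one place your write-up is looser is the final identification of the resulting generator $u\colon t\to\bH$ with the summand inclusion: the ``indecomposables'' check is a little circular as stated (it presupposes understanding the multiplication), whereas the bar-complex reading — identifying $u$ with the inclusion of the suspension of the nondegenerate $1$-simplices, which under $\bH\simeq\pi\pirn\one\simeq\bigoplus_n t^{\otimes n}$ is visibly the summand $t$ — is the clean way to close that gap. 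With that flagged, your argument stands and is a valid alternative proof.
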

 \begin{proof}
 By~\cite[Proposition~3.1.3.13]{HA}, it suffices to show that for each $n\geq 0$ (using~\cite[Notation~3.1.3.10]{HA}), the induced map~$\Sym^n(t)\to\bH=\oplus_{n\geq 0}t^{\otimes n}$ is equivalent to the inclusion of the summand~$t^{\otimes n}$.
 Explicitly, this is the composite
 \[
 t^{\otimes n}/\Sigma_n\to \bH^{\otimes n}/\Sigma_n\xto{\mu_n} \bH,
 \]
 where~$\Sigma_n$ acts by permuting the factors and~$\mu$ is the multiplication on~$\bH$.
 Now, by our assumption on~$t$, the canonical map $t^{\otimes n}\to t^{\otimes n}/\Sigma_n$ is an equivalence.
 (One may work at the level of homotopy categories, by~\cite[Lemma~5.19]{MR4133164}.)
 It remains to understand the multiplication on~$\bH$.
 We will show that the composite $t^{\otimes n}\to\bH^{\otimes n}\xto{\mu_n}\bH$ factors through the summand~$t^{\otimes n}\into\bH$ and is multiplication by~$n!$ on~$t^{\otimes n}$.
 Given that~$\catC$ is $\Q$-linear this will complete the proof.

 We show that claim by induction on~$n$, with~$n=0,1$ being clear.
 For $n\geq 2$ we use the explicit description of the projection formula in \Cref{sta:pi-rho-projection-formula}.
 It tells us that multiplication
 \[
 \mu_2\colon\pirn\one\otimes\pirn\one\to\pirn\pi\pirn\one\xto{\text{counit}}\pirn\one
 \]
 on the summand $t\otimes\pi\pirn\one$ is multiplication by the ``total degree'':
 \[
 \resizebox{4.4em}{!}{$
\begin{pmatrix}
    0&0&0&\cdots\\
   1&0&0\\
   0&2&0\\
   0&0&3\\
   \vdots&&&\ddots
\end{pmatrix}
$ }\colon \bigoplus_{\ell\geq 0}t\otimes t^{\otimes \ell}\to\bigoplus_{m\geq 0}t^{\otimes m}.
 \]
 (Recall that the counit of the adjunction projects onto the zeroth summand.)
 By induction, the composite
 \[
 \mu_n\colon t^{\otimes n}=t\otimes t^{\otimes (n-1)}\xrightarrow[=(n-1)!]{\mu_{n-1}}t\otimes  t^{\otimes (n-1)}\xrightarrow[=n]{\mu_2}t^{\otimes n}
 \]
 is indeed multiplication by~$n!$.
 \end{proof}

We remark that by the associativity of tensor products, for any $M\in\catC$ we have
$$
\one\otimes_A\augm_*M \simeq (\one\otimes_A\one)\otimes M
$$
inducing an equivalence of endofunctors $\varepsilon^*\varepsilon_*\simeq\bH\otimes-$. This can be promoted to an equivalence of comonads, using the very definition of the coalgebra structure on $\bH=\mathrm{Bar}(A,\varepsilon)=\one\otimes_A\one$.  We include a sketch of a proof of this fact. See also~\cite[Proposition 3.28]{bcn:pd-operads-lie}.

\begin{lemma}\label{sta:comonads}
There is a canonical equivalence of comonads $\varepsilon^*\varepsilon_*\simeq \bH\otimes-$.
\end{lemma}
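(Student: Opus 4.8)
The statement to establish is that the comonad $\varepsilon^*\varepsilon_*$ on $\catC$ is equivalent, \emph{as a comonad}, to the comonad $\bH\otimes-$ whose comultiplication and counit come from the coalgebra structure on $\bH=\mathrm{Bar}(A,\varepsilon)=\one\otimes_A\one$. The underlying equivalence of endofunctors $\varepsilon^*\varepsilon_*\simeq\bH\otimes-$ is already recorded just before the statement: for $M\in\catC$ one has $\varepsilon^*\varepsilon_*M=\one\otimes_A\augm_*M\simeq(\one\otimes_A\one)\otimes M=\bH\otimes M$ by associativity of the relative tensor product (the module $\augm_*M$ is $A$ acting on $M$ through $\varepsilon$, so it is $\one\otimes M$ as a left $A$-module). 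So the content of the lemma is the \emph{compatibility of this equivalence with the comonad structures}, and I expect this to be the only real point.

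First I would make precise which comonad structure sits on $\bH\otimes-$: the comultiplication $\bH\otimes-\to\bH\otimes\bH\otimes-$ is $\Delta_\bH\otimes\id$ where $\Delta_\bH\colon\bH\to\bH\otimes\bH$ is the coalgebra comultiplication on the Bar construction, and the counit $\bH\otimes-\to\id$ is $\epsilon_\bH\otimes\id$ with $\epsilon_\bH\colon\bH\to\one$ the coaugmentation; this is a comonad because $\bH$ is an $\E_1$-coalgebra (in fact a bialgebra, as noted in the text). The plan is then to identify both comonad structures through a single conceptual input: base change along the augmentation $\varepsilon\colon A\to\one$. The adjunction $\varepsilon^*\dashv\varepsilon_*$ between $\Mod_A$ and $\catC\simeq\Mod_\one$ is a morphism of module categories, and the comonad $\varepsilon^*\varepsilon_*$ is, by general descent/Barr–Beck formalism, computed by the cobar/Amitsur-type cosimplicial object whose value is $\one\otimes_A\cdots\otimes_A\one$. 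Concretely, $\varepsilon^*\varepsilon_*\varepsilon^*\varepsilon_*\simeq(\one\otimes_A\one)\otimes(\one\otimes_A\one)\otimes-$ and the comonad comultiplication $\varepsilon^*\eta_{\varepsilon_*}\varepsilon_*$ — where $\eta$ is the unit of $\varepsilon^*\dashv\varepsilon_*$ — is exactly the map induced on $\one\otimes_A\one$ by the unit $\one\to\varepsilon_*\varepsilon^*\one=\one\otimes_A\one$ inserted in the middle slot. But inserting $\one$ in the middle of $\one\otimes_A\one$ is precisely the face map $d_1$ of the bar cosimplicial object, which is the very definition of the comultiplication on $\mathrm{Bar}(A,\varepsilon)$ as an $\E_1$-coalgebra (see \cite[\S\,5.2.2]{HA}); likewise the comonad counit $\varepsilon^*\varepsilon_*\to\id$ is induced by the counit $\varepsilon_*\varepsilon^*\to\id$, i.e.\ by the augmentation collapsing $\one\otimes_A\one\to\one$, which is the coaugmentation of the Bar construction. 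Tensoring everything with $M$ and tracking these identifications gives the equivalence of comonads.

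To organize this cleanly I would (i) recall that for a morphism of $\E_\infty$-rings (here $A\to\one$) the associated comonad $\varepsilon^*\varepsilon_*$ on $\Mod_\one$ is canonically $\bigl(\one\otimes_A\one\bigr)\otimes-$ with comonad structure pulled back from the $\E_1$-coalgebra structure on $\one\otimes_A\one=\mathrm{Bar}(A,\varepsilon)$ — this is standard, and can be cited, e.g., from the treatment of descent comonads in \cite{HA} or the reference \cite{bcn:pd-operads-lie} already flagged in the text; (ii) invoke the equivalence $\catC\simeq\Mod_\one(\catC)$ to transport this to $\catC$; (iii) observe that the coalgebra $\mathrm{Bar}(A,\varepsilon)$ here is the specific $\bH$ under discussion, with its coaugmentation $\nu$ matching $\epsilon_\bH$. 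The main (and essentially only) obstacle is purely bookkeeping: one must check that the \emph{a priori} different ways of presenting $\varepsilon^*\varepsilon_*$ — via the projection-formula chain $\varepsilon^*\varepsilon_*M\simeq\varepsilon^*\varepsilon_*(\one)\otimes M$ versus via the cosimplicial bar object — carry the same higher-coherent comonad data; since both are manifestly induced by the unit/counit of the single adjunction $\varepsilon^*\dashv\varepsilon_*$ and the symmetric monoidal structure of the functors involved, this is a coherence statement that follows formally, and I would present it as such rather than chasing diagrams. Hence only a sketch is needed, matching the text's stated intent.
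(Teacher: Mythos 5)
Your plan is correct in spirit and would yield a valid proof, but it takes a genuinely different route from the one the paper adopts. The paper sidesteps the coherence bookkeeping you flag as ``the main obstacle'' by citing the Koszul-duality construction of \cite{DAN-COHEN-HOREV}, which already provides a factorization $\Mod_A\xrightarrow{\tilde{\varepsilon}^*}\coMod_\bH\xrightarrow{\mathrm{forget}}\catC$ of $\varepsilon^*$. Since the forgetful functor reflects colimits, $\tilde{\varepsilon}^*$ preserves them and admits a right adjoint $\tilde{\varepsilon}_*$, and the counit of the lifted adjunction induces a single \emph{map} of comonads $\varepsilon^*\varepsilon_*\to\mathrm{forget}\circ\mathrm{cofree}\simeq\bH\otimes-$; this map is then an equivalence because it already is one on underlying endofunctors --- precisely the associativity observation you recall at the start. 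The upshot is that the paper only has to produce one morphism of comonads and verify a statement about underlying functors (the easy part), whereas your route --- unwinding the cobar cosimplicial object for $\varepsilon^*\dashv\varepsilon_*$ and matching its face maps against the Bar-construction comultiplication on $\bH$ --- asks you to identify two full comonad structures term-by-term, and the higher coherences needed for that identification are exactly the content that is \emph{not} ``purely formal'' in an \icategorical setting and should not be waved away. Your outline is in fact close to how one would \emph{prove} the Dan-Cohen--Horev factorization from scratch; the economical argument is to cite it and reduce to the underlying-functor check, as the paper does.
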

\begin{proof}
    The construction of the first part of \cite[Corollary 8.4.3]{DAN-COHEN-HOREV} defines a factorization of the functor $\varepsilon^*$ as
    $$
    \Mod_A\xto{\widetilde{\varepsilon}^*}\coMod_{\bH}\xto{\text{forget}}\catC.
    $$
    Since the forgetful functor reflects colimits, we deduce that $\widetilde{\varepsilon}^*$ preserves them and therefore (use e.g.~\cite[Remark~5.5.2.10]{lurie}) admits a right adjoint $\widetilde{\varepsilon}_*$. We then obtain a map of comonads $\varepsilon^*\varepsilon_*\to (\mathrm{forget})\circ(\mathrm{cofree})\simeq \bH\otimes-$ induced by the counit of the adjunction $(\widetilde{\varepsilon}^*,\widetilde{\varepsilon}_*)$. This is easily seen to be an equivalence by what we remarked above.
    \end{proof}
    Given any $(X,f)\in \LFnil{\catC}{-\otimes t}$, we can extend uniquely  the map~$f$   to a coaction map $\hat{f}\colon X\to \bH\otimes X$.
			It is then reasonable to expect that $\LFnil{\catC}{-\otimes t}\simeq\coMod_\bH$ as we now show.
		 \begin{prop}
		 	\label{sta:Mod-coMod}
		 	Assume $\catC$ is $\Q$-linear and $\Alt^2(t)\simeq 0$. The functor $\pi:\LFnil{\catC}{-\otimes t}\to\catC$ is comonadic and factors through the following monoidal equivalence.
		   	\[
		 	 \begin{tikzcd}
                         \LFnil{\catC}{-\otimes t}
			 	\ar[rr, "\sim"]
			  	\ar[dr, "\pi" below]
			  	&&
			  	\coMod_\bH
			  	\ar[dl, "\mathrm{forget}"]
			  	\\
			  	&
			  	\catC
			  \end{tikzcd}
		   	\]
		 \end{prop}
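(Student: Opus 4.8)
The plan is to deduce \Cref{sta:Mod-coMod} from the monoidal equivalence $\LFnil{\catC}{-\otimes t}\simeq\Mod_A$ of \Cref{sta:Mod-fix} together with the factorization $\varepsilon^*\colon\Mod_A\xto{\widetilde\varepsilon^*}\coMod_\bH\to\catC$ already produced in the proof of \Cref{sta:comonads}. First I would recall that under the identification $\Mod_A\simeq\LFnil{\catC}{-\otimes t}$ of \Cref{cons:square-zero}, the forgetful functor $\pi$ corresponds to $\varepsilon^*=(-)\otimes_A\one$. So it suffices to show that $\widetilde\varepsilon^*\colon\Mod_A\to\coMod_\bH$ is a monoidal equivalence, compatible with the forgetful functors down to~$\catC$; composing with the equivalence of \Cref{sta:Mod-fix} then yields the claimed triangle, and in particular exhibits $\pi$ as comonadic.

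The key steps, in order, are: (1) observe that $\widetilde\varepsilon^*$ preserves all colimits (shown in the proof of \Cref{sta:comonads}, since the forgetful functor $\coMod_\bH\to\catC$ reflects them and $\varepsilon^*$ preserves them), so by the adjoint functor theorem it admits a right adjoint $\widetilde\varepsilon_*$; moreover $\widetilde\varepsilon^*$ is conservative because $\varepsilon^*=(\mathrm{forget})\circ\widetilde\varepsilon^*$ is conservative (using that $\varepsilon^*$ is the forgetful functor $\pi$, which is conservative by \Cref{sta:C^T-basics}\eqref{it:C^T-basics.2}). (2) By the comonadic Barr--Beck--Lurie theorem, $\widetilde\varepsilon^*$ is comonadic, hence identifies $\Mod_A$ with comodules over the comonad $\widetilde\varepsilon^*\widetilde\varepsilon_*$ on $\catC$. (3) Identify this comonad with $\bH\otimes-$: by \Cref{sta:comonads} there is a canonical equivalence of comonads $\varepsilon^*\varepsilon_*\simeq\bH\otimes-$, and since $\mathrm{forget}$ is itself comonadic with associated comonad $\bH\otimes-$, the comparison map $\widetilde\varepsilon^*\widetilde\varepsilon_*\to\bH\otimes-$ induced by the counit of $(\widetilde\varepsilon^*,\widetilde\varepsilon_*)$ is an equivalence of comonads. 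Therefore $\widetilde\varepsilon^*$ is an equivalence onto $\coMod_\bH$. (4) Upgrade to a monoidal equivalence: the factorization of \cite[Corollary~8.4.3]{DAN-COHEN-HOREV} is compatible with the monoidal structures, so $\widetilde\varepsilon^*$ is monoidal; an equivalence of monoidal \icats which is an equivalence on underlying \icats is automatically a monoidal equivalence. Finally, assemble with \Cref{sta:Mod-fix}.

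The main obstacle I expect is step (4), the monoidal refinement: one must check that the comodule structure on $\pi(X,f)=X$ transported from $\Mod_A$ really is the one built from the coaction map $\hat f\colon X\to\bH\otimes X$ extending $f$, and that this assignment is symmetric monoidal with respect to the formula in \Cref{rmk:coMod-monoidal}. Concretely this means tracing the monoidal structure on $\coMod_\bH$ (pulled back along $\bH\colon\E_\infty^\otimes\to\coAlg(\catC)^\otimes$ as in \cite{beardsley:coHA}) through the equivalence and matching it with the $\boxtimes$ on $\LFnil{\catC}{-\otimes t}$ of \Cref{rmk:monoidal-structure-LFnil}; since both monoidal structures lie over the one on $\catC$ via a conservative (indeed comonadic) monoidal forgetful functor, one reduces to checking compatibility of the coalgebra/comonad structures, which is exactly what \Cref{sta:comonads} and \Cref{lem:H-free} were set up to provide. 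The remaining bookkeeping — that $\pi$ is comonadic, that the triangle commutes — is then formal.
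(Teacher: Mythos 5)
Your overall strategy is close to the one the paper itself entertains, and it is indeed how they organize the argument in a remark after the proof: transport along the equivalence of \Cref{sta:Mod-fix}, observe that $\pi$ corresponds to $\varepsilon^*$, and reduce to the comonadicity of $\varepsilon^*$ together with the identification of comonads in \Cref{sta:comonads}. But as written there are two genuine gaps.

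The first is your step (2). You assert that $\widetilde\varepsilon^*$ (what you actually need is $\varepsilon^*\colon\Mod_A\to\catC$) is comonadic ``by the comonadic Barr--Beck--Lurie theorem'', having checked only that it is a conservative left adjoint. That is not the full hypothesis of BBL: you must also verify that $\varepsilon^*$-split cosimplicial objects in $\Mod_A\simeq\LFnil{\catC}{-\otimes t}$ have totalizations preserved by $\varepsilon^*$. This is exactly the nontrivial content of the paper's proof: the limit of such a diagram exists and is preserved once computed in $\LF{\catC}{-\otimes t}$ (which creates limits over $\catC$ by~\cite[Proposition II.1.5(v)]{Nikolaus-Scholze}), but one must \emph{then} show the limiting object $(Y,g)$ is still ind-nilpotent, i.e., $Y[g^{-1}]\simeq 0$. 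The paper proves this by a compactness/splitting argument that you have not supplied. (Alternatively, the paper's subsequent remark indicates that one could cite the descent results of Mathew and Mathew--Naumann--Noel to get comonadicity of $\augm^*$ for the split square-zero extension; but that too requires an argument, not just an appeal to BBL.) Also note the minor confusion: if $\widetilde\varepsilon^*$ has codomain $\coMod_\bH$, then $\widetilde\varepsilon^*\widetilde\varepsilon_*$ is a comonad on $\coMod_\bH$, not on $\catC$; what you want is $\varepsilon^*\varepsilon_*$ as a comonad on~$\catC$.

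The second gap is your step (4). You claim that the factorization of $\varepsilon^*$ through $\coMod_\bH$ from~\cite[Corollary 8.4.3]{DAN-COHEN-HOREV} ``is compatible with the monoidal structures, so $\widetilde\varepsilon^*$ is monoidal.'' The paper explicitly disclaims this: in a footnote inside the proof they write that they would like to conclude $\tilde\augm^*$ is monoidal for formal reasons, but they ``don't have a reference for that and instead proceed by hand.'' Their hands-on argument builds an explicit \emph{inverse} monoidal functor $\coMod_\bH\to\LFnil{\catC}{-\otimes t}$, using the coaction map $M\to\bH\otimes M$ to land in the oriented fiber product, the canonical algebra map $\bH\to\one\oplus t$ coming from \Cref{lem:H-free}, and the construction of~\Cref{cons:LEq-monoidal}. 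So the monoidal refinement is not a ``bookkeeping'' afterthought; it is a step the authors felt compelled to do concretely, and your proposal would need to either carry out an analogous construction or actually verify the lax monoidal naturality of the Dan-Cohen--Horev factorization.
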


		 \begin{proof}
   We first show that the functor $\pi$ is comonadic. By \Cref{sta:pi_rho^nil}, it is a left-adjoint, and it follows from \Cref{sta:C^T-basics} that it is conservative. As the functor $\LF{\catC}{-\otimes t}\to\catC$ creates limits \cite[Proposition II.1.5(v)]{Nikolaus-Scholze}, we are left to show that the limit $(Y,g)$ in $\LF{\catC}{-\otimes t}$ of any $\pi$-split cosimplicial object $(X^\bullet,f^\bullet)$ in $\LFnil{\catC}{-\otimes t}$, lies in $\LFnil{\catC}{-\otimes t}$. We shall prove that any map $K\to Y[g^{-1}]$ in~$\catC$ with $K$ compact is nullhomotopic. By compactness of~$K$, such a map factors through some~$Y\otimes t^{\otimes n}$. As the composite $K\to Y[g^{-1}]\to X^0[(f^0)^{-1}]\simeq 0$ is null-homotopic, we may choose~$n$ large enough so that the composite map $K\to Y\otimes t^{\otimes n}\to X^0\otimes t^{\otimes n}$ is null-homotopic already. However, by our assumption on the diagram, the second map is a split monomorphism and we deduce that $K\to Y\otimes t^{\otimes n}\to Y[g^{-1}]$ is also null-homotopic, as wanted.

              We then deduce an equivalence of \icats $\LFnil{\catC}{-\otimes t}\simeq\coMod_{\pi\pirn}\simeq\coMod_{\augm^*\augm_*}\simeq\coMod_\bH$ where the second equivalence follows from \Cref{sta:Mod-fix} and the last one from \Cref{sta:comonads}.
              We will now show that this equivalence underlies a monoidal equivalence by exhibiting a monoidal inverse.\footnote{We proved in particular that the functor $\tilde{\augm}^*\colon\Mod_A\to\coMod_\bH$ in the previous proof is an equivalence.
                   At this point we would like to conclude that the equivalence~$\tilde{\augm}^*$ is monoidal, for formal reasons.
                   However, we don't have a reference for that and instead proceed by hand.}
            
                Given an $\bH$-comodule~$M$ we may extract the coaction map $M\to \bH\otimes M$ in~$\catC$.
                This defines a lax monoidal functor $\coMod_{\bH}(\catC)^\otimes\to\Fun(\Delta^1,\catC)^\otimes$.
                By construction, postcomposing with $\mathrm{ev}_0$ yields the forgetful functor, while postcomposing with $\mathrm{ev}_1$ yields the forgetful functor composed with the lax monoidal functor~$\catT'=\bH\otimes-$.
                We deduce a lax monoidal functor $\coMod_{\bH}(\catC)^\otimes\to(\catC\overset{\to}{\times}_{\catT'}\catC)^\boxtimes$ which is automatically monoidal (see \Cref{cons:LEq-monoidal}). 

                As $\bH$ is a free commutative algebra on $t$ by \Cref{lem:H-free}, it is equipped with a canonical map of commutative algebras $\bH\to(\one\oplus t)$ which induces a natural transformation of lax monoidal functors $\catT'\to \catT=(\one\oplus t)\otimes-$ and hence, a monoidal functor $\catC\overset{\to}{\times}_{\catT'}\catC\to \catC\overset{\to}{\times}_{\catT}\catC$. The composition of the (monoidal) functors above $$\coMod_{\bH}(\catC)^\otimes\to (\catC\overset{\to}{\times}_{\catT'}\catC)^\boxtimes\to (\catC\overset{\to}{\times}_{\catT}\catC)^\boxtimes$$  lands inside $(\LFnil{\catC}{-\otimes t})^\boxtimes$ and can be seen to provide an inverse, on underlying \icats, to $\LFnil{\catC}{-\otimes t}\isoto\coMod_\bH$ above.
                 		\end{proof}
                   
                   \begin{rmk}
                        In the previous proof, in order to show that $\pi$ is comonadic, we could have just as well proved that the base change functor~$\augm^*$ induced by the augmentation of the split square-zero extension $\augm\colon A\to\one$ (see  \Cref{sta:free=ssze}) is comonadic (using \Cref{sta:Mod-fix} and \Cref{cons:square-zero}). This can be deduced from \cite[Proposition 3.22]{AkhilGalois} together with \cite[Example 4.5 and Proposition 4.7]{MR3570153}.
                   \end{rmk}
                
		\begin{rmk}
			Similarly, there is a canonical equivalence
			\[
			\LF{\catC}{-\otimes t}\simeq\coMod_{\hat{\bH}(t)},
			\]
			where
			\[
			\hat{\bH}(t)=\pi\rho\one\simeq\prod_{n\geq 0}t^{\otimes n}.
			\]
			Note that the comonadicity of $\pi\colon \LF{\catC}{-\otimes t} \to \catC$ follows  from 
			\Cref{sta:C^T-basics}(2) and \cite[Proposition~II.1.5(v)]{Nikolaus-Scholze}. We can  view this equivalence as defining the cofree coalgebra on~$t$. 
			See  \cite[Chapter 6, Section 2.7]{Gait-Rozen-II} and \cite[3.5]{FG}. %
		\end{rmk}
		
		\subsection{The derived category of \texorpdfstring{$(\varphi, N)$}{(phi,N)}-modules}
		\label{sec:phiNmod}
		We now give an example of a well-known \icat which can also be expressed in terms of modules/comodules/objects with monodromy using the equivalences above. This \icat is the natural target of the Hyodo--Kato realization functor that we will construct in \Cref{sec:dR-realization}.
		
		\begin{notn}
			\label{notn:K_0}
			Throughout this section we fix a perfect field $k$ of characteristic~$p>0$ and we let~$K_0=\mathrm{Frac}(W(k))$ denote the fraction field of its Witt vectors.
		\end{notn}
		
		Recall the following definition (see~\cite[\S\,4.2]{FontaineRepSem} or~\cite[\S\,1.15]{Bei-crys}).
		
		\begin{dfn}\label{dfn:phimod} \label{dfn:D-phi-N}
			Let $\varphi\colon K_0\to K_0$ denote the arithmetic Frobenius (so that it induces the absolute Frobenius $x\mapsto x^p$ on the residue field). 
			\begin{enumerate}
				\item
				A \emph{$\varphi$-module over}~$K_0$ is a finite-dimensional $K_0$-vector space~$D$ equipped with a $\varphi$-semilinear automorphism $\varphi_D\colon D\to D$, called the Frobenius too.
				Morphisms of $\varphi$-modules are $K_0$-linear maps that commute with the Frobenius.
				We denote by~${M}_{\varphi}(K_0)$ the abelian category of $\varphi$-modules over~$K_0$,  by~$\Dfp$ its bounded derived \icat and by $\Dp=\Ind(\Dfp)\in\Prlost$ the corresponding Ind-completion.
				\item
				A \emph{$(\varphi,N)$-module over}~$K_0$ is a $\varphi$-module~$(D, \varphi_D)$ over~$K_0$ equipped with a~$K_0$-linear endomorphism~$N$ such that $N\varphi_D = p \varphi_D N$.
				Again, morphisms of $(\varphi,N)$-modules are $K_0$-linear maps that commute with Frobenius and~$N$. Note that the condition $N\varphi_D = p \varphi_D N$ implies that~$N$ is nilpotent. %
				We denote by~$M_{(\varphi, N)}(K_0)$ the  category of $(\varphi,N)$-modules over~$K_0$,  by~$\DfpN$ its bounded derived \icat and by $\DpN=\Ind(\DfpN)\in\Prlost$ the corresponding Ind-completion.
				\item \label{pure}
				Assume that $k$ is a finite field of cardinality $q=p^a$. We say that a  $\varphi$-module $D$ over $K_0$ is \emph{pure of weight $i$ }if it is finite dimensional and such that each  eigenvalue $\lambda$ of the $K_0$-linear map  $\varphi_D^a\colon D\to D$ is an algebraic integer such that $|\lambda|=q^{i/2}$ under each field inclusion $\bar{\Q}\subset\C$. 
			\end{enumerate}
		\end{dfn}
		
		\begin{exm}\label{ex:phimods}
			\begin{enumerate}
				\item
				The unit for the tensor structure in~$M_{\varphi}(K_0)$ is the 1-dimensional vector space~$K_0$ with endomorphism the Frobenius lift~$\varphi$.
				\item
				The \emph{Tate twist}~$K_0(n)\in M_{\varphi}(K_0)$ for $n\in\Z$ is the same 1-dimensional vector space~$K_0$ with `twisted' endomorphism~$p^{-n}\varphi$.
				\item 
				Let~$D$ be a $\varphi$-module over~$K_0$.
				We denote by~$D(n)$ the tensor product $D\otimes K_0(n)$.
			\end{enumerate}
		\end{exm}
		
		\begin{rmk}
                \label{rmk:MphiN-tensor}
			Both $M_{\varphi}(K_0)$ and $M_{(\varphi, N)}(K_0)$ are $\Q_p$-linear Tannakian categories for which the functor sending~$(D, \varphi_D)$ to~$D\in \Mod_{K_0}$ (resp.~$(D, \varphi_D, N)\mapsto D$) is a fiber functor, that is, a faithful, exact, $\Q_p$-linear tensor functor.
			The tensor product of $\varphi$-modules is the tensor product on underlying $K_0$-vector spaces with Frobenius $\varphi_{D\otimes E}=\varphi_D\otimes\varphi_E$.
			For $(\varphi,N)$-modules, the tensor product on underlying $\varphi$-modules is as just described, and $N_{D\otimes E}=N_D\otimes\id+\id\otimes N_E$, see also \Cref{rmk:MphiN=LF} below.
		\end{rmk}

		\begin{rmk}
			\label{rmk:MphiN=LF}
			With this notation we may describe a $(\varphi,N)$-module as a $\varphi$-module~$D$ together with a morphism $N\colon D\to D(-1)$.
			In other words, $M_{(\varphi,N)}(K_0)$ is the category of lax fixed points of the functor~$D\mapsto D(-1)$ \emph{computed in $1$-categories}.
			In fact, this can be promoted to an equivalence of tensor categories, with the tensor structure on the lax fixed points of \Cref{LF-tensor}.
		\end{rmk}

                \begin{rmk}
                The tensor product described in \Cref{rmk:MphiN-tensor} may be derived (see~\cite[Proposition~3.2]{Drew:verdier} and~\cite[Corollary~4.8.1.14]{HA}) so that $\Dp$ and $\DpN$ become objects in~$\Prloost$.
		\end{rmk}

                \begin{rmk}
                \label{rmk:phi^*-K0-vs}
                The ring automorphism $\varphi\colon K_0\to K_0$ induces an adjoint equivalence of monoidal functors $\varphi^*\dashv\varphi_*$ on $K_0$-vector spaces (as well as on $\mcD(K_0)$), where $\varphi^*(D)=D\otimes_{K_0,\varphi}K_0$ can be identified with the vector space~$D$ on which $K_0$ acts through $\varphi^{-1}$.
                Similarly then, $\varphi_*(D)$ is the vector space~$D$ on which $K_0$ acts through~$\varphi$.
                It follows that a $\varphi$-module is the datum of a finite-dimensional $K_0$-vector space~$D$ equipped with a $K_0$-linear isomorphism $D\isoto\varphi_*(D)$.
                \end{rmk}
                
		\begin{rmk}
			\label{rmk:phi-modules-coh-dim}
			The abelian category~$M_{\varphi}(K_0)$ has cohomological dimension~$\leq1$.
			The Ext$^i$-groups (for $i=0,1$) from~$(D,\varphi_D)$ to~$(E,\varphi_E)$ are the cohomology groups of the 2-term complex (in cohomological degrees~$[0,1]$)
			\begin{equation}
				\label{eq:RHom-phi}
				\Hom_{K_0}(D,E)\xto{\delta}\Hom_{K_0}(D,\varphi_*E)
			\end{equation}
			where the map~$\delta$ is given by $\delta(\xi)=\varphi_E\xi-\xi\varphi_D$, see~\cite[\S\,1.14]{Bei-crys} (or~\cite[Lemma~2.3]{DN}).
			
                      Suppose that $k=\F_q$. Then, note that the $\Q_p$-vector space $\Hom_{K_0}(D,E)$ is finite-dimensional. Hence, as soon as there are no non-zero morphisms $D\to E$ in~$M_{\varphi}(K_0)$ then there are no non-zero extensions from~$D$ to~$E$ either, as in that case the map~$\delta$ is injective hence bijective.
			In particular, if $D$ and $E$ are pure of different weights, then $\Ext^{i}(D,E)=0$ for all~$i$. 
		\end{rmk}

		\begin{prop}
			\label{prop:DphiN}
			Let  $\hF{\mathcal{D}(K_0)}{\varphi}$ be  the \icat of fixed points for the functor~$\varphi^*$, that is, the equalizer of the pair of functors $\id,\varphi^*\colon \mathcal{D}(K_0)\to\mathcal{D}(K_0)$, computed in $\Prloost$ and let $T\colon \Dp\to\Dp$ be the endofunctor $D\mapsto D(-1)$. There are canonical equivalences in $\Prloost$:
			$$\Dp\isoto \hF{\mathcal{D}(K_0)}{\varphi}\qquad \DpN\isoto\LFnil{\Dp}{T}.$$
		\end{prop}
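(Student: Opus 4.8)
The plan is to prove each equivalence by first establishing it on compact objects, as an identification of small stable \icats carrying bounded $t$-structures, and then passing to Ind-completions; the key inputs are the mapping-space formula~\eqref{eq:mapping_space_laxeq} in lax equalizers together with the descriptions of $\Ext$-groups in $M_\varphi(K_0)$ and $M_{(\varphi,N)}(K_0)$.

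\emph{The equivalence $\Dp\isoto\hF{\mcD(K_0)}{\varphi}$.} Recall that $\hF{\mcD(K_0)}{\varphi}$ is the full \subicat of the lax equalizer $\LF{\mcD(K_0)}{\varphi^*}$ on those pairs $(X,\alpha)$ with $\alpha\colon X\to\varphi^*X$ an equivalence (cf.\ \Cref{rmk:map_laxeq}). Since $\varphi^*$ is a symmetric monoidal, colimit- and compact-preserving autoequivalence of $\mcD(K_0)\in\Prloost$, this equalizer in $\Prloost$ is computed by applying $\Ind$ to the equalizer of the compact parts $\mcD^b(K_0)\rightrightarrows\mcD^b(K_0)$; so it suffices to identify $\hF{\mcD^b(K_0)}{\varphi}$ with $\Dfp=\mcD^b(M_\varphi(K_0))$. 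First, declaring $(X,\alpha)$ connective (resp.\ coconnective) exactly when $X$ is defines a bounded $t$-structure on $\hF{\mcD^b(K_0)}{\varphi}$, using that $\varphi^*$ is $t$-exact so that the truncations of $(X,\alpha)$ remain of this form; by \Cref{rmk:phi^*-K0-vs} its heart is $M_\varphi(K_0)$, compatibly with tensor products. Second, for $V,W$ in the heart, formula~\eqref{eq:mapping_space_laxeq} shows $\spMap((V,\alpha),(W,\beta))$ is the fiber of a map $\Hom_{K_0}(V,W)\to\Hom_{K_0}(V,\varphi^*W)$ which, after transporting along the adjoint equivalence $\varphi^*\dashv\varphi_*$, is (up to reindexing) the two-term complex $\delta$ of~\eqref{eq:RHom-phi}; hence $\pi_{-i}\spMap((V,\alpha),(W,\beta))\cong\Ext^i_{M_\varphi(K_0)}(V,W)$ for all $i$. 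It follows that the canonical $t$-exact realization functor $\Dfp\to\hF{\mcD^b(K_0)}{\varphi}$ extending the identification of hearts is an equivalence on hearts \emph{and} on mapping spectra between heart objects, hence --- both $t$-structures being bounded --- an equivalence. Passing to $\Ind$ gives $\Dp\isoto\hF{\mcD(K_0)}{\varphi}$.

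\emph{The equivalence $\DpN\isoto\LFnil{\Dp}{T}$.} Here $T=-\otimes K_0(-1)$ is a colimit- and compact-preserving autoequivalence of $\Dp$. I would first check the hypothesis of \Cref{lem:CTNil-vsIndCat}: for a finite-dimensional $\varphi$-module $D$, $\Hom_{\Dp}(D,T^nD)=\bigoplus_i\Ext^i_{M_\varphi(K_0)}(D,D(-n))[-i]$, and the complex~\eqref{eq:RHom-phi} computing these groups is acyclic for $n\gg0$ by an elementary $p$-adic estimate (conjugation by the linearized Frobenius of $D$ distorts valuations boundedly on a lattice in $\End_{K_0}(D)$, whereas the twist by $K_0(-n)$ rescales by $p^n$, so its connecting map is injective, hence bijective by a dimension count). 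Thus $\LFnil{\Dp}{T}\simeq\Ind(\LF{\Dfp}{T})$ by \Cref{lem:CTNil-vsIndCat}. Now $T$ is $t$-exact on $\Dfp$, so $\LF{\Dfp}{T}$ carries a bounded $t$-structure (the forgetful functor being conservative and limit-preserving, directly from~\eqref{eq:C^T}) whose heart is, by \Cref{rmk:MphiN=LF}, the $1$-categorical lax fixed point category of $D\mapsto D(-1)$ on $M_\varphi(K_0)$, i.e.\ $M_{(\varphi,N)}(K_0)$; and by~\eqref{eq:mapping_space_laxeq} the mapping spectrum between heart objects $(D,N_D),(E,N_E)$ is $\fib(\mathrm{RHom}_{M_\varphi}(D,E)\to\mathrm{RHom}_{M_\varphi}(D,E(-1)))$, precisely the standard complex computing $\Ext^*_{M_{(\varphi,N)}(K_0)}$. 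As before the realization functor $\DfpN\to\LF{\Dfp}{T}$ is an equivalence, and applying $\Ind$ yields $\DpN\isoto\LFnil{\Dp}{T}$. (Combined with \Cref{sta:Mod-fix} and \Cref{sta:Mod-coMod}, this also identifies $\DpN$ with $\Mod_{K_0\oplus K_0(-1)[-1]}(\Dp)$ and with $\coMod_\bH(\Dp)$.)

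\emph{Where the work is.} The only arithmetic input is the $p$-adic estimate above, and it is elementary. The care is categorical: verifying that the candidate $t$-structures on the (lax) equalizers really are $t$-structures, and that a $t$-exact functor which is an equivalence on hearts \emph{and} on $\Ext$-groups between heart objects is an equivalence --- the usual dévissage along a bounded $t$-structure, carried out in the \icategorical setting --- and then promoting everything to symmetric monoidal equivalences so that the identifications take place in $\Prloost$. For the last point one uses that $\otimes_{K_0}$ and the induced tensor products on $\varphi$- and $(\varphi,N)$-modules are $t$-exact (being computed on underlying $K_0$-vector spaces over a field), so the monoidal structures on the derived \icats and on the (lax) equalizers are determined by their restriction to the hearts --- equivalently, via the universal property of the derived \icat of a symmetric monoidal abelian category --- and that $\Ind$ is symmetric monoidal.
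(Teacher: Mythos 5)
Your proposal reaches the same conclusions by a genuinely different route. The paper constructs the comparison functors explicitly: for the first equivalence it passes to the equalizer of $\varphi_*$ and $\id$ and cites the mapping-complex formula from~\cite[\S\,1.14]{Bei-crys}; for the second it builds a lax monoidal functor $\Phi\colon\DfpN\to\LF{\Dfp}{T}$ from a commutative square of abelian tensor categories (using~\cite{Drew:verdier} to pass to bounded derived \icats), checks full faithfulness directly on mapping complexes via~\cite[1.15]{Bei-crys}, and gets essential surjectivity from \Cref{lem:CTNil-vsIndCat} and \Cref{sta:C^Tnil-compactly-generated}. You instead put bounded $t$-structures on the (lax) fixed points — declaring $(X,f)$ (co)connective iff $X$ is, which is legitimate because the forgetful functor creates limits and colimits and $\varphi^*$, $T$ are $t$-exact — identify the hearts with $M_\varphi(K_0)$ and $M_{(\varphi,N)}(K_0)$ via~\eqref{eq:mapping_space_laxeq}, match $\Ext$-groups between heart objects, and conclude by dévissage; you then invoke \Cref{lem:CTNil-vsIndCat} as the paper does. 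Your route is conceptually cleaner in that the essential surjectivity is absorbed into the $t$-structure formalism, and it exposes the analogy with \Cref{exm:Ai}.

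The genuine gap is the phrase ``the canonical $t$-exact realization functor $\Dfp\to\hF{\mcD^b(K_0)}{\varphi}$ extending the identification of hearts''. There is no universal property of $\mcD^b(\mcA)$ producing such a functor from the identification of hearts alone (and $M_\varphi(K_0)$ does not have enough injectives), so its existence — and, more seriously for the statement in $\Prloost$, its \emph{monoidal} enhancement — has to be constructed, not asserted. This is precisely the work the paper does via the square~\eqref{eq:DfpN-oriented-fiber-product-operads}, and your closing paragraph (``the monoidal structures \dots are determined by their restriction to the hearts'') glosses over the same point: two monoidal $t$-exact functors agreeing on the heart need not agree. You would need to either reproduce something like the paper's construction (apply the universal property of Dwyer–Kan localization to the lift $\Ch^b(M_\varphi(K_0))\to\hF{\mcD^b(K_0)}{\varphi}$, and similarly for the second equivalence) or find another reference. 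A minor additional point: in the $p$-adic estimate you claim the connecting map $\delta$ is ``injective, hence bijective by a dimension count''; the dimension count uses finite-dimensionality over $\Q_p$ and fails for $k$ infinite (which \Cref{notn:K_0} allows), but this does no harm since the criterion of \Cref{lem:CTNil-vsIndCat} only needs vanishing of $\Hom$, i.e.\ injectivity of $\delta$.
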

		\begin{proof}
                Restricting to compact objects, the first equivalence amounts to an equivalence between $\Dfp$ and the equalizer of the pair of functors $\id,\varphi^*\colon \mathcal{D}^b(K_0)\to\mathcal{D}^b(K_0)$ computed in monoidal \icats.
                Instead of the latter equalizer we may just as well consider the equalizer of $\varphi_*$ and $\varphi_*\varphi^*\simeq\id$.
                Then, the first equivalence  follows  from the explicit description of the mapping complex in  $\Dfp$ provided by~\cite[\S\,1.14]{Bei-crys}, as in \Cref{rmk:phi-modules-coh-dim}. 
			
			By \Cref{rmk:MphiN=LF} we have an essentially commutative square of abelian tensor categories and lax monoidal functors
			\[
			\begin{tikzcd}
				M_{(\varphi,N)}^\boxtimes
				\ar[r]
				\ar[d]
				&
				(M_\varphi^{\Delta^1})^\otimes
				\ar[d, "\mathrm{ev}_1"]
				\\
				M_\varphi^\otimes
				\ar[r, "\mathbf{T}"]
				&
				M_\varphi^\otimes
			\end{tikzcd}
			\]
			where $\mathbf{T}$ is given by tensoring with the split square-zero extension~$K_0\oplus K_0(-1)$, cf. \Cref{LF-tensor}.
			The arguments in~\cite{Drew:verdier} %
			allow us to apply $\mathcal{D}^b$ to get a commutative square in \iops:
			\begin{equation}
				\label{eq:DfpN-oriented-fiber-product-operads}
				\begin{tikzcd}
					\DfpN^\boxtimes
					\ar[r]
					\ar[d]
					&
					\mathcal{D}^b(M_\varphi^{\Delta^1})^\otimes
					\ar[d, "\mathrm{ev}_1"]
					\\
					\Dfp^\otimes
					\ar[r, "\mathbf{T}"]
					&
					\Dfp^\otimes.
				\end{tikzcd}
			\end{equation}
			We observe that the right vertical functor factors through monoidal functors
			\[
			\mathcal{D}^b(M_\varphi^{\Delta^1})^\otimes\to(\Dfp^{\Delta^1})^\otimes\xto{\mathrm{ev}_1}\Dfp^\otimes
			\]
			with the pointwise monoidal structure on the middle term.
			From this we deduce a lax monoidal functor $\DfpN^\boxtimes\to (\Dfp\overset{\to}{\times}_{\mathbf{T}}\Dfp)^\boxtimes$  which factors through~$(\LF{\Dfp}{T})^\boxtimes$.
			We will first prove that the induced functor $\Phi\colon \DfpN\to\LF{\Dfp}{T}$ on underlying \icats is fully faithful.
			
			For this, let $D,E$ be bounded complexes of $(\varphi,N)$-modules.
			By~\cite[1.15]{Bei-crys} (or~\cite[Lemma 2.5]{DN}) together with \Cref{rmk:phi-modules-coh-dim}, the  complex~$\spMap_{\DfpN}(D,E)$ is the fiber of
			\[
			\spMap_{\Dfp}(D,E)\xto{(N_E)_*-(N_D)^*}\spMap_{\Dfp}(D,E(-1))
			\]
			which identifies with the mapping complex in~$\LF{\Dfp}{T}$, see~\eqref{eq:mapping_space_laxeq}.
			We conclude that~$\Psi$ is fully faithful.
			
			It follows from~\cite[Proposition~5.3.5.11]{lurie} that the functor
			\[
			\Ind(\Phi)\colon\Ind(\DfpN)\to\Ind(\LF{\Dfp}{T})
			\]
			on Ind-objects remains fully faithful.
			We now claim that the target of this functor is nothing but~$\LFnil{\Dfp}{T}$.
			Indeed, for any $D\in M_\varphi(K_0)$ we observe that for $n\gg 0$, there are no non-zero morphisms $D\to D(n)$ in $M_{\varphi}(K_0)$.
			Now the claim follows from \Cref{lem:CTNil-vsIndCat}.
			
			Finally, by \Cref{sta:C^Tnil-compactly-generated}, $\Ind(\Phi)$ is essentially surjective and thus an equivalence.
			As $\DfpN$ is idempotent complete, it follows that $\Phi$ is an equivalence too. 
			To show that $\Ind(\Phi)$ is monoidal, we may first restrict to compact objects, and then compose with the conservative monoidal forgetful functor $(\LF{\Dfp}{T})^{\boxtimes}\to \Dfp^{\otimes}$.
			By construction, this composite $(\DfpN)^{\boxtimes}\to\Dfp^{\otimes}$ is nothing but the left vertical arrow in~\eqref{eq:DfpN-oriented-fiber-product-operads} which is indeed monoidal.
			This completes the proof.
		\end{proof}

                \begin{rmk}
                \label{rmk:choice-of-phi^*}
                The diligent reader will have wondered why in \Cref{prop:DphiN} we chose fixed points for the functor~$\varphi^*$ and not~$\varphi_*$, given that the latter is the one actually used in relating the fixed points to $\varphi$-modules.
  The reason is that the natural lift of a $\varphi$-module structure to algebraic and rigid-analytic geometry is given by the relative Frobenius - which is a natural transformation $\id\to \varphi^*$.
  Our chosen convention will therefore lead to a more streamlined construction of realization functors in \Cref{sec:realizations}.
  In any case, as remarked in the proof of \Cref{prop:DphiN}, the resulting \icats of fixed points are canonically equivalent.
  An isomorphism $D\isoto \varphi_*D$ corresponds to its inverse isomorphism $D\isoto\varphi^*D$.
                \end{rmk}
		
		\subsection{Unipotent coefficient systems}
		\label{sec:motivic-monodromy}
		\label{sec:unipotent-setting}Let
		$q\colon\G_{m,S}\to S$ be the  multiplicative group over a scheme $S$. 
		A result of Spitzweck~\cite[Corollary~15.14]{spitzweck:thesis} gives a description of the full \subicat of~$\DAet(\G_{m,S};\Q)$ generated under colimits by ``constant'' motives, in terms of   modules over the ring~$q_*\one$.
		
		We now reinterpret this result in the more general setting of an arbitrary  compactly generated \emph{coefficient system} $C:\schop[\base]\to\Prloost$ over some base scheme~$\base$ (that we leave implicit most of the time) in the sense of \cite[Definition 3.1]{cs-exponentiation}. 
		This is an \icategorical analogue of a stable homotopy 2-functor in the sense of~\cite{ayoub-th1} or a motivic triangulated category in the sense of~\cite{cd}, and therefore underlies a six-functor formalism.
		The reader should feel free to take $C=\DAet(-;\Q)$, the \'{e}tale-local $\A^1$-derived \icat of motivic sheaves with rational coefficients, as studied in~\cite{ayoub-th2,AyoubEt}.
		This is the universal example to which our discussion in the present section applies (\Cref{rmk:universal-Q-et}). %
		Other pertinent examples are  $\ell$-adic sheaves, holonomic $\mathcal{D}$-modules (the ``de\,Rham coefficient system''), complex-analytic sheaves (the ``Betti coefficient system'') or mixed Hodge modules. See \cite[\S\,3.2]{cs-exponentiation} and \cite{cs-intro} for other examples and further discussion.
		
		\begin{dfn}
			\label{dfn:unipotent-motives}%
			Let $S$ be a ($\base$-)scheme.
			Let $\G_m=\G_{m,S}$ be the multiplicative group over~$S$ with structure morphism $q\colon\G_m\to S$. 
			We denote by $\uC(S)$ the smallest full \subicat of $C(\G_{m,S})$ that contains all constant objects~$q^*M$ (for $M\in C(S)$) and is closed under colimits.
			$\uC(S)$ is the \icat of \emph{unipotent coefficients over~$S$}.
			For   $C=\DAet(-;\Q)$, we denote this \icat by $\UDA(S)$.
		\end{dfn}
		
		\begin{rmk}
			By construction, $\uC(S)$ is a compactly generated stable \subicat of~$C(\G_m)$.
			As $q^*\colon C(S)\to C(\G_m)$ is monoidal and the tensor product in $C(\G_m)$ commutes with colimits in both variables, it follows that the monoidal structure on~$C(\G_m)$ restricts to~$\uC(S)$.
		\end{rmk}
		
		\begin{rmk}
			\label{rmk:ind-unipotent}
			In \Cref{dfn:unipotent-motives}, we are allowing, instead of ``extensions'' between constant coefficients in~$C(\G_m)$, arbitrary colimits of such.
			Strictly speaking, the coefficients in~$\uC(S)$ should therefore be called \emph{ind-}unipotent.
			For the sake of consistency with the literature we stick to the shorter terminology nonetheless.
		\end{rmk}

		\begin{prop}
			\label{sta:uC-Mod}
			The functor~$q^*\colon C(S)\to \uC(S)$ factors through the following equivalence of monoidal \icats.
			\begin{equation}
				\label{eq:uC-Mod}
				\begin{tikzcd}
					\Mod_{q_*\one}(C(S))
					\ar[rr, "{\sim}"]
					&&
					\uC(S)
					\\
					&
					C(S)
					\ar[lu, "{\textup{free}}"]
					\ar[ru, "q^*" below]
				\end{tikzcd}
			\end{equation}
		\end{prop}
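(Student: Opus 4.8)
The plan is to identify $\uC(S)$ with $\Mod_{q_*\one}(C(S))$ by monadic descent along $q^{*}\dashv q_{*}$, restricted to unipotent objects; this adapts the argument of Spitzweck~\cite[Corollary~15.14]{spitzweck:thesis} for $\DAet$ to an arbitrary coefficient system. Since $q\colon\G_{m,S}\to S$ is smooth affine of finite type, $q^{*}$ preserves compact objects, hence its right adjoint $q_{*}$ preserves all colimits; being lax symmetric monoidal, $q_{*}$ makes $q_{*}\one\simeq\one\oplus\one(-1)[-1]$ a commutative algebra in $C(S)$ — the split square-zero extension of \Cref{thm:summary-Mod-coMod-lax}. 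Because $q^{*}M$ is unipotent by definition, the corestriction $q^{*}\colon C(S)\to\uC(S)$ is a colimit-preserving monoidal functor whose right adjoint is $q_{*}$ composed with the (colimit-preserving, localizing) inclusion $\uC(S)\hookrightarrow C(\G_{m,S})$, and this right adjoint again preserves colimits.

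Next I would verify the projection formula $q_{*}(X\otimes q^{*}M)\isoto q_{*}X\otimes M$ for $X\in C(\G_{m,S})$ and $M\in C(S)$. For $M$ dualizable this is formal from the monoidal structure of $q^{*}$ and the adjunctions, and the natural transformation propagates to arbitrary $M$ because both sides preserve colimits in $M$ and $C(S)$ is generated under colimits by its compact — hence dualizable — objects. Consequently the monad $q_{*}q^{*}$ on $C(S)$ is canonically identified, compatibly with its multiplication, with $q_{*}\one\otimes(-)$.

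The crucial point is that the right adjoint $q_{*}\colon\uC(S)\to C(S)$ is conservative — it is \emph{not} conservative on all of $C(\G_{m,S})$, which is exactly why one must restrict to unipotent objects. If $X\in\uC(S)$ satisfies $q_{*}X\simeq 0$, then $\Map_{\uC(S)}(q^{*}M,X)\simeq\Map_{C(S)}(M,q_{*}X)\simeq 0$ for every $M\in C(S)$; the full subcategory of objects right-orthogonal to $X$ is stable under colimits and contains all $q^{*}M$, hence is all of $\uC(S)$ and in particular contains $X$, forcing $X\simeq 0$.

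Finally I would invoke the monoidal form of the Barr–Beck–Lurie theorem used already for \Cref{sta:Mod-fix} (e.g.\ \cite[Proposition~5.29]{MR3570153}): the colimit-preserving monoidal functor $q^{*}\colon C(S)\to\uC(S)$ has a conservative, colimit-preserving right adjoint satisfying the projection formula, so it is monadic and exhibits a monoidal equivalence $\Mod_{q_{*}q^{*}\one}(C(S))\isoto\uC(S)$ under which the free module functor corresponds to $q^{*}$; as $q_{*}q^{*}\one=q_{*}\one$ this yields the displayed diagram, the triangle commuting by construction of the comparison functor. The only real work is assembling these formal inputs, and I expect the verification of conservativity — together with the observation that it fails before restricting to $\uC(S)$ — to be the step deserving the most care.
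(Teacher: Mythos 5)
Your overall strategy — identify $\uC(S)$ with modules over $q_*\one$ via the monoidal Barr--Beck--Lurie theorem, after establishing that $q_*$ preserves colimits, is conservative on $\uC(S)$, and satisfies the projection formula — is exactly the paper's, and your conservativity argument is correct (and makes explicit what the paper leaves terse).

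However, your projection-formula step has a genuine gap. You reduce to $M$ dualizable and then claim this suffices because ``$C(S)$ is generated under colimits by its compact — hence dualizable — objects.'' In an arbitrary compactly generated coefficient system over an arbitrary base $B$-scheme $S$, compact objects need \emph{not} be dualizable; this is a special feature of, e.g., $\DAet(-;\Q)$ over a (perfect) field (Riou), but it is not among the axioms of a coefficient system and can fail over a singular or non-excellent base. Since \Cref{sta:uC-Mod} is asserted for all such $S$ and all coefficient systems, your reduction is not available. The paper instead proves the projection formula by a geometric argument internal to the six-functor formalism: compactify $q$ as $q=\pi\circ j$ with $j\colon\G_m\hookrightarrow\P^1_S$ open and $\pi\colon\P^1_S\to S$ proper, invoke the \emph{proper} projection formula for $\pi$, reduce via the localization triangle to the closed complement $i\colon S\amalg S\to\P^1_S$, and conclude by purity. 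This route uses only axioms every coefficient system satisfies and is the step you would need to replace.
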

		\begin{proof}
			As $q^*$ preserves compact objects between compactly generated \icats, the right adjoint~$q_*$ preserves filtered colimits hence, by exactness, all colimits.
			Fix $M\in C(S)$ and let us prove that the canonical projection map
			\[
			M\otimes q_*N\to q_*(q^*M\otimes N)
			\]
			is an equivalence for all~$N\in \uC(S)$.
			Since all functors in sight preserve colimits, we may assume that $N=q^*N'$ for some $N'\in C(S)$.
			
			Consider the inclusion $j\colon\G_{m}\into\P^1:=\P^1_S$ with closed complement $i\colon S\amalg S\xto{0,\infty}\P^1$.
			We denote by $\pi\colon\P^1\to S$ the structure map so that $q=\pi\circ j$.
			By the proper projection formula (applied to $\pi$) we reduce to showing that the canonical map
			\[
			\pi^*M\otimes j_*j^*\pi^*N'\to j_*j^*(\pi^*M\otimes\pi^*N')
			\]
			is an equivalence.
			By the localization triangle, this is true if and only if the similar map for the closed complement is an equivalence:
			\[
			\pi^*M\otimes i_!i^!\pi^*N'\to i_!i^!(\pi^*M\otimes\pi^*N').
			\]
			This follows from the purity isomorphism of~\cite[\S\,1.6.1\,ff]{ayoub-th1}. %
			
			Finally, by definition of~$\uC(S)$, the functor $q_*\colon\uC(S)\to C(S)$ is conservative.
			The result now follows from a monoidal version of Barr--Beck--Lurie~\cite[Proposition~5.29]{MR3570153}.
		\end{proof}
		
		\begin{rmk}
			The equivalence in \eqref{eq:uC-Mod} is the composite
			\[
			\Mod_{q_*\one}(C(S))\xto{q^*}\Mod_{q^*q_*\one}(\uC(S))\to\uC(S)
			\]
			where the second functor is base change along the counit of the adjunction~$q^*q_*\one\to \one$.
			A quasi-inverse takes an object $M\in\uC(S)$ to $q_*M$ with the module structure induced by the lax monoidality of~$q_*$ so that the action map is
			\[
			q_*\one\otimes q_*M\to q_*(\one\otimes M)\simeq q_*M.
			\]
		\end{rmk}
		
		\begin{rmk}
			As a consequence of \Cref{sta:uC-Mod} we see that the construction $S\mapsto\uC(S)$ underlies another coefficient system $\uC:\schop[\base]\to\Prloost$.
			Moreover, the functors $q^*\colon C(S)\to \uC(S)$ assemble to a morphism of coefficient systems $C\to \uC$.
		\end{rmk}

		We now show that in good cases the algebra $q_*\one$ is in fact a split square-zero extension. This will allow us to apply the results of \Cref{sec:Mod-laxFix,sec:comodules}.

		\begin{notn}
			\label{notn:q_*1-split}
			We choose the ``universal section'' $1\colon S\to\G_m$. 
			Then the cofiber sequence in $C(S)$,
			\[
			q_*\one\xto{\tilde{1}}\one\to t,
			\]
			splits where the first arrow $\tilde{1}:q_*q^*\one \to q_*1_*1^*q^*\one\simeq\one$ is the adjunction unit. The  cofiber~$t$ is the Tate twist~$\one(-1)$ in~$C(S)$. Note that~$\tilde{1}$ is a morphism of commutative algebra objects. 
		\end{notn}
		
		\begin{assu}
			\label{hyp:Q-et}
			We will now assume that the coefficient system~$C$
			\begin{enumerate}
				\item is $\Q$-linear, that is, it takes values in $\Prloost_{\Mod_{\Q}/}$; and
				\item satisfies \'{e}tale descent.
			\end{enumerate}
		\end{assu}
		
		\begin{rmk}
			\label{rmk:universal-Q-et}
			The coefficient system~$\DAet(-;\Q)$ is the universal example of a coefficient system satisfying \Cref{hyp:Q-et}.
			This follows from the universality of stable motivic $\A^1$-homotopy theory~\cite{cs-universal} or can also be proved analogously.
			The other examples mentioned at the beginning of \Cref{sec:unipotent-setting} also satisfy \Cref{hyp:Q-et}.
		\end{rmk}

		\begin{prop}\label{prop:Gmissplit}
			Under \Cref{hyp:Q-et}, $\tilde{1}:q_*\one\to\one$ in $C(S)$ is the split square-zero extension $\one\oplus\one(-1)[-1]$.
			Moreover, $q_*\one$ is the free commutative algebra on $\one(-1)[-1]$.
		\end{prop}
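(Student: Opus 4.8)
The plan is to apply \Cref{sta:free=ssze} to the augmented commutative algebra $A=q_*\one$. First I would recall why $q_*\one$ indeed lies in $\CAlg(C(S))_{/\one}$: the functor $q_*$ is lax monoidal, so $q_*\one$ is a commutative algebra, and $\tilde 1\colon q_*\one\to\one$ is an algebra map by \Cref{notn:q_*1-split}. Its augmentation ideal is, by definition, the fibre of $\tilde 1$; since the cofibre of $\tilde 1$ is the Tate twist $\one(-1)$ (again \Cref{notn:q_*1-split}), we get $\aid(q_*\one)\simeq\one(-1)[-1]$. In other words, in the notation of \Cref{sta:free=ssze} we are in the situation $A=q_*\one$, $t=\one(-1)$. (The splitting recorded in \Cref{notn:q_*1-split} already exhibits $q_*\one\simeq\one\oplus\one(-1)[-1]$ as an \emph{object}; the content of the proposition is to promote this to an equivalence of augmented commutative algebras, and this is exactly what \Cref{sta:free=ssze} does.)

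\textbf{Verifying the hypotheses of \Cref{sta:free=ssze}.} Two things are needed: that $C(S)$ is $\Q$-linear, which is part of \Cref{hyp:Q-et}; and that $\Alt^2(\one(-1))\simeq 0$, i.e.\ that the symmetry $\tau\colon\one(-1)\otimes\one(-1)\to\one(-1)\otimes\one(-1)$ is homotopic to the identity. This is the only non-formal input, and I expect it to be the main obstacle (everything else is bookkeeping). For $C=\DAet(-;\Q)$ it is the standard fact that the symmetry on the square of the Tate object is trivial in rational étale motivic homotopy theory — e.g.\ through the equivalence with Voevodsky's $\mathbf{DM}$, or using that $\SH_{\et}(-;\Q)$ is the ``plus part'' of $\SH(-;\Q)$, where $\langle -1\rangle$ acts as the identity. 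For a general coefficient system $C$ satisfying \Cref{hyp:Q-et} it then follows by transporting along the universal realization $\DAet(-;\Q)\to C$ of \Cref{rmk:universal-Q-et}: this functor is symmetric monoidal and sends the Tate object to the Tate object, and a symmetric monoidal exact functor carries the vanishing of $\Alt^2(\one(-1))$ to the corresponding vanishing in $C(S)$.

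\textbf{Conclusion.} Granting the above, \Cref{sta:free=ssze} applies and shows that both maps in
\[
q_*\one\from \free(\one(-1)[-1])\to \one\oplus\one(-1)[-1]
\]
are equivalences in $\CAlg(C(S))_{/\one}$. The left-hand equivalence says precisely that $q_*\one$ is the free commutative algebra on $\one(-1)[-1]$, and composing it with the right-hand one identifies $q_*\one$, as an augmented commutative algebra, with the split square-zero extension $\one\oplus\one(-1)[-1]$. This establishes both assertions. The only points requiring care are that the identification $\aid(q_*\one)\simeq\one(-1)[-1]$ is the one feeding into \Cref{sta:free=ssze} (it is, since both sides are the fibre of the augmentation $\tilde 1$) and that $\tilde 1$ is genuinely an algebra morphism, which is recorded in \Cref{notn:q_*1-split}.
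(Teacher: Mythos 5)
Your proof is correct and follows essentially the same route as the paper: both reduce to the criterion of \Cref{sta:free=ssze} applied to $A=q_*\one$ with $t=\one(-1)$, and both isolate the vanishing of $\Alt^2(\one(-1))$ as the one nontrivial input, which for $\DAet(-;\Q)$ is \cite[Lemme 11.5]{AyoubEt}. The only difference is that you spell out the transport of $\Alt^2(\one(-1))\simeq 0$ along the universal monoidal realization $\DAet(-;\Q)\to C$, which the paper leaves implicit in its citation.
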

		\begin{proof}
   \Cref{hyp:Q-et} buys us that the canonical transposition on~$t^{\otimes 2}$ is homotopic to the identity, see for example~\cite[Lemme 11.5]{AyoubEt}. We conclude by applying the criterion of \Cref{sta:free=ssze}.
		\end{proof}
		\Cref{sta:Mod-fix} together with \Cref{sta:uC-Mod} and \Cref{prop:Gmissplit} imply the following.
		\begin{cor}
			\label{cor:unip-sameas-monodromy}{Under \Cref{hyp:Q-et}}	the functor $1^*\colon \uC(S) \to C(S)$ factors through an equivalence of monoidal \icats
			\[\begin{tikzcd}
				\uC(S)
				\ar[rr, "\sim" ]
				\ar[dr, "1^*" below]
				&&
				\LFnil{C(S)}{-\otimes\one(-1)}
				\ar[dl, "\pi"]
				\\
				&
				C(S)
			\end{tikzcd}
			\]
		\end{cor}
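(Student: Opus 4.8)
The proof is essentially a matter of chaining together the three preceding results, so the plan is to assemble the composite equivalence and then track the functors down to $C(S)$. First I would fix $t:=\one(-1)\in C(S)$. By \Cref{prop:Gmissplit} (invoking \Cref{hyp:Q-et} to ensure $\Q$-linearity and $\Alt^2(t)\simeq 0$, exactly as in its proof) the commutative algebra $q_*\one\in\CAlg(C(S))_{/\one}$, with augmentation the map $\tilde{1}$ of \Cref{notn:q_*1-split}, is the split square-zero extension $\one\oplus t[-1]$, and $C(S)$ together with $t$ satisfies the hypotheses of \Cref{sta:Mod-fix}. Hence \Cref{sta:Mod-fix} applies and yields a monoidal equivalence $\Mod_{\one\oplus t[-1]}(C(S))\simeq\LFnil{C(S)}{-\otimes t}$ under which the forgetful functor becomes $\fib(N)$ and, by \Cref{cons:square-zero}, base change $\one\otimes_{\one\oplus t[-1]}(-)$ along the augmentation becomes the projection $\pi$.

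Next I would form the composite of monoidal equivalences
\[
\uC(S)\ \simeq\ \Mod_{q_*\one}(C(S))\ =\ \Mod_{\one\oplus t[-1]}(C(S))\ \simeq\ \LFnil{C(S)}{-\otimes\one(-1)},
\]
the first being \Cref{sta:uC-Mod} and the last \Cref{sta:Mod-fix}; being a composite of monoidal equivalences, it is itself a monoidal equivalence. Under \Cref{sta:uC-Mod} the functor $q^*$ corresponds to the free-module functor, which in turn corresponds to the zero-section $(N=0)$; so a constant object $q^*M$ is sent to $(M,0)$, whose image under $\pi$ is $M=1^*q^*M$. This is the first consistency check.

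It then remains to check that the composite equivalence sits in a triangle over $C(S)$ with $1^*$ on the left and $\pi$ (and not, say, $\fib(N)$) on the right. By the previous paragraph this amounts to showing that, under \Cref{sta:uC-Mod}, the functor $1^*\colon\uC(S)\to C(S)$ corresponds to base change $\one\otimes_{q_*\one}(-)$ along $\tilde{1}$. I would argue this as follows: the quasi-inverse of \Cref{sta:uC-Mod} sends $N$ to $q_*N$ with its canonical $q_*\one$-module structure; the unit $N\to 1_*1^*N$ of the adjunction $(1^*,1_*)$, pushed forward by $q_*$ and simplified using $q\circ 1=\id_S$, produces a natural $q_*\one$-linear map $q_*N\to 1^*N$ (with $1^*N$ regarded as a $q_*\one$-module through $\tilde{1}$), hence a natural transformation $\one\otimes_{q_*\one}q_*(-)\Rightarrow 1^*$ of colimit-preserving functors on $\uC(S)$. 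On each constant object $q^*M$ it is the canonical equivalence $\one\otimes_{q_*\one}(q_*\one\otimes M)\simeq M\simeq 1^*q^*M$; since $\uC(S)$ is the smallest colimit-closed full subcategory containing the $q^*M$, the transformation is an equivalence everywhere. Transporting along \Cref{cons:square-zero} identifies $1^*$ with $\pi$, and the triangle commutes.

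The two uses of monadic (Barr--Beck--Lurie) descent are already carried out in \Cref{sta:uC-Mod} and \Cref{sta:Mod-fix}, so the one step needing genuine care --- and the main obstacle --- is this last identification: checking coherently, not merely on objects, that $1^*$ is base change along the augmentation of $q_*\one$, so that one lands on the forgetful functor $\pi$ rather than on $\fib(N)$. The delicate point is the $q_*\one$-linearity of the map $q_*N\to 1^*N$, i.e.\ that the $q_*\one$-action on $1^*N$ really factors through $\tilde{1}$.
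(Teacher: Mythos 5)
Your proposal is correct and makes explicit the chain of implications that the paper compresses into a single line, including the identification of $1^*$ with the augmentation base change $\tilde{1}^*$ (and hence with $\pi$ rather than $\fib(N)$), which the paper relegates to the summary table in \Cref{rmk:tabsummary}. The ``delicate point'' you flag at the end --- $q_*\one$-linearity of the comparison map $q_*N\to 1^*N$ --- resolves cleanly: this map is $q_*$ applied to the unit of the monoidal adjunction $1^*\dashv 1_*$, hence a lax monoidal natural transformation $q_*\Rightarrow 1^*|_{\uC(S)}$ whose value on the unit object is $\tilde{1}$, and such a transformation automatically intertwines the induced module structures, which is precisely the linearity you need.
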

		\begin{rmk}
                \label{rmk:tabsummary}
                Consider the section-retraction $S\xto{1}\G_{m,S}\xto{q}S$. Let $\overline{1}_*$ be a right adjoint to $1^*\colon \uC(S) \to C(S)$ and note that $q_*\overline{1}_*=\id$.
                We may associate to it an augmentation (resp.\ coaugmentation)
                \begin{align*}
                  \tilde{1}\colon q_*q^*\one_S\to q_*\overline{1}_*1^*q^*\one_S=\one_S,&&\tilde{q}\colon\one_S=1^*q^*q_*\overline{1}_*\one_S\to 1^*\overline{1}_*\one_S.
                \end{align*}
                We can summarize the equivalences of \icats studied so far (see \Cref{sta:Mod-fix}, \Cref{sta:Mod-coMod}, \Cref{cor:unip-sameas-monodromy}) and the corresponding adjunctions in the following table.  Note that by \Cref{lemma:ciucciaN} we can identify the right adjoint functor $\pirn$ with $(N=0)\boxtimes\pirn\one$.
                \renewcommand*{\arraystretch}{1.5}
			\begin{table}[ht]
				\centering
				\begin{tabular}{| >{\centering\arraybackslash}m{3cm} | >{\centering\arraybackslash}m{3cm} | >{\centering\arraybackslash}m{3cm} | >{\centering\arraybackslash}m{3cm} |}
                                  \hline
					$\uC(S)$&$\Mod_{q_*q^*\one_S}$&$\LFnil{\catC}{-\otimes \one(-1)}$&$\coMod_{1^*\overline{1}_*\one_S}$\\
                                  \hline
&&&\\[-2.5ex]
                                  {$\begin{aligned}
                                     q^*&\dashv q_* \\
                                     1^*&\dashv \overline{1}_*
                                   \end{aligned}$}
                                  &
                                    {$\begin{aligned}
                                       \textup{free} &\dashv \textup{forget}\\
                                       \tilde{1}^*&\dashv\tilde{1}_*
                                     \end{aligned}$}
                                  &
                                    {$
                                    \begin{aligned}
                                      (N=0) &\dashv \fib(N)\\
                                      \pi&\dashv\pirn
                                    \end{aligned}$}
                                  &
                                    {$
                                    \begin{aligned}
                                      \tilde{q}^*&\dashv\tilde{q}_*\\
                                      \textup{forget}& \dashv \textup{cofree}
                                    \end{aligned}$}\\\hline
				\end{tabular}
                                \end{table}
                                \renewcommand*{\arraystretch}{1}
		\end{rmk}
		
		\section{Detection principles for realization functors}
		\label{sec:detection-principles}
		In subsequent sections we will be interested in (realization) functors $\LFnil{\catC}{-\otimes t}\to\catD$ into various \icats~$\catD$.
		In particular, we will want to be able to say when two such functors are equivalent.
		In the present section we develop two tools to detect such functors in simpler terms.
		
		\subsection{The Kummer object}
		We continue with the notation of \Cref{sec:split-square-zero}, with $\catC^{\otimes}\in\Prloost$ and $A=\one\oplus t[-1]$ a split square-zero extension in~$\catC$.
		\begin{cons}\label{def:kummerobj}
			Consider the canonical split inclusion $t[-1]\into A=\fib(N)(\one)$ in~$\catC$ (recall that $\fib(N)$ denotes the right adjoint to the section $\catC\to \LFnil{\catC}{-\otimes t}$ sending $X$ to~$(X,0)$).
			By adjunction, it induces a map $(t[-1],0)\to \one$ in $\LFnil{\catC}{-\otimes t}$ and we define the \emph{Kummer object} $\mcK:=\mcK_t\in\LFnil{\catC}{-\otimes t}$ to be the cofiber of this map.
			Applying~$\pi$, note that we get a cofiber sequence in~$\catC$,
			\begin{equation}
				\label{eq:Kummer-underlying}
				t[-1]\to\one\to\pi(\mcK).
			\end{equation}
			By construction, the first map is the composite in the bifiber sequence $t[-1]\to A\to\one$ in~$\catC$ hence homotopic to~$0$,
			so that $\pi(\mcK)\simeq\one\oplus t$.
		\end{cons}
		
		\begin{lemma}
			\label{lem:monodromy_Kummer_motive}
			\label{rmk:monodromy_Kummer_motive}
			Let $\mcK=(\pi(\mcK),f_{\mcK})\in\LFnil{\catC}{-\otimes t}$ be the Kummer object.
			In $\Ho(\catC)$ we have $\pi(\mcK)=\one\oplus t$, and
			\[
			f_{\mcK}\colon \one\oplus t\xto{
				\left(\begin{smallmatrix}
					0&\id_t\\0&0
				\end{smallmatrix}\right)}t\oplus t^{\otimes2}.
			\]
		\end{lemma}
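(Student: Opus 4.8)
The plan is to compute the cofiber defining $\mcK$ after transporting it across the monoidal equivalence $\LFnil{\catC}{-\otimes t}\simeq\Mod_A$ of \Cref{sta:Mod-fix}, with $A=\one\oplus t[-1]$; there the objects and maps in play become honest direct sums and honest module maps, visible already in $\Ho(\catC)$.

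First I would make the morphism $\alpha\colon(t[-1],0)\to\one$ of \Cref{def:kummerobj} explicit on the module side. By \Cref{cons:square-zero} the equivalence carries $(N=0)$ to the free-module functor, $\fib(N)$ to the forgetful functor, and the monoidal unit $(\one,0)$ to $A$; since $\alpha$ is, by construction, adjoint under $(N=0)\dashv\fib(N)$ to the canonical split inclusion $s\colon t[-1]\hookrightarrow A=\fib(N)(\one)$, it corresponds to the $A$-linear map $\tilde\alpha\colon A\otimes t[-1]\to A$, $a\otimes x\mapsto a\cdot s(x)$. As $A$ is square-zero with ideal $t[-1]$ one has $t[-1]\cdot t[-1]\simeq0$, so, writing $A\otimes t[-1]\cong t[-1]\oplus t^{\otimes2}[-2]$ and $A=\one\oplus t[-1]$, the map $\tilde\alpha$ is $\bigl(\begin{smallmatrix}0&0\\\id&0\end{smallmatrix}\bigr)$ in $\Ho(\catC)$. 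Consequently $\mcK$ corresponds to $\cofib(\tilde\alpha)$, whose underlying object is $\fib(N)(\mcK)=\cofib(\tilde\alpha)\simeq\one\oplus t^{\otimes2}[-1]$.

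Next I would extract $\pi(\mcK)$ and the monodromy $f_{\mcK}$ from the dictionary of \Cref{cons:square-zero}: $\pi(\mcK)=\mcK\otimes_A\one$, and $f_{\mcK}$ is the second map of the cofiber sequence $\mcK\otimes_A A\to\mcK\otimes_A\one\to\mcK\otimes_A t$ obtained by applying $\mcK\otimes_A(-)$ to the defining cofiber sequence $A\xto{\augm}\one\to t$ of $t=\one(-1)$. Applying $(-)\otimes_A\one$ to $\tilde\alpha$ annihilates $s$ (as $\augm\circ s\simeq0$), so $\one\otimes_A\tilde\alpha\simeq 0\colon t[-1]\to\one$, whence $\pi(\mcK)=\cofib(0\colon t[-1]\to\one)=\one\oplus t$, recovering \eqref{eq:Kummer-underlying}. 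Thus the cofiber sequence reads $\one\oplus t^{\otimes2}[-1]\xto{\beta}\one\oplus t\xto{f_{\mcK}}\mcK\otimes_A t$, where $\beta=\mcK\otimes_A\augm$ is the map on cofibers induced by the commuting square with horizontal maps $\tilde\alpha$ and $0\colon t[-1]\to\one$ and vertical maps $\augm\otimes\id$ (projection away from $t^{\otimes2}[-2]$) and $\augm$ (projection away from $t[-1]$); tracking summands gives $\beta=\bigl(\begin{smallmatrix}\id&0\\0&0\end{smallmatrix}\bigr)$, so $\mcK\otimes_A t=\cofib(\beta)=t\oplus t^{\otimes2}$ and, reading off the cofiber map in $\Ho(\catC)$, $f_{\mcK}=\bigl(\begin{smallmatrix}0&\id_t\\0&0\end{smallmatrix}\bigr)\colon\one\oplus t\to t\oplus t^{\otimes2}$, as claimed.

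The main obstacle is the bookkeeping in the passage through $\LFnil{\catC}{-\otimes t}\simeq\Mod_A$: one has to be sure that $\alpha$ really corresponds to the linearization $\tilde\alpha$ of $s$, and that the monodromy operator of an $A$-module is indeed the connecting map described in \Cref{cons:square-zero} — this is where all the homotopy-coherent content sits, and it is exactly what \Cref{cons:square-zero} (together with the explicit adjunctions of \Cref{sta:C^T-basics,sta:pi_rho^nil}) provides; the rest is the elementary manipulation of split cofiber sequences above. As an alternative one can argue entirely inside $\LFnil{\catC}{-\otimes t}$: the functors $\pi$ and $N\colon\LFnil{\catC}{-\otimes t}\to\Fun(\Delta^1,\catC)$ preserve colimits, so $N(\mcK)=\cofib(N(\alpha))$ may be computed pointwise, and beyond $\pi(\alpha)\simeq0$ the only further input is that the coherence $2$-cell making $\alpha$ a morphism of lax fixed points is $\id_{t[-1]}$, which contributes precisely the off-diagonal $\id_t$ of $f_{\mcK}$.
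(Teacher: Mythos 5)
Your main computation — transporting the defining cofiber sequence of $\mcK$ across the equivalence $\LFnil{\catC}{-\otimes t}\simeq\Mod_A$ of \Cref{sta:Mod-fix}, linearizing the split inclusion $s\colon t[-1]\into A$ to $\tilde\alpha\colon A\otimes t[-1]\to A$, and reading off $\pi(\mcK)$ and $f_{\mcK}$ via the dictionary of \Cref{cons:square-zero} — is correct and the matrix bookkeeping checks out. It is, however, a genuinely different route from the paper's: the paper works entirely inside $\LFnil{\catC}{-\otimes t}$, using the explicit description of mapping spaces from \Cref{rmk:map_laxeq} to identify the datum of the map $\alpha\colon(t[-1],0)\to(\one,0)$ with the pair (zero map, loop $\id_t\in\pi_1\Map_{\catC}(t[-1],t)$), and then reads off the entries of $f_{\mcK}$ by applying $\LFnil{\catC}{-\otimes t}\to\Ho(\catC)^{\Delta^1}$ to the cofiber sequence.

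The catch with your main route is the hypotheses. The equivalence $\LFnil{\catC}{-\otimes t}\simeq\Mod_A$ of \Cref{sta:Mod-fix} requires $\catC$ to be $\Q$-linear and $\Alt^2(t)\simeq 0$, but \Cref{lem:monodromy_Kummer_motive} is stated (and the Kummer object is defined) without these assumptions, in the generality of \Cref{def:kummerobj} — i.e., for any $\catC^\otimes\in\Prloost$ with a split square-zero extension $A$. So as written, your primary argument proves the lemma only under extra assumptions. (Admittedly these are met in all the applications, and $\fib(N)(\one,0)\simeq\one\oplus t[-1]=A$ in $\catC$ is independent of them, so one could try to rescue a purely object-level version of your argument; but the module-theoretic identifications you lean on — e.g.\ $\pi\leftrightarrow\augm^*$, $(N=0)\leftrightarrow\free$ — come from the equivalence and are not available without it.)

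Your closing alternative (``argue entirely inside $\LFnil{\catC}{-\otimes t}$: $\pi$ and $N$ preserve colimits, compute $\cofib(N(\alpha))$ pointwise, and feed in the coherence $2$-cell'') is essentially the paper's proof and does work in the stated generality; in a full write-up that should be the primary argument, not the fallback. One small slip there: the coherence cell is the loop at $0$ in $\Map_{\catC}(t[-1],t)$, hence lives in $\pi_1\Map_{\catC}(t[-1],t)\cong\pi_0\Map_{\catC}(t,t)$, so it is $\id_t$ rather than $\id_{t[-1]}$. Beyond that, both approaches pay the same price (tracking one homotopy-coherent datum, namely the cell making $\alpha$ a lax-fixed-point morphism); yours buys a more concretely algebraic picture at the cost of the extra hypotheses, while the in-category argument buys generality at the cost of a somewhat less transparent manipulation of mapping spaces.
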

		\begin{proof}
			Recall that $\Map_{\LFnil{\catC}{-\otimes t}}((t[-1],0),(\one,0))$ is the fiber of the zero map
			\[
			\Map_{\catC}(t[-1],\one)\xto{0}\Map_{\catC}(t[-1],t).
			\]
			The map $(t[-1],0)\to (\one,0)$ used in defining the Kummer object in this description is given by the composite $t[-1]\to A\to \one$ and a loop in $\Map_{\catC}(t[-1],t)$ based at~$0$, that is, an element of
			\[
			\pi_1\Map_{\catC}(t[-1],t)=\pi_0\Map_{\catC}(t,t).
			\]
			By construction, this is the identity~$\id_{t}$.
			
			The functor $\LFnil{\catC}{-\otimes t}\to\catC^{\Delta^1}\to\Ho(\catC)^{\Delta^1}$ applied to the cofiber sequence defining the Kummer object gives rise to a commutative diagram in the homotopy category
			\[
			\begin{tikzcd}
				\one
				\ar[r]
				\ar[d, "0"]
				&
				\one\oplus t
				\ar[r]
				\ar[d, "f_{\mcK}"]
				&
				t
				\ar[d, "0"]
				\\
				t
				\ar[r]
				&
				t\oplus t^{\otimes 2}
				\ar[r]
				&
				t^{\otimes 2}
			\end{tikzcd}
			\]
			which shows the vanishing of the three entries in the matrix in the statement.
			For the remaining entry note that $f_{\mcK}$ factors through~$t$.
			As a morphism $\cof(t[-1]\xto{0}\one)\to t$ it is described as an element in the fiber of
			\[
			\Map_{\catC}(\one,t)\xto{0}\Map_{\catC}(t[-1],t),
			\]
			namely $0\colon\one\to t$ together with the loop~$\id_t$ from before.
			This completes the proof.
		\end{proof}

		\begin{rmk}
			\label{rmk:Kummer-nonsplit}
			Note that $\one$ is not a direct factor of $\mcK$ in $\LFnil{\catC}{-\otimes t}$ unless $t\simeq 0$, and similarly  there is no natural map from $(N=0)(t)$ to $\mcK$ as the two compositions $t\to t\oplus t^2$ give different results (zero in one case, $(\id,0) $ in the other).
			
			More generally, if $c,d\in\catC$ then the map induced by the forgetful functor
			\[
			\pi\colon\Map_{\LFnil{\catC}{-\otimes t}}((c,0),(d,0))\to\Map_\catC(c,d)
			\]
			is a split retraction with fiber over~$0$ the space~$\Map_{\catC}(c[1],d\otimes t)$.
			As seen in the proof of \Cref{lem:monodromy_Kummer_motive}, the monodromy of the cofiber of a map $\alpha\colon (c,0)\to (d,0)$ with $\pi(\alpha)\simeq 0$ extracts the element $c[1]\to d\otimes t$ in this fiber.
		\end{rmk}

		\begin{rmk}
			\label{rmk:Sym1}
			We may identify $\Alg_{\E_0}(\catC)$ with the \icat $\catC_{\one/}$ consisting of maps $\one \to R$ in $\catC$ (objects of $\catC$ equipped with a unit map but no multiplication that it is the unit for, cf.~\cite[2.1.3.10]{HA}). There is an obvious forgetful functor $\CAlg(\catC) = \Alg_{\E_\infty}(\catC)\to \Alg_{\E_0}(\catC)$, and we denote by $\free_{\one/}(-)$ its left adjoint: it exists by \cite[Corollary 3.1.3.7]{HA}. 
			
                        Recall that the cofiber sequence~\eqref{eq:Kummer-underlying} is split.
                        Any retraction $\pi(\mcK)\to\one$ induces, by adjunction, a map $\mcK\to\pirn(\one)$ of $\E_0$-algebras thus a map $\free_{\one/}(\mcK)\to\pirn(\one)$ of $\E_\infty$-algebras in $\LFnil{\catC}{-\otimes t}$.
		\end{rmk}

		\begin{prop}\label{prop:HisKummer}
                Assume $\catC$ is $\Q$-linear and $\Alt^2(t)\simeq 0$.
                The morphism of commutative algebras in~$\LFnil{\catC}{-\otimes t}$ (constructed in \Cref{rmk:Sym1}) is an equivalence:
			\[
			\free_{\one/}(\mcK) \isoto \pirn(\one).
			\]
		\end{prop}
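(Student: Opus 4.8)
The plan is to verify the stated morphism is an equivalence after applying the forgetful functor $\pi\colon\LFnil{\catC}{-\otimes t}\to\catC$. This functor is a left adjoint by \Cref{sta:pi_rho^nil} (so it preserves all colimits), it is symmetric monoidal by \Cref{sta:pi-rho-projection-formula} (so in particular $\pi(\one)\simeq\one$), and it is conservative by \Cref{sta:C^T-basics}; hence it suffices to show that $\pi$ carries $\free_{\one/}(\mcK)\to\pirn(\one)$ to an equivalence in $\catC$.

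First I would record that $\pi$ intertwines the free commutative algebra functors on pointed objects, i.e.\ $\pi\circ\free_{\one/}\simeq\free_{\one/}\circ\pi$. Since $\pi$ is symmetric monoidal, unit-preserving and colimit-preserving, it commutes with $\free_{\E_0}=(\one\oplus -)$ and with $\free_{\E_\infty}=\Sym=\free_{\one/}\circ\free_{\E_0}$; as $\Alg_{\E_0}(\catC)\simeq\catC_{\one/}$ is monadic over $\catC$ and the corresponding forgetful functor preserves sifted colimits, every $\E_0$-algebra is a geometric realization of free ones, and the identification follows by passing to realizations.

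Next I would compute both sides. By \Cref{def:kummerobj} the underlying cofiber sequence of $\mcK$ is split, so $\pi(\mcK)\simeq\one\oplus t$ as an $\E_0$-algebra in $\catC$; therefore $\pi(\free_{\one/}(\mcK))\simeq\free_{\one/}(\one\oplus t)\simeq\Sym_\catC(t)$, which by \Cref{lem:H-free} is $\bH=\bigoplus_{n\geq0}t^{\otimes n}$, the free commutative algebra on the summand $t\hookrightarrow\bH$, while $\pi(\pirn(\one))=\bigoplus_{n\geq0}t^{\otimes n}=\bH$ by \Cref{sta:pi_rho^nil}. The remaining, and essential, point is that $\pi$ applied to our morphism is the canonical comparison $\Sym_\catC(t)\to\bH$, which \Cref{lem:H-free} identifies with the structural equivalence. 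As an $\E_\infty$-algebra map out of $\Sym_\catC(t)$ is determined by a map $t\to\bH$ in $\catC$, this comes down to checking that the underlying map of $\E_0$-algebras $\pi(\mcK)=\one\oplus t\to\bH$ restricts on the summand $t$ to the inclusion $t=t^{\otimes1}\hookrightarrow\bH$. For this I would unwind \Cref{rmk:Sym1}: that morphism is the adjunct of the $\E_0$-algebra map induced via $\pi\dashv\pirn$ by the retraction $\mathrm{pr}_1\colon\pi(\mcK)=\one\oplus t\to\one$, so on underlying objects it is $\pi(\pirn(\mathrm{pr}_1))\circ\eta_{\mcK}$; using the explicit description of the unit $\eta$ from \Cref{cons:pi_rho} together with the monodromy matrix $\left(\begin{smallmatrix}0&\id_t\\0&0\end{smallmatrix}\right)$ of \Cref{lem:monodromy_Kummer_motive}, one reads off that the only nonzero components of $\eta_{\mcK}$ are those in degrees $0$ and $1$ and that after $\pirn(\mathrm{pr}_1)$ the map becomes the inclusion of the first two summands of $\bH$. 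Conservativity of $\pi$ then finishes the proof.

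The step I expect to be most delicate is this last computation: that source and target of $\pi$ of the morphism agree is immediate, but one must genuinely check that the map hits the polynomial generator $t$ of $\bH$, and this is exactly where the nontrivial monodromy $\left(\begin{smallmatrix}0&\id_t\\0&0\end{smallmatrix}\right)$ of the Kummer object is used, in contrast to $(t,0)$ which carries zero monodromy; cf.\ \Cref{rmk:Kummer-nonsplit}. The auxiliary claim that $\pi$ commutes with $\free_{\one/}$ is the other potential friction point; if one prefers to avoid the monadicity argument, one can instead present $\free_{\one/}(\mcK)$ as the geometric realization of the bar resolution of $\mcK$ inside $\LFnil{\catC}{-\otimes t}$ and push it through the colimit-preserving functor $\pi$.
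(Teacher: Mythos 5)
Your proposal is correct and follows essentially the same route as the paper's proof: reduce to $\catC$ via the conservative monoidal functor $\pi$, identify $\pi\free_{\one/}(\mcK)$ with the free commutative algebra on $t$ (the paper invokes {\cite[Remark~3.1.3.8]{HA}} for the commutation $\pi\free_{\one/}\simeq\free_{\one/}\pi$ where you offer a monadicity argument instead, but these are interchangeable), and then check via the monodromy matrix of the Kummer object and the explicit unit of $\pi\dashv\pirn$ that the generator $t$ lands as the canonical summand of $\bH$, finishing with \Cref{lem:H-free}. The unwinding you carry out in the last paragraph is precisely the ``little exercise'' the paper leaves to the reader.
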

		\begin{proof}
			As the functor~$\pi\colon\LFnil{\catC}{-\otimes t}\to\catC$ is conservative, it suffices to show that the morphism $\pi\free_{\one/}(\mcK)\to\bH$ is an equivalence in~$\CAlg(\catC)$.
			By~\cite[Remark~3.1.3.8]{HA}, the domain of this map identifies canonically with $\free_{\one/}(\pi(\mcK))$ where we view $\pi(\mcK)\in\Alg_{\E_0}(\catC)$.
                        The fiber $t\to\pi(\mcK)$ of the retraction $\pi(\mcK)\to\one$ exhibits~$\pi(\mcK)$ as the free~$\E_0$-algebra generated by~$t$.
			(Indeed, the left adjoint to the forgetful functor~$\catC_{\one/}\to\catC$ is given by $\one\amalg-$.)
			It follows that $t\to\pi(\mcK)\to\free_{\one/}(\pi(\mcK))$ exihibts the latter as the free commutative algebra generated by~$t$.
			It is a little exercise to deduce from \Cref{lem:monodromy_Kummer_motive} and the natural transformation of \Cref{cons:pi_rho} that the composite $t\to\free_{\one/}(\pi(\mcK))\to\bH\simeq\oplus_{n\geq 0}t^{\otimes n}$ is the canonical inclusion.
			We conclude the proof by virtue of \Cref{lem:H-free}.
		\end{proof}
		\begin{rmk}\label{rmk:kummerUDA}
			Let us justify the terminology. Let $S$ be a regular scheme, and let $\UDA(S)\simeq\LFnil{\DA(S)}{\otimes\one(-1)}$ be as in \Cref{dfn:unipotent-motives}. %
			We can compute the mapping spectrum as follows,
			\begin{align*}
				\spMap_{\UDA(S)}(\one, \one(n)[m]) &\simeq  \spMap_{\DA(S)} (\one, q_*\one(n)[m]) \\ &\simeq \spMap_{\DA(S)} (\one,  \one(n)[m]) \oplus \spMap_{\DA(S)} (\one, \one(n-1)[m-1]),
			\end{align*}
			induced by the splitting $q_*\one\simeq\one\oplus\one(-1)[-1]$ of \Cref{notn:q_*1-split}.
			So for $n=m=1$ there is a distinguished element~$(0,\id)$ in the $\pi_0$~of the right-hand side, corresponding to a map $e\colon \one(-1)[-1] \to \one$ in $\UDA(S)$ inducing a bifiber sequence
			\[ \one \to \mathcal{K} \to{\one}(-1). \]
			Note that the morphism $e$ corresponds to the indeterminate $\varpi\in \mathcal{O}^\times(\G_m)$.
			This is the triangle considered in \cite[Definition 3.6.22 and Lemme 3.6.28]{ayoub-th2} and called Kummer extension. Clearly, $1^* \mathcal{K} = \one\oplus \one(-1)$ splits in $\DA(S)$, but $\mathcal{K}$ is a non-trivial extension in $\UDA(S)$. As explained in \cite[Lemme 11.22]{AyoubEt}, the \'etale realization of the Kummer motive is the classical Kummer extension of $\Z/\ell^n(-1)$ by $\Z/\ell^n$.
			
			In~\cite[D\'efinition~3.6.29]{ayoub-th1} (and~\cite[D\'efinition 11.6]{AyoubEt}), the object $\free_{\one/}(\mcK)$ in $\UDA(S)$ (or rather its image in the homotopy category) is called the \emph{logarithm}~$\mathcal{L}og^{\vee}$.
			In this setting, \Cref{lem:H-free,prop:HisKummer} apply and we may identify the logarithm with~$\pirn(\one)$.%
		\end{rmk}

                \begin{rmk}
                \label{rmk:derivation}
 The canonical `monodromy' map $N\colon \pirn(\one)\to \pirn(t)$ is a derivation.
 Recall this means that $(\id,N)\colon \pirn(\one)\to\pirn(\one)\oplus\pirn(t)$ defines a section to the structure map of this split square-zero extension.
 To see this we may apply \Cref{prop:HisKummer} which identifies such sections with morphisms $\mathcal{K}\to\mathcal{K}\oplus\pirn(t)$ in $\Alg_{\E_0}(\catC)_{/\mathcal{K}}$.
 We may take the morphism $(\id,\mathcal{K}\to\pirn(\one)\xto{N}\pirn(t))$.

 In particular, at the level of homotopy classes of maps $\pirn(\one)\otimes\pirn(\one)\to\pirn(t)$ we have the formula
 \[
N(ab)=N(a)b+aN(b).
\]
This can also be verified very explicitly using the description of the projection formula in \Cref{sta:pi-rho-projection-formula} and the multiplication on~$\pirn(\one)$ given in \Cref{lem:H-free}.
 \end{rmk}

		\subsection{Detection via monodromy}
		\label{sec:detection-monodromy}
		One of our main goals in this paper is to show that our abstractly defined Hyodo-Kato cohomology, as an enriched version of the rigid cohomology with a Frobenius and a monodromy operator, agrees with the classically defined Hyodo-Kato cohomology as $(\varphi, N)$-modules. This is in fact a special instance of a comparison result that holds in the $\ell$-adic and complex analytic setting as well.
		All of these comparison results will follow from the first detection principle that we now discuss.

		\begin{notn}\label{notation:comparison_monod}
			Let $F\colon \catC\to \catD$ be a morphism in $\Prlmst$, and denote by $u=F(t)$ the image of~$t$.
			Any such functor $F$ induces a monoidal functor $\widehat{F}\colon \LF{\catC}{-\otimes t} \to \LF{\catD}{-\otimes u}$, restricting to a monoidal functor $\widehat{F}\colon \LFnil{\catC}{-\otimes t} \to \LFnil{\catD}{-\otimes u}$.
			Informally, it sends the object $(X,f)$ to $(F(X),F(f))$. 
		\end{notn}
		
		\begin{assu}
			\label{hyp:F-package}
			We will make the following assumptions:
			\begin{enumerate}
   \item $F$ is a morphism in $\Prloost$.
				\item The right adjoint~$G$ to~$F$ satisfies the projection formula:
				\[
				Gd\otimes c\isoto G(d\otimes Fc).
				\]
				\item $\catC$ is $\Q$-linear and $\Alt^2(t)\simeq 0$.
			\end{enumerate}
   We view $\LFnil{\catC}{-\otimes t}$ (resp. $\LFnil{\catD}{-\otimes u}$)  as an object of $\Prloost_{\catC^\otimes/-/\catD^\otimes}$ via the functors $\catC^\otimes \xto{(N=0)} \LFnil{\catC}{-\otimes t}$ and  $\LFnil{\catC}{-\otimes t} \xto{F\circ \pi} \catD^\otimes$ (resp.~via the functors $\catC^\otimes \xto{(N=0)\circ F} \LFnil{\catD}{-\otimes u}$ and $\LFnil{\catD}{-\otimes u} \xto{\pi} \catD^\otimes$).
		\end{assu}
		
		\begin{rmk}\label{rmk:hypholds}
			The projection formula is automatically satisfied in case the compact objects of~$\catC$ are strongly dualizable: see e.g.~\cite[Lemme~2.8]{ayoub-h1}.   
		\end{rmk}
		
		\begin{prop}
			\label{sta:classification-monodromy}
			Under \Cref{hyp:F-package} there is a canonical homotopy equivalence of spaces:
			\[
			\Map_{\Prloost_{\catC^\otimes/-/\catD^\otimes}}(\LFnil{\catC}{-\otimes t},\LFnil{\catD}{-\otimes u})\isoto\Map_{\catD}(u,u).
			\]
		\end{prop}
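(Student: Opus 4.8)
The plan is to transport the question, via the monoidal equivalence of \Cref{sta:Mod-fix}, into a computation of a space of augmentations, and then to read off the answer from the explicit description of mapping spaces in lax fixed points.

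First I would pass to modules. Put $A\colonequals\one\oplus t[-1]$, so that $\LFnil{\catC}{-\otimes t}\simeq\Mod_A(\catC)$ by \Cref{sta:Mod-fix}; under this equivalence $(N=0)$ becomes the free‑module functor $\free_A$ and $\pi$ becomes base change $\varepsilon_A^*$ along the augmentation $\varepsilon_A\colon A\to\one$, see \Cref{cons:square-zero}. By \Cref{sta:free=ssze}, $A\simeq\free_\catC(t[-1])$; since $F$ is symmetric monoidal and colimit‑preserving (and hence its lax monoidal right adjoint makes it commute with free commutative algebras), $B\colonequals F(A)=\one\oplus u[-1]$ is $\free_\catD(u[-1])$, with augmentation $\varepsilon_B=F(\varepsilon_A)$ corresponding to the zero map $u[-1]\to\one_\catD$. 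Note that $\Alt^2(u)\simeq F(\Alt^2 t)\simeq 0$, but we will never need $\catD$ to be $\Q$‑linear.

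Next I would invoke the universal property of module categories (the monoidal Barr--Beck--Lurie theorem, in the form used already in \Cref{sta:Mod-fix,sta:uC-Mod}): colimit‑preserving monoidal functors $\Mod_A(\catC)\to\LFnil{\catD}{-\otimes u}$ whose restriction along $\free_A$ is $(N=0)_\catD\circ F$ are classified by augmentations of $(N=0)_\catD(F(A))$ in $\CAlg(\LFnil{\catD}{-\otimes u})$. Since $(N=0)_\catD$ is monoidal (\Cref{sta:N-fib-projection-formula}) it sends the free commutative algebra $B=\free_\catD(u[-1])$ to $\free_{\LFnil{\catD}{-\otimes u}}\big((u[-1],0)\big)$, so the space of such functors is $\Map_{\LFnil{\catD}{-\otimes u}}\big((u[-1],0),(\one_\catD,0)\big)$. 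A morphism in $\Prloost_{\catC^\otimes/-/\catD^\otimes}$ is such a functor together with a filling of the triangle to $\catD$; since $\pi_\catC$ corresponds to $\varepsilon_A$, while $\pi_\catD$ is monoidal and colimit‑preserving (\Cref{sta:pi-rho-projection-formula,sta:pi_rho^nil}), this extra datum forces the underlying $\catD$‑morphism of the classifying augmentation to be $\varepsilon_B$, i.e.\ the zero map. Hence
\[
\Map_{\Prloost_{\catC^\otimes/-/\catD^\otimes}}\big(\LFnil{\catC}{-\otimes t},\LFnil{\catD}{-\otimes u}\big)\;\simeq\;\fib_0\Big(\pi_\catD\colon\Map_{\LFnil{\catD}{-\otimes u}}\big((u[-1],0),(\one_\catD,0)\big)\to\Map_{\catD}(u[-1],\one_\catD)\Big),
\]
and by \Cref{rmk:Kummer-nonsplit} applied over $\catD$ with object $u$, the map $\pi_\catD$ is a split retraction whose fiber over $0$ is $\Map_{\catD}\big((u[-1])[1],\one_\catD\otimes u\big)=\Map_{\catD}(u,u)$. (Equivalently, unwinding \eqref{eq:mapping_space_laxeq} identifies $\Map_{\LFnil{\catD}{-\otimes u}}\big((u[-1],0),(\one_\catD,0)\big)$ with the equalizer of two copies of the zero map $\Map_{\catD}(u[-1],\one_\catD)\rightrightarrows\Map_{\catD}(u[-1],u)$, hence with $\Map_{\catD}(u[-1],\one_\catD)\times\Map_{\catD}(u[-1],u[-1])$, on which $\pi_\catD$ is the first projection.) All identifications above are natural, so the resulting equivalence with $\Map_\catD(u,u)$ is canonical.

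The main obstacle is not this final homotopy‑theoretic computation but the bookkeeping in the first two steps: one must verify carefully that under $\LFnil{\catC}{-\otimes t}\simeq\Mod_A(\catC)$ the two structure functors correspond to ``free module'' and ``base change along $\varepsilon_A$'', so that being a morphism in $\Prloost_{\catC^\otimes/-/\catD^\otimes}$ translates precisely into ``an augmentation of $(N=0)_\catD(F(A))$ whose underlying $\catD$‑morphism is the prescribed one''. The hypotheses of \Cref{hyp:F-package}---$\Q$‑linearity, $\Alt^2(t)\simeq0$, and the projection formula for the right adjoint of $F$---are exactly what make \Cref{sta:Mod-fix}, the commutation of $F$ with the relevant adjunctions, and the monoidal Barr--Beck theorem applicable here.
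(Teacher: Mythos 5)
Your strategy mirrors the paper's: pass to $\Mod_A(\catC)$ via \Cref{sta:Mod-fix}, classify the relevant functors through the fully faithful embedding $\CAlg(\catC)\hookrightarrow\Prloost_{\catC^\otimes/}$ of \cite[Corollary~4.8.5.21]{HA}, then exploit $F(A)\simeq\free(u[-1])$ and a free--forget adjunction. Your bookkeeping of the over-$\catD^\otimes$ datum as the fiber over $0$ of the forgetful functor on mapping spaces, computed via \Cref{rmk:Kummer-nonsplit} and \eqref{eq:mapping_space_laxeq}, is an equivalent but differently packaged version of the paper's passage through $\CAlg(\catD)_{/\one}$ and the augmentation-ideal functor~$\aid$; it produces the same identification with $\Map_\catD(u,u)$.

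There is, however, a gap at the step ``invoke the universal property of module categories''. The embedding $\CAlg(\catC)\hookrightarrow\Prloost_{\catC^\otimes/}$ being fully faithful only computes $\Map_{\Prloost_{\catC^\otimes/}}(\Mod_A(\catC),\mathcal{M})$ in terms of $\CAlg(\catC)$ when the target $\mathcal{M}$ lies in the essential image, and by \cite[Corollary~4.8.5.21]{HA} that requires, in particular, the right adjoint of the structure functor $\catC\to\mathcal{M}$ to be conservative. For $\mathcal{M}=\LFnil{\catD}{-\otimes u}\simeq\Mod_B(\catD)$ the structure functor is $(N=0)\circ F$, whose right adjoint factors through $G$, the right adjoint of $F$, and nothing in \Cref{hyp:F-package} makes $G$ conservative. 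This is exactly why the paper's proof begins by corestricting $F$ to the full \subicat $\catD'\subseteq\catD$ generated under colimits by its image, checking that this replacement changes neither mapping space in the statement (using that $\pi$ is conservative so any functor over $\catD^\otimes$ automatically lands in $\LFnil{\catD'}{-\otimes u}$). Once you insert that reduction, the rest of your argument goes through as written.
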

		\begin{proof}
			Let $\catD'\subseteq \catD$ be the full \subicat generated under colimits by the image of~$F$.
			Note that the monoidal structure restricts to~$\catD'$ so that $(\catD')^{\otimes}\in\Prloost$.
			If $F'\colon \catC\to\catD'$ denotes the corestriction of~$F$ and we prove the statement for~$F'$ then the original statement follows from the canonical equivalences
			$$\begin{aligned}
				\Map_{\Prloost_{\catC^\otimes/-/(\catD')^\otimes}}(\LFnil{\catC}{-\otimes t},\LFnil{(\catD')}{-\otimes u})&\simeq 
				\Map_{\Prloost_{\catC^\otimes/-/\catD^\otimes}}(\LFnil{\catC}{-\otimes t},\LFnil{\catD}{-\otimes u}),\\
				\Map_{\catD'}(u,u)&\simeq \Map_{\catD}(u,u).
			\end{aligned}$$
			In other words, from now on we may assume that the image of~$F$ generates~$\catD$ under colimits.
			Equivalently, its right adjoint $G$ is conservative. Note also that, as $F$ sends compact objects to compact objects, $G$ also preserves colimits. 
			
			Write $B=\one\oplus u[-1]$ for the associated split square-zero extension in~$\catD$.
			By the projection formula and the assumptions just made, we deduce that the functor $F\colon\catC\to\catD$, and hence also the functor $B\otimes F(-)\colon\catC\to\Mod_B(\catD)$, satisfy the conditions (a)-(e) of \cite[Corollary~4.8.5.21]{HA}. This implies that the latter lies in the essential image of the fully faithful embedding
			\[
			\CAlg(\catC)\into\Prloost_{\catC^{\otimes}/}
			\]
			induced by $R\mapsto (\catC\to\Mod_R(\catC))$. 
			In particular, we get a homotopy equivalence of spaces
			\[
			\Map_{\CAlg(\catC)_{/G\one}}(A,G(B))\isoto\Map_{\Prloost_{\catC^{\otimes}/-/\catD^\otimes}}(\Mod_A(\catC),\Mod_B(\catD)).
			\]
			The left-hand side can further be identified with~$\Map_{\CAlg(\catD)_{/\one}}(F(A),B)$ via the adjunction $F\colon\CAlg(\catC)\rightleftarrows\CAlg(\catD)\colon G$, see~\cite[Lemma~5.5.5.12]{lurie}.
			
			We will conclude the proof by showing that the augmentation ideal functor induces a homotopy equivalence
			\[
			\Map_{\CAlg(\catD)_{/\one}}(F(A),B)\xrightarrow[\sim]{\aid}\Map_{\catD}(u[-1],u[-1]).
			\]
			Indeed, \Cref{hyp:F-package} and \Cref{sta:free=ssze} imply that $F(A)\simeq F(\free(t[-1]))\simeq\free(u[-1])$ so that the left-hand side identifies with
			\[
			\Map_{\CAlg(\catD)_{/\one}}(\free(u[-1]),\one\oplus u[-1])\simeq\Map_{\catD}(u[-1],\aid(\one\oplus u[-1]))\simeq\Map_{\catD}(u[-1],u[-1])
			\]
			induced by the augmentation ideal functor.
		\end{proof}
		
		\begin{rmk}
			Let $\tilde{F}\colon\LFnil{\catC}{-\otimes t}\to\LFnil{\catD}{-\otimes u}$ be a morphism in~$\Prloost_{\catC^\otimes/-/\catD^\otimes}$.
			Let us denote by $\fib_t\colon \LFnil{\catC}{-\otimes t}\to\catC$ the functor~$\fib(N)$ of \Cref{sta:N=0-fib}, and similarly for~$\fib_u$.
			The equivalence $(N=0)\circ F\simeq \tilde{F}\circ (N=0)$ induces a map in~$\CAlg(\catD)$:
			\[
			F(A)\simeq F\fib_t\one\to \fib_u\tilde{F}\one\simeq B.
			\]
			Being a morphism \emph{over}~$\catD^\otimes$ it underlies a morphism $u_{\tilde{F}}\colon F(A)\to B$ in~$\CAlg(\catD)_{/\one}$.
			Unwinding the constructions in~\cite[\S\,4.8.5]{HA} one sees that the homotopy equivalence of \Cref{sta:classification-monodromy} associates to $\tilde{F}$ the map $\aid(u_{\tilde{F}})[1]\colon u\to u$ under the augmentation ideal functor (suspended once).
		\end{rmk}
		Our goal now is to describe this map more explicitly in terms of the monodromy on the Kummer object.
		
		\begin{cons}
			Let $\tilde{F}\colon\LFnil{\catC}{-\otimes t}\to\LFnil{\catD}{-\otimes u}$ be a morphism in~$\Prloost_{\catC^\otimes/-/\catD^\otimes}$.
			Recall the map $e_t\colon (t[-1],0)\to (\one,0)$ in~$\LFnil{\catC}{-\otimes t}$ whose cofiber is the Kummer object.
			As $\pi\tilde{F}(e_t)\simeq F\pi(e_t)\simeq 0$, we see that $\tilde{F}(e_t)\in\Map_{\LFnil{\catD}{-\otimes u}}((u[-1],0),(\one,0))$ corresponds to an element in $\Map_{\catD}(u,u)$, see \Cref{rmk:Kummer-nonsplit}.
			This construction gives rise to a map of sets
			\begin{equation}
				\label{eq:e}
				e\colon\pi_0\Map_{\Prloost_{\catC^\otimes/-/\catD^\otimes}}(\LFnil{\catC}{-\otimes t},\LFnil{\catD}{-\otimes u})\to\pi_0\Map_{\catD}(u,u).
			\end{equation}
		\end{cons}
		
		\begin{rmk}
			As observed in \Cref{rmk:Kummer-nonsplit}, the map $e(\tilde{F})\colon u\to u$ is the only possibly non-zero entry in the monodromy matrix of the cofiber of $\tilde{F}(e_t)$.
			In other words, the monodromy of~$\tilde{F}(\mcK_t)$ has the shape $
			\left(\begin{smallmatrix}
				0&e(\tilde{F})\\
				0&0
			\end{smallmatrix}\right)$.
		\end{rmk}
		
		\begin{lemma}\label{lemma:!N}
			The homotopy equivalence of \Cref{sta:classification-monodromy} induces on connected components the map~$e$ of~\eqref{eq:e}.
		\end{lemma}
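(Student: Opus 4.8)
The plan is to pin down both the image of a morphism~$\tilde F$ under the equivalence of \Cref{sta:classification-monodromy} and the element $e(\tilde F)$ of~\eqref{eq:e} as one and the same self-map of~$u$. Write $A=\one\oplus t[-1]$ and $B=\one\oplus u[-1]$ for the relevant split square-zero extensions, and let $u_{\tilde F}\colon F(A)\to B$ be the morphism of augmented algebras $F\fib_t(\one,0)\to\fib_u\tilde F(\one,0)$ induced by the equivalence $\tilde F\circ(N=0)\simeq(N=0)\circ F$, i.e.\ the Beck--Chevalley (mate) transformation of the commuting square attached to~$\tilde F$, evaluated at the unit $(\one,0)$. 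As recalled just before the statement, the equivalence of \Cref{sta:classification-monodromy} sends $\tilde F$ to $\aid(u_{\tilde F})[1]\colon u\to u$, so it suffices to prove that $e(\tilde F)=\aid(u_{\tilde F})[1]$ in $\pi_0\Map_\catD(u,u)$.

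First I would make $\tilde F(e_t)$ explicit. By \Cref{def:kummerobj}, $e_t\colon(t[-1],0)\to(\one,0)$ is the adjunct, under $(N=0)\dashv\fib_t$, of the split inclusion $\iota_A\colon t[-1]=\aid(A)\hookrightarrow A=\fib_t(\one,0)$; equivalently, $e_t$ is the counit $(N=0)\fib_t(\one,0)\to(\one,0)$ precomposed with $(N=0)(\iota_A)$. Applying $\tilde F$, using the equivalence $\tilde F\circ(N=0)\simeq(N=0)\circ F$ (so that $\tilde F(t[-1],0)\simeq(u[-1],0)$ and $\tilde F(\one,0)\simeq(\one,0)$), and invoking the standard compatibility of the mate $u_{\tilde F}$ with the counits of $(N=0)\dashv\fib_t$ and $(N=0)\dashv\fib_u$, one finds that $\tilde F(e_t)\colon(u[-1],0)\to(\one,0)$ in $\LFnil{\catD}{-\otimes u}$ is the adjunct, under $(N=0)\dashv\fib_u$, of the composite $u[-1]=F(\aid(A))\xrightarrow{F(\iota_A)}F(A)\xrightarrow{u_{\tilde F}}\fib_u(\one,0)=B$. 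Since $F$ is exact and monoidal, $F(\iota_A)$ is the augmentation-ideal inclusion $\aid(F(A))=u[-1]\hookrightarrow F(A)$, and since $u_{\tilde F}$ is a morphism of augmented algebras it intertwines the augmentation-ideal inclusions; hence $\tilde F(e_t)$ is the adjunct of $\iota_B\circ\aid(u_{\tilde F})\colon u[-1]\hookrightarrow B$.

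Next I would read off $e(\tilde F)$. By construction, $e(\tilde F)$ is the image of $\tilde F(e_t)$ --- which lies in the component with $\pi(-)\simeq 0$, since $\pi\tilde F(e_t)\simeq F\pi(e_t)\simeq 0$ --- under the identification of \Cref{rmk:Kummer-nonsplit} of the fibre over~$0$ of $\pi\colon\Map_{\LFnil{\catD}{-\otimes u}}((u[-1],0),(\one,0))\to\Map_\catD(u[-1],\one)$ with $\Map_\catD(u[-1][1],\one\otimes u)=\Map_\catD(u,u)$. Transporting the same mapping space through the adjunction $(N=0)\dashv\fib_u$, it becomes $\Map_\catD(u[-1],\fib_u(\one,0))$, with $\pi$ becoming postcomposition with the canonical map $\fib_u(\one,0)\to\one$; the splitting $\fib_u(\one,0)\simeq\one\oplus u[-1]$ then identifies the fibre over~$0$ with $\Map_\catD(u[-1],u[-1])\simeq\Map_\catD(u,u)$. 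These two descriptions of the fibre over~$0$ coincide up to the canonical suspension identification, both being avatars of the fibre sequence $u[-1]\to\fib_u(\one,0)\to\one$. Under the adjunction description, the previous paragraph identifies $\tilde F(e_t)$ with $\aid(u_{\tilde F})\in\Map_\catD(u[-1],u[-1])$; therefore $e(\tilde F)=\aid(u_{\tilde F})[1]$, which is exactly the image of $\tilde F$ under the equivalence of \Cref{sta:classification-monodromy}.

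The main obstacle is the mate coherence invoked in the second paragraph: identifying $\tilde F$ applied to the counit $(N=0)\fib_t(\one,0)\to(\one,0)$ with the counit $(N=0)\fib_u(\one,0)\to(\one,0)$ precomposed with $(N=0)(u_{\tilde F})$, under $\tilde F\circ(N=0)\simeq(N=0)\circ F$. This is a routine but slightly delicate unwinding of the definition of $u_{\tilde F}$ as a Beck--Chevalley transformation, together with the triangle identities relating mates and (co)units; the remaining inputs --- that exact monoidal functors commute with the formation of augmentation ideals, and that the two incarnations of the fibre sequence $u[-1]\to\fib_u(\one,0)\to\one$ agree --- are immediate.
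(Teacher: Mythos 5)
Your argument is correct and takes essentially the same route as the paper's: both reduce to showing $e(\tilde{F})=\aid(u_{\tilde{F}})[1]$ and both do so by recognizing $e_t$ as the adjunct of the split inclusion $t[-1]\hookrightarrow A$, applying the compatibility of the mate $u_{\tilde{F}}$ with the adjunctions $(N=0)\dashv\fib(N)$, and using that $u_{\tilde{F}}$ preserves augmentation ideals. The paper packages these same facts into the commutativity of its displayed diagram and reads off the answer via a matrix computation in $\Ho(\catD)$, whereas you unwind the same identifications directly through the adjunctions.
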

		\begin{proof}
			Let $\tilde{F}\colon\LFnil{\catC}{-\otimes t}\to\LFnil{\catD}{-\otimes u}$ be a morphism in~$\Prloost_{\catC^\otimes/-/\catD^\otimes}$.
			We have the following commutative diagram in $\Ho(\catD)$.
			\[
			\begin{tikzcd}
				Ft[-1]
				\ar[r, "\text{unit}"]
				\ar[d, "\text{incl}" swap]
				&
				F\fib_t(t[-1],0)
				\ar[ld, "F\fib_t(e_t)"]
				\ar[r, "u_{\tilde{F}}"]
				&
				\fib_u\tilde{F}(t[-1],0)
				\ar[ld, "\fib_u\tilde{F}(e_t)"]
				\\
				F\fib_t\one
				\ar[r, "u_{\tilde{F}}" swap]
				&
				\fib_u\tilde{F}\one
			\end{tikzcd}
			\]
			The top horizontal arrow is the inclusion of the direct summand $u[-1]\into u[-1]\oplus u^{\otimes 2}[-2]$ so that the path ``right-then-down'' is $
			\left(
			\begin{smallmatrix}
				0\\ e(\tilde{F})[-1]
			\end{smallmatrix}
			\right)\colon u[-1]\to \one\oplus u[-1]$.
			This shows the claim.
		\end{proof}

		\begin{cor}\label{prop:unicity}
			Under \Cref{hyp:F-package}, let $\tilde{F}\colon \LFnil{\catC}{-\otimes t}\to \LFnil{\catD}{-\otimes u}$ be a morphism in $\Prloost$ and assume:
			\begin{enumerate}[(i)]
				\item \label{cond:kummer0}$\tilde{F}$ is compatible with $(N=0)$ and $\pi$, i.e., there are  commutative squares of monoidal functors
				\[
				\begin{tikzcd}
					\catC
					\ar[d, "F"]\ar[r, "N=0"]
					&
					\LFnil{\catC}{-\otimes t}
					\ar[d,"\tilde{F}"]\ar[r,"\pi"]
					&
					\catC
					\ar[d, "F"]
					\\
					\catD
					\ar[r, "N=0"]
					&
					\LFnil{\catD}{-\otimes u}
					\ar[r,"\pi"]
					&
					\catD.
				\end{tikzcd}
				\]
				\item\label{cond:kummer}The monodromy matrix of $\tilde{F}(\mcK_t)$ in $\Ho(\catD)$ is given by
				\[
				\one\oplus u\xto{
					\left(\begin{smallmatrix}
						0&\id_u\\0&0
					\end{smallmatrix}\right)}u\oplus u^{\otimes2}.
				\]
			\end{enumerate}
			Then $\tilde{F}\simeq \widehat{F}$ (of \Cref{notation:comparison_monod}) as monoidal functors.
		\end{cor}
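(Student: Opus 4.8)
The strategy is to reduce the statement to \Cref{sta:classification-monodromy} and \Cref{lemma:!N}: both $\tilde F$ and $\widehat F$ are objects of the mapping space $\Map_{\Prloost_{\catC^\otimes/-/\catD^\otimes}}(\LFnil{\catC}{-\otimes t},\LFnil{\catD}{-\otimes u})$, and by \Cref{sta:classification-monodromy} this space is equivalent (via the map $e$ of \eqref{eq:e}, by \Cref{lemma:!N}) to the \emph{discrete} space $\Map_{\catD}(u,u)$ — in particular its connected components are classified by $e$. Hence it suffices to check that $e(\tilde F)=e(\widehat F)$ in $\pi_0\Map_{\catD}(u,u)$.

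\textbf{Step 1: $\tilde F$ defines a point of the relevant mapping space.} Hypothesis \eqref{cond:kummer0} says precisely that $\tilde F$ is a morphism in $\Prloost_{\catC^\otimes/-/\catD^\otimes}$ — i.e.\ it is a monoidal colimit-preserving functor sitting in the two commuting squares with $(N=0)$ (from $\catC$, resp.\ $\catC\xto{F}\catD$) and with $\pi$ (to $\catC\xto{F}\catD$, resp.\ from $\catD$). The structure maps into and out of $\LFnil{\catD}{-\otimes u}$ in \Cref{hyp:F-package} are exactly $(N=0)\circ F$ and $\pi$, so the two squares in \eqref{cond:kummer0} are the required compatibilities. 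Meanwhile $\widehat F$ (of \Cref{notation:comparison_monod}) is tautologically such a morphism, since it sends $(X,f)\mapsto(FX,Ff)$ and thus commutes strictly with $(N=0)$ and $\pi$.

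\textbf{Step 2: compute $e(\widehat F)$.} Since $\widehat F(X,f)=(FX,Ff)$, it sends the map $e_t\colon(t[-1],0)\to(\one,0)$ to $e_u\colon(u[-1],0)\to(\one,0)$, hence $\widehat F(\mcK_t)\simeq\mcK_u$, the Kummer object of $\catD$. By \Cref{lem:monodromy_Kummer_motive} its monodromy matrix is $\left(\begin{smallmatrix}0&\id_u\\0&0\end{smallmatrix}\right)$, so by \Cref{rmk:Kummer-nonsplit} and the construction of $e$ we get $e(\widehat F)=\id_u$.

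\textbf{Step 3: compute $e(\tilde F)$ and conclude.} By hypothesis \eqref{cond:kummer} the monodromy matrix of $\tilde F(\mcK_t)$ is also $\left(\begin{smallmatrix}0&\id_u\\0&0\end{smallmatrix}\right)$; by \Cref{rmk:Kummer-nonsplit} the only possibly nonzero entry of that matrix is exactly $e(\tilde F)$, hence $e(\tilde F)=\id_u=e(\widehat F)$. Since \Cref{sta:classification-monodromy} together with \Cref{lemma:!N} tells us that $e$ induces a bijection on $\pi_0$ of the mapping space (the space $\Map_\catD(u,u)$ being discrete here — it is the $\pi_0$ of itself), we conclude that $\tilde F$ and $\widehat F$ lie in the same connected component, i.e.\ $\tilde F\simeq\widehat F$ as morphisms in $\Prloost_{\catC^\otimes/-/\catD^\otimes}$, in particular as monoidal functors. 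The main (and essentially only) subtlety is Step 1 — making sure that the commuting squares of \eqref{cond:kummer0}, together with the hypothesis that $\tilde F$ is a morphism in $\Prloost$, really do pin $\tilde F$ down as a point of $\Map_{\Prloost_{\catC^\otimes/-/\catD^\otimes}}(\LFnil{\catC}{-\otimes t},\LFnil{\catD}{-\otimes u})$ rather than merely a functor that happens to be compatible on the nose; this is where one invokes that the forgetful functor from this slice/coslice mapping space records exactly the data of such squares, so no further coherence need be checked by hand.
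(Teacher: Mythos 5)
Your proof is correct and takes the same approach as the paper: reduce to \Cref{sta:classification-monodromy} and \Cref{lemma:!N}, then check via the Kummer object that both functors map to $\id_u\in\pi_0\Map_\catD(u,u)$. One small slip: you assert that $\Map_\catD(u,u)$ is discrete, which is false in general (it is an arbitrary mapping space in a stable $\infty$-category); fortunately this claim is unnecessary, since \Cref{sta:classification-monodromy} gives a homotopy equivalence of spaces and hence a bijection on $\pi_0$, which is all you use.
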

		\begin{proof}
			Condition \eqref{cond:kummer0} ensures that  $\tilde{F}$ lies in $\Map_{\Prloost_{\catC^\otimes/-/\catD^\otimes}}(\LFnil{\catC}{-\otimes t},\LFnil{\catD}{-\otimes u})$. Note that under the equivalence $\pi_0\Map_{\Prloost_{\catC^\otimes/-/\catD^\otimes}}(\LFnil{\catC}{-\otimes t},\LFnil{\catD}{-\otimes u})\simeq\pi_0\Map_{\catD}(u,u)$ of \Cref{sta:classification-monodromy} both functors correspond to the element $\id_u$, by \Cref{lemma:!N}.
		\end{proof}
		\begin{rmk}
			Another possible extension of $F$ is defined by the following composition: $(N=0)\circ F\circ \pi$. In other words, the functor which ``kills the monodromy datum''. Note that it does not satisfy condition \eqref{cond:kummer} above.
		\end{rmk}
		
		\begin{rmk}
			It is possible to give a more ``hands-on'' proof of the previous result, which is closer to the argument of~\cite[Th\'eor\`eme 11.17]{AyoubEt}: in order to verify that $\tilde{F}(f) = F(f)$, it is enough to check this on co-free objects. To see this, note that 
			one can recover the monodromy $f$ of $X$ as the first row of the monodromy of $(X,f)\boxtimes (\bH,\text{proj})$. We can thus replace $(X,f)$ by $(X,f)\boxtimes (\bH, \text{proj})$ and then further reduce to the case $(X,0)\boxtimes (\bH,\text{proj})$, since the latter is equivalent to the former in the \icat $\LFnil{\catC}{-\otimes t}$ by \Cref{lemma:ciucciaN}. In particular, it is enough to identify the two functors  on $(\bH,\text{proj})$ itself, and hence, in view of \Cref{prop:HisKummer}, on the Kummer object.
		\end{rmk}
	
		\subsection{Detection through hearts of weight structures}
		\label{sec:W}
		In this section we recall how certain questions about an \icat can be reduced to questions about a \subicat if the latter `is large enough' and has some self-orthogonality properties.  
		This employs Bondarko's theory of weight structures, as recast in the \icategorical setting by Sosnilo.
		However, here we will need to know very little of that theory and we refer to~\cite{MR2746283,Sosnilo,Sosnilo2,Aoki:weight-complex} for a more thorough development.
		
		In this section, $\catC$ will be of a different nature than before:
		We let $\catC$ be a stable \icat.
		A full \subicat $\heart{}\subseteq \catC$ is called \emph{negative} if the mapping spectrum~$\spMap_{\catC}(X,Y)$ is connective for all $X,Y\in\heart{}$. %

		\begin{dfn}\label{dfn:bws}
			Let $\catC$ be a  stable \icat.
			We say that a full \subicat $\heart{}\subseteq \catC$ is the \emph{heart of a bounded weight structure} if
			\begin{enumerate}
				\item
				\label{it:negative}
				it is negative,
				\item
				\label{it:thick} it is closed under finite coproducts and retracts, and
				\item
				\label{it:generate}
				generates~$\catC$ under finite limits and colimits.
			\end{enumerate}
			If $\catC^\otimes$ is a stable monoidal structure\footnote{This means that the tensor product commutes with finite (co)limits in both variables} on $\catC$ we say that $\heart{}$ is the \emph{heart of a bounded compatible weight structure} if moreover it  contains the tensor unit and is closed under tensor products.
		\end{dfn}
   
		\begin{rmk}
			If   $\heart{}$ is the heart of a {bounded compatible weight structure},  the monoidal structure on~$\catC$ restricts to a monoidal structure on~$\heart{}$ that commutes with finite (co)products in both variables (see \cite[Proposition 2.2.1.1, Remark 2.2.1.2]{HA}).
		\end{rmk}
		\begin{rmk}
			As the name suggests, in the situation of \Cref{dfn:bws} there is then a bounded weight structure on~$\catC$ whose heart coincides with~$\heart{}$.
			See~\cite[Definition 3.1.1]{Sosnilo2} and \cite[Theorem~4.3.2(II.2)]{MR2746283}.
			We will not need this in the sequel.
			
			We also note that the conditions~\ref{it:negative}--\ref{it:generate} can be checked on the homotopy category.
			That is, a full \subicat $\heart{}\subseteq\catC$ is the heart of a bounded weight structure if and only if:
			\begin{enumerate}
				\item $\Hom_{\ho(\catC)}(X,Y[n])=0$ for all $X,Y\in\heart{}$ and $n>0$,
				\item $\ho(\heart{})\subseteq\ho(\catC)$ is closed under finite direct sums and summands,
				\item the smallest triangulated subcategory of~$\ho(\catC)$ that contains~$\ho(\heart{})$ is~$\ho(C)$ itself.
			\end{enumerate}
		\end{rmk}

		\begin{rmk}
			Let $\heart{}$ be the heart of a bounded weight structure on~$\catC$, and let $\catD$ be a stable presentable \icat.
			Every exact functor $\catC\to\catD$ restricts to an additive functor $\heart{}\to\catD$, that is, a functor which preserves finite products (or, equivalently, finite coproducts).
			The canonical restriction functor $\Fun(\catC,\catD)\to\Fun(\heart{},\catD)$ induces an equivalence on the corresponding full \subicats:
			\begin{equation}
				\label{eq:add-vs-ex}
				\Funex(\catC,\catD)\isoto\Funadd(\heart{},\catD).
			\end{equation}
			This is essentially contained in~\cite[Proposition 3.3]{Sosnilo}.
			We now give a monoidal analog. 
		\end{rmk}

		\begin{notn}
			Let $\catC^\otimes$ be a stable monoidal \icat, let $\catD^\otimes$ be a stable presentably monoidal \icat,  and $\heart{}^\otimes$ an additive monoidal \icat.
			(So, in the second case the tensor product preserves small colimits in both variables, and in the third case it preserves finite coproducts.)
			We denote by
			\[
			\Funexo(\catC,\catD),\qquad\Funaddo(\heart{},\catD)
			\]
			the full \subicats of $\Fun^\otimes(\catC,\catD)$ and $\Fun^\otimes(\heart{},\catD)$ respectively (as in~\cite[Definition~2.1.3.7]{HA}), spanned by those monoidal functors whose underlying functors are exact and additive, respectively.
		\end{notn}
		
		\begin{prop}\label{cor:sosnilo@}
			Let $\catC^\otimes$ be a small  stable monoidal \icat, and $\heart{}\subseteq\catC$ the heart of a bounded compatible weight structure.
			Then for every stable presentably monoidal \icat~$\catD$, the restriction functor
			\[
			\Funexo(\catC,\catD)\to\Funaddo(\heart{},\catD)
			\]
			is an equivalence.
		\end{prop}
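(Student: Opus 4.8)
The plan is to bootstrap from the non-monoidal equivalence $\iota^*\colon\Funex(\catC,\catD)\isoto\Funadd(\heart{},\catD)$ of~\eqref{eq:add-vs-ex}, where $\iota\colon\heart{}\into\catC$ is the inclusion — which is symmetric monoidal because the weight structure is compatible. The idea is that a symmetric monoidal refinement of an exact functor is a piece of extra structure living entirely inside the multi-variable functor $\infty$-categories $\catC^{\times n}\to\catD$ of functors exact in each variable, and each of these is rigidly controlled by restriction to $\heart{}^{\times n}$; so the refinement transports uniquely across~\eqref{eq:add-vs-ex}.

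First I would set up the monoidal picture via Day convolution: for $\mathcal E$ small symmetric monoidal and $\catD\in\Prlmst$, the category $\Fun(\mathcal E,\catD)$ carries the Day convolution structure with $\CAlg(\Fun(\mathcal E,\catD))\simeq\mathrm{Fun}^{\mathrm{lax}}(\mathcal E,\catD)$, the lax monoidal functors; and $\Funexo(\catC,\catD)$ (resp.\ $\Funaddo(\heart{},\catD)$) is the full subcategory of $\mathrm{Fun}^{\mathrm{lax}}(\catC,\catD)$ (resp.\ $\mathrm{Fun}^{\mathrm{lax}}(\heart{},\catD)$) of lax monoidal functors whose underlying functor is exact (resp.\ additive) and whose structure maps $F(c)\otimes F(c')\to F(c\otimes c')$ and $\one\to F(\one)$ are equivalences. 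Since $\iota$ is symmetric monoidal, left Kan extension $\iota_!$ is strong monoidal for Day convolution, so $\iota^*$ is lax monoidal and induces $\iota^*\colon\mathrm{Fun}^{\mathrm{lax}}(\catC,\catD)\to\mathrm{Fun}^{\mathrm{lax}}(\heart{},\catD)$, $F\mapsto F\circ\iota$, lying over the equivalence~\eqref{eq:add-vs-ex}.

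Second I would show this restricts to an equivalence $\mathrm{Fun}^{\mathrm{lax}}(\catC,\catD)^{\mathrm{ex}}\isoto\mathrm{Fun}^{\mathrm{lax}}(\heart{},\catD)^{\mathrm{add}}$ on the full subcategories with exact (resp.\ additive) underlying functor. As the forgetful functors to underlying functors are conservative and~\eqref{eq:add-vs-ex} is already an equivalence, it is enough to check that for each exact $F\colon\catC\to\catD$ the map from the space of lax monoidal structures on $F$ to that on $F\circ\iota$ is an equivalence, and likewise on spaces of monoidal natural transformations. This I would deduce from the \emph{multi-variable} form of~\eqref{eq:add-vs-ex}: iterating that equivalence (legitimate, since $\Funex(\catC,\catD)\simeq\Funadd(\heart{},\catD)$ is again presentably stable) gives, for each $n$, an equivalence from functors $\catC^{\times n}\to\catD$ exact in each variable onto functors $\heart{}^{\times n}\to\catD$ additive in each variable, compatibly with natural transformations — here one uses that $\otimes_\catC,\otimes_\catD$ are exact in each variable (so the source $(c_i)\mapsto\bigotimes F(c_i)$ and target $(c_i)\mapsto F(\bigotimes c_i)$ of every structure map are exact in each variable), and that $\otimes_{\heart{}}$ preserves finite coproducts in each variable. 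Every datum and every coherence of a lax monoidal structure on $F$ lives in such a multi-exact functor- or natural-transformation-category, hence extends uniquely from the corresponding datum for $F\circ\iota$; packaged operadically, the map of $\infty$-operads $\catC^\otimes\to\catD^\otimes$ is detected arity-wise on $\heart{}$. Finally I would pass from lax to strong: for $F$ lax monoidal with exact underlying functor, the cofiber of the structure map is a natural transformation of functors $\catC\times\catC\to\catD$ exact in each variable, and since $\heart{}$ contains $\one$ and generates $\catC$ under finite (co)limits (condition~\ref{it:generate} of \Cref{dfn:bws}) it vanishes on all of $\catC\times\catC$ as soon as it vanishes on $\heart{}\times\heart{}$; similarly for the unit map. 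So $F$ is strong monoidal iff $F\circ\iota$ is, and the second-step equivalence cuts down to $\Funexo(\catC,\catD)\isoto\Funaddo(\heart{},\catD)$.

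The main obstacle is the second step: upgrading ``each datum and coherence is detected on $\heart{}$'' to an honest comparison of the full coherent towers (equivalently of the map-of-$\infty$-operads data), not merely of the binary level. I would handle this by writing $\mathrm{Fun}^{\mathrm{lax}}(\catC,\catD)$ as a limit indexed by $\Fin_*$ (or by $\Delta$ via a bar construction of the relevant endomorphism operad) whose terms are multi-variable functor categories, and checking that this diagram agrees term-by-term with the one for $\heart{}$ through the multi-variable form of~\eqref{eq:add-vs-ex}; as a shortcut one may instead invoke the symmetric monoidal refinement of the weight complex functor of Aoki~\cite{Aoki:weight-complex}.
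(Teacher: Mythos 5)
Your setup is sound, and the final step — cutting down from lax monoidal structures with exact underlying functor to strong monoidal ones by checking vanishing of the cofiber of the structure map on $\heart{}\times\heart{}$ — is a correct and clean argument using condition~\ref{it:generate} of \Cref{dfn:bws}. You also correctly locate the crux of the matter. But that crux is exactly where you leave a real gap.

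The claim that ``the map of \iops{} $\catC^\otimes\to\catD^\otimes$ is detected arity-wise on $\heart{}$'' is the entire content of the proposition, and neither of your two suggested routes closes it. The ``limit indexed by $\Fin_*$'' presentation of $\mathrm{Fun}^{\mathrm{lax}}(\catC,\catD)$ is not something you can just invoke: a lax monoidal functor is a functor $\catC^\otimes\to\catD^\otimes$ over $\Fin_*$ preserving inert edges, i.e.\ a full subcategory of $\Fun_{/\Fin_*}(\catC^\otimes,\catD^\otimes)$, and this is a single functor category over a base, not a limit of multi-variable pieces in a way that obviously interacts well with restriction along $\iota^{\times n}$. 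Even the multi-variable form of~\eqref{eq:add-vs-ex} that you want to iterate needs justification: you need $\Funex^{(n)}(\catC^{\times n},\catD)\isoto\Funadd^{(n)}(\heart{}^{\times n},\catD)$ compatibly with the operations induced by active morphisms of $\Fin_*$ — in particular with the tensor products $\catC^{\times n}\to\catC$ — and compatibility of those with $\heart{}^{\times n}\to\heart{}$ is precisely the higher coherence you are trying to establish. Asserting it does not prove it; and invoking the bar construction or Aoki's weight complex functor as a black box does not either, because the coherence of \emph{that} is exactly what is at stake. Contrast with the paper's proof, which sidesteps the arity-by-arity bookkeeping: it replaces $\catC$ with $\Ind\catC$, invokes the fact (from Aoki~\cite[Lemma~4.2]{Aoki:weight-complex}) that the additive Yoneda embedding is a \emph{monoidal} equivalence $\Ind\catC^\otimes\simeq\Funadd(\heart{}^{\op},\Sp)^\otimes$, and then identifies the latter with the cocompletion $\mathscr{P}^{K'}_K(\heart{})$ of~\cite[Proposition~4.8.1.10]{HA}, whose part~(4) gives the universal property for lax monoidal colimit-preserving functors in one shot. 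All coherence is absorbed by a single universal property of a monoidal cocompletion, rather than verified level by level. Your ``shortcut'' via Aoki is gesturing at the right circle of ideas, but you would still need to route through the Ind-category and the cocompletion universal property to actually land the argument — at which point you are reproducing the paper's proof rather than the Day-convolution strategy you set out.

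A second, smaller, point worth flagging: in the first paragraph you say $\iota^*$ ``lies over the equivalence~\eqref{eq:add-vs-ex}'', but you have not checked that left Kan extension $\iota_!$ along $\iota$ (strong monoidal for Day convolution) is compatible with the $t$-structure/weight-structure conditions that cut out the full subcategories on each side; the equivalence~\eqref{eq:add-vs-ex} is restriction, not Kan extension, and the claim that these two fit together compatibly with the Day convolution structure is another place where coherence is being assumed. It is true, but it is part of what needs proving, and is again better handled via the cocompletion universal property.
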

		\begin{proof}
			By~\cite[Lemma~4.2]{Aoki:weight-complex}, the \icat $\Funadd(\heart{}^{\op},\Sp)$ admits a monoidal structure such that the `additive' Yoneda embedding again induces an equivalence
			\[
			\Ind\catC^\otimes\isoto\Funadd(\heart{}^{\op},\Sp)^{\otimes}.
			\]
			Consider the following commutative square:
			\[
			\begin{tikzcd}
				\FunLo(\Ind\catC,\catD)
				\ar[d]
				&
				\FunLo(\Funadd(\heart{}^{\op},\Sp),\catD)
				\ar[l]
				\ar[d]
				\\
				\Funexo(\catC,\catD)
				\ar[r]
				&
				\Funaddo(\heart{},\catD).
			\end{tikzcd}
			\]
			We just argued that the top horizontal arrow is an equivalence, and the left vertical arrow is an equivalence by the last statement in~\cite[Corollary~4.8.1.14]{HA}.
			It remains to prove that the right vertical arrow is an equivalence.
			
			That functor is induced by precomposition with the `additive' Yoneda embedding and stabilization~\cite[Corollary~2.9]{Aoki:weight-complex}
			\[
			\heart{}^\otimes\to\Funadd(\heart{}^{\op},\Sp_{\geq 0})^\otimes\to\Sp(\Funadd(\heart{}^{\op},\Sp_{\geq 0}))^\otimes\ (\simeq\Funadd(\heart{}^{\op},\Sp)^\otimes).
			\]
			Restriction along the second functor induces an equivalence
			\[
			\FunLo(\Sp(\Funadd(\heart{}^{\op},\Sp_{\geq 0})),\catD)\isoto\FunLo(\Funadd(\heart{}^{\op},\Sp_{\geq 0}),\catD),
			\]
			as follows from~\cite[Proposition~5.4.(iv)]{MR3450758}.
			As observed in~\cite[Remark~2.5]{Aoki:weight-complex}, $\Funadd(\heart{}^{\op},\Sp_{\geq 0})$ can be identified with the cocompletion~$\mathscr{P}^{K'}_{K}(\heart{})$ of~\cite[Proposition~4.8.1.10]{HA} for $K$ the collection of finite discrete simplicial sets and for $K'$ the collection of all small simplicial sets.
			Part~(4) of \textsl{loc.\,cit.}\ says that restriction induces an equivalence of \icats
			\[
			\FunLlax(\mathscr{P}^{K'}_{K}(\heart{}),\catD)\isoto\Funaddlax(\heart{},\catD)
			\]
			between lax monoidal functors whose underlying functors preserve small colimits (resp.\ finite coproducts).
			Since $\heart{}^\otimes\to\mathscr{P}^{K'}_K(\heart{})^\otimes$ is monoidal, this functor restricts to a fully faithful embedding 
			\begin{equation}
				\label{eq:restriction-H}
				\FunLo(\mathscr{P}^{K'}_{K}(\heart{}),\catD)\into\Funaddo(\heart{},\catD).
			\end{equation}
			Now, if $f^\otimes\colon \heart{}^\otimes\to\catD^\otimes$ is monoidal and the underlying functor~$f$ additive then the induced monoidal functor $\mathscr{P}^{K'}(\heart{})^\otimes\to \catD^\otimes$ factors through the localization $\mathscr{P}^{K'}(\heart{})\to\mathscr{P}^{K'}_K(\heart{})$ and we conclude that~\eqref{eq:restriction-H} is an equivalence.
			This completes the proof.
		\end{proof}

		\begin{exm}\label{eg:ws}\label{exm:WS}
			The following are examples of hearts of weight structures that we will  use.
			\begin{enumerate}
				\item\label{exm:ChowWS}
				Let $k$ be a  field.
				We let $\catC=\DA(k)_{\cpt}$ be the \icat of compact motives over~$k$ 
				(see \Cref{sec:RigDA-monodromy} for our conventions), and $\heart{k}$ the full \subicat of Chow motives.
				That is, the idempotent completion of the additive \subicat generated by objects of the form ${\mathsf{M}}(X)\{i\}$ with $i\in\Z$, $\{i\}:=(i)[2i]$, and $X/k$ smooth and proper.
				By~\cite[Proposition~6.5.3]{MR2746283}, this is the heart of a bounded weight structure.
    \footnote{In \textsl{loc.\,cit.}\ this is shown if~$k$ is a perfect field. However, the base change to the perfection is an equivalence (see \Cref{rmk:gr_is_insensitive}) %
					so that the result remains true for general fields~$k$. %
     }
				Since $\mathsf{M}(X)\{i\}\otimes\mathsf{M}(Y)\{j\}\simeq\mathsf{M}(X\times Y)\{i+j\}$, it is clear that it is compatible with the monoidal structure.
				\item\label{exm:WSonComega} If $\widetilde{\heart{}}$ is a full \subicat of compact objects in a stable presentable \icat~$\catC$ such that $\widetilde{\heart{}}$ generates~$\catC$ under shifts and colimits. Then  $\widetilde{\heart{}}$ generates $\catC_{\cpt}$ under shifts, finite colimits and retracts. If~$\widetilde{\heart{}}$ is also negative, then the idempotent completion of its additive envelope  is the heart~$\heart{}$ of a bounded weight structure on~$\catC_{\cpt}$. %
				See~\cite[Corollary~2.1.2]{MR3775347}.
				\item \label{wc}If $\heart{}$ is the heart of a  bounded compatible weight structure on~$\catC^{\otimes}$ then $\Ho(\heart{})$ is the heart of a bounded compatible weight structure on $\mcK^b(\Ho(\heart{}))^\otimes$ (its bounded homotopy category) and the canonical functor $\heart{}\to\Ho(\heart{})$ induces a conservative monoidal functor $w\colon \catC^\otimes\to\mcK^b(\Ho(\heart{}))^\otimes$,  called the weight-complex, see~\cite[Corollary~4.5]{Aoki:weight-complex}.
			\end{enumerate}
		\end{exm}
		
		\begin{cor}
			\label{prop:factorsonwc}
			Let $\heart{}$ be the heart of a bounded compatible weight structure on $\catC^\otimes$ and let $F\colon\catC^\otimes\to\catD^\otimes$ be a monoidal exact functor to a stable presentably monoidal \icat. If $\pi_n\spMap_{\catD}(FX,FY)=0$ for any $X,Y$ in $ \heart{}$ and any $n>0$,  then~$F$ factors as follows:
			$$
			\catC^\otimes\xto{w}\mcK^b(\Ho(\heart{}))^\otimes\xto{\tilde{F}}\catD^\otimes.
			$$
		\end{cor}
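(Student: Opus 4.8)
The plan is to deduce the statement from the monoidal heart-versus-exact equivalence of \Cref{cor:sosnilo@}, applied twice --- once to $\catC^\otimes$ and once to $\mcK^b(\Ho(\heart{}))^\otimes$ --- thereby reducing everything to the question of whether a single functor on $\heart{}$ factors through the homotopy $1$-category.

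First I would observe that, since $F$ is monoidal and exact and $\heart{}\subseteq\catC$ is a monoidal \subicat closed under finite coproducts, the restriction $f:=F|_{\heart{}}\colon\heart{}^\otimes\to\catD^\otimes$ lies in $\Funaddo(\heart{},\catD)$. Assuming (as in all the applications) that $\catC^\otimes$ is small, \Cref{cor:sosnilo@} gives an equivalence $\Funexo(\catC,\catD)\isoto\Funaddo(\heart{},\catD)$ by restriction to the heart, so $F$ is the unique monoidal exact extension of $f$. By \Cref{exm:WS}, $\Ho(\heart{})$ is the heart of a bounded compatible weight structure on the small stable monoidal \icat $\mcK^b(\Ho(\heart{}))^\otimes$, so a second application of \Cref{cor:sosnilo@} yields $\Funexo(\mcK^b(\Ho(\heart{})),\catD)\isoto\Funaddo(\Ho(\heart{}),\catD)$. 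The weight-complex functor $w$ of \Cref{exm:WS} restricts on hearts to the canonical monoidal functor $p\colon\heart{}^\otimes\to\Ho(\heart{})^\otimes$, so $(\tilde F\circ w)|_{\heart{}}\simeq(\tilde F|_{\Ho(\heart{})})\circ p$; hence, under the two equivalences, precomposition with $w$ corresponds to precomposition with $p$. It therefore suffices to show that $f$ lies in the essential image of $p^*\colon\Funaddo(\Ho(\heart{}),\catD)\to\Funaddo(\heart{},\catD)$, i.e.\ that $f$ factors, as a monoidal functor with additive underlying functor, through $p$.

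To produce this factorization I would use the hypothesis to see that $f$ takes values in an ordinary category. Let $\catD_f\subseteq\catD$ be the full \subicat spanned by the objects $F(X)$ for $X\in\heart{}$. For $X,Y\in\heart{}$ we have $\pi_n\Map_{\catD}(FX,FY)=\pi_n\spMap_{\catD}(FX,FY)=0$ for all $n\geq 1$ by hypothesis, and since these mapping spaces are infinite loop spaces they are therefore discrete; thus $\catD_f$ is equivalent to a $1$-category. Moreover $\catD_f$ contains the unit $F(\one)\simeq\one_{\catD}$ and, since $F$ is monoidal and additive and $\heart{}$ is closed under $\otimes$ and finite $\oplus$, it is closed in $\catD$ under $\otimes$ and finite coproducts; so $\catD_f^\otimes$ is a symmetric monoidal \subicat of $\catD^\otimes$ (\cite[Proposition~2.2.1.1]{HA}) and the inclusion $\catD_f\into\catD$ is monoidal and additive. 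Now $f$ corestricts to a monoidal functor $\heart{}^\otimes\to\catD_f^\otimes$ with additive underlying functor, and applying the product-preserving functor $N\circ\ho(-)$ (together with the identification of $\Ho(\heart{})$ with the nerve of $\ho(\heart{})$) produces a monoidal functor $\bar f\colon\Ho(\heart{})^\otimes\to N\ho(\catD_f)^\otimes\simeq\catD_f^\otimes$ with additive underlying functor. By naturality of the unit of the adjunction $\ho\dashv N$, and because $\catD_f\to N\ho(\catD_f)$ is an equivalence, one gets $\bar f\circ p\simeq f$; postcomposing $\bar f$ with $\catD_f\into\catD$ exhibits $f$ in the essential image of $p^*$, and the corresponding $\tilde F\in\Funexo(\mcK^b(\Ho(\heart{})),\catD)$ then satisfies $F\simeq\tilde F\circ w$ as monoidal functors.

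The hard part is the last step, namely keeping track of \emph{monoidality} in the factorization of $f$ through $p$: the functor $p\colon\heart{}\to\Ho(\heart{})$ is not a Bousfield localization at a set of morphisms, so one cannot simply appeal to a universal property of $\Fun(\Ho(\heart{}),\catD)\into\Fun(\heart{},\catD)$ (which is not fully faithful in general). What rescues the argument is precisely the hypothesis, which forces $F(\heart{})$ to generate a $1$-categorical \subicat of $\catD$; combined with the universal property of $\heart{}\to N\ho(\heart{})$ among functors into $1$-categories and with the fact that $\ho(-)$ and $N(-)$ preserve finite products (hence symmetric monoidal structures), this upgrades the factorization to a monoidal one. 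A secondary point to verify carefully is the compatibility $w|_{\heart{}}\simeq p$, which is immediate from the construction of the weight-complex functor recalled in \Cref{exm:WS} --- the weight complex of an object of the heart is that object placed in degree~$0$.
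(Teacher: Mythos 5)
Your proposal is correct and follows essentially the same route as the paper: both arguments use \Cref{cor:sosnilo@} (once on $\catC^\otimes$ and once on $\mcK^b(\Ho(\heart{}))^\otimes$, via \Cref{eg:ws}) to reduce to factoring $F|_{\heart{}}$ through the homotopy category $\Ho(\heart{})^\otimes$, and both then corestrict to the essential image of $F|_{\heart{}}$ in $\catD$, which the hypothesis forces to be a (monoidal) $1$-category, so the factorization follows from the nerve/homotopy-category adjunction. The paper packages the final step by invoking that $(\catD')^\otimes\to\Ho(\catD')^\otimes$ is an equivalence of monoidal \icats (\cite[Remark~2.1.3.8]{HA}), where you instead apply $N\circ\ho(-)$ and use that it preserves finite products — these are the same device, and your more explicit discussion of why the factorization through $p\colon\heart{}\to\Ho(\heart{})$ is monoidal is a welcome elaboration of the paper's terser ``clear by the nerve-homotopy category adjunction.''
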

		
		\begin{proof}
			In light of \Cref{cor:sosnilo@}, we may as well prove that the restriction of $F$ to $\heart{}$ admits a factorization $\heart{}^\otimes\to\Ho(\heart{})^\otimes\to \catD^\otimes$ (see \Cref{eg:ws}\eqref{wc}). %
			Let $\catD'\subseteq\catD$ be the full \subicat spanned by the essential image of~$F$.
			Since~$F$ is monoidal, the monoidal structure on~$\catD$ restricts to one on~$\catD'$~\cite[Proposition~2.2.1.1, Remark~2.2.1.2]{HA}.
			It will then suffice to show that the corestricted functor $F'\colon \heart{}^\otimes\to(\catD')^\otimes$ factors through $\Ho(\heart{})^\otimes$.
			By our assumption (and~\cite[Remark~2.1.3.8]{HA}), the canonical functor $(\catD')^\otimes\to \Ho(\catD')^\otimes$ is an equivalence of monoidal \icats.
			We are reduced to show that the monoidal functor $\heart{}^\otimes\to\Ho(\catD')^\otimes=\Ho(\catD'^\otimes)$ factors through~$\Ho(\heart{})^\otimes=\Ho(\heart{}^\otimes)$.
			This is clear by the nerve-homotopy category adjunction.
		\end{proof}
		
		We now give an example of the situation above which is akin to the situation considered in~\cite[Remark 2.4.3(1)]{MR2746283}
		\begin{exm}\label{exm:Ai}
			Let $\mcA$ be an abelian category of cohomological dimension $\leq1$ and let $\{\mcA_i\}_{i\in\Z}$ be abelian subcategories of~$\mcA$ closed under subquotients such that, for any $X_i\in\mcA_i$ and $X_j\in\mcA_j$ one has $\Hom(X_i,X_j)=0$ if $i\neq j$ and $\Ext^1(X_i,X_j)=0$ if $j>i+1$. 
			We say that $C\in\mcD^b(\mcA)$ is \emph{pure} if $\Hm_i(C)\in\mcA_i$ for every $i$. Note that in this case 
			$$
			\begin{aligned}
				\pi_n\spMap_{\mcD(\mcA)}(C,C')&=\Hom_{\mathcal{D}(\mcA)}(C,C'[-n])\\&\simeq\bigoplus_{i,j}\Hom_{\mathcal{D}(\mcA)}(\Hm_iC[i],\Hm_jC'[j-n])\\&=\bigoplus_{i,j} \Ext^{j-i-n}(\Hm_iC,\Hm_jC')=0
			\end{aligned}
			$$ whenever $C,C'$ are pure and $n>0$. In particular, we deduce that if $\heart{}$ is the heart of a bounded compatible weight structure on $\catC^\otimes$ and $F\colon\catC^\otimes\to\mathcal{D}^b(\mathcal{A})^\otimes$ is  monoidal, exact and such that $F(X)$ is pure for any $X\in \heart{}$ then~$F$ factors through~$\mcK^b(\Ho(\heart{}))^\otimes$.
		\end{exm}
		
		\begin{rmk}\label{rmk:weight_filtration}
			Here we provide a reinterpretation of a result of Bondarko~\cite[Theorem~2.3.2]{MR2746283}.
			Let $\catH$ be an ordinary additive category and denote by $\Ch^b(\catH)$ and $\mcK^b(\catH)$ its ordinary category of (bounded) chain complexes and its homotopy \icat as before.
			Given an object $C\in\Ch^b(\catH)$ we may filter it by its stupid truncations from the left, that is, $C(i)=\sigma_{\leq i}C$, thus obtaining a filtered chain complex $C(\bullet)\in\Fun(\Z,\mcK^b(\catH))$
			\[
			\cdots C(i-1)\to C(i)\to C(i+1)\to\cdots
			\]
			Given an exact functor $F\colon \mcK^b(\catH)\to\catD$ between stable \icats assume that~$\catD$ is equipped with a t-structure whose heart is the abelian category~$\mcA$.
			The spectral sequence associated with a filtered object (see~\cite[\S\,1.2.2]{HA}) produces a convergent spectral sequence in~$\mcA$ with signature
			\[
			E^1_{p,q}=\Hm_{q}(F(C_p))\ \Rightarrow \ \Hm_{p+q}(F(C)).
			\]
			The functor $\Ch^b(\catH)\to \mathrm{SSeq}_{\geq 1}$ to (the ordinary category of) spectral sequences starting at page~$1$ descends to a functor $\mcK^b(\catH)\to\mathrm{SSeq}_{\geq 2}$~\cite[Theorem~2.3.2.IV]{MR2746283}.

			We will be particularly interested when $\catH=\Ho(\heart{})$ is the homotopy category of the heart of a weight structure and $C$ the weight complex of an object under consideration.
			If $\catD=\mathcal{D}^b(\mcA)$ and $F(X)$ is pure for every $X\in\heart{}$ (as in \Cref{exm:Ai}) then this spectral sequence degenerates at page~$E^2$ and the associated graded objects $\gr^{w}_i \Hm_nF(C)$ lie in~$\mcA_i$, i.e., this canonical filtration is the ``weight filtration'' of the homology groups.
		\end{rmk}
		
		\section{Analytic motives and their realizations}
		\label{sec:realizations}
		We now apply the results of the previous sections to the specific case of (rigid analytic) motives over a non-archimedean field $K$. In this section, we assume the following.
		\begin{notn}\label{not:K}
			Let $K$ be a complete non-archimedean field with a perfect residue field and a value group of (rational) rank $1$. 
			We denote as usual by $k$ its residue field and by $\mcO_K$ the valuation ring. We say that an element $\varpi\in K$ is a \emph{pseudo-uniformizer} if it is topologically nilpotent and not zero.\footnote{For $K$ as above, $\mcO_K$ is a complete height-$1$ valuation ring, so this amounts to~$\varpi$ being a non-zero element of the maximal ideal of $\mcO_K$. In particular, we have $\mcO_K[1/\varpi]=K$. %
   }
		\end{notn}
                \begin{rmk}
		    Typical examples of $K$ satisfying the hypotheses above are discretely valued fields with a perfect residue field, their complete algebraic closures, and any other immediate extension of them (such as the their spherical completions).
		\end{rmk}
		
		\subsection{Rigid motives as motives with monodromy}\label{sec:RigDA-monodromy}
		We can now prove \Cref{thm:intro1} of the introduction.
		
		\begin{dfn} We will consider the following \icats of motivic sheaves.\label{dfn:motives}
			\begin{enumerate}
				\item \label{eq:algebraic_motives}
				Let $S$ be a scheme locally of finite Krull dimension. We write $\DA(S) = \DAet(S;\Q)$ for the \icat of hypercomplete \'etale motives over~$S$, with rational coefficients as defined in~\cite[Section 3]{AyoubEt} by putting~$\Lambda=\Q$. It is constructed from the full \subicat $\DA^{\eff}_{\et}(S;\Q)$ of $\mathrm{Shv}_{\et}(\Sm/S; \mcD(\Q))$  spanned by those objects which are local with respect to the collection of maps $\Q(\A^1_Y) \to \Q(Y)$, by formally inverting the tensor product operation with the Tate twist $\one(1)$.  
				\item\label{eq:analytic_motives} Let $S$ be a rigid analytic space locally of finite Krull dimension, in the sense of \cite[Definition II.2.2.18]{fujiwara-kato}. We write $\RigDA(S) = \RigDA_{\et}(S; \Q)$ for the \icat of hypercomplete rigid analytic motives over~$S$ with respect  to the étale topology, as defined in~\cite[Definition 2.1.15]{agv}. Its construction is analogous to~\eqref{eq:algebraic_motives}, by replacing~$\Sm/S$ with~$\RigSm/S$, and the affine line~$\A^1$ by the closed unit disk~$\B^1$.  %
			\end{enumerate}
			
			These are stable, compactly generated monoidal \icats, underlying a six-functor formalism, see~\cite{ayoub-th1,ayoub-th2,agv}.
			If $Y$ is a scheme or a rigid analytic space which is smooth over $S$, we sometimes write $\mathsf{M}_S(Y)$ (or $\mathsf{M}(Y)$ for brevity) to denote its image under the Yoneda functor in the corresponding motivic \icat. 
		\end{dfn}
		\begin{rmk}
			In the setting of \eqref{eq:analytic_motives}, we will mostly be interested in the case where $S= \Spf(\mcO_K)^{\rig}$, for~$K$ and~$\mcO_K$ as in \Cref{not:K}. We will simply write $\RigDA(K)$ for $\RigDA(\Spf({\mcO_K)^{\rig}})$. Note that in this case by \cite[Lemma 2.4.18]{agv} the non-hypercomplete version of the construction of $\RigDA$ agrees with the one considered above, and the reader can simply ignore the difference between the two.
		\end{rmk}
		
		\begin{dfn}
			We let $\RigDA_{\gr}(K)$ be the full stable \subicat of $\RigDA(K)$ closed under colimits spanned by $\mathsf{M}(\mathfrak{X}^{\rig})$, where $\mathfrak{X}$ runs through smooth formal schemes over~$\mcO_K$. We call it the \icat of motives ``of good reduction'' or ``unipotent'' motives.
		\end{dfn}
		\begin{rmk}
			As observed in \cite[Proposition~3.29]{pWM} or \cite[proof of Theorem~2.5.34]{ayoub-rig}, the \icat $\RigDA_{\gr}(K)$ contains much more than simply motives of  varieties having a smooth formal model over~$\mcO_K$. In fact,  every formal scheme $f\colon\mathfrak{X}\to\mcO_K$ with~$f$ a \emph{pluri-nodal fibration} in the sense of \cite[Definition 1.1]{berk-contr} satisfies $\mathsf{M}(\mathfrak{X}^{\rig}) \in \RigDA_{\gr}(K)$. We remark that semistable formal schemes are pluri-nodal. The same
			is true for polystable formal schemes in the sense of \cite[Definition 1.2]{berk-contr}.
		\end{rmk}
		
		\begin{rmk}
			In case $K$ is algebraically closed, then $\RigDA_{\gr}(K)=\RigDA(K)$ (see \cite[Theorem 3.7.21]{agv}). This implies that, in the general case, for any compact motive $M$ in $\RigDA(K)$ there exists a finite field extension $L/K$ for which $M_L$ ``becomes unipotent'' in the sense that it lies in $\RigDA_{\gr}(L)$. This follows from the following formula in $\Prloo$: $$\RigDA(C)\simeq\varinjlim_{L\subseteq C, [L:K]<\infty}\RigDA(L)$$
			where $C$ is the complete algebraic closure of $K$, as proved in \cite[\S 5]{vezz-tilt4rig}, see also~\cite[Theorem~2.8.15]{agv}.
		\end{rmk}
		
		\begin{rmk}\label{rmk:gr_is_insensitive}
			As remarked in~\cite[Proposition 3.23]{pWM} the \icat of unipotent rigid analytic motives is insensitive to totally ramified extensions and is compatible with the tilting equivalence of \cite{vezz-fw}. Similarly, by semi-separatedness, we note that $\DA(k)\simeq\DA(k^{\perf})$ for any field~$k$ \cite[Th\'eor\`eme~1.2]{AyoubEt} so that the hypothesis on~$k$ of \Cref{not:K} doesn't impose any restriction. %
		\end{rmk}
		\begin{cons}
			\label{cons:chi}
			Let $j\colon \Spec(K)\to \Spec(\mcO_K)$ be the open immersion, and let $\iota\colon \Spec(k) \to \Spec(\mcO_K)$ be the complementary closed immersion. We denote by 
			\[\chi^{\alg} \colon \DA(K) \to \DA(k)\]
			the functor $\iota^* j_*$.  
			We can also define a functor
			\[
			\chi \colon\RigDA(K) \to \DA(k)
			\]
			as the right adjoint to the monoidal generic fiber functor, or ``Monsky-Washnitzer'' functor:
			\[\xi\colon \DA(k) \simeq \FDA(\mcO_K) \xrightarrow{(-)^{\rig}} \RigDA(K), \]
			where the first equivalence is \cite[Corollaire 1.4.29]{ayoub-rig}, and $(-)^{\rig}$ is the functor induced by sending an $\mcO_K$-formal scheme $\mathfrak{X}$ to its associated rigid space (see~\cite[Notation 3.8.14]{agv}). 
			Recall now that there is a monoidal analytification functor
			\[ \An^* \colon \DA(K) \to \RigDA(K),  \]
			see~\cite[Proposition 2.2.13]{agv}, and a natural transformation $\rho_K \colon \chi^{\alg} \to \chi \circ \An^*$. In view of~\cite[Corollary~3.8.18 and Theorem 3.8.19]{agv}, this natural transformation is invertible, and we will identify $\chi \An^*$ and $\chi^{\alg}$ as functors $\DA(K)\to \DA(k)$.
			In particular, $\chi\one\simeq\chi^{\alg}\one$ as objects in $\CAlg(\DA(k))$.
		\end{cons}

		We collect some important properties of $\RigDA_{\gr}(K)$ proved in \cite{agv} (see \cite{ayoub-rig} for the equi-characteristic zero analogue) in the following Proposition.
		\begin{prop}\label{prop:facts_of_life} Let $K$ be as in \Cref{not:K}. %
			\begin{enumerate}
				\item The generic fiber functor $\xi$ refines to an equivalence of monoidal \icats
				\[\tilde{\xi}\colon\Mod_{\chi\one}(\DA(k)) \xrightarrow{\simeq} \RigDA_{\gr}(K).\]
				\item The choice of a pseudo-uniformizer $\varpi\in\mcO_K$ identifies $\chi\one\in\CAlg(\DA(k))$ with the split square-zero extension $\chi\one\simeq\one\oplus \one(-1)[-1]=\Gmd$.
			\end{enumerate}
			\end{prop}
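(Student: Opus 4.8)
The plan is to deduce (1) from the monoidal Barr--Beck--Lurie theorem applied to the adjunction $\xi\dashv\chi$, and (2) by computing $\chi\one$ and then invoking \Cref{prop:Gmissplit} (equivalently \Cref{sta:free=ssze}). For (1), recall that $\xi$ factors as $\DA(k)\isoto\FDA(\mcO_K)\xto{(-)^{\rig}}\RigDA(K)$, the first arrow being the equivalence of \cite[Corollaire~1.4.29]{ayoub-rig} and the second a symmetric monoidal, colimit-preserving functor carrying $\mathsf{M}(\mathfrak{X})$ to $\mathsf{M}(\mathfrak{X}^{\rig})$. Since $\mathfrak{X}^{\rig}$ is qcqs whenever $\mathfrak{X}$ is of finite type over $\mcO_K$, the functor $\xi$ preserves compact objects, so its right adjoint $\chi$ preserves all colimits. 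By definition $\RigDA_{\gr}(K)$ is the smallest \subicat of $\RigDA(K)$ closed under colimits and containing the essential image of $\xi$; hence the corestricted functor $\chi\colon\RigDA_{\gr}(K)\to\DA(k)$ is conservative. The six-functor formalism of \cite{agv} furnishes the projection formula $\chi(M)\otimes N\isoto\chi(M\otimes\xi N)$, which in particular (taking $M=\one$) identifies the monad $\chi\xi$ with $\chi\one\otimes(-)$. The monoidal version of Barr--Beck--Lurie (e.g.\ \cite[Proposition~5.29]{MR3570153}) then applies to $\xi\dashv\chi$ and yields the asserted monoidal equivalence $\tilde\xi\colon\Mod_{\chi\one}(\DA(k))\isoto\RigDA_{\gr}(K)$ with $\tilde\xi\circ\mathrm{free}\simeq\xi$.

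For (2), first note that $\one_{\RigDA(K)}=\An^*\one_{\DA(K)}$ and that the comparison $\rho_K\colon\chi^{\alg}\to\chi\An^*$ is invertible (\Cref{cons:chi}, using \cite{agv}), so $\chi\one\simeq\iota^*j_*\one_{\DA(K)}$ in $\CAlg(\DA(k))$, with $j\colon\Spec K\into\Spec\mcO_K$ and $\iota\colon\Spec k\into\Spec\mcO_K$ the complementary immersions. Applying $\iota^*$ to the localization triangle $\iota_*\iota^!\one\to\one\to j_*j^*\one$ and using absolute purity $\iota^!\one\simeq\one(-1)[-2]$ (codimension one) produces a bifiber sequence $\one(-1)[-2]\to\one\to\chi\one$ in $\DA(k)$; its first map is an element of $\Hom_{\DA(k)}(\one(-1)[-2],\one)\cong\Hom_{\DA(k)}(\one,\one(1)[2])$, a motivic cohomology group of~$k$ which vanishes, so $\chi\one\simeq\one\oplus\one(-1)[-1]$ as an object of $\DA(k)$. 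To upgrade this to an identification of commutative algebras one uses the pseudo-uniformizer: following \cite{agv} (and \cite{ayoub-rig} in the equicharacteristic-zero case), $\varpi$ realizes $\chi$ on constant motives as base change from the universal ``punctured disk'' situation over $\G_m$ --- the analogue of a choice of canonical generator of $\pi_1$ in Ayoub's construction of motivic nearby cycles --- whence a canonical isomorphism $\chi\one\simeq q_*\one=\Gmd$ of commutative algebras, where $q\colon\G_{m,k}\to\Spec k$. Since $\DAet(-;\Q)$ over $\Spec k$ satisfies \Cref{hyp:Q-et}, \Cref{prop:Gmissplit} identifies $q_*\one$ with the split square-zero extension $\one\oplus\one(-1)[-1]$, proving (2). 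Equivalently, once $\chi\one$ is known to be an augmented commutative algebra with augmentation ideal $\one(-1)[-1]$ --- the choice of $\varpi$ again supplying the augmentation --- \Cref{sta:free=ssze} forces it to be split square-zero, using $\Q$-linearity and $\Alt^2(\one(-1))\simeq0$.

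The genuinely non-formal ingredients are all imported from \cite{agv} (resp.\ \cite{ayoub-rig} in equicharacteristic zero): the projection formula for $\xi\dashv\chi$, the compact-preservation of the rigidification functor, and the $\varpi$-dependent computation of $\chi\one$ --- that is, of the motivic nearby-cycle of the unit --- including the purity input over the possibly non-discretely valued ring $\mcO_K$. That last computation is the step I would expect to be the main obstacle were it not already available; everything else is the standard monoidal monadicity bookkeeping together with \Cref{prop:Gmissplit}/\Cref{sta:free=ssze}.
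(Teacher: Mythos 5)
Your argument for~(1) is correct and is essentially the argument behind the result that the paper cites (\cite[Theorem~3.3.3(1)]{agv}): one verifies that $\chi$ restricted to $\RigDA_{\gr}(K)$ is conservative and colimit-preserving, establishes the projection formula for $\xi\dashv\chi$, and applies the monoidal Barr--Beck--Lurie theorem. The only thing I would tighten is the invocation of ``the six-functor formalism'' for the projection formula: either cite the relevant statement in~\cite{agv} directly, or note (as in \Cref{rmk:hypholds}) that it follows from strong dualizability of compact objects, since $\xi$ is monoidal and preserves compacts. But the strategy is right and the paper's proof is simply to cite the result.

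Your proof of~(2) has a genuine gap, and it is precisely the gap that the paper sidesteps by citing \cite[Theorem~3.13]{ayoub-weil}. Two issues. First, the absolute-purity identification $\iota^!\one\simeq\one(-1)[-2]$ over $\Spec\mcO_K$ is only available when $\mcO_K$ is regular Noetherian, i.e.\ a DVR; but \Cref{not:K} allows rank-one valuation rings with value group~$\Q$ (e.g.\ $\mcO_{\C_p}$), which are not Noetherian, and no purity input of this kind is supplied. Second, and more fundamentally, even granting the object-level splitting $\chi\one\simeq\one\oplus\one(-1)[-1]$, the statement is an identification \emph{in $\CAlg(\DA(k))$}, and your argument for the algebra-level structure --- that ``$\varpi$ realizes $\chi$ on constant motives as base change from the universal punctured disk situation over $\G_m$'' --- is a heuristic, not a proof; you neither construct the map $\chi\one\to q_*\one$ of commutative algebras nor verify it is an equivalence. (Your alternative route through \Cref{sta:free=ssze} has the same problem: you still need to produce the augmentation with augmentation ideal $\one(-1)[-1]$ out of the choice of $\varpi$, which is exactly the content of the Ayoub theorem the paper invokes.) This is the genuinely non-formal input, as you yourself flag at the end, and a complete proof either cites \cite[Theorem~3.13]{ayoub-weil} or reproves it --- a nontrivial task that goes well beyond the localization-triangle computation you offer.
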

			
			\begin{proof}
				The first statement is a special case of~\cite[Theorem 3.3.3(1)]{agv}.
				The second statement follows immediately from~\cite[Theorem 3.13]{ayoub-weil}.%
			\end{proof}

  \begin{rmk}
      More generally, \cite[Theorem 3.13]{ayoub-weil} can be used to see that the category $\RigDA_{\gr}(K)$ (which equals $\RigDA(K)$ if $K$ is algebraically closed)   is completely determined by the residue field and the rational rank of the value group (assuming it is finite). In particular, it is insensitive to immediate extensions.
  \end{rmk}
 
		The following definition is inspired by the category of $(\varphi,N)$-modules considered in \Cref{sec:phiNmod}.
		\begin{dfn}
			We denote by  $\DAN$ the \icat $\LFnil{\DA(k)}{-\otimes\one(-1)}$ (see \Cref{dfn:LFnil}). Informally, its objects are given by pairs $(M,N_M)$ with $M$ an object of $\DA(k)$ and $N_M$ a map (the \emph{monodromy}) $N_M\colon M\to M(-1)$ which is ind-nilpotent. 
		\end{dfn}
  We note that on compact objects the condition on ind-nilpotency is automatically fullfilled.
		\begin{prop}\label{cor:DANil_is_indDAN}
			The \icat $\DAN$ is equivalent to $\Ind\left((\LF{\DA(k)_{\cpt})}{-\otimes\one(-1)}\right)$. In particular, if $M$ is compact in $\DA(k)$, any map $N\colon M\to M(-1)$ is nilpotent. 
		\end{prop}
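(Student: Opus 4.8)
The plan is to deduce this from \Cref{lem:CTNil-vsIndCat}, applied to $\catC=\DA(k)$ and $T=(-)\otimes\one(-1)$. The two standing hypotheses there are immediate: $\DA(k)\in\Prlost$ is stable and compactly generated (\Cref{dfn:motives}), and $T$ preserves compact objects because $\one(-1)$ is invertible, so that $T$ is an autoequivalence. Granting the criterion of that remark, namely that $\Hom_{\DA(k)}(X,X(-n))=0$ for $n\gg 0$ for a set of compact generators $X$ of $\DA(k)$, we obtain at once the equivalence $\DAN=\LFnil{\DA(k)}{-\otimes\one(-1)}\simeq\Ind\left(\LF{\DA(k)_{\cpt}}{-\otimes\one(-1)}\right)$ together with the nilpotency of the monodromy of any pair $(M,N_M)$ with $M$ compact. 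So the whole content of the proposition is the verification of that vanishing criterion.

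For the set of generators I would take the (shifted) Chow motives $X=\mathsf{M}(Y)\{i\}[j]$, with $Y/k$ smooth and proper and $i,j\in\Z$ (recall $\{i\}=(i)[2i]$). By \Cref{eg:ws}\eqref{exm:ChowWS} these span the heart of a bounded weight structure on $\DA(k)_{\cpt}$; in particular they generate $\DA(k)_{\cpt}$ under finite (co)limits and retracts, hence generate $\DA(k)$ under colimits, so they do form a set of compact generators. Since $\Hom$ is unchanged under shifting both arguments by $[j]$, we may discard the shift and are reduced to showing $\Hom_{\DA(k)}(\mathsf{M}(Y)\{i\},\mathsf{M}(Y)\{i\}(-n))=0$ for $n\gg 0$.

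Here I would use strong dualizability: $\mathsf{M}(Y)$ has dual $\mathsf{M}(Y)(-d)[-2d]$ with $d=\dim Y$ (Poincar\'e duality in $\DA(k)$), hence $\mathsf{M}(Y)\{i\}$ is dualizable with dual $\mathsf{M}(Y)(-d-i)[-2d-2i]$, and therefore, cancelling twists and shifts and using $\mathsf{M}(Y)\otimes\mathsf{M}(Y)\simeq\mathsf{M}(Y\times Y)$,
\[
\Hom_{\DA(k)}(\mathsf{M}(Y)\{i\},\mathsf{M}(Y)\{i\}(-n))\simeq\Hom_{\DA(k)}(\one,\mathsf{M}(Y\times Y)(-d-n)[-2d]).
\]
Now for any smooth proper $W/k$, one more application of Poincar\'e duality identifies $\Hom_{\DA(k)}(\one,\mathsf{M}(W)(a)[b])$ with the motivic cohomology group $H^{2d_W+b}_{\mathcal{M}}(W,\Q(d_W+a))\simeq CH^{d_W+a}(W,2a-b)_\Q$, which vanishes as soon as $2a-b<0$ (higher Chow groups vanish in negative simplicial degree). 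With $a=-d-n$, $b=-2d$ and $d_W=2d$ we get $2a-b=-2n<0$ for every $n\geq 1$, which yields the desired vanishing and completes the verification.

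I do not expect a serious obstacle: apart from the formal machinery of \Cref{sec:split-square-zero} packaged into \Cref{lem:CTNil-vsIndCat}, the only inputs are the existence of the Chow weight structure on $\DA(k)_{\cpt}$ (already recorded in \Cref{eg:ws}) and the elementary vanishing of higher Chow groups in negative degree. The one place calling for (mild) care is the bookkeeping of Tate twists and shifts in the reduction to $\Hom_{\DA(k)}(\one,\mathsf{M}(W)(a)[b])$; equivalently, and perhaps more conceptually, the last step just says that the Borel--Moore motivic homology of $W$ vanishes in negative degrees, which is the geometric reason why twisting far enough down annihilates all maps out of the unit.
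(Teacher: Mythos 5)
Your proof is correct, and it follows the same overall strategy as the paper: reduce to the criterion of \Cref{lem:CTNil-vsIndCat} and verify it on the Chow generators of $\DA(k)$. The two proofs diverge only in how the required vanishing $\Hom(X,X(-n))=0$ for $n\gg 0$ is established. The paper's argument is shorter and stays entirely inside the weight-structure formalism already set up in \Cref{eg:ws}\eqref{exm:ChowWS}: for $M\in\heart{k}$ one has $M(-n)=(M\{-n\})[2n]$ with $M\{-n\}\in\heart{k}$, so negativity of the heart gives $\Hom(M,M(-n))=\Hom(M,M\{-n\}[2n])=0$ directly for all $n>0$. You instead unwind this to first principles, invoking strong dualizability of $\mathsf{M}(Y)$, the identification $\mathsf{M}(Y)^\vee\simeq\mathsf{M}(Y)\{-d\}$, and the vanishing of rational higher Chow groups in negative simplicial degree; your Tate-twist and shift bookkeeping checks out, giving $2a-b=-2n<0$ and hence the same conclusion. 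In effect you re-derive the negativity of the Chow heart on the generators rather than citing it, so your argument is more self-contained but relies on the same geometric input (Poincar\'e duality for smooth proper $Y$ and negative-degree vanishing of higher Chow groups), which is precisely what underlies Bondarko's theorem quoted in \Cref{eg:ws}. The paper's version has the small advantage of applying uniformly to all objects of the heart $\heart{k}$ without passing through an explicit duality computation, while yours makes the ``geometric reason'' for the vanishing more transparent, as you yourself note at the end.
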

		
		\begin{proof}
			We let $M=\mathsf{M}_k(X)$ be the motive of a proper smooth variety over $k$. As these motives are in the heart $\heart{k}$ of a bounded weight structure on $\DA(k)$ and since, if  $t^{\otimes n}\otimes M=(M(-n)[-2n])[2n]$ is a shift of an object in $\heart{k}$, we deduce that $\Hom(M,t^{\otimes n}\otimes M)=0$ whenever $n>0$. We then conclude using the criterion of \Cref{lem:CTNil-vsIndCat}.
		\end{proof}
		
		\begin{cor}\label{cor:RigDAisDAN}
			Let $K$ be as in \Cref{not:K}. 
				A choice of a pseudo-uniformizer $\varpi\in\mcO_K$ determines an equivalence of monoidal \icats $\RigDA_{\gr}(K)\simeq\DAN$.
			\end{cor}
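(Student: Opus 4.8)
The plan is to obtain the equivalence by composing three monoidal equivalences already available in the text: the module description of $\RigDA_{\gr}(K)$ from \Cref{prop:facts_of_life}, the identification of $\chi\one$ as a split square-zero extension, and the monoidal Koszul-duality statement of \Cref{sta:Mod-fix}.

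First I would recall that, by \Cref{prop:facts_of_life}(1), the generic fiber functor already refines to a monoidal equivalence $\tilde\xi\colon\Mod_{\chi\one}(\DA(k))\isoto\RigDA_{\gr}(K)$, so the task is to re-express the source as an \icat of motives with monodromy. To that end I would use \Cref{prop:facts_of_life}(2): the choice of the pseudo-uniformizer $\varpi\in\mcO_K$ produces an identification in $\CAlg(\DA(k))$
\[
\chi\one\ \simeq\ \one\oplus\one(-1)[-1]\ =\ \Gmd ,
\]
exhibiting $\chi\one$ as the split square-zero extension $\one\oplus t[-1]$ of $\one$ with augmentation ideal $t[-1]=\one(-1)[-1]$, i.e. with $t=\one(-1)$. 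Hence $\Mod_{\chi\one}(\DA(k))$ is monoidally equivalent to $\Mod_{\one\oplus t[-1]}(\DA(k))$. (This is the only step in which the choice of $\varpi$ intervenes, matching the statement of the corollary.)

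Next I would verify the hypotheses of \Cref{sta:Mod-fix} for $\catC=\DA(k)$ and $t=\one(-1)$: the \icat $\DA(k)=\DAet(k;\Q)$ lies in $\Prloost$ and is $\Q$-linear by construction (see \Cref{dfn:motives}), and $\Alt^2(\one(-1))\simeq 0$ — i.e. the transposition of the two tensor factors of $\one(-1)$ is homotopic to the identity — holds because $\DAet(-;\Q)$ satisfies \Cref{hyp:Q-et}, exactly as in the proof of \Cref{prop:Gmissplit} (cf.\ \cite[Lemme~11.5]{AyoubEt}). Then \Cref{sta:Mod-fix} supplies a monoidal equivalence
\[
\Mod_{\one\oplus t[-1]}(\DA(k))\ \isoto\ \LFnil{\DA(k)}{-\otimes\one(-1)}\ =\ \DAN ,
\]
the last equality being the definition of $\DAN$. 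Composing the three equivalences yields the asserted monoidal equivalence $\RigDA_{\gr}(K)\simeq\DAN$.

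I do not expect a genuine obstacle at this stage: the geometric input — that $\RigDA_{\gr}(K)$ is modules over $\chi\one$, and that $\chi\one$ is a split square-zero extension — is imported through \Cref{prop:facts_of_life} from \cite{agv} and \cite{ayoub-weil}, and the categorical input — the \emph{monoidal} enhancement of the correspondence between modules over split square-zero extensions and ind-nilpotent lax fixed points — is \Cref{sta:Mod-fix}. The only thing that would require some care, and which is not needed for the bare statement of the corollary, is the bookkeeping of compatibilities along the composite equivalence: that the generic fiber functor $\xi$ corresponds to $(N=0)$ and $\chi$ to $\fib(N)$, that $\pirn\one$ is sent to the object playing the role of the logarithm, and so on. These can all be read off directly from the explicit descriptions in \Cref{cons:square-zero} and the table in \Cref{rmk:tabsummary}.
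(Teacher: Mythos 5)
Your proposal is correct and follows exactly the route the paper intends: the paper's own proof is the one-line instruction ``Use \Cref{sta:Mod-fix} and \Cref{prop:facts_of_life}'', and your write-up simply unpacks that, including the necessary verification that $\DA(k)$ is $\Q$-linear and $\Alt^2(\one(-1))\simeq 0$ (which the paper records in the remark immediately following the corollary). Nothing to add.
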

			\begin{proof}
				Use \Cref{sta:Mod-fix} and \Cref{prop:facts_of_life}.
			\end{proof}
			
			\begin{dfn}
				We denote the equivalence $\RigDA_{\gr}(K)\simeq \DAN$ by $M\mapsto (\Psi M,N_{\Psi M})$. In particular, we denote the composite $\RigDA_{\gr}(K)\simeq\DAN\xto{\pi}\DA(k)$ by $\Psi$.
			\end{dfn}
			
			\begin{rmk}
				\label{rmk:nearby-cycles} Assume that $K$ is discretely valued and $\varpi$ is a uniformizer.
				If we denote by $\DA_{\gr}(K)$  the full \subicat of $\DA(K)$ given by the inverse image of $\RigDA_{\gr}(K)$ along the analytification functor $\An^*$ then the composite
				$$\DA_{\gr}(K)\xto{\An^*} \RigDA_{\gr}(K)\xto{\Psi}\DA(k)$$
				can be shown to coincide with the functor $\Psi_{\varpi}=\Upsilon_{\varpi}$ of (unipotent) motivic nearby cycles (in equi-characteristic~$0$ this is proved in \cite[Scholie 1.3.26]{ayoub-rig}).
			\end{rmk}
			
			\begin{rmk}
				\label{rmk:choice?}
				If the residue field $k$ is contained in~$\bar{\F}_p$ then there is a \emph{canonical}  augmentation $\varepsilon\colon\chi\one\to\one$ inducing a \emph{canonical} equivalence $\RigDA_{\gr}(K)\simeq\LFnil{\DA(k)}{T}$ where $T$ is now the functor $\otimes\fib(\varepsilon)[1]$. Indeed, such augmentations form a torsor under the group $\Hom_{\DA(k)}(\one,\one(1)[1])\cong k^*\otimes_{\Z}\Q$ (see e.g. \cite[Corollary 4.2]{mvw} or \cite[Corollaire 11.4]{AyoubEt}) which is trivial. %
			\end{rmk}
			
			\begin{rmk}\label{rmk:changepi?}
				Replacing $\varpi$ with another pseudo-uniformizer of the form $\lambda\varpi^{q}$ with $q\in \Q_{>0}$ and $\lambda\in\mcO_K^*$ induces the automorphism of $\one\oplus\one(-1)[-1]$ given by the matrix $\left(\begin{smallmatrix}
					1&\lambda\otimes1\\0&q
				\end{smallmatrix}\right)$ under the equivalence $\Hom(\one(-1)[-1],\one)\cong k^*\otimes_{\Z}\Q$.   
				This can be deduced from its construction~\cite[Corollary~3.8.32]{agv}: an exponentiation map $T\mapsto T^n$ on $\G_{m,k}$ induces the automorphism $\left(\begin{smallmatrix}
					1&0\\0&n
				\end{smallmatrix}\right)$  on $\mathsf{M}(\G_m)=\one\oplus\one(1)[1]$ (see the proof of \Cref{thm:KummerisTate} below) and the multiplication by $\lambda$ extends to an automorphism of~$\A^1$. In particular, if $k\subseteq\bar{\F}_p$, in which case $k^*\otimes\Q=0$, then the isomorphism $\chi\one\cong\one\oplus\one(-1)[-1]$ in $\Ho(\DA(k))$ actually depends only on the \emph{valuation}~$|\varpi|$ and two different choices would lead to a multiplication of the monodromy operator by a rational constant. 
				Whenever $K$ is of mixed characteristic $(0,p)$, one can consider the ``canonical''  choice $\varpi=p$. %
			\end{rmk}
			
			\begin{rmk}
				As observed in \cite[Lemme 11.5]{AyoubEt}, one has $\Alt^2(\one(-1))=0$ in $\Ho(\DA(k))$ and moreover compact objects in $\DA(k)$ are strongly dualizable (see e.g. \cite{riou-dual}). In particular, in light of \Cref{rmk:hypholds}, \Cref{hyp:F-package} holds in  case $\catC=\DA(k)$ and $t\simeq\one(-1)$. We can then use freely the results of \Cref{sec:detection-principles} in what follows.
			\end{rmk}

			\begin{rmk}\label{rmk:chi_isFibN_xi_is_pirn}
                        Fixing an equivalence $\RigDA_{\gr}(K)\simeq\DAN$ we may extend the table of \Cref{rmk:tabsummary} as follows,  where~$\mathscr{L}\text{og}^\vee$ is as in \Cref{rmk:kummerUDA}.
                        \renewcommand*{\arraystretch}{1.5}
				\begin{table}[H]
					\centering
					\begin{tabular}{| >{\centering\arraybackslash}m{5cm}| >{\centering\arraybackslash}m{5cm}|}\hline

                                          $\RigDA_{\gr}(K)$&$\DAN$\\
                                          \hline{}
                                          &\\[-2.5ex]
						{$
                                          \begin{aligned}
                                            \xi&\dashv \chi\\
                                            \Psi&\dashv \xi\otimes\mathscr{L}\text{og}^{\vee}
                                          \end{aligned}$}
                                               &
                                                {$
                                                 \begin{aligned}
                                                   (N=0) &\dashv \fib(N) \\
                                                   \pi&\dashv\pirn
                                                 \end{aligned}$}
\\\hline
					\end{tabular}
                                        \end{table}
                                        \renewcommand*{\arraystretch}{1}
			\end{rmk}
			
			\subsection{The Kummer motive and the Tate curve}
			\label{sec:KummerinRig}
			We now show that under the equivalence $\RigDA_{\gr}(K)\simeq\DAN$ of \Cref{cor:RigDAisDAN}, the Kummer motive (see \Cref{def:kummerobj}) appears as the $\Hm^1$-component of a Tate curve. This will be important to deduce information on the monodromy of an arbitrary rigid analytic motive from the monodromy of the Tate curve.
			
			\begin{prop}\label{thm:KummerisTate}
				Fix an identification $\RigDA_{\gr}(K)\simeq \DAN$  induced by a pseudo-uniformizer~$\varpi$ and let $\mcK$ be the Kummer motive in $\RigDA_{\gr}(K)$.  Let $\mathcal{T}_\varpi$ be the Tate curve~$\G_{m,K}^{\an}/(\varpi^\Z)$. The motive $\mathsf{M}_{K}(\mathcal{T}_\varpi)\in\RigDA_{\gr}(K)$ admits a decomposition
				\begin{equation}\label{eq:Tate_curve_splits}
				\mathsf{M}_{K}(\mathcal{T}_\varpi)\simeq \one \oplus \mcK(1)[1]\oplus \one(1)[2].
				\end{equation}
			\end{prop}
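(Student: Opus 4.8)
The plan is to compute $\mathsf{M}_K(\mathcal{T}_\varpi)$ by a Mayer--Vietoris argument for the classical admissible covering of a Tate curve by two closed annuli, and to recognise the non-constant summand as $\mcK(1)[1]$ by tracking the Kummer class that the gluing automorphism $x\mapsto\varpi x$ contributes. Concretely, write $p\colon\G_{m,K}^{\an}\to\mathcal{T}_\varpi$ for the uniformisation and fix radii $|\varpi|<\rho<1$ in $|K^\times|$. Let $V_0,V_1\subseteq\mathcal{T}_\varpi$ be the images under $p$ of two slightly overlapping ``half-period'' closed annuli, chosen so that $\mathcal{T}_\varpi=V_0\cup V_1$ while $V_0\cap V_1=W_0\sqcup W_1$ is a disjoint union of two closed annuli, where $W_0$ straddles the seam $\{|x|=1\}\sim\{|x|=|\varpi|\}$ of the closed fundamental domain. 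Each of these annuli has motive $\one\oplus\one(1)[1]\simeq\mathsf{M}_K(\G_{m,K}^{\an})$, the degree-one inclusions among them being equivalences, and $\RigDA$ satisfies \v{C}ech descent along admissible affinoid coverings, so one obtains a distinguished triangle
\[
(\one\oplus\one(1)[1])^{\oplus2}\xrightarrow{\ \delta\ }(\one\oplus\one(1)[1])^{\oplus2}\longrightarrow\mathsf{M}_K(\mathcal{T}_\varpi)\xrightarrow{\ +1\ }
\]
in $\RigDA(K)$, with $\delta$ the \v{C}ech differential. Three of the four restriction maps $W_j\hookrightarrow V_i$ are given by the identity on the coordinate $x$ and induce the identity of $\one\oplus\one(1)[1]$; the remaining one, $W_0\hookrightarrow V_1$, is given in coordinates by $x\mapsto\varpi x$, i.e.\ by translation by the point $\varpi\in\G_{m,K}^{\an}(K)$.

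The crucial point is to compute the induced automorphism $[\varpi]_*$ of $\mathsf{M}_K(\G_{m,K}^{\an})=\one\oplus\one(1)[1]$. On the summand $\one$ it is the identity (for degree reasons $\Hom(\one(1)[1],\one)=0$), and on $\one(1)[1]$ it is the identity because $[\varpi]^\ast d\log x=d\log x$; hence $[\varpi]_*=\bigl(\begin{smallmatrix}1&\kappa\\0&1\end{smallmatrix}\bigr)$ for a unique $\kappa\in\Hom_{\RigDA(K)}(\one,\one(1)[1])$. Factoring $[\varpi]$ as $\mathrm{mult}\circ(\varpi,\id)$ and using that the $K$-point $\varpi\colon\Spa K\to\G_{m,K}^{\an}$ induces $(\id,\{\varpi\})\colon\one\to\one\oplus\one(1)[1]$, one identifies $\kappa$ with the Kummer class $\{\varpi\}$ of $\varpi$. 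This class is nonzero because $|\varpi|<1$: it lies in the ``valuation'' line of $\Hom_{\RigDA(K)}(\one,\one(1)[1])$, which under the identification $\RigDA_{\gr}(K)\simeq\DAN$ of \Cref{cor:RigDAisDAN} attached to $\varpi$ is exactly the line spanned by $e(1)[1]$, where $e\colon\one(-1)[-1]\to\one$ is the map of \Cref{rmk:kummerUDA} with $\mcK=\cof(e)$; thus $\kappa=c\cdot e(1)[1]$ for some $c\in\Q^\times$. (The same self-map computation, carried out for the exponentiation $x\mapsto x^n$ in place of a translation, gives the matrix $\bigl(\begin{smallmatrix}1&0\\0&n\end{smallmatrix}\bigr)$ on $\mathsf{M}_k(\G_{m,k})$, as used in \Cref{rmk:changepi?}.)

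Finally I would compute $\cof(\delta)$. With respect to the decomposition $(\one)^{\oplus2}\oplus(\one(1)[1])^{\oplus2}$, the differential $\delta$ is block lower-triangular: the two diagonal blocks are the incidence matrix of the nerve $S^1$ on the $\one$-summands and on the $\one(1)[1]$-summands (each of rank $1$, with one-dimensional kernel and cokernel), and the single off-diagonal entry is $\kappa\colon\one\to\one(1)[1]$ coming from $W_0\hookrightarrow V_1$. Passing to cofibres, the cokernel of the $\one$-block and the shifted kernel of the $\one(1)[1]$-block split off as $\one$ and $\one(1)[2]$, while the off-diagonal entry $\kappa$ glues the two leftover factors into $\cof(\one\xrightarrow{\kappa}\one(1)[1])\simeq\cof(\one\xrightarrow{e(1)[1]}\one(1)[1])=\mcK(1)[1]$ (scaling a map by a unit does not change its cofibre). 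This yields $\mathsf{M}_K(\mathcal{T}_\varpi)\simeq\one\oplus\mcK(1)[1]\oplus\one(1)[2]$, and in particular re-proves $\mathsf{M}_K(\mathcal{T}_\varpi)\in\RigDA_{\gr}(K)$ (which also follows from the existence of a nodal, hence pluri-nodal, formal model of $\mathcal{T}_\varpi$). The main obstacle is the middle paragraph: showing that $[\varpi]_*$ is genuinely unipotent-nontrivial --- a translation is \emph{not} $\B^1$-homotopic to the identity here, unlike in the algebraic $\A^1$-homotopy setting --- and matching its off-diagonal term with the class $e$ defining $\mcK$, i.e.\ checking that the splitting $\chi\one\simeq\one\oplus\one(-1)[-1]$ of \Cref{prop:facts_of_life} and the point $\varpi\in\G_{m,K}^{\an}(K)$ produce the same class in $\Hom_{\RigDA(K)}(\one,\one(1)[1])$, up to the harmless rational scaling recorded in \Cref{rmk:changepi?}.
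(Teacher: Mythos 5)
Your strategy is recognisably the same in spirit as the paper's but differs in two ways that are worth recording. Both proofs reduce the computation of $\mathsf{M}_K(\mathcal{T}_\varpi)$ to understanding the automorphism of $\mathsf{M}(\G_m^{\an})\simeq\one\oplus\one(1)[1]$ induced by multiplication by $\varpi$, and both read off the decomposition from a cofiber sequence whose glue is that automorphism. Where you cover $\mathcal{T}_\varpi$ by two annuli and take the \v{C}ech cofiber, the paper uses \'etale descent along the $\Z$-cover $\G_m^{\an}\to\mathcal{T}_\varpi$ to write $\mathsf{M}(\mathcal{T}_\varpi)=\mathsf{M}(\G_m^{\an})_{h\Z}$, which is the coequalizer of $\id$ and $[\varpi]_*$; these are equivalent packagings of the same $S^1$-shaped colimit, and either one is fine. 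The more substantial difference is that the paper does \emph{not} carry out this computation directly in $\RigDA(K)$. It first tilts (via \Cref{rmk:gr_is_insensitive} and \cite{vezz-fw}) to reduce to $K=k(\!(T)\!)$ with $\varpi=T$, and then computes $\mathsf{M}(\G_m)_{h\Z}$ in the \emph{algebraic} category $\DA(k[T^{\pm1}])$, where the matrix of multiplication-by-$T$ is obtained purely from the Hopf structure of $\mathsf{M}(\G_m)$ (via \cite[Proposition~2.6]{AyoubEt}) and the off-diagonal entry is, on the nose, the class $T\in k[T^{\pm1}]^*\otimes\Q$ used to define the Kummer object of $\UDA(k)$ in \Cref{rmk:kummerUDA}. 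Step~3 then transports this along $\An^*\alpha^*\colon\UDA(k)\isoto\RigDA_{\gr}(k(\!(T)\!))$, which is by construction the same equivalence fixed in the statement, so the Kummer object is preserved tautologically.

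This is precisely the point where your argument has a genuine gap, as you yourself flag at the end. In the middle paragraph you identify the off-diagonal entry of $[\varpi]_*$ with ``the Kummer class $\{\varpi\}$'' and assert that it spans the same line as the class $e$ of \Cref{rmk:kummerUDA} after a nonzero rational rescaling. But $\Hom_{\RigDA_{\gr}(K)}(\one,\one(1)[1])\cong(k^*\otimes\Q)\oplus\Q$ (as in the proof of \Cref{prop:WSonRig}), the two summands arising from the splitting $\chi\one\simeq\one\oplus\one(-1)[-1]$ of \Cref{prop:facts_of_life}, and $e$ is by definition $(0,\id)$ in the second summand. Showing that the class of the $K$-point $\varpi$ has nonzero --- let alone unit --- component in that ``valuation'' summand requires unwinding exactly how the chosen pseudo-uniformizer produces the splitting of $\chi\one$ in \cite[Theorem~3.13]{ayoub-weil} and \cite[Corollary~3.8.32]{agv}; it is not a formal consequence of anything you state. (Your appeal to \Cref{rmk:changepi?} is also delicate here, since the paper's justification of that remark in turn cites the present proof.) The paper's detour through $\DA(k[T^{\pm1}])$ is designed precisely so that this identification becomes definitional rather than something to be verified. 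A secondary, fixable, point: your assertion that the diagonal block on $\one(1)[1]$ is the identity rests on the heuristic $[\varpi]^*d\log x=d\log x$, which is a de\,Rham-realization argument rather than a motivic one; the motivic proof comes from factoring $[\varpi]$ through the multiplication on $\G_m$ and using the known comultiplication matrix, which is exactly what the paper does.
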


			\begin{proof}
				The proof will be divided into three steps.\\[.2cm]
				{\it Step 1}: We first reduce to the equi-characteristic case. {In light of \Cref{rmk:gr_is_insensitive}} we may and do assume that $K$ is perfectoid and that it has all the $p$-th roots of $\varpi$ which assemble to an element~$\varpi^\flat$ in $K^\flat\cong \varprojlim_{x\mapsto x^p} K$. Let $\mathsf{M}(\G_{m,K}^{\an})_{h\Z}$ be  the  colimit of the diagram $B\Z\to\RigDA_{\gr}(K)$ induced by the automorphism of~$\mathsf{M}(\G_{m,K}^{\an})$ given by the multiplication by $\varpi$. By étale descent, it coincides with the motive of $\mathcal{T}_\varpi$.
        Under the equivalence $\RigDA_{\gr}(K)\simeq \DAN\simeq \RigDA_{\gr}(K^\flat)$ the motive of $\G^{\an}_{m,K}$ corresponds to the motive of $\G^{\an}_{m,K^{\flat}}$ and the multiplication by $\varpi$ corresponds to the multiplication by $\varpi^\flat$ (see \cite[Propositions 7.5 and 7.6]{vezz-fw}).  Hence, the  motive $\mathsf{M}_{K}(\mathcal{T}_\varpi)$ corresponds to $\mathsf{M}_{K^{\flat}}(\mathcal{T}_{\varpi^\flat})$. We are then reduced to proving the statement for $K^\flat$ which is an extension of $k(\!(\varpi^\flat)\!)$. We may then prove the statement for $K=k(\!(T)\!)$ and $\varpi=T$.\\[.2cm]
				{\it Step 2}: Write now $\mathsf{M}(\G_m)_{h\Z}$ for the colimit of the diagram $B\Z\to\DA(k[T^{\pm1}])$ induced by the automorphism~$T$ of~$\mathsf{M}(\G_m)$. We show that $\mathsf{M}(\G_m)_{h\Z}$ decomposes in $\DA(k[T^{\pm1}])$ as in \eqref{eq:Tate_curve_splits}. In this category, the multiplication by $T$ on $\mathsf{M}(\G_{m})$ can be factored as follows:
				$$
				\mathsf{M}(\G_{m})=\mathsf{M}(\G_m)\otimes\one\xto{\id\otimes T}\mathsf{M}(\G_{m})\otimes \mathsf{M}(\G_m)\xto{\mu}\mathsf{M}(\G_{m})
				$$
				where  $\mu $ is the multiplication map. Using the decomposition $\mathsf{M}(\G_m)=\one\oplus\one(1)[1]$ induced by the rational point $1\in\G_m$, giving rise to a decomposition $\mathsf{M}(\G_m)\otimes \mathsf{M}(\G_m)\cong \one\oplus \one(1)[1]\oplus\one(1)[1]\oplus\one(2)[2]$ the map above is represented by the matrix (for the matrix of $\mu$, see \cite[Proposition 2.6]{AyoubEt})
				$$
				\begin{pmatrix}
					\id&0&0&0\\0&\id&\id&0
				\end{pmatrix}
				\begin{pmatrix}
					\id&0\\T&0\\0&\id\\0&T
				\end{pmatrix}=\begin{pmatrix}
					\id&0\\T&\id
				\end{pmatrix}
				$$
				where here we let $T$ represent the morphism $\one\to\one(1)[1]$ induced by $T$ under the isomorphism $\Hom(\one,\one(1)[1])\cong k[T^{\pm1}]^*\otimes_{\Z}\Q$. Since $B\Z=S^1=\Delta^1/\partial\Delta^1$, %
				 the object $\mathsf{M}(\G_m)_{h\Z}$ coincides with the  coequalizer  of~$\id $ and the multiplication by~$T$ on~$\mathsf{M}(\G_m)$.  As such, it fits in an exact triangle in the homotopy category,
				$$
				\one\oplus\one(1)[1]\xto{\left(\begin{smallmatrix}
						0&0\\T&0
					\end{smallmatrix}\right)}\one\oplus\one(1)[1]\to \mathsf{M}(\G_m)_{h\Z}\to ,
				$$
				giving rise to an isomorphism $\mathsf{M}(\G_m)_{h\Z}\cong\one\oplus\one(1)[2]\oplus {\mcK}(1)[1]$ with ${\mcK}$ the extension 
				$$\one\to{\mcK}\to\one(-1)\to$$
				induced by $T$. Note that this object is the Kummer object of $\UDA(k)$, see \Cref{rmk:kummerUDA}.\\[.2cm]	
				{\it Step 3:} We now conclude the proof. Let $\alpha$ be the map $\Spec k(\!(T)\!)\to \Spec k[T^{\pm1}]$. We note that $\An^* \alpha^*(\mathsf{M}(\G_m)_{h\Z})$ in $\RigDA(k(\!(T)\!))$  is $\mathsf{M}(\G_m^{\an})_{h\Z}$. From the previous steps we then deduce an equivalence $\mathsf{M}(\mathcal{T}_T)\simeq \mathsf{M}(\G_m^{\an})_{h\Z} \simeq \one\oplus\one(1)[2]\oplus\An^*\alpha^*{\mcK}$. Note that $\An^*\alpha^*$ induces an equivalence $\UDA(k)\simeq\RigDA_{\gr}(k(\!(T)\!))$ which is compatible with the equivalence $\RigDA_{\gr}(k(\!(T)\!))\simeq \DAN$ fixed in the statement. This shows that $\mcK$ is sent to the Kummer object via $\An^*\alpha^*$, as wanted.
			\end{proof}

			\begin{rmk}Note that every Tate curve is an elliptic curve with nodal reduction.  In fact, every elliptic curve $E$ over $K$ that satisfies $|j(E)|>1$ is isomorphic after a finite extension to a Tate curve, and (Tate's theorem) an elliptic curve is isomorphic to a Tate curve if and only if it has (split) multiplicative reduction (see e.g. \cite[Theorem 5.8]{silverman_adva}).
			\end{rmk}	
			
			\begin{rmk}
				Let $C$ be the completion of the algebraic closure of $k(\!(T)\!)$. Recall from~\cite[Proposition 2.34]{pWM} that the \icat $\RigDA(C)$ is a localization of the \icat of log motives~$\log\DA(k^0)$, where $k^0$ denotes the field $k$ with canonical log structure. From 
				a log perspective, the motive of the Tate curve corresponds to the log motive of its reduction over the log point $k^0$. %
				Since the Tate curve over an algebraically closed field has nodal reduction, its normalization is $\P^1$.  Under a suitable $cdh$ descent property, the log motive of the reduction of the Tate curve should be a Tate motive built out of $\mathbb{P}^1$ minus three points. 
			\end{rmk}

			\subsection{The weight structure on rigid motives}\label{sec:WSonRig}
			We now find the heart of a bounded compatible weight structure on $\RigDA_{\gr}(K)$ (see \Cref{sec:W}) expanding some arguments of \cite[Appendix]{pWM}. We recall that we denote by $\heart{k}\subset\DA(k)$  the \icat of Chow motives over~$k$, which is the heart of a bounded compatible weight structure on~$\DA(k)$ (see \Cref{exm:WS}\eqref{exm:ChowWS}).
			\begin{prop}\label{prop:WSonRig}
				Let  $\heart{K}\subset\RigDA_{\gr}(K)$ be 
				the full \subicat spanned by the image of~$\heart{k}$ under the functor~$\xi$.
				\begin{enumerate}
					\item \label{WSR1}$\heart{K}$ is the heart of a bounded compatible weight structure on $\RigDA_{\gr}(K)_{\cpt}$. 
					\item \label{WSR2} The functors $\Psi$ and $\xi$ define quasi-inverse monoidal equivalences $\Ho(\heart{k})\simeq \Ho(\heart{K})$
				\end{enumerate}
			\end{prop}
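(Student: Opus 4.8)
The plan is to push everything through the explicit presentation $\RigDA_{\gr}(K)\simeq\Mod_{\chi\one}(\DA(k))$ of \Cref{prop:facts_of_life}, under which the generic fibre functor $\xi$ becomes the free-module functor $M\mapsto\chi\one\otimes M$ and (after the choice of pseudo-uniformizer, so that $\chi\one\simeq\one\oplus\one(-1)[-1]$ and $\RigDA_{\gr}(K)\simeq\DAN$) the functor $\Psi$ becomes the forgetful functor $\pi=\augm^*$. The single computation driving the argument is the following: for $X,Y\in\heart{k}$ the free--forget adjunction gives
\[
\spMap_{\RigDA_{\gr}(K)}(\xi X,\xi Y)\;\simeq\;\spMap_{\DA(k)}\bigl(X,\chi\one\otimes Y\bigr)\;\simeq\;\spMap_{\DA(k)}(X,Y)\ \oplus\ \spMap_{\DA(k)}(X,Y(-1))[-1].
\]
Since $\{-1\}=(-1)[-2]$ and $\heart{k}$ is closed under the twists $\{i\}$, we have $Y\{-1\}=Y(-1)[-2]\in\heart{k}$, so the second summand is $\spMap_{\DA(k)}(X,Y\{-1\})[1]$; as $\heart{k}$ is negative in $\DA(k)$, both summands are connective and the second is even $1$-connective. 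Hence $\heart{K}$ is negative, and taking $\pi_0$ yields a natural isomorphism $\Hom_{\heart{K}}(\xi X,\xi Y)\cong\Hom_{\heart{k}}(X,Y)$ induced by $\xi$.

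For \eqref{WSR1} I would then invoke \Cref{exm:WS}\eqref{exm:WSonComega} applied to the additive \subicat $\widetilde{\heart{}}\subseteq\RigDA_{\gr}(K)_{\cpt}$ generated by the objects $\xi(\mathsf{M}_k(X)\{i\})$ for $X/k$ smooth and proper, $i\in\Z$. It consists of compact objects (because $\xi$ preserves compactness, being the free functor up to the equivalence $\tilde\xi$), it is negative by the computation above, and it generates $\RigDA_{\gr}(K)$ under shifts and colimits: the free $\chi\one$-modules generate $\Mod_{\chi\one}(\DA(k))$ under colimits, so $\xi(\DA(k))$ generates $\RigDA_{\gr}(K)$, while $\heart{k}$ generates $\DA(k)$ under shifts and colimits (being the heart of a bounded weight structure, \Cref{exm:WS}\eqref{exm:ChowWS}). \Cref{exm:WS}\eqref{exm:WSonComega} then produces a bounded weight structure on $\RigDA_{\gr}(K)_{\cpt}$ whose heart is the idempotent completion of $\widetilde{\heart{}}$; one checks this heart equals $\heart{K}=\xi(\heart{k})$ using that $\xi$ preserves finite coproducts and that, by the $\pi_0$-identification above, $\End(\xi M)\cong\End(M)$ for a Chow motive $M$, so every idempotent of $\xi M$ comes from an idempotent of $M$, which splits in $\heart{k}$. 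Finally $\heart{K}$ contains the tensor unit and is closed under tensor products because $\xi$ is monoidal and $\heart{k}$ has these properties; this gives compatibility with the monoidal structure.

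For \eqref{WSR2}: the functor $\Ho(\xi)\colon\Ho(\heart{k})\to\Ho(\heart{K})$ is essentially surjective by the definition of $\heart{K}$ and fully faithful by the $\pi_0$-computation, hence an equivalence; it is monoidal since $\xi$ is and both hearts are monoidal \subicats, so its quasi-inverse is monoidal as well. This quasi-inverse is $\Ho(\Psi)$: under $\RigDA_{\gr}(K)\simeq\DAN$ one has $\Psi\circ\xi\simeq\pi\circ(N=0)\simeq\id_{\DA(k)}$ (cf.\ \Cref{rmk:chi_isFibN_xi_is_pirn}), so $\Psi$ restricts to a functor $\heart{K}\to\heart{k}$ with $\Ho(\Psi)\circ\Ho(\xi)\simeq\id$, forcing it to be the quasi-inverse, and $\Psi=\pi$ is monoidal by \Cref{sta:pi-rho-projection-formula}.

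The only genuinely delicate point is the bookkeeping in \eqref{WSR1}: matching the heart produced abstractly by \Cref{exm:WS}\eqref{exm:WSonComega} — an idempotent completion of an additive envelope — with $\xi(\heart{k})$ on the nose, together with the precise generation statement. Everything else is formal once the displayed mapping-spectrum computation is in hand.
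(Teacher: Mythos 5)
Your proposal is correct and follows essentially the same route as the paper: the same free--forget mapping-spectrum computation $\spMap(\xi X,\xi Y)\simeq\spMap(X,Y)\oplus\spMap(X,Y(-1)[-1])$, the same connectivity observation (second summand $1$-connective), and the same appeal to \Cref{exm:WS}\eqref{exm:WSonComega}. You spell out the idempotent-completion bookkeeping and the identification of $\Ho(\Psi)$ as the quasi-inverse in slightly more detail than the paper does, but these are elaborations, not a different method.
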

			
			\begin{proof}The image of $\xi$ clearly generates $\RigDA_{\gr}(K)$ under colimits, and $\heart{k}$ generates $\DA(k)$ under shifts and colimits. This proves that $\heart{K}$ generates $\RigDA(K)$ under shifts and colimits.   As $\xi $ is monoidal, $\heart{K}$ is also closed under tensor products, and contains the unit $\one$. 
				Observe that for every $M,N\in\DA(k)$,
				\begin{align*}
					\spMap_{\RigDA(K)} (\xi M, \xi N)&
					\simeq     \spMap_{\DA(k)} (M, \chi\xi N)\\
					&\simeq\spMap_{\DA(k)}(M,N)\oplus \spMap_{\DA(k)}(M,N(-1)[-1]).
				\end{align*}
				and the functors $\xi$ resp. $\Psi$ induce the canonical inclusion resp. the projection to the direct summand $\spMap_{\DA(k)}(M,N)$ as they are induced by the unit $\one\to \one\oplus\one(-1)[-1]$ resp. the augmentation $\one\oplus\one(-1)[-1]\to\one$.
				
				If we apply this to $M,N$ in $\heart{k}$, and use that $N(-1)[-1]=(N(-1)[-2])[1]$ we see that in this case both summands are connective spectra so that $\heart{K}$ is negative. As the second spectrum is even  1-connective,   we deduce~\eqref{WSR2}. Note that this implies that $\heart{K}$ is (additive and) idempotent-complete, thus proving~\eqref{WSR1} by means of \Cref{exm:WS}\eqref{exm:WSonComega}.
			\end{proof}

	\begin{rmk}
	The fact that motives of the form $\xi M$ with $M$ a Chow motive over $k$ are not only (compact) generators of  $\RigDA_{\gr}(K)_\omega$, but that they even generate the heart of a weight structure (that is, their orthogonality property) admits very concrete applications. For instance, one can attach to any compact motive in  $\RigDA_{\gr}(K)$
 a functorial weight complex  in $\mcK^b(\Ho(\heart{}))$ (see \Cref{eg:ws}\eqref{wc}). Moreover, as we will see, one can produce convergent  spectral sequences on many ``classical''  cohomological realizations, thus giving rise to (functorial) weight filtrations on cohomology groups, which are even compatible (in a suitable sense) with the monodromy operator (see \Cref{rmk:weight_filtration} and \Cref{eg:mononWC}).   %
	\end{rmk}

			\subsection{Motivic $(\varphi,N)$-modules}\label{sec:motivicphiN}
			In this section, we assume that $k$ has positive characteristic~$p$, and use \Cref{notn:K_0}.
			We continue to denote by~$\varphi\colon K_0\to K_0$ and by $\varphi\colon k(\!(T)\!)\to k(\!(T)\!)$ an arithmetic Frobenius with reduction~$\varphi:k\to k$. We will show that there is a canonical enrichment $\RigDA_{\gr}(K)\to \DA_{(\varphi,N)}(k)$ where the target is the \icat of \emph{motivic} $(\varphi,N)$-modules (that we will introduce below) similar to \Cref{sec:phiNmod}.
			
	\begin{dfn}\label{dfn:Dphi}
	    Let $\catD$ be a monoidal stable compactly generated \icat equipped with a monoidal auto-equivalence $\varphi\colon \catD\isoto\catD$. We denote by  $\hF{\catD}{\varphi}$   the equalizer of~$\id$ and~$\varphi$ computed in~$\Prloo$. Thus its compact objects are given by pairs $(M,f)$ with $M$ a compact object of~$\catD$ and $f\colon M\isoto\varphi M$. In particular,   we let $\hF{\DA(k)}{\varphi}$ be $\hF{\DA(k)}{\varphi^*}$.
	\end{dfn}

   		The following is the algebraic analogue of~\cite[\S\,2.3]{LBV} and~\cite[Remark~2.32]{pWM}.
			\begin{rmk}
				\label{rmk:relative-Frob-modp}
				We use the terminology of~\cite[\href{https://stacks.math.columbia.edu/tag/0CC6}{Tag 0CC6}]{stacks-project}.
				The relative Frobenius defines a natural transformation $X\to \varphi^*X$ on (smooth) $k$-schemes which induces a natural transformation $\id\to \varphi^*$ of endofunctors of $\DA(k)$.
				Since all Frobenius maps are universal homeomorphisms of schemes, this natural transformation is an equivalence, see~\cite[Proposition~6.3.16]{MR3477640}.
				In particular, we obtain a monoidal functor $\rF\colon \DA(k)\to\hF{\DA(k)}{\varphi}$ inducing a monoidal functor 
    \begin{equation}\label{eq:enrich}
 \LFnil{\DA(k)}{-\otimes\one(-1)}
 \xto{\rF} 
 \LFnil{(\hF{\DA(k)}{\varphi})}{-\otimes \rF(\one(-1))}\simeq \Mod_{\rF(\chi\one)}(\DA(k)^\varphi)
    \end{equation}
    where the last equivalence (depending on a choice of a pseudo-uniformizer) follows from the fact that $\rF(\chi\one)$ is the free symmetric algebra on $\rF(\one(-1)[-1])$ (see \Cref{prop:Gmissplit}).
			\end{rmk}

 We thank an anonymous referee for suggesting the following.
\begin{lemma}\label{lemma:K0phinew}
    Under the canonical equivalence $\varphi^*\one(-1)\simeq\one(-1)$, the object $\rF( \one(-1))$ in $\hF{\DA(k)}{\varphi}$ is given by the map $$p^{-1}\colon \one(-1)\to  \varphi^*\one(-1)\simeq\one(-1).$$
 \end{lemma}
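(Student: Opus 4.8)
The statement is about identifying the image of $\one(-1)$ under the Frobenius-enrichment functor $\rF\colon \DA(k)\to \hF{\DA(k)}{\varphi}$ of Remark~\ref{rmk:relative-Frob-modp}, i.e.\ computing the isomorphism $f\colon \one(-1)\isoto\varphi^*\one(-1)$ that exhibits $\rF(\one(-1))$ as an object of the category of $\varphi$-fixed points. The plan is to reduce the computation from the Tate object in weight $-1$ to the Tate object in weight $+1$ by duality, and then to trace through the definition of the relative Frobenius on the motive $\one(1)[2]=\widetilde{\mathsf{M}}(\P^1)$ (the reduced motive of the projective line), where the answer can be read off geometrically.

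First I would recall that the relative Frobenius $F_{X/k}\colon X\to \varphi^*X=X^{(p)}$ on a smooth $k$-scheme $X$ fits into the factorization $F_X=\varphi_{X}\circ F_{X/k}$, where $F_X$ is the absolute Frobenius; passing to motives, $\rF$ is induced by the natural transformation $\id\Rightarrow\varphi^*$ whose component at $\mathsf{M}(X)$ is $\mathsf{M}(F_{X/k})$. Now take $X=\P^1_k$. The absolute Frobenius acts on $\P^1$ by $[x_0:x_1]\mapsto[x_0^p:x_1^p]$, which is a finite flat degree-$p$ map; on the top cohomology / on the Lefschetz motive $\one(1)[2]$ (the complement of the point in $\mathsf{M}(\P^1)=\one\oplus\one(1)[2]$), a degree-$p$ self-map of $\P^1$ acts by multiplication by $p$. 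Since $\P^1$ is already defined over $\F_p$, $\varphi^*\P^1=\P^1$ canonically and $\varphi_{\P^1}=\id$, so $\rF$ sends $\one(1)[2]$ to the object $(\one(1)[2], p\colon \one(1)[2]\isoto\varphi^*\one(1)[2]\simeq \one(1)[2])$. Cancelling the (invertible, $\varphi^*$-fixed) shift $[2]$ gives that $\rF(\one(1))$ is $(\one(1),p)$.

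Next I would dualize. The functor $\rF$ is monoidal, hence preserves duals, and $\one(-1)=\one(1)^{\vee}$ with the canonical evaluation/coevaluation pairing. Under a monoidal functor, the map attached to the dual of an object is the inverse-transpose of the map attached to the object; since the underlying object is a $\otimes$-invertible Tate twist (so "transpose" is just "the same scalar"), the isomorphism for $\one(-1)$ is the inverse of the one for $\one(1)$, namely multiplication by $p^{-1}$. Concretely: tensoring the identity $\rF(\one(1))=(\one(1),p)$ with $\rF(\one(-1))=(\one(-1),f)$ must yield $\rF(\one)=(\one,\id)$, forcing $p\cdot f=\id$ after the canonical identifications $\varphi^*\one(\pm1)\simeq\one(\pm1)$, i.e.\ $f=p^{-1}$. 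This is exactly the claim, and it matches the twisting convention in Example~\ref{ex:phimods}, where $K_0(n)$ carries the Frobenius $p^{-n}\varphi$, so that $\one(-1)$ realizes to $K_0(-1)$ with Frobenius $p\varphi$ (the inverse isomorphism $D\isoto\varphi^*D$ being $p^{-1}$, cf.\ Remark~\ref{rmk:choice-of-phi^*}).

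The main obstacle I anticipate is not conceptual but bookkeeping: pinning down the sign/normalization conventions so that "the relative Frobenius on $\mathsf{M}(\P^1)$ acts by $p$ on $\one(1)[2]$" is stated with the correct identification of $\one(1)[2]$ as a summand of $\mathsf{M}(\P^1)$ (via the rational point $\infty\in\P^1$, as in the splitting used in the proof of Theorem~\ref{thm:KummerisTate}), and so that the passage through $\varphi^*$ and its canonical trivialization on motives of $\F_p$-schemes does not introduce a spurious Frobenius twist. A clean way to sidestep most of this is to note that $\rF$ is symmetric monoidal and the underlying $\DA(k)$-object of $\rF(\one(-1))$ is $\otimes$-invertible; hence the data of $f$ amounts to a single element of $\Hom_{\DA(k)}(\one,\one)^{\times}=\Q^{\times}$ (using that $\End_{\DA(k)}(\one)=\Q$ for $k$ a field, e.g.\ via the motivic cohomology computation cited in Remark~\ref{rmk:choice?}), and that scalar can be determined by evaluating any faithful realization — e.g.\ rigid or crystalline cohomology of $\P^1$ — where one knows the relative Frobenius acts on $H^2(\P^1)$ by $p$. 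This pins $f=p^{-1}$ on $\one(-1)$ without a delicate diagram chase.
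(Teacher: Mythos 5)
Your argument is correct and uses essentially the same idea as the paper's proof: compute the relative Frobenius on the motive of a rational variety defined over $\F_p$, read off that it acts by $p$ on the $\one(1)$-shifted summand, and dualize to get $p^{-1}$ on $\one(-1)$. The only material difference is the choice of geometric model: you use $\P^1$ (so the Tate summand is $\one(1)[2] = \widetilde{\mathsf{M}}(\P^1)$ and the scalar is the degree of $F_{\P^1/k}$), whereas the paper uses $\G_m$ and passes through the identification $\Gmd \simeq \one \oplus \one(-1)[-1]$, reading off the matrix $\left(\begin{smallmatrix}\id&0\\0&p\end{smallmatrix}\right)$ on $\mathsf{M}(\G_m)\simeq\one\oplus\one(1)[1]$. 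The paper's choice is marginally more economical here because $\Gmd = \chi\one$ is already the central object of Proposition~\ref{prop:facts_of_life}, so no new geometry has to be introduced; but both computations are standard and equally valid, and your final reduction to the $\otimes$-invertibility of $\one(-1)$ and $\End(\one)=\Q$ (or a single faithful realization) is a clean way to avoid any sign-chasing.
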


  \begin{proof}
      Using the identification $\Gmd\simeq\one\oplus\one(-1)[-1]$, it is enough to identify the action of the relative Frobenius on $\Gmd$ or, by duality, on $\mathsf{M}(\G_m)\simeq\one\oplus\one(1)[1]$. At this level,   the relative Frobenius on $\mathbb{G}_m$ is represented by the matrix $\left(\begin{smallmatrix}
  					\id&0\\0&p
			\end{smallmatrix}\right)$ hence the claim.
  \end{proof}

	In light of \Cref{lemma:K0phinew}, we may introduce the following notation.
			\begin{dfn}\label{cons:DAphiN}
    We    denote the \icat $\LFnil{(\hF{\DA(k)}{\varphi})}{-\otimes \rF(\one(-1))}$ by $\DA_{(\varphi,N)}(k)$, whose objects are ``motivic $(\varphi,N)$-modules''. Informally, its compact objects are given by commutative squares of compact objects in $\DA(k)$:
       			$$
        			\begin{tikzcd}
        				M \arrow[r,"\sim", "\alpha"'] \arrow[d,"N"]&\varphi^* M\arrow[d,"\varphi^* N"]\\
        				M(-1) \arrow[r,"\sim","\alpha\cdot\frac{1}{p}"'] &\varphi^* M (-1).
        			\end{tikzcd}
				$$
    \begin{prop}\label{prop:K0phinew}
         The choice of a pseudo-uniformizer  of $K$ determines a functor in $\Prloo$:
				$$\RigDA_{\gr}(K)\simeq\RigDA_{\gr}(K_0)\to \DA_{(\varphi,N)}(k).$$
    \end{prop}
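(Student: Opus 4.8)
The plan is to build the functor by concatenating equivalences and functors that are already available, so that the proof itself is mostly bookkeeping together with a check that each piece is colimit‑ and compact‑preserving. First I would pin down the target: by \Cref{lemma:K0phinew}, under $\varphi^*\one(-1)\simeq\one(-1)$ the object $\rF(\one(-1))\in\hF{\DA(k)}{\varphi}$ is the map $p^{-1}\colon\one(-1)\to\varphi^*\one(-1)\simeq\one(-1)$, so $\DA_{(\varphi,N)}(k)=\LFnil{(\hF{\DA(k)}{\varphi})}{-\otimes\rF(\one(-1))}$ of \Cref{cons:DAphiN} really is the $\infty$‑category of motivic $(\varphi,N)$‑modules drawn there.

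Next I would use the remark following \Cref{prop:facts_of_life} (via~\cite[Theorem 3.13]{ayoub-weil}): since $\RigDA_{\gr}(-)$ depends only on the residue field and the rational rank of the value group, and $K$ and $K_0$ both have perfect residue field $k$ and a value group of rational rank $1$, there is a symmetric monoidal equivalence $\RigDA_{\gr}(K)\simeq\RigDA_{\gr}(K_0)$ in $\Prloo$; combining \Cref{prop:facts_of_life} with \Cref{cor:RigDAisDAN}, it is moreover compatible with the presentations of both sides as $\Mod_{\Gmd}(\DA(k))\simeq\DAN$. Fixing the pseudo‑uniformizer $\varpi$ of $K$ then gives, by \Cref{cor:RigDAisDAN}, an equivalence in $\Prloo$
\[
\RigDA_{\gr}(K)\;\simeq\;\DAN\;=\;\LFnil{\DA(k)}{-\otimes\one(-1)}.
\]
Then I would feed this into the construction of \Cref{rmk:relative-Frob-modp}: the relative Frobenius $\id\to\varphi^*$ defines $\rF\colon\DA(k)\to\hF{\DA(k)}{\varphi}$, and the functoriality $F\mapsto\widehat F$ of the lax‑fixed‑point construction (\Cref{notation:comparison_monod}) yields an induced monoidal functor
\[
\widehat{\rF}\colon\LFnil{\DA(k)}{-\otimes\one(-1)}\;\longrightarrow\;\LFnil{(\hF{\DA(k)}{\varphi})}{-\otimes\rF(\one(-1))}\;=\;\DA_{(\varphi,N)}(k),
\]
which is precisely the arrow~\eqref{eq:enrich}. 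Composing with the equivalence of the previous paragraph produces the desired functor $\RigDA_{\gr}(K)\simeq\RigDA_{\gr}(K_0)\to\DA_{(\varphi,N)}(k)$.

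It then remains to verify that every arrow in the chain lies in $\Prloo$. The equivalences obviously do. For $\rF$: the forgetful functor $\hF{\DA(k)}{\varphi}\to\DA(k)$ is conservative and preserves small colimits and compact objects (by the description in \Cref{dfn:Dphi} of $\hF{\DA(k)}{\varphi}$ as the equalizer of $\id$ and $\varphi^*$ in $\Prloo$, using that $\varphi^*$ is an equivalence), and $\rF$ is a section of it, so $\rF$ itself preserves colimits and compact objects. For $\widehat{\rF}$: combine this with \Cref{sta:pi-compacts} and the fact that the inclusion $\LFnil{\DA(k)}{-\otimes\one(-1)}\into\LF{\DA(k)}{-\otimes\one(-1)}$ preserves filtered colimits; the latter also shows that $\widehat{\rF}$ genuinely lands in the ind‑nilpotent (indeed nilpotent) part, since ind‑nilpotency of $\rF(f)$ is tested on underlying objects, where it reduces to that of $f$. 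I do not expect a genuine obstacle here: the real content has already been isolated in \Cref{lemma:K0phinew} and \Cref{cons:DAphiN}, and what is left is assembly plus these routine $\Prloo$‑checks.
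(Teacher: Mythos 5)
Your proposal is correct and follows the same route as the paper: the paper also takes the functor on $\LFnil{-}{-}$-categories induced by $\rF\colon\DA(k)\to\hF{\DA(k)}{\varphi}$ (the arrow~\eqref{eq:enrich} from \Cref{rmk:relative-Frob-modp}), and identifies its domain with $\RigDA_{\gr}(K)$ via \Cref{cor:RigDAisDAN} using the chosen pseudo-uniformizer. You add a more explicit verification that each piece lies in $\Prloo$ and that the image lands in the ind-nilpotent part, which the paper leaves implicit, but the substance is identical.
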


    \begin{proof}  The  domain of the canonical functor  $$
 \LFnil{\DA(k)}{-\otimes\one(-1)}
 \xto{\rF} 
 \LFnil{(\hF{\DA(k)}{\varphi})}{-\otimes \rF(\one(-1))}=\DA_{(\varphi,N)}(k)
    $$
    is identified, using the chosen pseudo-uniformizer, with $\RigDA_{\gr}(K)$ via \Cref{cor:RigDAisDAN}. 
    \end{proof}
				Informally, the functor above sends $M$ to the square
				$$
				\begin{tikzcd}
					\Psi M \arrow[r,"\sim", "\alpha"'] \arrow[d,"N"]&\varphi^* \Psi M\arrow[d,"\varphi^* N"]\\
					\Psi M(-1) \arrow[r,"\sim","\alpha\cdot \frac{1}{p}"'] &\varphi^* \Psi M (-1)
				\end{tikzcd}
				$$
				where $\alpha$ is now induced by the relative Frobenius map. Note that such a diagram expresses at the motivic level the relation $ N \varphi \simeq p\varphi N$ that holds in the classical abelian category  of $(\varphi, N)$-modules over $K_0$, see \Cref{sec:phiNmod} including \Cref{rmk:choice-of-phi^*}.
			\end{dfn}

\alberto{

}

                \subsection{Extending realization functors}\label{sec:extend}
			\Cref{cor:RigDAisDAN} gives a way to define, starting from any (co)homology theory ``on the special fiber'', a (co)homology theory ``on the generic fiber'', and this will be equipped with a monodromy operator. We now explain how, and give some examples. For the definition of $\LFnil{\catD}{-\otimes u}$ and the functors $\pi,(N=0)$ we refer to \Cref{sec:nilpotent-op}.
			
			\begin{cor}\label{cor:RGammahat}
				Let $F\colon\DA(k)\to \catD$ be a  colimit-preserving monoidal functor between stable presentably monoidal   \icats. Let $u=F(\one(-1))\in\catD$. The choice of a pseudo-uniformizer~$\varpi\in\mcO_K$ gives rise to a monoidal extension
				$$
				\widehat{F}\colon\RigDA_{\gr}(K)\to \LFnil{\catD}{-\otimes u}
				$$
				endowed with equivalences $\widehat{F}\circ\xi\simeq (N=0)\circ F$ and $\pi\circ\widehat{F}\simeq F\circ \Psi$. Informally, it sends the motive $M$ to $(F(\Psi M), F(N_{\Psi M}))$. \qed
			\end{cor}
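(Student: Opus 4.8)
The plan is to obtain $\widehat{F}$ by composing the equivalence of \Cref{cor:RigDAisDAN} with the functoriality of the lax-nilpotent-fixed-point construction established in \Cref{notation:comparison_monod}. First I would fix a pseudo-uniformizer $\varpi\in\mcO_K$; by \Cref{cor:RigDAisDAN} this determines an equivalence of monoidal \icats
\[
\RigDA_{\gr}(K)\ \simeq\ \DAN\ =\ \LFnil{\DA(k)}{-\otimes\one(-1)}.
\]
Now the functor $F\colon\DA(k)\to\catD$ is a colimit-preserving monoidal functor, hence a morphism in $\Prlmst$, and $\one(-1)$ maps to $u=F(\one(-1))$. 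By \Cref{notation:comparison_monod}, $F$ induces a monoidal functor
\[
\widehat{F}\colon \LFnil{\DA(k)}{-\otimes\one(-1)}\ \to\ \LFnil{\catD}{-\otimes u},\qquad (X,f)\mapsto (F(X),F(f)),
\]
and precomposing with the equivalence above produces the desired functor $\widehat{F}\colon\RigDA_{\gr}(K)\to\LFnil{\catD}{-\otimes u}$. (I should remark that $F(f)$ is automatically ind-nilpotent: a compact generator of $\DAN$ is compact, its monodromy is nilpotent by \Cref{cor:DANil_is_indDAN}, and $F$ sends this to a nilpotent operator; since $\widehat{F}$ preserves colimits the whole target lands in $\LFnil{\catD}{-\otimes u}$. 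Alternatively one simply applies the construction of \Cref{notation:comparison_monod} verbatim.)

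Next I would identify the two compatibilities. Under the equivalence of \Cref{cor:RigDAisDAN}, the functor $\xi\colon\DA(k)\to\RigDA_{\gr}(K)$ corresponds to the zero-section $(N=0)\colon\DA(k)\to\DAN$ (this is part of \Cref{prop:facts_of_life} together with the description in \Cref{rmk:chi_isFibN_xi_is_pirn}, where $\xi\dashv\chi$ matches $(N=0)\dashv\fib(N)$), and the functor $\Psi$ corresponds to $\pi$ by definition of $\Psi$. Since $\widehat{F}$ is, by its very construction in \Cref{notation:comparison_monod}, the functor $(X,f)\mapsto(F(X),F(f))$, it visibly commutes with the zero-sections and with the forgetful functors: $\widehat{F}\circ(N=0)\simeq(N=0)\circ F$ (because $F$ is monoidal, hence sends the zero operator to the zero operator — this is $(N=0)$ being monoidal, cf.\ \Cref{sta:N-fib-projection-formula}), and $\pi\circ\widehat{F}\simeq F\circ\pi$. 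Transporting along the equivalence $\RigDA_{\gr}(K)\simeq\DAN$ then yields $\widehat{F}\circ\xi\simeq(N=0)\circ F$ and $\pi\circ\widehat{F}\simeq F\circ\Psi$, and the informal description $M\mapsto(F(\Psi M),F(N_{\Psi M}))$ is immediate from unwinding these identifications.

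I do not expect any serious obstacle here: the statement is essentially a formal consequence of \Cref{cor:RigDAisDAN} plus the elementary functoriality recorded in \Cref{notation:comparison_monod}. The only point requiring a line of care is checking that $\widehat{F}$ genuinely lands in the \emph{ind-nilpotent} (rather than all) lax fixed points — handled as above via \Cref{cor:DANil_is_indDAN} and the fact that $\widehat{F}$ preserves colimits — and the bookkeeping of which adjoints correspond to $\xi$ and $\Psi$, which is already spelled out in \Cref{rmk:chi_isFibN_xi_is_pirn}. Everything is monoidal by construction since both the equivalence of \Cref{cor:RigDAisDAN} and the functor of \Cref{notation:comparison_monod} are monoidal.
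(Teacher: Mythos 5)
Your proposal is correct and follows the same route as the paper: apply the functoriality of \Cref{notation:comparison_monod} to get $\DAN\to\LFnil{\catD}{-\otimes u}$ and precompose with the equivalence of \Cref{cor:RigDAisDAN}. The extra verifications you supply (that $\widehat{F}$ lands in the ind-nilpotent subcategory, and the identification of $\xi$ with $(N=0)$ and $\Psi$ with $\pi$ via \Cref{rmk:chi_isFibN_xi_is_pirn}) are correct, though the paper treats them as already built into \Cref{notation:comparison_monod} and the definitions; for the ind-nilpotence one can also argue more directly that $F(X)[F(f)^{-1}]\simeq F(X[f^{-1}])\simeq 0$ since $F$ preserves colimits.
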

   \begin{proof}
As in \Cref{notation:comparison_monod}, the functor $F$ induces a  monoidal functor $\widehat{F}\colon \DAN \to \LFnil{\catD}{-\otimes u}$. We conclude by \Cref{cor:RigDAisDAN}.
   \end{proof}

			\begin{rmk}
			Note that, since colimits are reflected by $\pi$, the functor $\widehat{F}$ is in $\CAlg(\Prl)$. Similarly, since $\pi$ reflects compact objects, if $F$ is a functor in $\Prloo$ then so is~$\widehat{F}$.
			\end{rmk}
			
			\begin{exm}\label{exm:NonWC}
				Let $w\colon\DA(k)_{\cpt}\to \mcK^b(\Ho(\heart{k}))$ be the (monoidal) weight complex functor. We deduce that there is a monoidal functor $\widehat{w}\colon \RigDA_{\gr}(K)_{\cpt}\to \LFnil{\mcK^b(\Ho(\heart{k}))}{(-1)}$ with $(-1)$ being the functor $C\mapsto (C\{-1\})[2]$ (i.e. $C(-1)_n=C_{n-2}(-1)[-2]$). Note that the underlying functor $\pi\circ\widehat{w}\simeq w\circ\Psi$ coincides with the weight complex functor of~$\RigDA_{\gr}(K)_{\cpt}$ (since $\Psi$ induces an equivalence on the weight complexes, see \Cref{{prop:WSonRig}}). In other words, the weight complex $w X$ of a motive~$X$ in~$\RigDA_{\gr}(K)_{\cpt}$ is  equipped with a monodromy operator $w X\to w X(-1)$, represented by a map of complexes levelwise given by $(wX)_n\to (wX)_{n-2}(-1)[-2]$.
			\end{exm}
			
			\begin{exm}\label{eg:mononWC}
				We keep the notation of \Cref{exm:NonWC}.    Suppose that $F$ as in \Cref{cor:RGammahat} factors over the weight complex functor (on compact objects) as	$$\DA(k)_{\cpt}\xto{w}\mcK^b(\Ho(\heart{k}))\xto{\widetilde{F}}\catD
				$$
				(for example, under the hypotheses of \Cref{exm:Ai}). We deduce that the functor~$\widehat{F}$ (on compact objects) factors as
				$$
				\RigDA_{\gr}(K)_{\cpt}\xto{\widehat{w}}\LFnil{\mcK^b(\Ho(\heart{k}))}{(-1)} \xto{\widetilde{F}}\LFnil{\catD}{(-1)}.$$
    where we write $(-1)$ for $-\otimes u$ in $\catD$, for simplicity. 
				We now place ourselves in the setting of \Cref{exm:Ai,rmk:weight_filtration}. %
				Note that in this case the monodromy operator $N$ defines a map between the two weight spectral sequences for any~$X$ in~$\RigDA_{\gr}(K)_{\cpt}$:
				\[ E^1_{pq}=\Hm_p(F(w X)_q)\Rightarrow \Hm_{p+q}(\widehat{F}X), \quad E^1_{pq}=\Hm_p(F(w( X(-1)))_q)\Rightarrow \Hm_{p+q}(\widehat{F}X(-1))\]
				given by 
				\[\Hm_p(F(wX)_q)\to \Hm_p(F(w (X(-1)))_q)\cong \Hm_{p+2}(F(wX)_{q-2})(-1).\]
				As such, it induces a map between the induced graded pieces for the weight filtration $\gr^{w}_i\Hm_n\widehat{F}(X)\to \gr^{w}_{i+2}\Hm_n\widehat{F}(X)(-1)$.
			\end{exm}

			\begin{cor}\label{cor:RGammahat2}Fix the hypotheses of \Cref{cor:RGammahat}. Assume moreover the following:
				\begin{enumerate}
					\item $k$ is a perfect field of characteristic $p>0$.
                    \item $\catD$ is compactly generated and $F$ preserves compact objects.
					\item $\catD$  is equipped with a monoidal auto-equivalence $\varphi\colon\catD\xto{\sim}\catD$.
					\item $F$ is $\varphi$-equivariant, i.e., there is an invertible transformation $F\circ\varphi^*\simeq \varphi\circ F$ of monoidal functors.
				\end{enumerate}
				Then there is a monoidal extension (depending on the choice of a pseudo-uniformizer~$\varpi\in\mcO_K$)
				$$
				\widehat{F}\colon\RigDA_{\gr}(K)\to \LFnil{(\hF{\catD}{\varphi})}{-\otimes u}
				$$
				where $u=F\left(\one(-1)\xrightarrow[\rF]{\sim}\varphi^{*} \one(-1)\right)\simeq\left(F\one(-1)\xto{\sim}\varphi(F\one(-1))\right)$. 
				Informally, it sends the motive $M$ to $(F(\Psi M), F(\alpha_M), F(N_{\Psi M}))$ (see \Cref{cons:DAphiN}). 
			\end{cor}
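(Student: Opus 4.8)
The statement is a formal composite of results already established in this section, and the plan is to assemble it in three steps. \emph{First}, using the chosen pseudo-uniformizer $\varpi$, I would invoke \Cref{cor:RigDAisDAN} to identify $\RigDA_{\gr}(K)$ with $\DAN=\LFnil{\DA(k)}{-\otimes\one(-1)}$ as monoidal \icats; this is the only ingredient that depends on $\varpi$.

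\emph{Second}, I would promote $F$ to a colimit-preserving monoidal functor $F'\colon\DA(k)\to\hF{\catD}{\varphi}$ into the equalizer of $\id$ and $\varphi$ (\Cref{dfn:Dphi}). The relative Frobenius is an \emph{equivalence} of endofunctors $\id\isoto\varphi^*$ of $\DA(k)$ (\Cref{rmk:relative-Frob-modp}); whiskering it by $F$ and composing with the $\varphi$-equivariance equivalence $F\varphi^*\simeq\varphi F$ of hypothesis (4) yields an equivalence $F\isoto\varphi F$ of colimit-preserving monoidal functors, which is precisely the datum classified by the universal property of the equalizer $\hF{\catD}{\varphi}$ (namely a colimit-preserving monoidal functor $G$ together with an equivalence $G\simeq\varphi G$). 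One must then check that $F'$ is a morphism in $\Prloost$: since $\varphi$ is a monoidal auto-equivalence of the compactly generated $\catD$ it preserves compact objects, hence $\hF{\catD}{\varphi}$ is again compactly generated and presentably monoidal and its forgetful functor to $\catD$ is conservative and preserves colimits and compact objects; as the composite of this forgetful functor with $F'$ is $F$, which by hypothesis (2) preserves colimits and compacts, the same holds for $F'$. Finally, by \Cref{lemma:K0phinew} the object $F'(\one(-1))$ is $F\one(-1)$ together with the equivalence obtained by applying $F$ to the relative Frobenius $\one(-1)\isoto\varphi^*\one(-1)$, i.e.\ multiplication by $p^{-1}$ under the canonical identification $\varphi^*\one(-1)\simeq\one(-1)$; thus $F'(\one(-1))=u$ as in the statement.

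\emph{Third}, I would feed $F'$ into the construction of \Cref{notation:comparison_monod}, obtaining a monoidal functor $\widehat{F'}\colon\LF{\DA(k)}{-\otimes\one(-1)}\to\LF{\hF{\catD}{\varphi}}{-\otimes u}$ that restricts to $\widehat{F'}\colon\LFnil{\DA(k)}{-\otimes\one(-1)}\to\LFnil{\hF{\catD}{\varphi}}{-\otimes u}$; the restriction to ind-nilpotent objects is legitimate because $F'$ preserves colimits and therefore carries $X[f^{-1}]$ to $F'(X)[F'(f)^{-1}]$, and $F'$ preserves compact objects, so $\widehat{F'}$ is a functor in $\Prloo$. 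Precomposing with the equivalence of the first step produces the desired monoidal $\widehat F$, and the compatibility $\pi\circ\widehat F\simeq F\circ\Psi$ (after the forgetful functor $\hF{\catD}{\varphi}\to\catD$) together with the informal description $M\mapsto(F(\Psi M),F(\alpha_M),F(N_{\Psi M}))$ — where $\alpha_M\colon\Psi M\isoto\varphi^*\Psi M$ is the relative Frobenius and the commuting square is part of the data of an object of the target, as in \Cref{cons:DAphiN} — are then read off from the corresponding statements for $\widehat{F'}$, for \Cref{cor:RigDAisDAN}, and for the functor $\DAN\to\DA_{(\varphi,N)}(k)$ of \Cref{cons:DAphiN}. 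I do not anticipate a genuine obstacle here; the only point needing care is the \icategorical bookkeeping of the second step (that $\hF{\catD}{\varphi}$ stays in $\Prloost$ and $F'$ is a morphism there), and it is exactly there that hypotheses (2) and (3) are used — hypothesis (1) being needed only so that $\DA(k)$ carries the Frobenius and \Cref{rmk:relative-Frob-modp} applies, and hypothesis (4) being the input that produces the lift $F'$.
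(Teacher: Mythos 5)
Your proposal is correct and is essentially the same as the paper's argument, only decomposed slightly differently: the paper applies \Cref{notation:comparison_monod} to $\hF{F}{\varphi}\colon \hF{\DA(k)}{\varphi}\to\hF{\catD}{\varphi}$ and then pre-composes with the functor $\RigDA_{\gr}(K)\to\DA_{(\varphi,N)}(k)$ of \Cref{prop:K0phinew} (itself the lift of $\rF$), whereas you first assemble $F'=\hF{F}{\varphi}\circ\rF\colon\DA(k)\to\hF{\catD}{\varphi}$ and apply \Cref{notation:comparison_monod} once; by functoriality of that construction the two composites agree. Your careful verification that $F'$ stays in $\Prloost$ (via conservativity of the forgetful functor $\hF{\catD}{\varphi}\to\catD$ and hypotheses (2)--(3)) and the identification of $F'(\one(-1))$ with $u$ via \Cref{lemma:K0phinew} are exactly the points the paper leaves implicit.
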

			\begin{proof}
                        The functor $\hF{F}{\varphi}\colon \hF{\DA(k)}{\varphi}\to \hF{\catD}{\varphi}$ induces a functor $\widehat{F}^\varphi\colon \DA_{(\varphi,N)}(k)\to   \LFnil{(\hF{\catD}{\varphi})}{-\otimes u}$ as in \Cref{notation:comparison_monod}. We can then pre-compose with the functor $\RigDA_{\gr}(K)\to \DA_{(\varphi,N)}(k)$ of \Cref{prop:K0phinew}.
			\end{proof}

			\begin{cor}\label{cor:RGammas}
				Let $R\Gamma\colon\RigDA_{\gr}(K)_{\cpt}\to \catD$ be a monoidal exact functor to a stable presentably monoidal \icat. If $\pi_n\spMap_{\catD}(R\Gamma\xi X,R\Gamma \xi Y)=0$ for  any $X,Y$ in $ \heart{k}$ and any $n>0$,  then $R\Gamma\simeq \pi \circ \widehat{R\Gamma\circ \xi}$ as monoidal functors. 
			\end{cor}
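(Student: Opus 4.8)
The plan is to reduce the statement to the assertion that $R\Gamma$ factors through $\Psi$, i.e.\ that $R\Gamma\simeq(R\Gamma\circ\xi)\circ\Psi$ as monoidal functors on $\RigDA_{\gr}(K)_{\cpt}$; granting this, the identity $\pi\circ\widehat{F}\simeq F\circ\Psi$ of \Cref{cor:RGammahat}, applied to $F=R\Gamma\circ\xi$ (first extend $R\Gamma$ along $\RigDA_{\gr}(K)_{\cpt}\hookrightarrow\RigDA_{\gr}(K)$ to a colimit-preserving monoidal functor, which is possible since $\catD$ is presentably monoidal, then postcompose with $\xi$), yields $R\Gamma\simeq\pi\circ\widehat{R\Gamma\circ\xi}$ after restricting back to compact objects. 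So everything hinges on the factorisation through $\Psi$.

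First I would bring in the weight structure of \Cref{prop:WSonRig}: its heart $\heart{K}=\xi(\heart{k})$ is the heart of a bounded compatible weight structure on the small stable monoidal \icat $\RigDA_{\gr}(K)_{\cpt}$, and every object of $\heart{K}$ has the form $\xi X$ with $X\in\heart{k}$. Hence the hypothesis $\pi_n\spMap_{\catD}(R\Gamma\xi X,R\Gamma\xi Y)=0$ for $X,Y\in\heart{k}$ and $n>0$ is \emph{exactly} the hypothesis of \Cref{prop:factorsonwc} for the functor $R\Gamma$ relative to the heart $\heart{K}$, and that corollary produces a factorisation $R\Gamma\simeq\widetilde{R\Gamma}\circ w$ through the weight-complex functor $w\colon\RigDA_{\gr}(K)_{\cpt}^{\otimes}\to\mcK^b(\Ho(\heart{K}))^{\otimes}$.

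The second step is to identify $w$ with $w_{\DA(k)}\circ\Psi$, where $w_{\DA(k)}$ denotes the weight-complex functor of $\DA(k)_{\cpt}$. By \Cref{prop:WSonRig}\eqref{WSR2} the functors $\Psi$ and $\xi$ restrict to mutually inverse monoidal equivalences $\Ho(\heart{K})\simeq\Ho(\heart{k})$, hence induce a monoidal equivalence $\mcK^b(\Ho(\heart{K}))^{\otimes}\simeq\mcK^b(\Ho(\heart{k}))^{\otimes}$; under it \Cref{exm:NonWC} identifies $w$ with $w_{\DA(k)}\circ\Psi$. Writing $G\colon\DA(k)_{\cpt}^{\otimes}\to\catD^{\otimes}$ for the composite of $\widetilde{R\Gamma}$ with that equivalence and with $w_{\DA(k)}$ — a monoidal exact functor — we obtain $R\Gamma\simeq G\circ\Psi$. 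Precomposing with $\xi$ and using $\Psi\circ\xi\simeq\id$, which holds monoidally because under \Cref{cor:RigDAisDAN} the pair $(\xi,\Psi)$ becomes $((N=0),\pi)$ and $(N=0)$ is a monoidal section of $\pi$ by \Cref{sta:N-fib-projection-formula}, we get $R\Gamma\circ\xi\simeq G\circ\Psi\circ\xi\simeq G$, hence $R\Gamma\simeq G\circ\Psi\simeq(R\Gamma\circ\xi)\circ\Psi$, as needed.

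The routine work consists in tracking the monoidal enhancements and the passage between functors on $\RigDA_{\gr}(K)_{\cpt}$ and their colimit-preserving extensions (here one uses only that $\xi$, $\Psi$ and $\pi$ preserve compactness). The main obstacle is the identification of the rigid weight-complex functor with $w_{\DA(k)}\circ\Psi$ (that is, \Cref{exm:NonWC}) together with the application of \Cref{prop:factorsonwc} — this is where, and the only place where, the hypothesis enters. It is worth stressing that one \emph{cannot} shortcut by restricting $R\Gamma$ and $(R\Gamma\circ\xi)\circ\Psi$ to $\heart{K}$ and invoking \Cref{cor:sosnilo@} directly: although these two monoidal exact functors agree on the \emph{objects} of $\heart{K}$, the functor $\xi\colon\heart{k}\to\heart{K}$ is not fully faithful at the \icategorical level — the extra summand $\spMap_{\DA(k)}(X,Y(-1)[-1])$ contributes to $\pi_{\ge 1}$ — so $\Psi\circ\xi$ becomes the identity only after passing to homotopy categories, and it is precisely the weight complex that carries out this passage functorially.
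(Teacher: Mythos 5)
Your proposal is correct and takes essentially the same approach as the paper: reduce to $R\Gamma\simeq(R\Gamma\circ\xi)\circ\Psi$, apply \Cref{prop:factorsonwc} to factor $R\Gamma$ through the weight-complex functor, and use the monoidal equivalence $\mcK^b(\Ho(\heart{k}))\simeq\mcK^b(\Ho(\heart{K}))$ coming from \Cref{prop:WSonRig} to conclude. Your closing remark — that $\xi\colon\heart{k}\to\heart{K}$ is not fully faithful at the $\infty$-categorical level, so one cannot invoke \Cref{cor:sosnilo@} directly and must instead pass through the weight complex — correctly pinpoints the subtlety that the paper's terse proof leaves implicit.
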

			\begin{proof}
				By construction, the right-hand side is equivalent to $R\Gamma \circ \xi\circ\Psi$, so that it is enough to show  $R\Gamma \simeq R\Gamma \circ \xi\circ\Psi$.   In light of \Cref{prop:factorsonwc} this follows from the fact that $\xi$ and $\Psi$ induce an equivalence $\Ho(\heart{k})\simeq\Ho(\heart{K})$ (see \Cref{prop:WSonRig}) and hence an equivalence $\mcK^b(\Ho(\heart{k}))\simeq\mcK^b(\Ho(\heart{K})).$
			\end{proof}
			
			\begin{rmk}
				The equivalence of \Cref{cor:RGammas} can be suggestively restated by the formula $R\Gamma(X) \simeq R\Gamma(\Psi X)$ for every (unipotent) rigid analyitic motive $X$ over $K$. This is in accordance with the formulas given for the three cohomology theories considered in the introduction (see for example \cite{illusie,mokrane}). 
			\end{rmk}

		\subsection{A ``new'' Hyodo--Kato realization}
			\label{sec:dR-realization}
			Let $K$ be as in \Cref{not:K}, and assume moreover that the residue field~$k$ is a perfect field of characteristic $p>0$. As in \Cref{sec:phiNmod} we let $K_0$ be $W(k)[1/p]$ and we  denote by $\Dp$ (resp.\ $\DpN$) the Ind-completion of the bounded derived \icat of $\varphi$-modules (resp.\ of $(\varphi,N)$-modules). 
			Recall the overconvergent de\,Rham realization~$\dR\colon \RigDA(K_0)\to\QCoh(\Spa K_0)^{\op}$ from~\cite[Corollary 4.39]{LBV}. 
			
			\begin{dfn}\label{def:hom_real_from_coho}
			\begin{enumerate}
			    \item Assume $\car K=0$.    We let $R\Gamma_{\dR,K}^{\dagger}\colon \RigDA(K)_{\cpt}\to\mcD( K)$ be the covariant realization defined on compact objects by pre-composing $\dR$ with the canonical duality $M\mapsto M^\vee$.\footnote{Here, we use that every compact object in~$\RigDA(K)$ is dualizable, as follows from~\cite[Proposition~2.31]{ayoub-neww} and~\cite{riou-dual}.} We still denote by $R\Gamma_{\dR}^{\dagger}$ its monoidal, compact-preserving, colimit-preserving extension to $\RigDA(K)\simeq\Ind(\RigDA(K)_{\cpt})$ Informally, $R\Gamma_{\dR}^{\dagger}(\varinjlim M_i)=\varinjlim \dR(M_i^\vee)$.
       \item  We let $R\Gamma_{\rig}$ be the composite
				$$
R\Gamma_{\rig}\colon\DA(k)\xto{\xi}\RigDA(K_0)\xto{R\Gamma^\dagger_{\dR,K_0}} \mcD( K_0).$$
			\end{enumerate}	
			\end{dfn}
			
                          \begin{rmk}
                          As the functor $R\Gamma_{\rig}$ is $\varphi$-equivariant, it extends to a functor (\Cref{prop:DphiN})
	$$
 \DA(k)^\varphi\xto{R\Gamma_{\rig}} \mcD( K_0)^\varphi\simeq\mcD_\varphi( K_0).$$
                              We point out that the image of $\rF(\one(-1))$ under this functor is equivalent to~$K_0(-1)\in\mcD_{\varphi}(K_0)$, that is, the $\varphi$-module~$K_0$ with the morphism given by $p\cdot\varphi\colon K_0\isoto \varphi_*K_0$, where $\varphi$ is the Frobenius on~$K_0$ (see \Cref{ex:phimods}). %
                              The fact that this is the image of $\rF(\one(-1))$ follows from (the proof of) \Cref{lemma:K0phinew}, see also \Cref{rmk:choice-of-phi^*}.

                              In particular, we can make the following definition.
                          \end{rmk}

			\begin{dfn}
				 We let $\widehat{R\Gamma}_{\rig}$ be the functor
				\[
				\widehat{R\Gamma}_{\rig}\colon \RigDA_{\gr}(K)\to \LFnil{\Dp}{T} \simeq \DpN
				\]
				induced by a pseudo-uniformizer $\varpi\in W(k)$, as in \Cref{cor:RGammahat2}, where $T$ is the Tate twist endofunctor $D\mapsto D(-1)$ (see \Cref{prop:DphiN}).
			\end{dfn}
			\begin{rmk}As observed in
				\Cref{rmk:changepi?}, a change in the choice of the pseudo-uniformizer~$\varpi$ has the effect of multiplying the monodromy by a rational constant. For example, if $\varpi$ is a uniformizer of a finite extension $K$ of $\Q_p$, then $\widehat{R\Gamma}{}^\varpi_{\rig}(X)$ and ${\widehat{R\Gamma}}{}^p_{\rig}(X)$ agree as $\varphi$-modules, but the monodromy operator on the latter is given by the  monodromy operator on the former divided by the absolute ramification index $e_K$. Choosing $p$ is coherent with the classical normalization of the monodromy to make it compatible with the base-change. See \cite[Section 3A, p. 1722]{nekovar_niziol}.
			\end{rmk}

			The cohomology $\widehat{R\Gamma}_{\rig}$ is our ``new'' Hyodo--Kato cohomology for rigid analytic varieties.
			Technically, the cohomology theory $R\Gamma_{\rig}$ constructed above is the so-called Monsky--Washnitzer cohomology, based on the choice of a local weak-formal lift for a scheme over $k$. It is well-known to compare to  rigid cohomology (see also \cite{besser}, \cite{vezz-MW}) as we recall here below, thus justifying our notation.
			\begin{prop}
				\label{prop:ispure}
The following holds.
				\begin{enumerate}
					\item The functor $\pi\circ \widehat{R\Gamma}_{\rig}\circ\xi  \colon \DA(k)\to \mcD_{\varphi}(K_0)$ computes rigid cohomology, equipped with its canonical $\varphi$-structure.
					\item If $k$ is finite, then for any $M\in \heart{k}$ the $\varphi$-module $R^i\Gamma_{\rig}(M)$ is pure of weight $i$.
				\end{enumerate}
			\end{prop}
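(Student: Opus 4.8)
The statement has two parts. For part (1), the claim is that the composite $\pi\circ\widehat{R\Gamma}_{\rig}\circ\xi$ agrees with rigid cohomology with its Frobenius. By \Cref{cor:RGammahat2} (and the construction of $\widehat{R\Gamma}_{\rig}$), the composite $\pi\circ\widehat{R\Gamma}_{\rig}\circ\xi$ is canonically equivalent to $R\Gamma_{\rig}$ together with its $\varphi$-structure coming from the relative Frobenius, which by \Cref{def:hom_real_from_coho} is $R\Gamma^\dagger_{\dR,K_0}\circ\xi$, i.e.\ the (homological) overconvergent de\,Rham realization of \cite{LBV} applied after the Monsky--Washnitzer functor $\xi\colon\DA(k)\to\RigDA(K_0)$. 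So the plan is simply to invoke the comparison between overconvergent de\,Rham cohomology composed with $\xi$ and classical rigid cohomology: this is precisely what is established in \cite{vezz-MW} (and recalled in \cite{besser}), namely that for a smooth $k$-variety $X$ with weak formal lift, $R\Gamma^\dagger_{\dR}(\xi\mathsf{M}(X))$ computes $R\Gamma_{\rig}(X)$, and that this comparison is compatible with the Frobenius endomorphism induced on both sides by the relative/absolute Frobenius of $X$. Since both sides are monoidal, colimit-preserving functors out of $\DA(k)$, and $\DA(k)$ is generated under colimits by motives of smooth varieties, the comparison on generators extends to an equivalence of functors; the $\varphi$-equivariance is then automatic because the relative Frobenius on $\DA(k)$ restricts to the usual Frobenius on each $\mathsf{M}(X)$. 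The main content here is therefore bookkeeping: one must check that the $\varphi$-structure carried by $\widehat{R\Gamma}_{\rig}\circ\xi$, which is defined via $\rF\colon\DA(k)\to\DA(k)^\varphi$ and \Cref{prop:DphiN}, matches the classical crystalline Frobenius on rigid cohomology under the comparison isomorphism of \cite{vezz-MW}. This is routine given that both are induced by the same geometric (relative) Frobenius.

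\textbf{Part (2): purity.}
For the purity statement, fix $M\in\heart{k}$, the heart of the Chow weight structure on $\DA(k)$. By \Cref{exm:WS}\eqref{exm:ChowWS}, $\heart{k}$ is the idempotent completion of the additive category generated by the objects $\mathsf{M}(X)\{i\}$ with $X/k$ smooth and proper and $i\in\Z$. Since $R^i\Gamma_{\rig}$ is additive and the Tate twist $\{i\}=(i)[2i]$ shifts weights by $2i$ and degrees by $2i$ (so $R^n\Gamma_{\rig}(M\{i\})=R^{n-2i}\Gamma_{\rig}(M)(i)$), and since a direct summand of a pure $\varphi$-module of weight $i$ is again pure of weight $i$, it suffices to prove the claim for $M=\mathsf{M}(X)$ with $X$ smooth and proper over the finite field $k$. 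In that case part (1) identifies $R^i\Gamma_{\rig}(\mathsf{M}(X))$ with the $i$-th rigid cohomology group $H^i_{\rig}(X/K_0)$ equipped with its Frobenius, and the required purity---that the eigenvalues of $\varphi^a$ (for $k=\F_{p^a}$) are algebraic integers of absolute value $p^{ai/2}$ under every complex embedding---is exactly the Weil conjecture for rigid cohomology of smooth proper varieties, due to Kedlaya \cite{weil1} (building on Deligne). So the plan is: reduce to generators via additivity and the weight-structure description of $\heart{k}$, invoke part (1) to pass to classical rigid cohomology, then cite the known purity/Weil-II result for rigid cohomology of smooth proper varieties.

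\textbf{Anticipated obstacle.}
Neither part involves a genuinely hard new argument; the work is in assembling citations correctly. The one point deserving care is the precise normalization of the Frobenius: \Cref{lemma:K0phinew} and the surrounding remarks show that $\rF(\one(-1))$ realizes to $K_0(-1)$, the $\varphi$-module with structure map $p\cdot\varphi$, and one must make sure this normalization is the one under which ``pure of weight $i$'' in \Cref{dfn:D-phi-N}\eqref{pure} corresponds to the classical notion (eigenvalues of $\varphi^a$ of size $q^{i/2}$) appearing in Kedlaya's theorem. This is a matter of tracking Tate twists through the overconvergent de\,Rham realization of \cite{LBV} and comparing with the crystalline Frobenius on $H^*_{\rig}$; once that compatibility is pinned down, both (1) and (2) follow immediately. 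I would therefore expect the proof in the paper to be short, consisting of a reference to \cite{vezz-MW} for (1) and to \cite{weil1} for (2), with a sentence reducing (2) to smooth proper generators via additivity of $R^i\Gamma_{\rig}$ and the compatibility of the Chow heart with Tate twists.
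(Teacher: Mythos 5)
Your proposal follows essentially the same route as the paper: for (1), identify $\pi\circ\widehat{R\Gamma}_{\rig}\circ\xi$ with $R\Gamma_{\rig}$ and cite the standard comparison between Monsky--Washnitzer and rigid cohomology together with the compatibility of Frobenii (the paper cites Fresnel--van der Put, Berthelot, and van der Put rather than \cite{vezz-MW}, but the content is the same); for (2), reduce to generators $\mathsf{M}(X)\{i\}$ with $X$ smooth proper and invoke the Weil conjectures. One small discrepancy: the paper cites Katz--Messing \cite{km} (Weil conjectures for crystalline cohomology, which agrees with rigid cohomology for smooth proper $X$), whereas you attribute the purity to Kedlaya while citing \cite{weil1} (Deligne) — the Katz--Messing route is the one the paper takes, and Kedlaya's Weil II for rigid cohomology is a stronger theorem than needed here.
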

			\begin{proof}
				The fact that rigid cohomology can be computed in terms of Monsky-Washnitzer cohomology is well-known, see e.g. \cite[Theorem 7.7.2]{fvdp}, \cite[Proposition 1.10]{berth-fin}. The $\varphi$-structure is induced by the absolute Frobenius on the special fiber, see e.g.~\cite[\S\,2.4]{van_derPut_MW}.
				
				The second statement is a consequence of the Weil conjectures for crystalline cohomology, proved by Katz-Messing \cite{km}, following Deligne \cite{weil1}.
			\end{proof}

			\begin{rmk}
				If $K$ is a local field, as remarked in \cite[Corollary 3.35]{pWM}, by Galois descent, we may extend  $\widehat{R\Gamma}{}_{\rig}^\varpi$ to a realization functor (in $\Prloo$) for arbitrary rigid analytic motives:
				$$
				\widehat{R\Gamma}{}^\varpi_{\rig}\colon \RigDA(K)\to \mcD_{(\varphi,N,\Gamma)}(W(\bar{k})[1/p])
				$$
				where the target is the derived \icat of $(\varphi, N,\Gamma)$-modules in the sense of Fontaine (see e.g. \cite[\S 4.2]{FontaineRepSem}) with $\Gamma=\Gal(\bar{K}/K)$.
			\end{rmk}

\subsection{Hyodo-Kato isomorphism}
\label{sec:hyodo-kato-iso}

As anticipated in the introduction, the comparison with de\,Rham cohomology (the ``Hyodo--Kato isomorphism'') will follow formally from our definition and the weight structure on $\RigDA_{\gr}(K)$.
However, to run this argument we need to land in the right target category.
  It turns out that the category of $\varphi$-modules is suited, so our first goal is to enrich the de\,Rham cohomology with a $\varphi$-module structure.

  For this we will need an alternative interpretation of 
the category of $(\varphi,N)$-modules (\Cref{cons:DAphiN}) $ \DA_{(\varphi,N)}(k)\simeq\Mod_{\rF\chi\one}(\hF{\DA(k)}{\varphi})$ ``on the generic fiber'', as follows.

                \begin{prop}\label{prop:newnew}
                    There is an equivalence of monoidal \icats $$ \Mod_{\rF\chi\one}(\hF{\DA(k)}{\varphi})\simeq\RigDA_{\gr}(k(\!(T)\!))^{\varphi}.$$
                \end{prop}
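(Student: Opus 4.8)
The plan is to reduce the statement to a single compatibility — how the Frobenius of the base is transported through the equivalence of \Cref{cor:RigDAisDAN} — and then to conclude by a formal interchange of limits.

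First I would rewrite both sides in a common shape. On the left, $\DA(k)^{\varphi}=\hF{\DA(k)}{\varphi}$ is by \Cref{dfn:Dphi} the equalizer of $\id$ and $\varphi^*$ computed in $\Prloo$, and $\rF$ is monoidal, so $\rF(\chi\one)=\one\oplus \rF(\one(-1)[-1])$ is the split square-zero extension of the unit by $\rF(\one(-1)[-1])$ in $\hF{\DA(k)}{\varphi}$; since $\Alt^2(\one(-1))\simeq 0$ in $\DA(k)$, \Cref{sta:Mod-fix} gives a monoidal equivalence $\Mod_{\rF\chi\one}(\hF{\DA(k)}{\varphi})\simeq \LFnil{(\hF{\DA(k)}{\varphi})}{-\otimes\rF(\one(-1))}=\DA_{(\varphi,N)}(k)$, which is exactly the identification already recorded in~\eqref{eq:enrich}. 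On the right, \Cref{prop:facts_of_life}(1) gives a monoidal equivalence $\RigDA_{\gr}(k(\!(T)\!))\simeq \Mod_{\chi\one}(\DA(k))$ through $\xi$ (equivalently $\RigDA_{\gr}(k(\!(T)\!))\simeq\DAN$ via \Cref{cor:RigDAisDAN}, with pseudo-uniformizer $T$), and the right-hand side of the proposition is the equalizer $\RigDA_{\gr}(k(\!(T)\!))^{\varphi}=\hF{\RigDA_{\gr}(k(\!(T)\!))}{\varphi^*}$. So it suffices to transport the endofunctor $\varphi^*$ to $\Mod_{\chi\one}(\DA(k))$ and then commute the two limits.

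The key — and the step I expect to be the main obstacle — is the identification of this transported Frobenius. A lift $\varphi\colon k(\!(T)\!)\to k(\!(T)\!)$ of $\mathrm{Frob}_k$ extends to the formal model $\Spf k[[T]]$ and reduces to $\mathrm{Frob}_k$ on the special fibre, hence fits into squares of monoidal functors commuting both with the generic fibre functor $\xi\colon\DA(k)\to\RigDA_{\gr}(k(\!(T)\!))$ and with its right adjoint $\chi$; thus under $\RigDA_{\gr}(k(\!(T)\!))\simeq\Mod_{\chi\one}(\DA(k))$ the endofunctor $\varphi^*$ corresponds to the endofunctor of $\Mod_{\chi\one}(\DA(k))$ induced by $\varphi^*$ on $\DA(k)$ together with a $\varphi$-algebra structure on $\chi\one$. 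What must be checked is that this $\varphi$-algebra structure is precisely the relative-Frobenius one, i.e.\ that $\chi\one$ lifts to $\rF(\chi\one)\in\CAlg(\hF{\DA(k)}{\varphi})$; equivalently, that the induced endofunctor of $\DAN$ is the one (of \Cref{notation:comparison_monod}) built from $\varphi^*$ on $\DA(k)$ using the relative-Frobenius identification $\varphi^*\one(-1)\simeq\rF(\one(-1))$ rather than the naive geometric one. This is exactly the computation of \Cref{lemma:K0phinew} (the relative Frobenius acts by $\mathrm{diag}(\id,p)$ on $\mathsf{M}(\G_m)=\one\oplus\one(1)[1]$, producing the $p$-twist responsible for $N\varphi\simeq p\varphi N$), and proving it in this setting amounts to unwinding the construction of the equivalence of \Cref{prop:facts_of_life}(2), which rests on~\cite{ayoub-weil}; a change of the chosen lift $\varphi$ (or of the pseudo-uniformizer) only rescales $N$ by a rational constant, consistently with \Cref{rmk:changepi?}, so this causes no ambiguity in the statement.

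Granting this, the conclusion is formal. The formation of module $\icats$ commutes with the limit defining $\hF{-}{-}$ (both $\Mod$ over an algebra and $\hF{-}{-}$ being limits of diagrams of $\icats$, and limits commute), so that
\[
\RigDA_{\gr}(k(\!(T)\!))^{\varphi}\;\simeq\;\bigl(\Mod_{\chi\one}(\DA(k))\bigr)^{\varphi}\;\simeq\;\Mod_{\rF\chi\one}\bigl(\DA(k)^{\varphi}\bigr)\;=\;\DA_{(\varphi,N)}(k),
\]
the first equivalence by the previous paragraph and the second because $\chi\one$ carries the $\varphi$-structure $\rF(\chi\one)$. Equivalently, one may phrase the middle step as a Fubini-type interchange $\hF{\LFnil{\DA(k)}{-\otimes\one(-1)}}{\varphi^*}\simeq\LFnil{(\hF{\DA(k)}{\varphi})}{-\otimes\rF(\one(-1))}$, using additionally that by \Cref{cor:DANil_is_indDAN} and \Cref{lem:CTNil-vsIndCat} ind-nilpotency is automatic on a set of compact generators, so that $\LFnil{}{}$ may be freely interchanged with $\Ind$ and with the equalizer. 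Finally, all the equivalences involved are symmetric monoidal — those of \Cref{cor:RigDAisDAN} and \Cref{sta:Mod-fix} by construction, and $\hF{-}{-}$ of monoidal $\icats$ is monoidal via \Cref{cons:LEq-monoidal} — so the composite is an equivalence of monoidal $\icats$, as claimed.
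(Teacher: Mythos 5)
Your reduction is sound in outline and differs from the paper's at the level of categorical bookkeeping: you propose a ``Fubini'' interchange of the equalizer $\hF{-}{\varphi}$ with the $\Mod$-formation (so that $(\Mod_{\chi\one}\DA(k))^\varphi\simeq\Mod_{\rF\chi\one}(\DA(k)^\varphi)$), whereas the paper applies the monoidal Barr--Beck--Lurie theorem directly to the functor $\hF{\chi}{\varphi}\colon\hF{\RigDA(k(\!(T)\!))}{\varphi}\to\hF{\DA(k)}{\varphi}$, reducing the required hypotheses to the known monadicity of $\chi$ by postcomposing with the conservative, colimit-preserving forgetful functor to $\DA(k)$. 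The two are closely related (your ``limits commute'' is really the statement that monadicity passes to a limit of conservative, (co)limit-preserving diagrams, which is exactly what the Barr--Beck check establishes), but beware that formation of module $\icats$ over an algebra varying in the diagram is \emph{not} a limit of a fixed diagram of $\icats$, so ``limits commute'' is not a citation; it needs the argument the paper actually supplies.

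The genuine gap is the step you yourself flag as ``the main obstacle.'' You need to know that, under the Barr--Beck equivalence, the $\varphi$-fixed-point algebra $\hF{\chi}{\varphi}\one\in\CAlg(\hF{\DA(k)}{\varphi})$ (obtained from the adjunction $\hF{\xi}{\varphi}\dashv\hF{\chi}{\varphi}$) agrees with $\rF\chi\one$, i.e.\ with $\chi\one$ equipped with its \emph{relative-Frobenius} $\varphi$-structure. This is \emph{not} the content of \Cref{lemma:K0phinew}: that lemma merely computes $\rF(\one(-1))$ as the $p^{-1}$-twist of the Tate object, and says nothing about which $\varphi$-algebra structure on $\chi\one$ is induced by the generic fibre adjunction. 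Nor does it follow directly from ``unwinding'' \Cref{prop:facts_of_life}(2). The paper proves this identification by an explicit diagram chase using (a) that $\xi$ sends relative Frobenii to relative Frobenii, and (b) that the two ways around the square involving the counit of $\xi\dashv\chi$ and the relative Frobenius agree as composites $\xi\chi\one\to\one\to\varphi^*\one$. Without that verification, your proof does not close.
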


                \begin{proof}
                    The generic fiber functor $\xi\colon\DA(k)\to\RigDA(k(\!(T)\!))$ is compatible with Frobenius and therefore induces a monoidal functor $\hF{\xi}{\varphi}\colon\hF{\DA(k)}{\varphi}\to\hF{\RigDA(k(\!(T)\!))}{\varphi}$. %
				Since $\chi\colon\RigDA(k(\!(T)\!))\to\DA(k)$ is also compatible with Frobenius, it  induces a functor$$\hF{\chi}{\varphi}\colon\hF{\RigDA(k(\!(T)\!))}{\varphi}\to\hF{\DA(k)}{\varphi}$$ which is easily seen to be right adjoint to~$\hF{\xi}{\varphi}$.
				Thus a factorization of~$\hF{\xi}{\varphi}$ through the monoidal functor
				\[
				\hF{\tilde{\xi}}{\varphi}\colon \Mod_{\hF{\chi}{\varphi}\one}(\hF{\DA(k)}{\varphi})\to\hF{\RigDA(k(\!(T)\!))}{\varphi}
				\]
    which induces an equivalence
		\[ \Mod_{\hF{\chi}{\varphi}\one}(\hF{\DA(k)}{\varphi})\xto{\sim}\hF{\RigDA_{\gr}(k(\!(T)\!))}{\varphi}.
				\]
		Indeed, it is enough  to prove that the right adjoint $\hF{\chi}{\varphi}$ satisfies the hypotheses of the monoidal Barr--Beck--Lurie theorem~\cite[Proposition~5.29]{MR3570153}. It suffices to do this after the application of the conservative, colimit-preserving functor $\hF{\DA(k)}{\varphi}\to \DA(k)$. At this point, the result follows from the monadic properties of the functor $\chi\colon \RigDA_{\gr}(k(\!(T)\!))\to \DA(k)$ \cite[Theorem 3.3.3 (1)]{agv}, recalled in \Cref{prop:facts_of_life}.

                We now claim that the object $\chi^\varphi\one\in\hF{\DA(k)}{\varphi}$ is given by $\rF\chi\one$.
                Once we know that, the proof of the proposition will be complete as $\rF\chi\one$ is the free commutative algebra on $\rF(\one(-1)[-1])$.
                Recall (\Cref{rmk:relative-Frob-modp}) that $\rF$ is the relative Frobenious $\rFactual\colon\id\to\varphi^*$.
                For the claim, by adjunction, it is therefore equivalent to show the following diagram commutes.
               \[
               \begin{tikzcd}[ampersand replacement=\&]
                \xi\chi\one
                \ar[d, "\rFactual\one" swap]
                \ar[r, "\xi \rFactual\chi\one"]
                \ar[rd, "\rFactual\xi\chi\one"'{pos=.81} swap,outer sep=-3pt]
                \&
                \xi\varphi^*\chi\one
                \ar[d]
                \\
                \xi\chi\varphi^*\one
                \ar[d]
                \&
                \varphi^*\xi\chi\one
                \ar[dl]
                \\
                \varphi^*\one
               \end{tikzcd}
                \]
                Here, the unlabeled arrows at the bottom are induced by the counit of the adjunction $\xi\dashv\chi$ while the right vertical arrow is induced by the canonical identification $\xi\varphi^*\simeq\varphi^*\xi$.
                The top triangle commutes as $\xi$ sends relative Frobenii to relative Frobenii.
                The bottom triangle also commutes since both paths can be identified with the composite $\xi\chi\one\to\one\xto{\rFactual}\varphi^*\one$.
                \end{proof}

\begin{rmk}
   As remarked by a referee, the statement above does \emph{not} hold if we considered $K_0$ (and its canonical Frobenius lift $\varphi$) in place of $k(\!(T)\!)$ as indeed in this case the object $\chi^\varphi\one$ is given by the \emph{identity} map on $\mathsf{M}^{\mathrm{coh}}(\G_m)$ (as the uniformizer $p$ is fixed under $\varphi$).
\end{rmk}
Nonetheless, we can ``tilt'' the previous description in characteristic zero in the following way.

\begin{dfn}
    Let $\Ftilt$ be the automorphism of $\RigDA_{\gr}(K_0)$ induced by $\varphi^*$ under the equivalences $\RigDA_{\gr}(K_0)\simeq \DA_N(k) \simeq \RigDA_{\gr}(k(\!(T)\!))$ provided by \Cref{cor:RigDAisDAN} with the `canonical' choices $\varpi=p$ resp.\ $\varpi=T$.
\end{dfn}

We can then restate \Cref{prop:newnew} in mixed characteristic, as follows. 
\begin{cor}
\label{prop:new!}
        The functor $\DA(k)\to \RigDA_{\gr}(K_0)$ is equivariant with respect to $\varphi^*$ and $\Ftilt$, and  it induces an equivalence
        \[%
        \Mod_{\rF\chi\one}(\hF{\DA(k)}{\varphi})\simeq\RigDA_{\gr}(K_0)^{\Ftilt}.%
       \]
       In particular, there exists a functor 
       \begin{equation}\label{frobinmixchar}\RigDA_{\gr}(K_0) \to \RigDA_{\gr}(K_0)^{\Ftilt}\end{equation}
       which, under the equivalence above, coincides with \eqref{eq:enrich}. \qed
    \end{cor}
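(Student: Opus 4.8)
The plan is to reduce the statement, via \Cref{cor:RigDAisDAN}, to the equal-characteristic equivalence of \Cref{prop:newnew}. Recall that by construction $\Ftilt$ is the auto-equivalence of $\RigDA_{\gr}(K_0)$ obtained by transporting $\varphi^*$ on $\RigDA_{\gr}(k(\!(T)\!))$ along the composite monoidal equivalence
\[
\RigDA_{\gr}(K_0)\ \isoto\ \DAN\ \isofrom\ \RigDA_{\gr}(k(\!(T)\!))
\]
of \Cref{cor:RigDAisDAN}, where the left arrow uses the choice $\varpi=p$ and the right one the choice $\varpi=T$. The first point I would record is that \emph{both} of these identifications send the generic fibre functor $\xi\colon\DA(k)\to\RigDA_{\gr}(-)$ to the zero-section $(N=0)\colon\DA(k)\to\DAN=\LFnil{\DA(k)}{-\otimes\one(-1)}$: this is part of the dictionary in \Cref{rmk:chi_isFibN_xi_is_pirn} (where $\xi\dashv\chi$ is matched with $(N=0)\dashv\fib(N)$), resting on \Cref{prop:facts_of_life} and \Cref{sta:Mod-fix}, and it is insensitive to the chosen pseudo-uniformizer, the choice of $\varpi$ only affecting the identification $\chi\one\simeq\one\oplus\one(-1)[-1]$. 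Hence the composite equivalence $\RigDA_{\gr}(K_0)\simeq\RigDA_{\gr}(k(\!(T)\!))$ intertwines the two generic fibre functors.

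With this in hand the equivariance claim is immediate. Transporting along the composite equivalence, the assertion $\Ftilt\circ\xi\simeq\xi\circ\varphi^*$ (for the generic fibre functor of $K_0$) becomes, by the previous paragraph, the assertion that $\xi\colon\DA(k)\to\RigDA_{\gr}(k(\!(T)\!))$ intertwines $\varphi^*$ on source and target. But that is precisely the Frobenius compatibility of the generic fibre functor used in the proof of \Cref{prop:newnew}: $\xi$ sends relative Frobenii to relative Frobenii, hence is $\varphi$-equivariant, producing the natural equivalence $\xi\varphi^*\simeq\varphi^*\xi$ which we then conjugate back to $\RigDA_{\gr}(K_0)$.

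For the equivalence of \icats, since $\Ftilt$ is conjugate to $\varphi^*$ by a \emph{monoidal} equivalence, passing to the equalizer of $\id$ and the respective auto-equivalence computed in $\Prloo$ (as in \Cref{dfn:Dphi}) yields a monoidal equivalence $\RigDA_{\gr}(K_0)^{\Ftilt}\simeq\hF{\RigDA_{\gr}(k(\!(T)\!))}{\varphi}$, and the latter is $\simeq\Mod_{\rF\chi\one}(\hF{\DA(k)}{\varphi})$ by \Cref{prop:newnew}. The ``in particular'' is then pure bookkeeping: the functor~\eqref{eq:enrich} has domain $\LFnil{\DA(k)}{-\otimes\one(-1)}$, identified with $\RigDA_{\gr}(K_0)$ by the case $\varpi=p$ of \Cref{cor:RigDAisDAN}, and codomain $\Mod_{\rF(\chi\one)}(\DA(k)^\varphi)$, identified with $\RigDA_{\gr}(K_0)^{\Ftilt}$ by the equivalence just established; under these identifications \eqref{eq:enrich} becomes~\eqref{frobinmixchar}. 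One may additionally note that \eqref{frobinmixchar} composed with the forgetful functor $\RigDA_{\gr}(K_0)^{\Ftilt}\to\RigDA_{\gr}(K_0)$ is the identity, since $\rF$ becomes the identity after forgetting the $\varphi$-structure, so this functor genuinely equips each motive with a Frobenius.

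The step I expect to be the main obstacle is the compatibility bookkeeping of the first two paragraphs: one must check that the two distinct ``canonical'' choices of pseudo-uniformizer ($p$ for $K_0$, $T$ for $k(\!(T)\!)$) do not spoil the identification of the two generic fibre functors, and that the auto-equivalence of $\DAN$ induced by $\Ftilt$ agrees with the one induced by $\varphi^*$ on $\RigDA_{\gr}(k(\!(T)\!))$ --- which holds essentially by the definition of $\Ftilt$, but must be tracked through the monoidal structures and the (co)Cartesian-fibration constructions of \Cref{sec:split-square-zero}.
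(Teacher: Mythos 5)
Your proof is correct and follows the same route the paper intends (the result is stated as a \texttt{\textbackslash qed} corollary precisely because it unwinds from the definition of~$\Ftilt$ as a transport of~$\varphi^*$ together with \Cref{prop:newnew}). You correctly identify the one small point that needs to be checked — that both identifications $\RigDA_{\gr}(K_0)\simeq\DAN\simeq\RigDA_{\gr}(k(\!(T)\!))$ match~$\xi$ with the zero-section~$(N=0)$ independently of the chosen pseudo-uniformizer (since free always corresponds to free under base change along the algebra isomorphism) — and the rest is the intended bookkeeping.
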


\begin{rmk}
There is a ``geometric'' interpretation of $\Ftilt$ given by the identification $\RigDA(\Spa K_0)\simeq\RigDA(\mathrm{Spd}\,K_0)$ (see \cite[Theorem 5.13]{LBV}). More precisely,   let $K_0^{\cyc}$ be the perfectoid pro-\'etale cover of $K_0$ given by adding all the $p$-th roots of unity to $K_0$. The  Galois group $\Gal(K_0^{\cyc}/K_0)$ is $\Z_p^*$ and the tilt $K_0^{\cyc\flat}$ is isomorphic to $k(\!(T)\!)^{\Perf}$ (see e.g. \cite[Example 6.2.4]{berkeley}. 
Consider the following canonical equivalences 
$$
\RigDA(K_0)\simeq\RigDA(K_0^{\cyc})^{\Z_p^*}\simeq \RigDA(k(\!(T)\!)^{\Perf})^{\Z_p^*}\simeq  \RigDA(k(\!(T)\!))^{\Z_p^*}
$$
obtained by pro-\'etale descent (see \cite[Theorem 2.15]{LBV}) and the tilting equivalence of \cite{vezz-fw}.  Under the equivalence above, $\Z_p^*$ is acting on $\RigDA(k(\!(T)\!))$ via its action on $k(\!(T)\!)^{\Perf}$ given by $\gamma\cdot T=(T+1)^\gamma-1$. %
We may identify $\Ftilt $ with (the restriction of) the automorphism induced on $\RigDA(K_0)$ by the Frobenius automorphism $\varphi^*$ acting on $\RigDA(k(\!(T)\!)^{\Perf})$ (it commutes with the $\Z_p^*$-action). 
\end{rmk}

   We can now enrich the functor $R\Gamma^\dagger_{\dR}$ with a $\varphi$-module structure, using the functor \eqref{frobinmixchar}.

\begin{prop}\label{prop:HKprequel}
    The functor $R\Gamma^\dagger_{\dR,K_0}\colon\RigDA_{\gr}(K_0)\to\mcD(K_0)$ is equivariant with respect to the functor $\Ftilt$ on $\RigDA_{\gr}(K_0)$ and the functor $\varphi^*$ on $\mcD(K_0)$. In particular, the functor \eqref{frobinmixchar}  induces an enrichment
    $$
    R\Gamma^\dagger_{\dR, K_0}\colon \RigDA_{\gr}(K_0)\to\RigDA_{\gr}(K_0)^{\Ftilt}\to\mcD_{\varphi}(K_0).
    $$
\end{prop}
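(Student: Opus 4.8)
The plan is to reduce to the first assertion — that $R\Gamma^\dagger_{\dR,K_0}$ intertwines $\Ftilt$ on $\RigDA_{\gr}(K_0)$ with $\varphi^*$ on $\mcD(K_0)$ — since the ``in particular'' clause then follows formally. Indeed, by functoriality of equalizers such an equivariance produces a symmetric monoidal functor $\RigDA_{\gr}(K_0)^{\Ftilt}\to\mcD(K_0)^{\varphi^*}$, and $\mcD(K_0)^{\varphi^*}\simeq\mcD_\varphi(K_0)$ by \Cref{dfn:Dphi} and \Cref{prop:DphiN}; precomposing with the functor \eqref{frobinmixchar} of \Cref{prop:new!} gives the displayed enrichment. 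Its composite with the forgetful functor $\mcD_\varphi(K_0)\to\mcD(K_0)$ recovers $R\Gamma^\dagger_{\dR,K_0}$, because \eqref{frobinmixchar} followed by the forgetful functor $\RigDA_{\gr}(K_0)^{\Ftilt}\to\RigDA_{\gr}(K_0)$ is the identity — this is clear from the identification of \eqref{frobinmixchar} with \eqref{eq:enrich} in \Cref{prop:new!}, the latter being induced by the relative Frobenius $\rF\colon\DA(k)\to\hF{\DA(k)}{\varphi}$, which becomes the identity after forgetting the $\varphi$-structure.

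To establish the equivariance I would work on $\RigDA_{\gr}(K_0)\simeq\Mod_{\chi\one}(\DA(k))$ (\Cref{prop:facts_of_life}(1)) and compare the two symmetric monoidal colimit-preserving functors $G_1=R\Gamma^\dagger_{\dR,K_0}\circ\Ftilt$ and $G_2=\varphi^*\circ R\Gamma^\dagger_{\dR,K_0}$ from $\Mod_{\chi\one}(\DA(k))$ to $\mcD(K_0)$. The key observation is that both restrict, along the free-module functor $\DA(k)\to\Mod_{\chi\one}(\DA(k))$ (which is $\xi$ up to $\tilde\xi$), to the same functor $F=\varphi^*\circ R\Gamma_{\rig}$: for $G_2$ this is immediate from $R\Gamma^\dagger_{\dR,K_0}\circ\xi\simeq R\Gamma_{\rig}$ (\Cref{def:hom_real_from_coho}); for $G_1$ one uses the $\Ftilt$-equivariance of $\xi$ (\Cref{prop:new!}) to get $G_1\circ\xi\simeq R\Gamma^\dagger_{\dR,K_0}\circ\xi\circ\varphi^*\simeq R\Gamma_{\rig}\circ\varphi^*$, and then the $\varphi$-equivariance of $R\Gamma_{\rig}$ (the remark after \Cref{def:hom_real_from_coho}) to rewrite this as $\varphi^*\circ R\Gamma_{\rig}=F$. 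It then remains to see that a symmetric monoidal colimit-preserving functor out of $\Mod_{\chi\one}(\DA(k))$ is determined by its restriction along the free-module functor. This follows from the analysis in the proof of \Cref{sta:classification-monodromy} (via \cite[Corollary~4.8.5.21]{HA}, applicable since $F$ satisfies \Cref{hyp:F-package}: compact objects of $\DA(k)$ are dualizable, $\Alt^2(\one(-1))\simeq0$, and $F(\one)=K_0$ generates $\mcD(K_0)$ under colimits), which identifies the space of extensions of $F$ with $\Map_{\CAlg(\mcD(K_0))_{/K_0}}(F(\chi\one),K_0)$; this space is contractible, because its target is the terminal object of the slice, $F(\chi\one)$ carrying the canonical augmentation to $K_0$ coming from the split square-zero structure of $\chi\one$ (\Cref{prop:facts_of_life}(2), \Cref{sta:free=ssze}). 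Hence $G_1\simeq G_2$.

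The step I expect to demand most care is the last — namely recognizing that this ``rigidity of symmetric monoidal functors out of a module \icat'' is the right tool, and checking that the relevant algebra-map space is contractible. In particular, one cannot take the more familiar route through hearts of weight structures (\Cref{cor:RGammas}, \Cref{prop:factorsonwc}): the vanishing of $\Ext$-groups in positive degrees between pure objects that it requires fails in $\mcD(K_0)$ and holds only after the Frobenius is present, that is, in $\mcD_\varphi(K_0)$ — precisely the structure being constructed, so that argument would be circular. Beyond this, the only remaining work is routine coherence bookkeeping, which should be carried out entirely within the $\infty$-categorical $\CAlg$- and $\Mod$-formalism so that all module structures and base changes remain coherent, rather than being verified on homotopy categories.
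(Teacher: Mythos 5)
Your overall strategy is the same as the paper's: verify that both composites restrict along $\xi$ to $F=\varphi^*\circ R\Gamma_{\rig}$, using the $\varphi$-equivariance of $R\Gamma_{\rig}$ and the $\Ftilt$-equivariance of $\xi$ from \Cref{prop:new!}, and then conclude from a uniqueness statement about extensions of $F$ along the free-module functor. Your reduction of the ``in particular'' to the equivariance, and your observation that the weight-structure route via \Cref{cor:RGammas} would be circular, are both correct and agree with the paper.

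The gap is in the identification of the classifying space. You assert that the space of monoidal colimit-preserving extensions of $F$ along $\Mod_{\chi\one}(\DA(k))$ is $\Map_{\CAlg(\mcD(K_0))_{/K_0}}(F(\chi\one),K_0)$, which is indeed contractible because its target is terminal in the slice. But that slice appears in the proof of \Cref{sta:classification-monodromy} precisely because one classifies morphisms in $\Prloost_{\catC^\otimes/-/\catD^\otimes}$, i.e.\ ones that \emph{also} cover a specified functor to $\mcD(K_0)$. When the target of the comparison is $\mcD(K_0)$ itself rather than $\LFnil{\mcD(K_0)}{-\otimes u}$, the ``under $\mcD(K_0)$'' constraint forces $\tilde F\simeq F\circ\varepsilon^*$, the ``kill-monodromy'' composite. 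Showing that $G_1$ and $G_2$ satisfy this constraint is essentially the content of \Cref{thm:HKiso}, whose proof relies on the proposition you are proving; so this route is circular. Here you are only over $\DA(k)^\otimes$, and the correct identification — the one the paper uses — is
\[
\Map_{\Prloost_{\DA(k)^\otimes/}}\bigl(\Mod_{\chi\one}(\DA(k)),\mcD(K_0)\bigr)\simeq
\Map_{\CAlg(\DA(k))}\bigl(\chi\one,\varphi_*G(K_0)\bigr)\simeq
\Map_{\CAlg(\mcD(K_0))}\bigl(F(\chi\one),K_0\bigr),
\]
with no slice. Using $F(\chi\one)\simeq\free(K_0[-1])$ this is $\Map_{\mcD(K_0)}(K_0[-1],K_0)$, which is \emph{not} contractible: it has $\pi_1\simeq K_0$. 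Its $\pi_0$ is, however, trivial, and that is all the paper claims and all one needs: both $G_1$ and $G_2$ are points of this space, so they are equivalent. Replacing your contractibility claim by the $\pi_0$-triviality of the unsliced space repairs the argument and brings it into exact agreement with the paper's proof.
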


\begin{proof}
    We first point out that the rigid realization is (canonically) $\varphi$-equivariant, i.e.\ there is an invertible natural transformation $R\Gamma_{\rig}\circ\varphi^*\simeq \varphi^*\circ R\Gamma_{\rig}$ of monoidal functors. (This can be deduced, e.g.\ from the canonical isomomorphism of DG-algebras $\varphi^*\Omega_{X^\dagger/K_0}\simeq \Omega_{\varphi^*X^\dagger/K_0}$ for any smooth dagger affinoid variety $X^\dagger/K_0$, see~\cite[Theorems~1.16, 1.21, 2.12 and 2.16]{ayoub-weil}). As such, we may and do identify the $\DA(k)^\otimes$-algebra structures on $\mcD(K_0)$ given by the monoidal functors $R\Gamma_{\rig}\circ\varphi^*$ and $\varphi^*\circ R\Gamma_{\rig}$, and denote them by $\varphi\mcD(K_0)$. 

    Moreover, observe that the functor $R\Gamma_{\rig} \colon \DA(k)\to \mcD(K_0)$ sends compact objects to compact generators, hence its right adjoint $G$ is conservative and preserves all colimits. Thus it is monadic, inducing an equivalence $\mcD(K_0) \simeq \Mod_{G(K_0)}(\DA(k))$ (see the proof of \Cref{sta:classification-monodromy} and \Cref{rmk:hypholds}). In particular,   we have the following equivalences:
    \begin{align*} \Map_{\Prloost_{\DA(k)^\otimes/-}} (\RigDA_{\gr}(K_0), \varphi\mcD(K_0)) &\simeq \Map_{\CAlg(\DA(k))}(\free(\one(-1)[-1]) ,  \varphi_*G(K_0)) \\ 
&\simeq \Map_{\mcD(K_0)}(\varphi^*R\Gamma_{\rig}(\one(-1)[-1]), K_0 ) \\&\simeq \Map_{\mcD(K_0)}(K_0[-1],K_0). \end{align*}
Since the $\pi_0$ of this latter space is trivial,  we can choose an invertible natural transformation   between $R\Gamma^\dagger_{\dR, K}\circ \varphi^\sharp$ and $\varphi^*\circ R\Gamma^\dagger_{\dR, K}$, as claimed.
(We use \Cref{prop:new!} to interpret the former as a point in the space.)
\end{proof}

We can now produce an equivalence (``Hyodo-Kato isomorphism'') between  de Rham cohomology (of the generic fiber) and  rigid cohomology (of the nearby cycle) as $\varphi$-modules, as follows.

			\begin{thm}\label{thm:HKiso} Assume that $k\subseteq\barFp$. 
				There is an  identification of monoidal functors from $\RigDA_{\gr}(K_0)$ to $\mcD_\varphi(K_0)$, $$\pi\circ \widehat{R\Gamma}_{\rig}\simeq R\Gamma^\dagger_{\dR,K_0}.$$
			\end{thm}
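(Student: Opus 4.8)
The plan is to obtain the statement directly from the detection principle \Cref{cor:RGammas}, applied to the Frobenius-enriched overconvergent de\,Rham realization $R\Gamma^\dagger_{\dR,K_0}\colon\RigDA_{\gr}(K_0)_{\cpt}\to\mcD_\varphi(K_0)$ of \Cref{prop:HKprequel}. Since both functors in the statement are colimit-preserving and $\RigDA_{\gr}(K_0)=\Ind(\RigDA_{\gr}(K_0)_{\cpt})$, it suffices to produce the equivalence on compact objects. Now, by \Cref{def:hom_real_from_coho}.(2) the restriction $R\Gamma^\dagger_{\dR,K_0}\circ\xi$ along the generic fibre functor $\xi\colon\DA(k)\to\RigDA_{\gr}(K_0)$ is, \emph{by definition}, the rigid realization $R\Gamma_{\rig}$; moreover the $\varphi$-structure built in \Cref{prop:HKprequel} is by construction the one coming from the rigid Frobenius, so this identification is $\varphi$-equivariant. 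Therefore $\widehat{R\Gamma^\dagger_{\dR,K_0}\circ\xi}$ coincides with $\widehat{R\Gamma}_{\rig}$, and \Cref{cor:RGammas} will give $R\Gamma^\dagger_{\dR,K_0}\simeq\pi\circ\widehat{R\Gamma}_{\rig}$ as monoidal functors — which is the assertion — as soon as its hypothesis is verified.

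That hypothesis is the vanishing $\pi_n\spMap_{\mcD_\varphi(K_0)}(R\Gamma_{\rig}(X),R\Gamma_{\rig}(Y))=0$ for all $X,Y$ in the heart $\heart{k}$ of Chow motives and all $n>0$. I would deduce it from the purity input \Cref{prop:ispure}.(2) together with the formal argument of \Cref{exm:Ai}. Take $\mcA=M_\varphi(K_0)$, which has cohomological dimension $\leq 1$ (\Cref{rmk:phi-modules-coh-dim}), and let $\mcA_i\subseteq\mcA$ be the full subcategory of $\varphi$-modules pure of weight~$i$; these are closed under subquotients, and between objects of $\mcA_i$ and $\mcA_j$ with $i\neq j$ there are neither nonzero morphisms nor nonzero $\Ext^1$'s (again \Cref{rmk:phi-modules-coh-dim}). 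By \Cref{prop:ispure}.(2), for $X\in\heart{k}$ the complex $R\Gamma_{\rig}(X)$ has $R^i\Gamma_{\rig}(X)\in\mcA_i$, i.e.\ it is \emph{pure} in the sense of \Cref{exm:Ai}; the computation recorded there then yields exactly the required positive-degree vanishing, which is precisely the hypothesis of \Cref{cor:RGammas}.

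The only point requiring genuine care — and the expected main obstacle — is that \Cref{prop:ispure}.(2) and the $\Ext$-vanishing in \Cref{rmk:phi-modules-coh-dim} are stated for a \emph{finite} residue field, whereas here only $k\subseteq\barFp$ is assumed. To bridge this I would argue by descent: every Chow motive over $k$ is the base change of one defined over a finite subfield $\F_q\subseteq k$, the rigid realization commutes with the induced base change of coefficients $W(\F_q)[1/p]\to K_0$, and $K_0=W(\barFp)[1/p]$ is the filtered union of the $W(\F_{q^m})[1/p]$; since mapping spectra between compact objects of $\mcD_\varphi$ commute with this filtered colimit, the positive-degree $\Ext$-vanishing between pure objects of distinct weights over the finite levels propagates to $K_0$. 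Here one uses that purity of a given weight is insensitive to enlarging the finite base field, so that no weights get mixed in the process; the underlying input is the Weil conjectures for crystalline cohomology (Katz–Messing), already invoked in \Cref{prop:ispure}.(2). With this bookkeeping in place, the rest is a direct application of the detection principle.
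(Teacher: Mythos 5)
Your overall strategy matches the paper's: apply the detection principle \Cref{cor:RGammas} to the $\varphi$-enriched functor $R\Gamma^\dagger_{\dR,K_0}$ of \Cref{prop:HKprequel}, note that $R\Gamma^\dagger_{\dR,K_0}\circ\xi$ is by definition $R\Gamma_{\rig}$, and verify the vanishing hypothesis from the purity of rigid cohomology (\Cref{prop:ispure}.(2)) together with the orthogonality of pure $\varphi$-modules of different weights (\Cref{rmk:phi-modules-coh-dim}, \Cref{exm:Ai}). When $k$ is finite this \emph{is} the paper's argument. You also correctly flag that the case $k=\barFp$ is where the real work lies.

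The gap is in your reduction to finite subfields. First a small inaccuracy: $K_0=W(\barFp)[1/p]=\breve{\Q}_p$ is the $p$-adic \emph{completion} of $\bigcup_m W(\F_{q^m})[1/p]$, not the union itself. More seriously, the claim that the positive-degree vanishing of mapping spectra over finite levels \emph{propagates} to $\mcD_\varphi(\breve{\Q}_p)$ is false. Over $\breve{\Q}_p$ the residue field is algebraically closed, so by Dieudonn\'e--Manin $M_\varphi(\breve{\Q}_p)$ is semisimple with simple objects classified by \emph{slope}, not weight, and two $\varphi$-modules pure of distinct weights over $\F_q$ can acquire common slopes (hence nonzero morphisms) after base change. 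Concretely, if $E/\F_q$ is an ordinary elliptic curve, the unit-root part of $H^1_{\rig}(E)\otimes\breve{\Q}_p$ becomes isomorphic to the trivial isocrystal $L_0\simeq H^0_{\rig}(E)\otimes\breve{\Q}_p$, producing a nonzero $\Hom(H^1_{\rig}(E)_{\breve{\Q}_p},H^0_{\rig}(E)_{\breve{\Q}_p})$ and therefore a nonzero class in $\pi_1$ of the relevant mapping spectrum. So the hypothesis of \Cref{cor:RGammas} is actually \emph{violated} in the target $\mcD_\varphi(\breve{\Q}_p)$, and your descent cannot salvage it: the extra morphisms do not come from any finite level.

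The paper's fix is therefore not bookkeeping but a genuine change of target. It introduces $\catD:=\varinjlim_{K'_0}\Derp(K'_0)$, the filtered colimit in $\Prlost$ over finite unramified extensions $K'_0/\Q_p$, and uses $\RigDA(\breve{\Q}_p)=\varinjlim\RigDA(K'_0)$ to factor $R\Gamma^\dagger_{\dR,\breve{\Q}_p}$ through $\catD$. In $\catD$ the mapping spectra between objects coming from a finite level \emph{are} filtered colimits of the finite-level ones, so the vanishing hypothesis of \Cref{cor:RGammas} holds there; one applies \Cref{cor:RGammas} with target $\catD$ and then postcomposes with the canonical functor $\catD\to\mcD_\varphi(\breve{\Q}_p)$ (which is not fully faithful) to get the identification in the theorem's stated target. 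This intermediate category is precisely what your argument is missing.
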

			\begin{proof}
                        It is enough to verify that the assumptions of \Cref{cor:RGammas} are satisfied, with $R\Gamma$ being the functor $R\Gamma_{\dR,K_0}^{\dagger}\colon\RigDA_{\gr}(K_0)\to\mcD_{\varphi}(K_0)$ of \Cref{prop:HKprequel}. Assume first that $k$ is finite. Then, for any $M$ in $\heart{k}$ we have $R^i\Gamma_{\dR}^\dagger(\xi M)\cong \Hm^i_{\rig}(M)$ which is pure of weight $i$ by \Cref{prop:ispure}. Note that $\varphi$-modules form an abelian category of cohomological dimension at most $1$, and that there are no maps nor extensions between pure $\varphi$-modules of different weights (see \Cref{rmk:phi-modules-coh-dim}). As pointed out in  \Cref{exm:Ai} we can then apply \Cref{cor:RGammas} to conclude.

                        If $k=\overline{\F}_p$, we let $\catD$ be the colimit $\varinjlim(\Derp(K'_0))$ in $\Prlost$, where $K'_0$ runs through the set of finite unramified extensions of $\Q_p$. Note that since $\RigDA_{}(\breve{\Q}_p) = \varinjlim\RigDA_{}(K'_0)$ (see \cite[Theorem~2.8.15]{agv}), the realization $R\Gamma^\dagger_{\dR, \breve{\Q}_p}$ factors through $\catD$, and that any two $X,Y\in \mathcal{H}_k$ are actually defined over a common finite subfield $\F_q$. We are then reduced to the previous case, and we can conclude.
			\end{proof}

			\begin{cor}
				\label{cor:HKiso}
					{Assume that $k\subseteq\barFp$.} There is an  identification of monoidal functors from $\RigDA_{\gr}(K)$ to $\mcD(K)$:
					$$  \widehat{R\Gamma}_{\rig}\widehat{\otimes}_{K_0}K\simeq R\Gamma^\dagger_{\dR,K}.$$
				\end{cor}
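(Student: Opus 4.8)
The plan is to obtain this from \Cref{thm:HKiso} by extending scalars along $K_0\hookrightarrow K$; here $K$ is of mixed characteristic (otherwise $R\Gamma^\dagger_{\dR,K}$ is not defined), so $W(k)\subseteq\mcO_K$ and $K_0=W(k)[1/p]\subseteq K$. The first step is a reduction to $K_0$: since both $K_0$ and $K$ have residue field $k$ and value group of rational rank $1$, the insensitivity of $\RigDA_{\gr}$ recorded after \Cref{prop:facts_of_life} (i.e.\ \cite[Theorem~3.13]{ayoub-weil}) shows that the base-change functor along $\Spf\mcO_K^{\rig}\to\Spf\mcO_{K_0}^{\rig}$ is a monoidal equivalence $\RigDA_{\gr}(K_0)\isoto\RigDA_{\gr}(K)$, compatible with the equivalences to $\DAN$ of \Cref{cor:RigDAisDAN} attached to the common pseudo-uniformizer $\varpi\in W(k)$. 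Since $\widehat{R\Gamma}_{\rig}$ is produced by the recipe of \Cref{cor:RGammahat2} out of the functor $R\Gamma_{\rig}\colon\DA(k)\to\mcD(K_0)$ — which does not involve $K$ — together with this same $\varpi$, the functor $\widehat{R\Gamma}_{\rig}$ on $\RigDA_{\gr}(K)$ is carried to the one on $\RigDA_{\gr}(K_0)$ under this equivalence.

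Next I would unwind the meaning of $\widehat{\otimes}_{K_0}K$: it is postcomposition of $\widehat{R\Gamma}_{\rig}\colon\RigDA_{\gr}(K)\to\DpN$ with the monoidal, colimit-preserving functor
$$
\DpN=\LFnil{\Dp}{T}\xto{\pi}\mcD_\varphi(K_0)\to\mcD(K_0)\xto{-\otimes_{K_0}K}\mcD(K)
$$
(forget the monodromy, forget the Frobenius, extend scalars). Since $k\subseteq\barFp$, \Cref{thm:HKiso} gives $\pi\circ\widehat{R\Gamma}_{\rig}\simeq R\Gamma^\dagger_{\dR,K_0}$ as monoidal functors $\RigDA_{\gr}(K_0)\to\mcD_\varphi(K_0)$, hence
$$
\widehat{R\Gamma}_{\rig}\widehat{\otimes}_{K_0}K\ \simeq\ R\Gamma^\dagger_{\dR,K_0}(-)\otimes_{K_0}K
$$
as monoidal functors $\RigDA_{\gr}(K)\simeq\RigDA_{\gr}(K_0)\to\mcD(K)$, where on the right one forgets the $\varphi$-enrichment of \Cref{prop:HKprequel}.

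It then remains to identify $R\Gamma^\dagger_{\dR,K_0}(-)\otimes_{K_0}K$ with $R\Gamma^\dagger_{\dR,K}$ as monoidal functors on $\RigDA_{\gr}(K)$, i.e.\ the base-change compatibility of the overconvergent de\,Rham realization of \cite{LBV} along $\Spa K\to\Spa K_0$. Both are monoidal exact functors $\RigDA_{\gr}(K)_{\cpt}\to\mcD(K)$ preserving colimits (and determined by their restriction to compacts), so by \Cref{cor:sosnilo@} applied to the weight structure of \Cref{prop:WSonRig} it suffices to compare their restrictions to the heart $\heart{K}=\xi(\heart{k})$. On $\xi X$ with $X$ a Chow motive over $k$ both sides compute $\Hm^\ast_{\rig}(X/K_0)\otimes_{K_0}K$ — on the left by \Cref{def:hom_real_from_coho}, on the right by the classical comparison between the overconvergent de\,Rham cohomology of the generic fiber over $K$ and the rigid cohomology over $K_0$ of the special fiber (Berthelot; see also \cite{vezz-MW,LBV}) — and this identification is compatible with K\"unneth, hence monoidal.

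The only non-formal ingredient is this last classical input (base change of de\,Rham/rigid cohomology along $K_0\to K$ on smooth and proper objects), which I expect to be the main, though modest, point; the rest is bookkeeping. I should stress that one cannot shortcut by applying \Cref{cor:RGammas} directly to $R\Gamma^\dagger_{\dR,K}\colon\RigDA_{\gr}(K)_\cpt\to\mcD(K)$: the category $\mcD(K)$ carries no weight obstruction and the required $\Ext$-vanishing on the heart fails, which is precisely why the detour through $\mcD_\varphi(K_0)$ and \Cref{thm:HKiso} is necessary.
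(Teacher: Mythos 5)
Your proposal is essentially the same as the paper's proof: reduce to $K=K_0$ using the base-change equivalence $\RigDA_{\gr}(K_0)\simeq\RigDA_{\gr}(K)$ (insensitivity of $\RigDA_{\gr}$ to totally ramified extensions) together with compatibility of the overconvergent de\,Rham realization with base change, then apply \Cref{thm:HKiso}. The one place where you diverge is in how you justify the last ingredient: the paper simply invokes base-change compatibility of the de\,Rham realization as a structural feature of the construction in \cite{LBV} (which is built over general bases and commutes with $f^*$), whereas you rederive it via the weight-heart detection principle \Cref{cor:sosnilo@}, restricting both functors to $\heart{K}=\xi(\heart{k})$ and matching them on Chow motives via the classical comparison of rigid cohomology over $K_0$ with de\,Rham cohomology of a smooth proper lift over $K$. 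This buys independence from any global base-change statement, at the cost of having to check that the pointwise identification on the heart is genuinely natural and monoidal (your ``compatible with K\"unneth, hence monoidal'' is the thin spot); the paper's citation is cleaner. Your closing remark — that one cannot shortcut directly via \Cref{cor:RGammas} on $\mcD(K)$ because the $\Ext$-vanishing fails without the $\varphi$-enrichment — correctly identifies why \Cref{thm:HKiso} is the crux and not a formality.
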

				\begin{proof}
					Since the base change induces an equivalence $\RigDA_{\gr}(K_0)\simeq\RigDA_{\gr}(K)$ and the overconvergent de\,Rham realization is compatible with base change, we may and do suppose that $K=K_0$, and the result follows from \Cref{thm:HKiso}. %
				\end{proof}
\begin{rmk}
   \label{rmk:josephHK} If we drop the hypothesis that $k\subseteq \barFp$, we can still find an identification   $\widehat{R\Gamma}_{\rig}\widehat{\otimes}_{K_0}K\simeq R\Gamma^\dagger_{\dR,K}$ as monoidal functors to $\mcD(K)$ using the equivalence $$\Map_{\Prloost_{\DA(k)^\otimes/-}} (\RigDA_{\gr}(K), \mcD(K))\simeq \Map_{\mcD(K)}(K[-1],K)$$
   as we have done in the proof of \Cref{prop:HKprequel}. (This can also be obtained using \cite[Theorem 3.21]{ayoub-weil}.) 
   Note that, contrarily to \Cref{cor:HKiso}, in this generality the existence of such an equivalence relies on a certain space of functors being connected, rather than on explicit identifications between the two functors built using the weight structure formalism.
   \end{rmk} 

   \begin{rmk}\label{rmk:mononWCHK}
					Note that in the proof of \Cref{thm:HKiso}, we even proved that the functor $\pi\circ \widehat{R\Gamma}_{\rig}$ factors through the weight complex functor and that $\varphi$-modules fit into the setting of \Cref{exm:Ai}. In particular, the cohomology groups $\widehat{\Hm}{}^n_{\rig}(X) = \Hm_{-n} (\widehat{R\Gamma}_{\rig}(\mathsf{M}(X)^\vee))$ can be computed via a canonical, convergent, weight spectral sequence, degenerating at page $E_2$, giving rise to the weight filtration. The monodromy operator acts on the weight complex, and hence on this spectral sequence, so that the monodromy operator restricts to a map between the graded pieces $\gr^i \widehat{\Hm}{}^n(X)\to \gr^{i-2}\widehat{\Hm}{}^n(X)(-1)$. See (the cohomological version of) \Cref{eg:mononWC}.%
				\end{rmk}

\subsection{Comparison with the ``classical'' Hyodo--Kato}
\label{sec:comp-with-classical}

				We now show that our $\widehat{R\Gamma}_{\rig}$ computes the ``classical'' Hyodo--Kato cohomology.
				\begin{dfn}If $\car K=0$  we denote by $R\Gamma_{\HK}^p\colon \RigDA_{\gr}(K) \to \mcD_{(\varphi,N)}(K_0)$ the motivic Hyodo--Kato realization functor $R\Gamma_{\HK}$ of \cite[Proposition 3.30]{pWM}. 
					If $\car K=p$ we denote by $R\Gamma_{\HK}^\varpi\colon \RigDA_{\gr}(K) \to \DpN$ the motivic Hyodo--Kato realization functor of \cite[Corollary 3.32(2)]{pWM}. 
				\end{dfn}
				The realizations above are defined using the classical definition of Hyodo--Kato, i.e., based on a log-de\,Rham complex of some appropriate lifts of the log-smooth special fiber of a variety with semi-stable reduction.
				
				\begin{cor}\label{cor:HKisHK} Assume $k\subseteq\barFp$ and fix a pseudo-uniformizer $\varpi$ of $K$. If $\car K=0$, we assume $\varpi=p$.  There is an equivalence ${\widehat{R\Gamma}}{}^\varpi_{\rig}\simeq 
					R\Gamma^\varpi_{\mathrm{HK}}$ as monoidal functors $\RigDA_{\gr}(K)\to \DpN$.
				\end{cor}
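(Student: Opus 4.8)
The plan is to apply the rigidity criterion \Cref{prop:unicity}. Take $\catC=\hF{\DA(k)}{\varphi}$, $\catD=\Dp$ (which is $\hF{\mcD(K_0)}{\varphi}$ by \Cref{prop:DphiN}), $t=\rF(\one(-1))$ and $F=\hF{R\Gamma_{\rig}}{\varphi}\colon\catC\to\catD$; then $u:=F(t)$ is the $\varphi$-module $K_0(-1)$, as recorded in the remark before \Cref{thm:HKiso}. Using the equivalences of \Cref{cor:RigDAisDAN}, \Cref{prop:K0phinew} and \Cref{prop:DphiN}, all normalized by the \emph{same} pseudo-uniformizer $\varpi$ (with $\varpi=p$ when $\car K=0$), the source $\LFnil{\catC}{-\otimes t}$ is identified with $\RigDA_{\gr}(K)$ and the target $\LFnil{\catD}{-\otimes u}$ with $\DpN$; through these identifications $\widehat{R\Gamma}{}^\varpi_{\rig}$ is, \emph{by its very construction} in \Cref{cor:RGammahat2}, the functor $\widehat F$ of \Cref{notation:comparison_monod}. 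First I would check that $F$ satisfies \Cref{hyp:F-package}: it is a morphism in $\Prloost$ because $R\Gamma_{\rig}$ preserves colimits and compact objects and this passes to $\varphi$-fixed points; its right adjoint satisfies the projection formula because compact objects of $\hF{\DA(k)}{\varphi}$ are strongly dualizable, inheriting this from $\DA(k)$ (\Cref{rmk:hypholds}); and $\catC$ is $\Q$-linear with $\Alt^2(t)\simeq 0$, the latter since $\Alt^2(\one(-1))\simeq 0$ in $\DA(k)$ and the forgetful functor $\catC\to\DA(k)$ is monoidal and conservative.

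It then remains to verify that $R\Gamma^\varpi_{\HK}$, read through the same equivalences as a monoidal functor $\LFnil{\catC}{-\otimes t}\to\LFnil{\catD}{-\otimes u}$, satisfies conditions \eqref{cond:kummer0} and \eqref{cond:kummer} of \Cref{prop:unicity}. Condition \eqref{cond:kummer0} has two halves. The half involving $(N=0)$ asserts that classical Hyodo--Kato cohomology of a motive of good reduction $\xi M$ is $R\Gamma_{\rig}(M)$ with vanishing monodromy (the special fibre carries the trivial log structure), which is part of the construction of $R\Gamma_{\HK}$ in \cite{pWM}. The half involving $\pi$ asserts $\pi\circ R\Gamma^\varpi_{\HK}\simeq R\Gamma_{\rig}\circ\Psi$ as $\mcD_\varphi(K_0)$-valued functors; this I would deduce from \Cref{cor:RGammas} exactly as in the proof of \Cref{thm:HKiso}, its orthogonality hypothesis holding because $R\Gamma^\varpi_{\HK}\circ\xi\simeq R\Gamma_{\rig}$ sends Chow motives over $k$ to pure $\varphi$-modules (\Cref{prop:ispure}) and $M_\varphi(K_0)$ has cohomological dimension $\leq 1$ with no morphisms or extensions between pure modules of distinct weights (\Cref{rmk:phi-modules-coh-dim}, cf.\ \Cref{exm:Ai}); the case $k=\barFp$ reduces to $k$ finite by a filtered colimit, as there.

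The substance of the argument is condition \eqref{cond:kummer}: the monodromy matrix of $R\Gamma^\varpi_{\HK}(\mcK)$ on the Kummer motive must be $\left(\begin{smallmatrix}0&\id_u\\0&0\end{smallmatrix}\right)$. By \Cref{lemma:!N} this matrix is $\left(\begin{smallmatrix}0&c\\0&0\end{smallmatrix}\right)$ for a single scalar $c\in\End_{M_\varphi(K_0)}(K_0(-1))=\Q_p$, so the point is to pin down $c=1$. Here \Cref{thm:KummerisTate} enters: the Tate curve $\mathcal T_\varpi=\G_{m,K}^{\an}/\varpi^{\Z}$ satisfies $\mathsf{M}_K(\mathcal T_\varpi)\simeq\one\oplus\mcK(1)[1]\oplus\one(1)[2]$ in $\RigDA_{\gr}(K)$, so by functoriality of the monodromy operator, applying $R\Gamma^\varpi_{\HK}$ exhibits $c$, up to twist and shift, as the off-diagonal entry of the monodromy operator on the classical Hyodo--Kato cohomology of $\mathcal T_\varpi$ in degree $1$. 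That cohomology is the standard semistable $(\varphi,N)$-module with $\gr^W_0=K_0$, $\gr^W_2=K_0(-1)$ and $N\colon\gr^W_2\xrightarrow{\sim}\gr^W_0(-1)$; with $N=N_\varpi$ normalized by the \emph{same} uniformizer used to present the curve, this isomorphism is the identity (it is multiplication by $v_\varpi(\varpi)=1$), matching the normalization of \Cref{rmk:changepi?} on the motivic side --- see \cite{mokrane,GK05,HK94}. Hence $c=1$, and \Cref{prop:unicity} gives $R\Gamma^\varpi_{\HK}\simeq\widehat F=\widehat{R\Gamma}{}^\varpi_{\rig}$ as monoidal functors $\RigDA_{\gr}(K)\to\DpN$. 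I expect this normalization bookkeeping to be the only delicate point: if one could only establish $c\neq 0$, one would instead obtain an equivalence with $\widehat{R\Gamma}{}^{\varpi'}_{\rig}$ for a rescaled pseudo-uniformizer $\varpi'$ via \Cref{rmk:changepi?}, and in mixed characteristic the choice $\varpi=p$ together with compatibility of both sides with finite unramified base change would force $\varpi'=\varpi$.
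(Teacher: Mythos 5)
Your overall strategy exactly matches the paper's: apply \Cref{prop:unicity}, reduce to showing that $R\Gamma_{\HK}$ is compatible with $(N=0)$ and $\pi$ on the one hand, and has the correct monodromy on the Kummer object (via the Tate curve, \Cref{thm:KummerisTate}) on the other. The Tate-curve computation, its role in pinning down the normalization, and the reduction to $k$ finite are all as in the paper, which cites \cite[Section~9]{lestum-HKc} for the classical computation of $N$ on $\Hm^1_{\HK}(\G_m^{\an}/p^\Z)$.

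There is, however, a slip in your choice of $\catC$. You take $\catC=\hF{\DA(k)}{\varphi}$ and $t=\rF(\one(-1))$, and then assert that $\LFnil{\catC}{-\otimes t}$ ``is identified with $\RigDA_{\gr}(K)$''. It is not: by the very definition in \Cref{cons:DAphiN} one has $\LFnil{(\hF{\DA(k)}{\varphi})}{-\otimes\rF(\one(-1))}=\DA_{(\varphi,N)}(k)$, and \Cref{prop:K0phinew} only supplies a \emph{functor} $\RigDA_{\gr}(K)\to\DA_{(\varphi,N)}(k)$ (the lax-fixed-point extension of $\rF\colon\DA(k)\to\hF{\DA(k)}{\varphi}$, which is far from essentially surjective), not an equivalence. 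With your setup you would be comparing two functors out of $\DA_{(\varphi,N)}(k)$, which then requires the additional --- and non-obvious --- claim that $R\Gamma^\varpi_{\HK}$ factors through $\RigDA_{\gr}(K)\to\DA_{(\varphi,N)}(k)$. The fix is simple and is what the paper implicitly does: take $\catC=\DA(k)$, $t=\one(-1)$, $\catD=\Dp$ and $F=\hF{R\Gamma_{\rig}}{\varphi}\circ\rF\colon\DA(k)\to\Dp$ (the $\varphi$-enriched rigid realization); then $\LFnil{\catC}{-\otimes t}=\DAN\simeq\RigDA_{\gr}(K)$ honestly, $\widehat F$ of \Cref{notation:comparison_monod} really is $\widehat{R\Gamma}{}^\varpi_{\rig}$ by naturality of the lax-fixed-point extension, and \Cref{hyp:F-package} follows directly from \cite[Lemme~11.5]{AyoubEt} and \Cref{rmk:hypholds} without the ad hoc argument transporting $\Alt^2\simeq 0$ along the conservative forgetful functor.

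For the $\pi$-compatibility you take a slightly different (and somewhat cleaner) route than the paper. The paper invokes the classical Hyodo--Kato isomorphism to get $\pi\circ R\Gamma_{\HK}\simeq R\Gamma^\dagger_{\dR}$ as monoidal $\mcD_\varphi(K_0)$-valued functors, and then composes with \Cref{thm:HKiso}. You instead apply the weight-orthogonality criterion \Cref{cor:RGammas} directly to $\pi\circ R\Gamma_{\HK}$, using only $R\Gamma_{\HK}\circ\xi\simeq(N=0)\circ R\Gamma_{\rig}$ together with \Cref{prop:ispure} and \Cref{rmk:phi-modules-coh-dim}. Both routes are valid; yours is more economical in that it avoids the intermediate comparison through $R\Gamma^\dagger_{\dR}$ and the corresponding external input from \cite{CN19,DN,EY19} for that step (though you still need the $\xi$-compatibility, which comes from the same sources). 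Finally, your remark that \Cref{lemma:!N} reduces condition \eqref{cond:kummer} to a single scalar $c$ should rather cite the remark following the construction of the map $e$ (just before \Cref{lemma:!N}), which is what shows the monodromy matrix of $\tilde F(\mcK_t)$ has the shape $\left(\begin{smallmatrix}0&c\\0&0\end{smallmatrix}\right)$; and the ``fallback'' observation that unramified base-change compatibility would force $\varpi'=\varpi$ is not quite right, since the rescaling constant of \Cref{rmk:changepi?} is insensitive to unramified extension --- the explicit Tate-curve computation is genuinely needed, as you also conclude.
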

				\begin{proof}%
					We want to apply \Cref{prop:unicity}. By the compatibility of both realization functors with  unramified base extensions and the tilting equivalence, we may and do assume that $K$ is in mixed characteristic and  $K=K_0$ with $\varpi=p$.  Under these assumptions, we can use  the Hyodo--Kato isomorphism (see e.g. \cite[Formula 5.16]{CN19}, \cite[Lemma 2.21]{DN}, \cite[Definition 4.3]{EY19}) which yields an equivalence of monoidal functors. We then know that the underlying $\varphi$-module of $R\Gamma_{\HK}$ coincides with the $\varphi$-module underlying the overconvergent de\,Rham cohomology,
					i.e., there is an equivalence of monoidal functors $\pi\circ R\Gamma_{\mathrm{HK}}\simeq  
					R\Gamma^\dagger_{\dR}$. On the other hand, we proved in \Cref{thm:HKiso} that $ R\Gamma^\dagger_{\dR}\simeq (\pi\circ \widehat{R\Gamma}_{\rig})=R\Gamma_{\rig}\circ\Psi$. This gives an equivalence $\pi\circ R\Gamma_{\HK}\simeq R\Gamma_{\rig}\circ \Psi$.
					
					By construction, one also has $R\Gamma_{\HK}\circ\xi\simeq (N=0)\circ R\Gamma_{\rig}$ (see \cite[Definition 3.3]{pWM} using \cite[Definition 3.17 and Remark 3.19]{EY19}). 
					
					We are left to show that the monodromy operators on $R\Gamma_{\HK}(\mcK)$ and $\widehat{R\Gamma}_{\rig}(\mcK)$ agree. In light of \Cref{thm:KummerisTate} this amounts to computing the monodromy on the $\varphi$-module $\Hm^1_{\HK}(\G_m^{\an}/p^\Z)\cong  K_0\oplus K_0(-1)$. This is well-known to be represented by the matrix $\left(\begin{smallmatrix}
						0&1\\0&0
					\end{smallmatrix}\right)$, see e.g. \cite[Section~9]{lestum-HKc}.  This agrees with the realization of the Kummer motive via $\widehat{R\Gamma}_{\rig}$ using \Cref{rmk:monodromy_Kummer_motive}.%
				\end{proof}
				
				\begin{rmk}
					In the category of $(\varphi,N)$-modules, there are only two extensions of $\one$ and $\one(-1)$ up to rescaling of the monodromy operator: the trivial one (with trivial monodromy) and the  one given by the monodromy matrix $\left(\begin{smallmatrix}
						0&1\\0&0
					\end{smallmatrix} \right)$. By \Cref{prop:unicity}, in order to prove the corollary up to a rescaling factor, it then suffices to show that the underlying $\varphi$-modules of the two realization functors agree, and that the monodromy of the Hyodo--Kato realization of the Kummer motive is not zero.  In light of \Cref{sta:classification-monodromy} this second condition amounts to showing that the monodromy of the Hyodo--Kato realization  is not \emph{always} zero, which is obvious.
				\end{rmk}

				We point out that our alternative definition of $R\Gamma_{\HK}$ gives a direct proof of the compatibility of the Hyodo--Kato cohomology with the tilting equivalence, which is one of the crucial ingredients of the proof of the $p$-adic weight monodromy conjecture for complete intersections \cite[Corollary 3.13]{pWM}.
				\begin{cor}\label{cor:shortcut}
					The composition
					$$
					\RigDA(\C^\flat_p)\simeq \RigDA(\C_p)\xto{{R\Gamma}_{\HK}}\mcD_{(\varphi,N)}(\breve{\Q}_p)
					$$
					coincides with ${R\Gamma}_{\HK}^{p^\flat}$.
				\end{cor}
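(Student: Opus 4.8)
The plan is to deduce this from \Cref{cor:HKisHK} and from the observation that the ``new'' realization $\widehat{R\Gamma}_{\rig}$ is, by construction, insensitive to tilting once the pseudo-uniformizers are matched. Since $\car\C_p=0$, the functor written $R\Gamma_{\HK}$ on $\RigDA(\C_p)$ is $R\Gamma^p_{\HK}$. Both $\C_p$ and $\C_p^\flat$ are algebraically closed with residue field $\barFp$, so $\RigDA(\C_p)=\RigDA_{\gr}(\C_p)$ and $\RigDA(\C_p^\flat)=\RigDA_{\gr}(\C_p^\flat)$, and \Cref{cor:HKisHK} applies with $K=\C_p$, $\varpi=p$ and with $K=\C_p^\flat$, $\varpi=p^\flat$, giving monoidal equivalences $R\Gamma^p_{\HK}\simeq\widehat{R\Gamma}{}^p_{\rig}$ and $R\Gamma^{p^\flat}_{\HK}\simeq\widehat{R\Gamma}{}^{p^\flat}_{\rig}$. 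Hence it suffices to show that $\widehat{R\Gamma}{}^p_{\rig}$, composed with the tilting equivalence $\RigDA(\C_p^\flat)\simeq\RigDA(\C_p)$, agrees with $\widehat{R\Gamma}{}^{p^\flat}_{\rig}$ as monoidal functors to $\mcD_{(\varphi,N)}(\breve{\Q}_p)$.

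Next I would unwind the definition of $\widehat{R\Gamma}{}^{\varpi}_{\rig}$ via \Cref{cor:RGammahat2} and \Cref{prop:K0phinew}: it is the composite of the equivalence $\RigDA_{\gr}(K)\xrightarrow{\sim}\DA_{(\varphi,N)}(\barFp)$ determined by $\varpi$ with the functor $\DA_{(\varphi,N)}(\barFp)\to\mcD_{(\varphi,N)}(\breve{\Q}_p)$ obtained — in the manner of \Cref{notation:comparison_monod} — from the Frobenius-equivariant rigid realization $R\Gamma_{\rig}\colon\DA(\barFp)\to\mcD(\breve{\Q}_p)$. The second functor involves only the residue field $\barFp$ (and the relative Frobenius on $\DA(\barFp)$, see \Cref{rmk:relative-Frob-modp}), not $\varpi$ nor the distinction between $\C_p$ and $\C_p^\flat$. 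So the claim reduces to the assertion that the tilting equivalence intertwines the identification $\RigDA_{\gr}(\C_p)\simeq\DAN$ determined by $p$ with the identification $\RigDA_{\gr}(\C_p^\flat)\simeq\DAN$ determined by $p^\flat$ — which is exactly the compatibility of the tilting equivalence of~\cite{vezz-fw} with the monodromy description recalled in \Cref{rmk:gr_is_insensitive}, using that $p^\flat$ is a choice of tilt of $p$, so that multiplication by $p$ on $\mathsf{M}(\G_{m,\C_p}^{\an})$ corresponds to multiplication by $p^\flat$ on $\mathsf{M}(\G_{m,\C_p^\flat}^{\an})$ (cf.\ \cite[Propositions~7.5 and 7.6]{vezz-fw} and Step~1 of the proof of \Cref{thm:KummerisTate}).

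To carry out the monodromy part cleanly — without having to pin down the tilting equivalence on the nose — I would finish instead by invoking \Cref{prop:unicity}. Write $G_1\colonequals R\Gamma^p_{\HK}\circ(\text{tilting})$ and $G_2\colonequals R\Gamma^{p^\flat}_{\HK}$, and transport both to functors $\DAN\to\DpN$ along the $(\varpi=p^\flat)$-identification $\RigDA_{\gr}(\C_p^\flat)\simeq\DAN$. One then checks the two hypotheses of \Cref{prop:unicity}: (i) compatibility with $(N=0)=\xi$ and $\pi=\Psi$ — built into the construction of $\widehat{R\Gamma}{}^{p^\flat}_{\rig}$ for $G_2$, and for $G_1$ a consequence of the same property over $\C_p$ together with the compatibility of the tilting equivalence with $\xi$ and $\Psi$ (\Cref{rmk:gr_is_insensitive}); and (ii) the monodromy matrix of the Kummer motive equals $\left(\begin{smallmatrix}0&\id\\0&0\end{smallmatrix}\right)$ for both — for $G_2$ by \Cref{cor:HKisHK} and \Cref{rmk:monodromy_Kummer_motive}, and for $G_1$ because the tilting equivalence carries the Kummer motive of $\RigDA_{\gr}(\C_p^\flat)$ to that of $\RigDA_{\gr}(\C_p)$ (Step~1 of the proof of \Cref{thm:KummerisTate}), whose Hyodo--Kato realization carries the standard monodromy, as computed in the proof of \Cref{cor:HKisHK} via the $\varphi$-module $\Hm^1_{\HK}(\G_m^{\an}/p^\Z)$. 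Then \Cref{prop:unicity} gives $G_1\simeq G_2$, completing the proof. The one genuinely non-formal input, and the step I expect to demand the most care, is the pseudo-uniformizer bookkeeping across tilting: that taking $p^\flat$ to be the tilt of $p$ matches the two monodromy operators (equivalently, the two split square-zero structures $\Gmd$ on $\DA(\barFp)$); everything else is transport of the already-established equivalence $R\Gamma_{\HK}\simeq\widehat{R\Gamma}_{\rig}$ along an equivalence of categories.
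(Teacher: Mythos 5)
Your proposal is correct and follows essentially the same route as the paper: invoke \Cref{cor:HKisHK} to replace both $R\Gamma^p_{\HK}$ and $R\Gamma^{p^\flat}_{\HK}$ by the corresponding $\widehat{R\Gamma}_{\rig}$, then observe that the latter factors through $\DA_{(\varphi,N)}(\barFp)$ via the motivic $(\varphi,N)$-structure, which sees only the residue field and is compatible with tilting once $p^\flat$ is taken to be a tilt of $p$. The paper's proof is a one-liner recording exactly this observation; you have spelled out the pseudo-uniformizer bookkeeping explicitly and added a belt-and-suspenders finish via \Cref{prop:unicity} and the Kummer motive, which is a sound (if redundant) safeguard against having to match the tilting equivalence on the nose.
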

				\begin{proof}
					This is clear as they both agree with $\widehat{R\Gamma}_{\rig}$ on $\DA_{(\varphi,N)}(k)$ by \Cref{cor:HKisHK}.
				\end{proof}
				
					\subsection{Hodge and $\ell$-adic realizations}
				\label{sec:H-and-l}
				Our framework is well-suited to define and recover more classical realizations. We briefly sketch their construction here.
				
				\begin{rmk}(The $\ell$-adic case) Let $K$ be a local field, $\ell$ be a prime number invertible in $k$ and let $$R\Gamma_{\ell}\colon\DA(k)\to \mcD_{\proet}(k,\Q_\ell)$$%
					be the homological \'etale $\ell$-adic realization functor (see \cite[\S 9]{AyoubEt}). Using \Cref{cor:RGammahat} we can define a canonical extension
					$$
					\widehat{R\Gamma}_\ell\colon\RigDA_{\gr}(K)\to\LFnil{\mcD_{\proet}(k,\Q_\ell)}{T}
					$$
					A compact object of the target is informally given by an equivariant map $V\to V(-1)$ of continuous $\Gal(k)$-representations on a finite dimensional (complex of) $\Q_\ell$-vector spaces, where the twist $(1)$ is given by the $\ell$-adic cyclotomic character.  
					Using the same techniques as in \Cref{sec:dR-realization} one can prove the following:
					\begin{enumerate}
						\item An equivalence of $(\Gal(k),N)$-representations $R\Gamma_{\ell}(X)\simeq R\Gamma_{\ell}(\Psi X)$ where on the left we consider the $\ell$-adic realization of (unipotent) rigid analytic varieties (see \cite[\S 3.1]{bamb-vezz}). Note that the action of $N$ corresponds to the logarithm of the (unipotent) action of a generator of the pro-cyclic group $I/P$, where $I$ is the inertia group and $P$ is its pro-$p$-Sylow subgroup. In particular, the motivic monodromy coincides with the classical monodromy operator of Galois representations. This recovers the compatibility result of Ayoub \cite[Th\'eor\`eme 11.17]{AyoubEt} for $S=\Spec \mcO_K$.
						\item 
						A factorization of ${R\Gamma}_\ell\colon \DA(k)\to \mcD_{\proet}(k,\Q_\ell)$ through $\mcK(\Ho(\heart{k}))$ giving rise to a factorization of $\widehat{R\Gamma}_\ell$ through $\LF{\mcK(\Ho(\heart{k}))}{T}$. This induces a canonical weight spectral sequence converging to $\Hm_n(X,\Q_\ell)$,  for any $X$ in $\RigDA_{\gr}(K)$. The induced filtration is the weight filtration, and the monodromy operator restricts to a map $\gr_i\to \gr_{i+2}(-1)$.
					\end{enumerate}  
					 Note that we can simply consider the (\'etale) realization induced by base change along~$\bar{k}/k$ and~$R\Gamma_\ell$:
					$$
					R\Gamma_{\ell}\colon\DA(k)\to \mcD(\Q_{\ell})
					$$
					taking values in  $\Q_\ell$-modules, and extend it directly to 
					$$
					\widehat{R\Gamma}_\ell\colon\RigDA_{\gr}(K)\to\mcD_{(\varphi,N)}(\Q_{\ell})
					$$
					where on the right we consider the derived \icat of $(\varphi,N)$-modules over~$\Q_\ell$. In this case we recover the Weil--Deligne representation attached to the \'etale cohomology of rigid analytic varieties (of  polystable reduction). 
				\end{rmk}
				
				\begin{rmk}(The Hodge case) 
    Suppose that $K=\C(\!(T)\!)$. We may consider the Hodge realization %
     $$R\Gamma_{\Hodge}\colon \DA(\C)\to\Ind\mcD^b(\MHS^p_\Q)$$ 
     taking values in  the Ind-derived \icat of rational {polarizable} mixed Hodge structures (see e.g.~\cite[Theorem~V.2.3.10]{levine_motives}, \cite[Theorem~1.2]{drew:motivic-hodge}), the latter being a semi-simple abelian category (see~\cite[Corollary~I.2.12]{hodge-book}). 	
     Following the convention in Hodge theory, we say that  a polarizable mixed Hodge structure $(F^\bullet V_\C, V_\Q)$  is pure of weight $n$ if $V_{\C}^{p,q}=0$ for $p+q\neq n$, where $V_{\C}^{p,q} = F^p V_\C \cap \overline{F^q V_\C}$.
                                                                                
				 By construction, the Tate object $\one(1)\in \DA(\C)$ is sent by $R\Gamma_{\Hodge}$ to the object $\Q_{\Hodge}(1)$, given by a pair $(F^\bullet V_\C, V_\Q)$, where $V_\Q = {(2\pi i)} \Q$, $V_\C = \C\otimes_\Q V_\Q \cong \C$ and the filtration is $0=F^0 \subset F^{-1} = V_\C$. This corresponds to the classical Tate (pure) Hodge structure of weight~$-2$, rank~$1$ and grading~$(-1,-1)$. 	 Note that the realization of any  motive in $\heart{\C}$ gives rise to a pure Hodge structure, and that the family, for varying~$i$, of pure polarizable rational Hodge structures of weight~$i$ are orthogonal in the sense of \Cref{exm:Ai}.

     If we let 
    $$
    \widehat{R\Gamma}_{\Hodge}\colon \RigDA_{\gr}(\C(\!(T)\!))\to \LF{\Ind\mcD^b(\MHS_\Q^p)}{T}
    $$
   be the induced extension, where now $T$ is the twist by~$\Q_{\Hodge}(-1)$, we then deduce that the image lies in the \subicat generated by pure Hodge structures under colimits and shifts, and that moreover:
     \begin{enumerate}
        \item One has the formula $\widehat{R\Gamma}_{\Hodge}(X)\simeq {R\Gamma}_{\Hodge}(\Psi X)$.
        \item ${R\Gamma}_{\Hodge}$ factors over $\mcK(\Ho(\heart{\C}))$. Therefore $\widehat{R\Gamma}_{\Hodge}$ factors over $\LF{\mcK(\Ho(\heart{\C}))}{T}$. This gives rise to a  canonical weight spectral sequence converging to $\Hm_n\widehat{R\Gamma}{}_{\Hodge}(X)$,  for any $X$ in $\RigDA_{\gr}(\C(\!(T)\!))$. The induced filtration is the weight filtration, and the monodromy operator restricts to a map $\gr_i\to \gr_{i+2}(-1)$. 
    \end{enumerate}
					This allows us to call $\widehat{R\Gamma}{}_{\Hodge}$  the limit-Hodge realization following Steenbrink. Note that the weight-monodromy conjecture  (proved by Saito) implies that $N^k\colon \gr_{d-k}\to\gr_{d+k}(-k)$ is an equivalence, whenever $X$ is the motive of the analytification of a smooth and projective variety over~$\C(T)$.
				\end{rmk}
				
				\begin{rmk}
     The monodromy operator  $N$ acting on the singular cohomology $V = \Hm^{*}(X_t, \Q)$ of a smooth generic fiber $X_t$ of a  semi-stable degeneration $X\to \Delta$ of complex K\"ahler manifolds is classically used to \emph{define} a mixed Hodge structure (the limit Hodge structure) on the graded vector space $V$ \cite[Section 2]{morrison}, which is then only \textsl{a posteriori} compared with the mixed Hodge structure on the cohomology of the nearby cycle. From our perspective, it is more natural to look at the monodromy  as an extra operator that acts directly on the motivic nearby cycle, and thus on its realization. 
				\end{rmk}
				
			\subsection{The Clemens--Schmid diagram}
				\label{sec:CS}
				We now define a motivic Clemens-Schmid diagram that induces by any (appropriate) realization the two Clemens-Schmid chain complexes. Specializing to the case of the Hyodo-Kato realization, this provides a proof of a conjecture of Flach--Morin \cite[Conjecture 7.15]{flach-morin}. 
				
				\begin{prop}
 Let $K$ be as in \Cref{not:K}.
    Denote by $j\colon \Spec K\to \Spec\mcO_K$ the natural open immersion and by $\iota\colon\Spec k\to\Spec\mcO_K$ its closed complementary. 
					\begin{enumerate}
						\item\label{eq:chi_is_fib_CS} Let $M$ be an object of $\DA(K)$ such that $\An^*M$ lies in $\RigDA_{\gr}(K)$. There is a functorial fiber sequence in  $\DA(k)$:
						$$
						\chi^{\mathrm{alg}}M \to \Psi  \An^*M \xto{N} \Psi  \An^*M (-1).
						$$
						\item\label{eq:CS_triangles} Let $M$ be an object of $\DA(\mcO_K)$ such that $\An^*j^*M$ lies in $\RigDA_{\gr}(K)$. There are two functorial fiber sequences of in  $\DA(k)$:
						\begin{align*}
							& \chi^{\mathrm{alg}}j^* M\to \Psi  \An^* j^*M  \xto{N} \Psi  \An^* j^*M(-1) 
							\\
							\iota^! M\to \iota^*M \to  & \chi^{\mathrm{alg}}j^* M.
						\end{align*}
					\end{enumerate}
				\end{prop}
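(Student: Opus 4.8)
The plan is to deduce the statement formally from the identification $\RigDA_{\gr}(K)\simeq\DAN$ of \Cref{cor:RigDAisDAN} together with the standard localization triangle in the algebraic six-functor formalism; no new geometric input is required.

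I would treat part~(1) first. By \Cref{cor:RigDAisDAN}, a choice of pseudo-uniformizer identifies $\RigDA_{\gr}(K)$ with $\DAN=\LFnil{\DA(k)}{-\otimes\one(-1)}$ in such a way that $\An^*M$ corresponds to the pair $(\Psi\An^*M,N_{\Psi\An^*M})$. Under this equivalence the generic fiber functor $\xi$ corresponds to $(N=0)$, and hence its right adjoint $\chi$ corresponds to $\fib(N)$; this is the content of the table in \Cref{rmk:chi_isFibN_xi_is_pirn}, via \Cref{prop:facts_of_life}. One small point to make explicit is that $\xi$ factors through the colimit-closed full \subicat $\RigDA_{\gr}(K)$ of $\RigDA(K)$, so that the restriction of $\chi\colon\RigDA(K)\to\DA(k)$ to $\RigDA_{\gr}(K)$ coincides with the right adjoint of $\xi\colon\DA(k)\to\RigDA_{\gr}(K)$, by uniqueness of adjoints. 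Now \Cref{sta:N=0-fib} identifies $\fib(N)$ with the functor $(X,f)\mapsto\fib(f)$, naturally in $(X,f)$, so evaluating at $(\Psi\An^*M,N_{\Psi\An^*M})$ yields a functorial fiber sequence
\[
\chi(\An^*M)\ \longrightarrow\ \Psi\An^*M\ \xto{N}\ \Psi\An^*M(-1)
\]
in $\DA(k)$. Finally \Cref{cons:chi} supplies a natural equivalence $\chi\circ\An^*\simeq\chi^{\mathrm{alg}}$ of functors $\DA(K)\to\DA(k)$ (this is where the comparison theorems of~\cite{agv} enter), whence $\chi(\An^*M)\simeq\chi^{\mathrm{alg}}M$; as all maps involved are natural transformations, the fiber sequence is functorial in $M$.

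For part~(2), the first fiber sequence is simply part~(1) applied to $j^*M\in\DA(K)$, which is legitimate since $\An^*j^*M\in\RigDA_{\gr}(K)$ by hypothesis. For the second, I would start from the functorial localization triangle attached to the open--closed decomposition $(j,\iota)$ of $\Spec\mcO_K$ in the six-functor formalism on $\DA(-)$ (see~\cite{ayoub-th1,cd}):
\[
\iota_*\iota^!M\ \longrightarrow\ M\ \longrightarrow\ j_*j^*M\ \xto{+1}
\]
in $\DA(\mcO_K)$. Applying the exact functor $\iota^*$, and using the recollement identity $\iota^*\iota_*\simeq\id$ together with the definition $\chi^{\mathrm{alg}}=\iota^*j_*$ from \Cref{cons:chi}, produces the functorial fiber sequence
\[
\iota^!M\ \longrightarrow\ \iota^*M\ \longrightarrow\ \chi^{\mathrm{alg}}(j^*M)\ \xto{+1}
\]
in $\DA(k)$, as claimed.

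I do not expect a genuine obstacle here: once \Cref{cor:RigDAisDAN} and the identification $\chi\An^*\simeq\chi^{\mathrm{alg}}$ are granted (these rest on the substantial work of~\cite{agv} and~\cite{ayoub-weil}), the whole argument is bookkeeping with adjunctions and with the localization triangle. The single step that deserves a careful sentence is the compatibility used above between the ambient functor $\chi$ on $\RigDA(K)$ and the functor $\fib(N)$ on $\DAN$ — i.e.\ that restricting $\chi$ to $\RigDA_{\gr}(K)$ leaves it unchanged — which holds precisely because $\xi$ already takes values in $\RigDA_{\gr}(K)$.
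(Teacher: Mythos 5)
Your proposal is correct and follows essentially the same route as the paper: part~(1) via the identification of $\chi$ with $\fib(N)$ under \Cref{cor:RigDAisDAN} and \Cref{rmk:chi_isFibN_xi_is_pirn}, and part~(2) by specializing part~(1) to $j^*M$ and by applying $\iota^*$ to the localization triangle. The extra sentences you add (the corestriction/right-adjoint compatibility, and the invocation of $\chi\An^*\simeq\chi^{\mathrm{alg}}$) are correct and just make explicit what the paper leaves implicit.
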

					\begin{proof}
						Part~\eqref{eq:chi_is_fib_CS} follows directly from \Cref{rmk:chi_isFibN_xi_is_pirn}, since the functor $\chi$ is identified with the functor that computes the fiber of the monodromy.
The first fiber sequence in~\eqref{eq:CS_triangles} is a special case of that.
And for the second, it is enough to apply $\iota^*$ to the localization fiber sequence:
\[ \iota^*( \iota_* \iota^! M \to M \to j_*j^*M) = \iota^!M \to \iota^* M\to \chi^{\mathrm{alg}}j^*M.\qedhere
\]
					\end{proof}

				\begin{rmk}\label{rmk:calotte}Following Saito's depiction \cite[Remarque 5.2.2]{saito-mdhp}, we may concatenate the two fiber sequences in \eqref{eq:CS_triangles} using their common vertex  $\chi^{\mathrm{alg}}j^* M$ as the ``lower half'' of a octahedron (on the right) which induces automatically the ``upper half'' of the octahedron (on the left)  here below, where solid arrows are part of fiber sequences and $\Phi$ denotes the cofiber of the induced map $\iota^*\to \Psi \An^* j^*$ (the motivic \emph{vanishing cycles}).
$$
\adjustbox{max width=\textwidth}{
\begin{tikzcd}
                \iota^*\ar[dd,"+1"] &&\iota^!\ar[ll,dashed]\ar[dl]
                &
			\iota^*
			\arrow[dd, dashed, "+1"]
			\ar[dr]
			&&
                \iota^!
                \ar[ll]
\\
&\Phi[-1]\ar[ul]\ar[dr]
&&
& \chi^{\mathrm{alg}} j^*\ar[ur,"+1"]\ar[dl,"+1"]
\\
\Psi \An^* j^*[-1]\ar[rr,"N",dashed]\ar[ur]&&\Psi \An^*j^*(-1)[-1]\ar[uu,"+1"]
&
\Psi \An^* j^*[-1]\ar[rr,"N"]&&\Psi \An^*j^*(-1)[-1]\ar[ul]\ar[uu,"+1",dashed]
		\end{tikzcd}}
  $$
				\end{rmk}
				
				\begin{rmk}
					Vertical maps have also a geometric interpretation: the canonical open inclusion $(\mcX^\wedge)_\eta\subseteq \mcX^{\An}_K$ for any smooth scheme $\mcX$ over $\mcO_K$ induces a natural transformation $\xi\iota^*\to \An^*j^*$ (see  \cite[Remark 4.5]{vezz-bang}) and hence a natural transformation $\iota^*\simeq \Psi\xi\iota^*\to \Psi\An j^*$.
				\end{rmk}
				
				\begin{rmk}\label{rmk:calotteself-dual}
				If $K$ is discretely valued,	note that the diagrams above are ``self-dual'' in the following sense. For any $a\colon S\to \mcO_K$ of finite type and any  $M$ in $\DA(S)$ we let  $\mathbb{D}_S(M)$ be the motive $\uhom(M,a^!\one)$. We remind the reader that, by absolute purity, the  functor  $\mathbb{D}_k$ agrees with $\uhom(-,\one)(-1)[-2]$ while $\mathbb{D}_K$ agrees with $\uhom(-,\one)$ (since $j^!=j^*)$. %
     
					Assume now that $M$ is compact. In this case, we have  canonical identifications $\mathbb{D}_k(\iota^!M)=\iota^*\mathbb{D}(M)$ and $\mathbb{D}_k(\iota^*M)=\iota^!\mathbb{D}(M)$.\footnote{More generally, $\mathbb{D}$ ``exchanges'' $*$'s with $!$'s and vice-versa, see \cite[Th\'eor\`eme 2.3.75]{ayoub-th1}.} Also, note that $j^*M$ is compact (hence dualizable). Now, further assume that $\An^*j^*M$ lies in $\RigDA_{\gr}(K)$. One has then $\An^*j^*\mathbb{D}(M)\cong \uhom(\An^*j^*M,\one)$. This proves that $\An^*j^*\mathbb{D}(M)$ also lies in $\RigDA_{\gr}(K)$.  In this situation, we have canonical identifications		
     $$\resizebox{\textwidth}{!}{$
					\uhom(-,\one)\left(
					\Psi\An^*j^*M\xto{N}\Psi\An^*j^*M(-1)
					\right)\cong
					\left(\Psi\An^*j^*\mathbb{D}(M)\xto{N}\Psi\An^*j^*\mathbb{D}(M)(-1)\right)(1)
     $}
					$$
					and
					$$
					\uhom(-,\one)\left(
					\iota^!M\xto{}\iota^*M
					\right)\cong
					\left(
					\iota^!\mathbb{D}(M)\xto{}\iota^*\mathbb{D}(M)\right)(1)[2].
					$$
					In particular, the square diagrams of \Cref{rmk:calotte}  applied to $\mathbb{D}(M)$ agree with the duals of the  diagrams applied to $M$, up to $(-1)[-2]$. 
				\end{rmk}

				\begin{rmk}\label{rmk:CShom} Let $R\Gamma\colon\DA(k)\to \catD$ be a functor in $\Prlmst$ and assume $\catD$ is endowed with a t-structure. Let $\widehat{R\Gamma}\colon\RigDA_{\gr}(K)\to \LFnil{\catD}{T}$ be the induced extension. We denote by $\Hm_*$ resp. $\widehat{\Hm}_*$ the homology objects $\Hm_*R\Gamma$ resp. $\Hm_*\pi\widehat{R\Gamma}\cong \Hm_*R\Gamma\Psi$ (where $\pi$ is the functor that ``forgets monodromy'', see \Cref{cons:lax-equalizer}). Let also $M$ be in $\DA(\mcO_K)$ such that $\An^*j^*M$ is in $\RigDA_{\gr}(K)$. 
					By intertwining the long exact sequences in homology of the diagrams in \Cref{rmk:calotte}, we deduce the existence of two canonical chain complexes (there is a drop by two in the indices, giving rise to \emph{two} parallel complexes):
					$$
					\cdots\! \to \Hm_*(\iota^*M)\to \widehat{\Hm}_*(\An^*j^*M)\xto{N}\widehat{\Hm}_*(\An^*j^*M)(-1)\to \Hm_{*-2}(\iota^!M)\to \Hm_{*-2}(\iota^*M)\to\!\cdots 
					$$
				\end{rmk}
				
				We now want to give a cohomological version of Clemens-Schmid complexes, under the semi-stable reduction hypothesis, which is the case typically considered in the literature. To this aim, we recall that for any map of schemes $p\colon X\to S$ which is locally of finite type there are induced adjunctions (see e.g. \cite{ayoub-th1, ayoub-ICM})
				$$
				p^*\colon \DA(S)\to \DA(X)\colon p_*\qquad 
				p_!\colon \DA(X)\to \DA(S)\colon p^!.
				$$
				The object $\mathsf{M}_S^{\mathrm{coh}}(X)\colonequals p_*\one=p_*p^*\one$ is the so-called ``co-homological'' motive of $X$ over $S$ whereas the object $\mathsf{M}_S(X)\colonequals p_!p^!\one$ is the so-called ``homological'' motive of $X$ over $S$. The same can be said for a map of rigid analytic spaces, see \cite{agv}. From now on, we abbreviate~$\{i\}\colonequals (i)[2i]$.

				\begin{rmk}\label{rmk:CSpre}
				Let $K$ be discretely valued and let $M=\mathsf{M}^{\mathrm{coh}}_{\mcO_K}(\mcX)$ be the cohomological motive of a flat, projective regular scheme $p\colon \mcX\to \mcO_K$ of relative dimension $d$ with a smooth generic fiber, for which $\An^*j^*M$ lies in $\RigDA_{\gr}(K)$ (e.g.\ if $\mcX$ has polystable reduction). In this case, by absolute purity (see e.g. \cite[Theorem A(II)]{dfjk}), we know that $p^!\one\cong \one\{d\}$ which implies that $$\iota^!M=\iota^!p_*p^*\one\cong p_*\iota^!p^!\one\{-d\}\cong p_!p^!\iota^!\one\{-d\}=p_!p^!\one\{-d-1\}\cong \mathsf{M}_k(\mcX_k)\{-d-1\}.$$
					The outer square of the diagrams in \Cref{rmk:calotte} is then formed by the maps
					$$
					\mathsf{M}_k(\mcX_k)\{-d-1\}\to \mathsf{M}^{\mathrm{coh}}_k(\mcX_k)\to\Psi \mathsf{M}_K^{\mathrm{coh}}(\mcX_K^{\An})\xto{N} \Psi \mathsf{M}_K^{\mathrm{coh}}(\mcX_K^{\An})(-1)\to \mathsf{M}_k(\mcX_k)\{-d-1\}[2].
					$$
     Note that  the motives $\mathsf{M}_K(\mcX_K)$, $\mathsf{M}_K(\mcX_K^{\An})$ and $\mathsf{M}_k(\mcX_k)$ are compact and hence strongly dualizable (see e.g. \cite{riou-dual}, \cite[Proposition 2.31]{ayoub-neww}). 
				\end{rmk}
				\begin{cor}\label{cor:CS_realization}
                               Let $K$ be discretely valued. Let $R\Gamma\colon\DA(k)\to \catD$ be a functor in $\Prlmst$ and assume that $\catD$ is endowed with a t-structure compatible with tensor products and with heart~$\mcA$.
                                Let $\widehat{R\Gamma}\colon\RigDA_{\gr}(K)\to \LFnil{\catD}{T}$ be the induced extension. Let  $\mcX$ be a flat, proper, regular scheme of essential relative dimension $d$ over $\mcO_K$ which is generically smooth and has polystable reduction over $k$ (e.g. it has semi-stable reduction). %
						There are two canonical chain complexes in $\mcA$ (for even and for odd indices):
						$$%
						\cdots \to \Hm^*(\mcX_k)\to \widehat{\Hm}{}^*(\mcX_K^{\An})\xto{N} \widehat{\Hm}{}^*(\mcX_K^{\An})(-1) \to \Hm_{2d-*}(\mcX_k)(-d-1) \to  \Hm^{*+2}(\mcX_k)\to\cdots 
						$$%
					where we let $\Hm_*(M)$ resp. $\widehat{\Hm}_{*}(M)$   be the objects $\Hm_*R\Gamma(M)$ resp. $\Hm_*\pi\widehat{R\Gamma}(M)\cong \Hm_*R\Gamma(\Psi M)$  and we let  $\Hm^{*}(M)$ resp. $\widehat{\Hm}{}^{*}(M)$ be their duals.
				\end{cor}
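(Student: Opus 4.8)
The plan is to realize, via the machinery of \Cref{rmk:CShom}, the octahedral diagram of \Cref{rmk:calotte} attached to the cohomological motive $M=\mathsf{M}^{\mathrm{coh}}_{\mcO_K}(\mcX)\in\DA(\mcO_K)$, and to rewrite the resulting homological complexes cohomologically; in fact most of the work is already contained in \Cref{rmk:calotte,rmk:CShom,rmk:CSpre}, so what remains is to check the hypotheses in the stated generality and to run the duality bookkeeping. First I would verify the standing condition $\An^*j^*M\in\RigDA_{\gr}(K)$: since $\mcX$ has polystable reduction it is a pluri-nodal fibration over $\mcO_K$ in the sense of \cite{berk-contr}, so $\mathsf{M}_K(\mcX_K^{\An})=\An^*j^*\mathsf{M}_{\mcO_K}(\mcX)$ lies in $\RigDA_{\gr}(K)$; as $\mcX_K$ is smooth and proper, $\mathsf{M}_{\mcO_K}(\mcX)=p_!p^!\one$ is compact hence dualizable, whence $\An^*j^*M\simeq\An^*j^*\uhom(\mathsf{M}_{\mcO_K}(\mcX),\one)\simeq\uhom(\mathsf{M}_K(\mcX_K^{\An}),\one)$, which lies in $\RigDA_{\gr}(K)$ by the closure under duality recorded in \Cref{rmk:calotteself-dual}. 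This also makes available the realization $\widehat{R\Gamma}$ of \Cref{cor:RGammahat}.

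Next I would identify the four vertices of the outer square of the octahedron for $M$; this is exactly the computation of \Cref{rmk:CSpre}, which goes through verbatim for $p$ \emph{proper} regular of essential relative dimension $d$ (projectivity is not used there): absolute purity \cite{dfjk} gives $p^!\one\simeq\one\{d\}$ and the codimension-one regular immersion $\iota$ gives $\iota^!\one\simeq\one\{-1\}$, so $\iota^*M\simeq\mathsf{M}^{\mathrm{coh}}_k(\mcX_k)$, $\iota^!M\simeq\mathsf{M}_k(\mcX_k)\{-d-1\}$, and $\An^*j^*M\simeq\mathsf{M}^{\mathrm{coh}}_K(\mcX_K^{\An})$ by base change.

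Then I would apply $R\Gamma$ and $\widehat{R\Gamma}$ and invoke \Cref{rmk:CShom}: intertwining the long exact homology sequences of the two fiber sequences produces, in each parity, a chain complex in $\mcA$
\[
\cdots\to\Hm_m(\iota^*M)\to\widehat{\Hm}_m(\An^*j^*M)\xto{N}\widehat{\Hm}_m(\An^*j^*M)(-1)\to\Hm_{m-2}(\iota^!M)\to\Hm_{m-2}(\iota^*M)\to\cdots .
\]
To bring this into the asserted form I would use that $R\Gamma$ is monoidal and $\Psi$ a monoidal equivalence, together with the dualizability of the compact motives $\mathsf{M}_k(\mcX_k)$, $\mathsf{M}_K(\mcX_K^{\An})$ and $\Psi\mathsf{M}_K(\mcX_K^{\An})$ and the identities $\mathsf{M}^{\mathrm{coh}}_k(\mcX_k)\simeq\mathsf{M}_k(\mcX_k)^\vee$, $\mathsf{M}^{\mathrm{coh}}_K(\mcX_K^{\An})\simeq\mathsf{M}_K(\mcX_K^{\An})^\vee$ (valid for $\mcX_k,\mcX_K$ proper with no smoothness needed, from $\uhom(p_!A,B)\simeq p_*\uhom(A,p^!B)$): the contravariant duality passes through $\Psi$ and through $R\Gamma$, so $\Hm_m(\iota^*M)\simeq\Hm^{-m}(\mcX_k)$ and $\widehat{\Hm}_m(\An^*j^*M)\simeq\widehat{\Hm}^{-m}(\mcX_K^{\An})$, while commuting $R\Gamma$ past $\{-d-1\}=(-d-1)[-2d-2]$ gives $\Hm_{m-2}(\iota^!M)\simeq\Hm_{m+2d}(\mcX_k)(-d-1)$. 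Putting $n=-m$ rewrites the complex as
\[
\cdots\to\Hm^n(\mcX_k)\to\widehat{\Hm}^n(\mcX_K^{\An})\xto{N}\widehat{\Hm}^n(\mcX_K^{\An})(-1)\to\Hm_{2d-n}(\mcX_k)(-d-1)\to\Hm^{n+2}(\mcX_k)\to\cdots ,
\]
which is the assertion.

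The main — and only genuinely non-formal — step is the last one: one must check that the monoidal structures on $R\Gamma$ and $\Psi$ really identify the homology of the realization of a cohomological motive with the dual of the homology of the realization of the corresponding homological motive, so that the symbols $\Hm^*$ of the statement (defined there as the duals of the homology objects $\Hm_*$) are exactly the terms produced by \Cref{rmk:CShom}. Beyond the dualizability of the motives in play, matching the literal phrase ``dual of $\Hm_*$'' uses that $(-)^\vee$ is exact on the heart $\mcA$, which is automatic in all the examples of interest ($\mcA$ a tensor abelian category over a field, e.g. $(\varphi,N)$-modules, $\ell$-adic representations, mixed Hodge structures). Everything else — the octahedron, the fiber sequences, the realization — is furnished by \Cref{rmk:calotte,rmk:CShom,rmk:CSpre,cor:RGammahat}.
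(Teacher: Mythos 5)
Your proposal is correct and takes essentially the same route as the paper: cite \Cref{rmk:CShom} and \Cref{rmk:CSpre}, then pass from homological to cohomological indexing through dualizability of the motives involved. Your write-up fills in more of the bookkeeping than the paper does and also correctly flags two points the paper glosses over: (i) that the hypotheses of \Cref{rmk:CSpre}, which is stated for $\mcX$ \emph{projective}, go through for $\mcX$ \emph{proper} as in the corollary, and (ii) that the identification of $\Hm_{m}(\cdot^\vee)$ with $\Hm_{-m}(\cdot)^\vee$ requires duality to interact well with the heart.

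One small imprecision worth noting: you assert that $\mathsf{M}_{\mcO_K}(\mcX)=p_!p^!\one$ is ``compact hence dualizable'' over $\mcO_K$, but compactness does not imply strong dualizability over a base such as $\Spec\mcO_K$ (this implication holds over a field, which is what the paper uses in \Cref{rmk:CSpre}). The conclusion you want, namely $\An^*j^*M\in\RigDA_{\gr}(K)$, is better obtained without this detour: by base change along $j$ and compatibility of cohomological motives with analytification for the proper $\mcX_K$ one has $\An^*j^*\mathsf{M}^{\mathrm{coh}}_{\mcO_K}(\mcX)\simeq\mathsf{M}^{\mathrm{coh}}_K(\mcX_K^{\An})\simeq\mathsf{M}_K(\mcX_K^{\An})^\vee$, and $\mathsf{M}_K(\mcX_K^{\An})\in\RigDA_{\gr}(K)$ by pluri-nodality, while $\RigDA_{\gr}(K)$ is closed under duals of compact objects (\Cref{rmk:calotteself-dual}). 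With this fix everything you wrote goes through.
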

				
				\begin{proof}As $\mathsf{M}^{\mathrm{coh}}_k(\mcX_k)=(\mathsf{M}(\mcX_k))^\vee$ and $\mathsf{M}^{\mathrm{coh}}_K(\mcX_K^{\An})=(\mathsf{M}(\mcX_K^{\An}))^\vee$ we have $\Hm_n(\mathsf{M}^{\mathrm{coh}}_k(\mcX_k))=\Hm^{-n}(\mcX_k)$, $\widehat{\Hm}_n(\mathsf{M}^{\mathrm{coh}}_K(\mcX_K^{\An}))=\widehat{\Hm}{}^{-n}(\mcX_K^{\An})$ and $\widehat{\Hm}_n(\mathsf{M}^{\mathrm{coh}}_K(\mcX_K^{\An})(-1))=\widehat{\Hm}{}^{-n}(\mcX_K^{\An})(-1)$. We may then  apply \Cref{rmk:CShom} to the motive $M=\mathsf{M}_{\mcO_K}^{\text{coh}}(\mcX)$ using \Cref{rmk:CSpre}.
				\end{proof}

				\begin{rmk}
					In the Hodge, $\ell$-adic or $p$-adic case, it is known that the Clemens-Schmid complexes associated to $\mcX$ as in \Cref{rmk:CSpre} are exact whenever the corresponding weight-monodromy conjecture holds for $\mcX_K$ (see e.g.\ \cite[\S 5]{chiar-tsu-CS}). In particular, they are always exact in the equi-characteristic case,  in case $\mcX_K$ has dimension $\leq2$ or is a smooth complete intersection in a smooth projective toric variety.
				\end{rmk}
				
				\begin{exm}\label{exm:WCofiotas}
					Let us explicitly describe the maps induced on the weight complexes by the Clemens-Schmid octahedron of \Cref{rmk:calotte}. Let $\mcX$ be as in \Cref{cor:CS_realization} and denote by $\{D_i\}_{i\in I}$ the set of (proper, smooth) irreducible components of the special fiber $\mcX_k$. For any $J\subseteq I$ we let $D_J$ be the proper, smooth variety $\bigcap_{j\in J} D_j$. By $h$-descent, the weight complex of $\mathsf{M}(\mcX_k)$ is given by
					$$
					\mathsf{M}(D_I) \to \bigoplus_{|J|=|I|-1} \mathsf{M}(D_{J}) \to \cdots \to \bigoplus_{i\neq j} \mathsf{M}(D_{ij}) \to \bigoplus_i \mathsf{M}(D_i)  
					$$
					where the right-most term is in degree~$0$ and with differentials given by the alternating sums of maps induced by inclusion. We deduce that the weight complex of $\mathsf{M}^{\text{coh}}(\mcX_k)$ is given by its dual, that is,
					$$
					\bigoplus_i \mathsf{M}(D_i)^\vee \to   \bigoplus_{i\neq j} \mathsf{M}(D_{ij})^\vee \to \cdots \to \bigoplus_{|J|=|I|-1} \mathsf{M}(D_{J})^\vee \to \mathsf{M}(D_I)^\vee
					$$
					which by purity can be expressed as
					$$
					\bigoplus_i \mathsf{M}(D_i)\{-d\} \to   \bigoplus_{i\neq j} \mathsf{M}(D_{ij})\{-d+1\} \to \cdots \to \bigoplus_{|J|=|I|-1} \mathsf{M}(D_{J})\{-1\}  \to \mathsf{M}(D_I)
					$$
					where the left-most term is in degree~$0$ and the differentials are alternating sums of Gysin maps.
					
					The map $\mathsf{M}_k(\mcX_k)=\iota^!\mathsf{M}^{\mathrm{coh}}(\mcX)\{d+1\}\to \iota^*\mathsf{M}^{\mathrm{coh}}(\mcX)\{d+1\}= \mathsf{M}_k^{\mathrm{coh}}(\mcX_k)\{d+1\}$ is given on weight complexes by (we omit $\mathsf{M}(-)$ and $0$'s for brevity)
					$$
					\xymatrix{
						D_I \ar[r]  &  \cdots \ar[r] & \bigoplus_{i\neq j} D_{ij} \ar[r] & \bigoplus D_i  \ar[d]\\
						&&& \bigoplus D_i\{1\} \ar[r] &     \bigoplus_{i\neq j} D_{ij}\{2\}\ar[r]&\cdots\ar[r]& D_I\{d+1\}
					}
					$$
					where the vertical map is induced by $\bigoplus D_i\to \mcX\to \bigoplus D_j\{1\}$ (cfr. \cite[Theorem 4.2.1]{DDO}).
     The cone of this map (which is simply the concatenation of the two complexes) computes the weight complex of $\chi^{\alg} j^* \mathsf{M}^{\mathrm{coh}}(\mcX)\{d+1\}=\chi^{\alg} \mathsf{M}(\mcX_K)\{1\}$.
				\end{exm}
				
				\begin{exm}
					We point out that it is also possible to use rigid analytic varieties to give an explicit description of the objects in the lower part of the diagrams in   \Cref{rmk:CSpre} for  $M=\mathsf{M}_{\mcO_K}^{\mathrm{coh}}(\mcX)$ as in \Cref{cor:CS_realization}. We use the same notation as in \Cref{exm:WCofiotas}.
					
					We can use the specialization map $\mathrm{sp}\colon |\mcX_K^{\An}|\to |\mcX_k|$ to define open subsets of the domain given by $(\mathrm{sp}^{-1}(D_J))^\circ=]D_J[$. This yields a presentation $\mathsf{M}(\mcX_K^{\An})=\colim \mathsf{M}(]D_J[)$. To fix our ideas let us assume there are only two irreducible components $D_1$, $D_2$. We have $\mathsf{M}(]D_i[)\cong \mathsf{M}(]D_i\setminus D_{12}[)$ (see~\cite[Theorem~5.1]{ais}) which is the motive of good reduction $\xi \mathsf{M}(D_i\setminus D_{12})$ %
     with weight complex $(\,D_i \to D_{12}\{1\}\,)$ in degrees~$0$ and~$-1$. Note that, by considering an admissible formal blow up centered in $D_{12}$ we can find another  model $\mcX'$ for $\mcX^{\An}_K$ lying above $\mcX$, whose special fiber has as irreducible components $D_1,D_2$ and $\P^1_{D_{12}}$,  for which $\mathsf{M}(]D_{12}[)=\mathsf{M}(]\P^1_{D_{12}}[)\cong \mathsf{M}(]\G_{m,D_{12}}[)$. Its weight complex is $(\,D_{12}\xto{0} D_{12}\{1\}\,)$. We deduce that the weight complex of $\mathsf{M}(\mcX_K^{\An})$ (which is the same as the one of $\Psi \mathsf{M}(\mcX_K^{\An})$) is
					$$
					\Tot\left(
					\vcenter{\xymatrix{D_{12}\ar[r]\ar[d] & D_1\oplus D_2\ar[d]\\
							D_{12}\{1\}\ar[r] & (D_{12}\{1\})^{\oplus 2}
					}}
					\right) \simeq \Big(D_{12}\to D_1\oplus D_2 \to D_{12}\{1\} \Big) 
					$$
                                        in degrees~$[-1,1]$.
					A similar description can be given using h-descent and the formulas for~$\Psi$ given in~\cite[Theorem~6.1]{ais}.

					In the general case (with more than two irreducible components), we expect similar simplifications to hold giving an analogous description of the weight complex $C_\bullet$ of $ \mathsf{M}(\mcX^{\An}_K)$  akin to the one given by Bloch--Gillet--Soul\'e \cite{bgs}.  That is, $C_\bullet$ should coincide with the total complex  of the bi-complex
					$
					(X^{r,s}=X^{(r-s+d)}\{s\}, \del',\del'')_{r,s}
					$
					where $X^{(a)}=\bigoplus_{|J|=-a+d+1} D_J$ (it is of dimension $a$)  and where the differentials $X^{r,s}\to X^{r+1,s}$, $X^{r,s}\to X^{r,s+1}$ are alternating sums of inclusion/Gysin maps. Note that the (bi-)complex is self-dual, up to applying $\{d\}$ which is compatible with the formula $\mathsf{M}(\mcX_K)^\vee\simeq \mathsf{M}(\mcX_K)\{-d\}$ (see \cite[Scholium]{agv}). This would be the motivic analog to the formulas for~$\Psi$ given by Steenbrink, Rapoport--Zink and Mokrane.
					
 				The monodromy operator  is induced by the canonical map on the weight complex
					$$N\colon C_\bullet\to C_\bullet(-1)=C_{\bullet}\{-1\}[2]=C_{\bullet -2}\{-1\}$$
					given by the identity map on each component $X^{(r)}\{s\}$ appearing on both sides (and $0$ elsewhere). One can check directly that the cone of $N$ agrees with the weight complex of $\chi \mathsf{M}(\mcX_K)$ computed above (up to twist and shift).
     \end{exm}

 \def\MR#1{}
\providecommand{\MRhref}[2]{%
\href{http://www.ams.org/mathscinet-getitem?mr=#1}{#2}
}
\providecommand{\href}[2]{#2}

			\end{document}